\DeclareFontFamily{OT1}{pzc}{}
\DeclareFontShape{OT1}{pzc}{m}{it}{<-> s * [1.10] pzcmi7t}{}
\DeclareMathAlphabet{\mathpzc}{OT1}{pzc}{m}{it}
\crefname{defin}{Definition}{Definitions}
\crefname{eg}{Example}{Examples}
\crefname{egs}{Examples}{Examples}
\crefname{convention}{Convention}{Convention}
\crefname{lem}{Lemma}{Lemmas}
\crefname{prop}{Proposition}{Propositions}
\crefname{theo}{Theorem}{Theorems}
\crefname{rem}{Remark}{Remarks}
\crefname{equation}{}{}
\crefname{enumi}{}{}
\newcommand\la{\lambda}
\newcommand\C{\mathbb{C}}
\newcommand\N{\mathbb{N}}
\newcommand\Z{\mathbb{Z}}
\newcommand\kk{\Bbbk}
\newcommand\one{\mathbbm{1}}
\newcommand\bB{\mathbf{B}}
\newcommand\bF{\mathbf{F}}
\newcommand\fg{\mathfrak{g}}
\newcommand\fo{\mathfrak{o}}
\newcommand\fso{\mathfrak{so}}
\newcommand\Cl{\mathrm{Cl}}             % Clifford algebra
\newcommand\Group{{\mathrm{G}}}         % Either Spin(V) or Pin(V), depending on parity of N
\newcommand\op{\mathrm{op}}
\newcommand\rO{\mathrm{O}}              % Orthogonal group
\newcommand\Spin{\mathrm{Spin}}         % Spin group
\newcommand\Pin{\mathrm{Pin}}           % Pin group
\newcommand\triv{\mathrm{triv}}
\newcommand\md{\textup{-mod}}           % finite-dimensional modules
\newcommand\Vset{\mathtt{I}}            % Set of indices labelling basis of vector rep
\newcommand\inv{^{-1}}
\newcommand\qbinom[2]{\begin{bmatrix} #1 \\ #2 \end{bmatrix}}
\newcommand\Brauer{\mathpzc{B}}         % Brauer category
\newcommand\cC{\mathcal{C}}
\newcommand\cN{\mathpzc{N}}
\newcommand\cK{\mathpzc{K}}             % Kauffman category
\newcommand\QSB{\mathpzc{QSB}}          % Quantum spin Brauer category
\newcommand\Sgo{\mathsf{S}}             % Diagrammatic spin generating object
\newcommand\Vgo{{\color{blue} \mathsf{V}}}             % Diagrammatic vector generating object
\DeclareMathOperator{\End}{End}
\DeclareMathOperator{\flip}{flip}
\DeclareMathOperator{\Hom}{Hom}
\DeclareMathOperator{\id}{id}
\DeclareMathOperator{\Ind}{Ind}
\DeclareMathOperator{\Kar}{Kar}
\DeclareMathOperator{\Res}{Res}
\DeclareMathOperator{\Span}{span}
\DeclareMathOperator{\Tr}{Tr}
\DeclareMathOperator{\wt}{wt}
\tikzset{anchorbase/.style={>=To,baseline={([yshift=-0.5ex]current bounding box.center)}}}
\tikzset{ % Syntax: \begin{tikzpicture}[centerzero={0,0.2}]
    centerzero/.style={>=To,baseline={([yshift=-0.5ex](#1))}},
    centerzero/.default={0,0}
}
\tikzset{wipe/.style={white,line width=3pt}}
\tikzset{vec/.style={thick,blue}}
\tikzset{spin/.style={black}}
\tikzset{multispin/.style={very thick,black}}
\tikzset{any/.style={thick,red}}
\tikzset{->-/.style={decoration={
  markings,
  mark=at position #1 with {\arrow{>}}},postaction={decorate}}
}
\tikzset{-<-/.style={decoration={
  markings,
  mark=at position #1 with {\arrow{<}}},postaction={decorate}}
}
\newcommand\braidup{to[out=up,in=down]}
\newcommand\braiddown{to[out=down,in=up]}
\newcommand\strandlabel[1]{$\scriptstyle{#1}$}
\newcommand\botlabel[1]{node[anchor=north] {\strandlabel{#1}}}
\newcommand\toplabel[1]{node[anchor=south] {\strandlabel{#1}}}
\newcommand{\altbox}[3]{% \alttbox{lower left corner}{upper right corner}{label}
    \filldraw[fill=black!20!white] (#1) rectangle (#2) node[midway] {\strandlabel{#3}}
}
\newcommand\bub[2]{% \bub{spin/vec}{position}
    \draw[#1] (#2)++(0,0.2) arc(90:-270:0.2)
}
\newcommand\bubble[1]{% \bubble{spin/vec}
    \begin{tikzpicture}[centerzero]
        \bub{#1}{0,0};
    \end{tikzpicture}
}
\newcommand\idstrand[1]{
    \begin{tikzpicture}[centerzero]
        \draw[#1] (0,-0.2) -- (0,0.2);
    \end{tikzpicture}
}
\newcommand\poscross[2]{
    \begin{tikzpicture}[centerzero]
        \draw[#2] (0.2,-0.2) -- (-0.2,0.2);
        \draw[wipe] (-0.2,-0.2) -- (0.2,0.2);
        \draw[#1] (-0.2,-0.2) -- (0.2,0.2);
    \end{tikzpicture}
}
\newcommand\negcross[2]{
    \begin{tikzpicture}[centerzero]
        \draw[#1] (-0.2,-0.2) -- (0.2,0.2);
        \draw[wipe] (0.2,-0.2) -- (-0.2,0.2);
        \draw[#2] (0.2,-0.2) -- (-0.2,0.2);
    \end{tikzpicture}
}
\newcommand{\cupmor}[1]{
    \begin{tikzpicture}[centerzero]
        \draw[#1] (-0.15,0.15) -- (-0.15,0) arc(180:360:0.15) -- (0.15,0.15);
    \end{tikzpicture}
}
\newcommand{\capmor}[1]{
    \begin{tikzpicture}[centerzero]
        \draw[#1] (-0.15,-0.15) -- (-0.15,0) arc(180:0:0.15) -- (0.15,-0.15);
    \end{tikzpicture}
}
\newcommand{\mergemor}[3]{
    \begin{tikzpicture}[anchorbase]
        \draw[#1] (-0.197,-0.197) -- (0,0);
        \draw[#2] (0.197,-0.197) -- (0,0);
        \draw[#3] (0,0) -- (0,0.229);
    \end{tikzpicture}
}
\newcommand{\splitmor}[3]{
    \begin{tikzpicture}[anchorbase]
        \draw[#2] (-0.197,0.197) -- (0,0);
        \draw[#3] (0.197,0.197) -- (0,0);
        \draw[#1] (0,0) -- (0,-0.229);
    \end{tikzpicture}
}
\newcommand\jail[2]{
    \begin{tikzpicture}[centerzero]
        \draw[#1] (-0.1,-0.2) -- (-0.1,0.2);
        \draw[#2] (0.1,-0.2) -- (0.1,0.2);
    \end{tikzpicture}
}
\newtheorem{theo}{Theorem}[section]
\newtheorem{prop}[theo]{Proposition}
\newtheorem{lem}[theo]{Lemma}
\newtheorem{cor}[theo]{Corollary}
\theoremstyle{definition}
\newtheorem{defin}[theo]{Definition}
\newtheorem{rem}[theo]{Remark}
\numberwithin{equation}{section}
    \newcommand{\acomments}[1]{
        \ \\
        {\color{red}
            \textbf{AS:} #1
        }
        \ \\
    }
    \newcommand{\pcomments}[1]{
        \ \\
        {\color{blue}
            \textbf{PM:} #1
        }
        \ \\
    }
    \newcommand{\acomments}[1]{\ignorespaces}
    \newcommand{\pcomments}[1]{\ignorespaces}
    \newcommand{\details}[1]{
        \ \\
        {\color{OliveGreen}
            \textbf{Details:} #1
        }
        \\
    }
    \newcommand{\details}[1]{\ignorespaces}
\begin{document}
%===============

\title{The quantum spin Brauer category}

\author{Peter J. McNamara}
\address[P.M.]{
    School of Mathematics and Statistics \\
    University of Melbourne \\
    Parkville, VIC, 3010, Australia
}
\urladdr{\href{http://petermc.net/maths}{petermc.net/maths},
\textrm{\textit{ORCiD}:} \href{https://orcid.org/0000-0001-6111-1511}{orcid.org/0000-0001-6111-1511}}
\email{maths@petermc.net}

\author{Alistair Savage}
\address[A.S.]{
    Department of Mathematics and Statistics \\
    University of Ottawa \\
    Ottawa, ON, K1N 6N5, Canada
}
\urladdr{\href{https://alistairsavage.ca}{alistairsavage.ca}, \textrm{\textit{ORCiD}:} \href{https://orcid.org/0000-0002-2859-0239}{orcid.org/0000-0002-2859-0239}}
\email{alistair.savage@uottawa.ca}

\begin{abstract}
    We introduce a diagrammatic braided monoidal category, the \emph{quantum spin Brauer category}, together with a full functor to the category of finite-dimensional, type $1$ modules for $U_q(\mathfrak{so}(N))$ or $U_q(\mathfrak{o}(N))$.  This functor becomes essentially surjective after passing to the idempotent completion.  The quantum spin Brauer category can be thought of as a quantum version of the spin Brauer category introduced in \cite{MS24}.  Alternatively, it is an enlargement of the Kauffman category, obtained by adding a generating object corresponding to the quantum spin module.
\end{abstract}

\subjclass[2020]{18M15, 18M30, 17B37}

\keywords{Quantum group, quantized enveloping algebra, special orthogonal Lie algebra, spin group, orthogonal group, monoidal category, string diagram, graphical calculus, Deligne category, interpolating category}

\ifboolexpr{togl{comments} or togl{details}}{%
  {\color{magenta}DETAILS OR COMMENTS ON}
}{%
}

\maketitle
\thispagestyle{empty}

%\tableofcontents

%=====================
\section{Introduction}
%=====================

For the orthogonal groups, the analogue of Schur--Weyl duality involves the statement that there is a surjective algebra homomorphism
\[
    \mathrm{Br}_r \to \End_{\rO(V)}(V^{\otimes r}),
\]
where $\mathrm{Br}_r$ is the Brauer algebra on $r$ strands.  The quantum version of this statement is that there is a surjective algebra homomorphism
\begin{equation} \label{BMW}
    \mathrm{BMW}_r \to \End_{U_q(\fo(N))}(V^{\otimes r}),
\end{equation}
where $\mathrm{BMW}_r$ is the Birman--Murakami--Wenzl (BMW) algebra on $r$ strands, $U_q(\fo(N)) = U_q(\fso(N)) \rtimes (\Z/2\Z)$ is the quantized enveloping algebra, and $V$ is the quantum analogue of the natural module for the orthogonal group $\rO(V)$, with $N = \dim V$.

More recently, these statements have been incorporated into a more comprehensive approach, where one considers morphisms between \emph{different} powers of the natural module $V$ and rephrases the results in terms of monoidal categories.  More precisely, there is a full and essentially surjective functor
\begin{equation} \label{toomy}
    \Brauer(N) \to \rO(V)\md,
\end{equation}
where $\Brauer(N)$ is the \emph{Brauer category}.  See, for example, \cite[Th.~4.8]{LZ15}.  The category $\Brauer(N)$ is defined for \emph{any} choice of the parameter $N \in \C$.  Its additive Karoubi envelope is Deligne's interpolating category for the orthogonal groups \cite{Del07}.  In the quantum setting, there is a full functor
\begin{equation} \label{Kauffman}
    \cK(N) \to U_q(\fso(N))\md,
\end{equation}
where $\cK(N)$ is the \emph{Kauffman category}.  See \cite[\S7.7]{Tur89} for a definition of the Kauffman category and \cite[Prop.~4.1]{GRS22} for the definition of the above functor.  Fullness can be deduced from the surjectivity of \cref{BMW}.

The functor \cref{Kauffman} is \emph{not} essentially surjective because its image does not contain the quantum analogue of the spin module.  The same issue occurs if we replace the orthogonal group in \cref{toomy} by its double cover, the pin group $\Pin(V)$ or the identity component, the spin group $\Spin(V)$.  The goal of the current paper is to resolve this problem by enlarging the category $\cK(N)$ to take the quantum spin module $S$ into account.  A first step in this direction was taken by Wenzl \cite{Wen12,Wen20}, who described the endomorphism algebra of $S^{\otimes r}$ in terms of coideal subalgebras, also known as iquantum groups; see \cref{iquantum}.  Other partial results were obtained in \cite{OW02,Wen12,Abo22}, and the special case $N=7$ was treated in \cite{Wes08}.

The non-quantum setting was considered in \cite{MS24}, where the Brauer category was enlarged to the \emph{spin Brauer category}.  One then has a full and essentially surjective functor from the spin Brauer category to the category of modules of the spin or pin groups.  The results of the current paper can be viewed as quantum analogues of those results.  We introduce the \emph{quantum spin Brauer category} $\QSB(q,t,\kappa,d_\Sgo)$, where the parameters $q,t,\kappa,d_\Sgo$ are elements of the ground ring.  The definition of this strict monoidal category is diagrammatic, given via a presentation in terms of generators and relations.  While the Kauffman category has one generating object, which should be thought of as a formal version of the quantum analogue of the natural module, the quantum spin Brauer category (which might also be a called the \emph{spin Kauffman category}) has an additional generating object corresponding to the quantum spin module.  The parameter $d_\Sgo$ corresponds to the quantum dimension of this additional object.  In \cref{incarnation}, we define a functor
\begin{equation} \label{functorintro}
    \bF \colon \QSB(N) \to U_q(N)\md,
\end{equation}
where $\QSB(N)$ is the quantum spin Brauer category at certain values of the parameters (see \cref{crabby}) and
\[
    U_q(N)
    =
    \begin{cases}
        U_q(\fso(N)) & \text{if $N$ is odd}, \\
        U_q(\fso(N)) \rtimes (\Z/2\Z) & \text{if $N$ is even}.
    \end{cases}
\]
In the case where $N$ is even (type $D$), the nontrivial element of $\Z/2\Z$ acts on $U_q(\fso(N))$ by the nontrivial Dynkin diagram automorphism.  This construction is a quantum version of the pin group, for which there is only one spin module.  We show that the functor $\bF$ is full (\cref{full}) and essentially surjective after passing to the additive Karoubi envelope (\cref{essential}).  The subcategory of $\QSB$ in which we omit the braiding morphisms for two copies of the spin module is an interpolating category for the categories of $U_q(N)$-modules (\cref{negligible}).

The constructions of the current paper open some natural directions for future research.  Since $\QSB$ is a braided monoidal category, one can apply the general affinization procedure of \cite{MS21} to define a \emph{quantum affine spin Brauer category}.  Roughly speaking, this corresponds to considering quantum spin Brauer string diagrams on the cylinder.  This affine category acts on the category of $U_q(N)$-modules and provides tools for investigating the translation functors given by tensoring with the quantum spin and vector modules.  In the non-quantum setting, the \emph{affine spin Brauer category} was studied in \cite[\S9]{MS24}.

It would also be interesting to explore the precise connection between the quantum spin Brauer category and various web categories in types $B$ and $D$ that have recently appeared in the literature.  The goal of web categories is closely related to the goals of the current paper.  Namely, one aims to give a presentation of a module category by generators and relations.  The main difference is that web categories typically contain more generating objects (e.g., a generating object for each exterior power of the natural module) and generating morphisms, and one is often able to give a complete presentation, including a complete set of relations.  Thus, the categories are more complicated but give a more complete description.  In contrast, the quantum spin Brauer category is quite simple, but one does not have a complete presentation of $U_q(N)\md$ until one explicitly describes the kernel of the functor \cref{functorintro}.  Webs for the quantum analogues of orthogonal groups were described in \cite{BW23}, but the spin module was excluded there (as in \cref{toomy}).  In type $B$, the theory of webs, including the spin module, was partially developed in \cite{BER24}.  It would be interesting to precisely describe the connection between the quantum spin Brauer category and the web categories developed in those papers.  Roughly speaking, this should involve passing to a partial idempotent completion of the quantum spin Brauer category, where one adds in an object corresponding to each of the quantum antisymmetrizer idempotents given in \cref{sec:antisymmetrizer}.

Other work that is somewhat related to ours is \cite{ST19}, which develops diagrammatics for the representation categories of various coideal subalgebras by enlarging the known web categories in type $A$, developed in \cite{CKM14}.  In some sense, \cite{ST19} goes in the opposite direction as the current paper, seeing usual quantized enveloping algebras appear in diagrammatic endomorphism algebras of modules over coideal subalgebras, whereas we see coideal subalgebras appear as diagrammatic endomorphism algebras of modules over usual quantized enveloping algebras (see \cref{iquantum}).

%-----------------------------
\subsection*{Acknowledgements}
%-----------------------------

The research of A.S.\ was supported by Discovery Grant RGPIN-2023-03842 from the Natural Sciences and Engineering Research Council of Canada.  The authors would like to thank Elijah Bodish and Weiqiang Wang for helpful conversations.  Many algebraic computations and verifications were done using SageMath \cite{sagemath}.

%================================================================
\section{The quantum spin Brauer category\label{sec:qspinBrauer}}
%================================================================

In this section, we introduce our main category of interest.   We work over an arbitrary commutative ring $\kk$ and use the usual string diagram calculus for strict monoidal categories.  Throughout this paper, $N$ denotes a natural number and $n = \lfloor \frac{N}{2} \rfloor$, so that $N=2n$ (type $D$) or $N=2n+1$ (type $B$).

\begin{defin} \label{QSBdef}
    Suppose $d_\Sgo \in \kk$ and $q, t, \kappa \in \kk^\times$, such that $q-q^{-1} \in \kk^\times$.  The \emph{quantum spin Brauer category}, or \emph{spin Kauffman category}, $\QSB = \QSB(q,t,\kappa,d_\Sgo)$ is the strict $\kk$-linear monoidal category presented as follows.  The generating objects are $\Sgo$ and $\Vgo$, whose identity morphisms we depict by a thin black strand and a thick blue strand:
    \[
        \idstrand{spin} := 1_\Sgo,\qquad
        \idstrand{vec} := 1_\Vgo.
    \]
    The generating morphisms are
    \begin{gather*}
        \capmor{spin} \colon \Sgo \otimes \Sgo \to \one,\qquad
        \cupmor{spin} \colon \one \to \Sgo \otimes \Sgo,\qquad
        \capmor{vec} \colon \Vgo \otimes \Vgo \to \one,\qquad
        \cupmor{vec} \colon \one \to \Vgo \otimes \Vgo,
        \\
        \poscross{spin}{spin}, \negcross{spin}{spin} \colon \Sgo \otimes \Sgo \to \Sgo \otimes \Sgo,\qquad
        \poscross{vec}{vec}, \negcross{vec}{vec} \colon \Vgo \otimes \Vgo \to \Vgo \otimes \Vgo,
        \\
        \poscross{vec}{spin}, \negcross{vec}{spin} \colon \Vgo \otimes \Sgo \to \Sgo \otimes \Vgo,\qquad
        \poscross{spin}{vec}, \negcross{spin}{vec} \colon \Sgo \otimes \Vgo \to \Vgo \otimes \Sgo,
        \\
        \mergemor{vec}{spin}{spin} \colon \Vgo \otimes \Sgo \to \Sgo.
    \end{gather*}
    To state the defining relations, we will use the convention that a relation involving $r \ge 1$ red strands (as in \cref{braid,yank}) means we impose the $2^r$ relations obtained from replacing each such strand with either a black strand or a blue strand.  The defining relations on morphisms are then as follows:
    \begin{gather} \label{braid}
        \begin{tikzpicture}[centerzero]
            \draw[any] (0.2,-0.4) to[out=135,in=down] (-0.15,0) to[out=up,in=225] (0.2,0.4);
            \draw[wipe] (-0.2,-0.4) to[out=45,in=down] (0.15,0) to[out=up,in=-45] (-0.2,0.4);
            \draw[any] (-0.2,-0.4) to[out=45,in=down] (0.15,0) to[out=up,in=-45] (-0.2,0.4);
        \end{tikzpicture}
        =
        \begin{tikzpicture}[centerzero]
            \draw[any] (-0.2,-0.4) -- (-0.2,0.4);
            \draw[any] (0.2,-0.4) -- (0.2,0.4);
        \end{tikzpicture}
        =
        \begin{tikzpicture}[centerzero]
            \draw[any] (-0.2,-0.4) to[out=45,in=down] (0.15,0) to[out=up,in=-45] (-0.2,0.4);
            \draw[wipe] (0.2,-0.4) to[out=135,in=down] (-0.15,0) to[out=up,in=225] (0.2,0.4);
            \draw[any] (0.2,-0.4) to[out=135,in=down] (-0.15,0) to[out=up,in=225] (0.2,0.4);
        \end{tikzpicture}
        \ ,\quad
        \begin{tikzpicture}[centerzero]
            \draw[any] (0.4,-0.4) -- (-0.4,0.4);
            \draw[wipe] (0,-0.4) to[out=135,in=down] (-0.32,0) to[out=up,in=225] (0,0.4);
            \draw[any] (0,-0.4) to[out=135,in=down] (-0.32,0) to[out=up,in=225] (0,0.4);
            \draw[wipe] (-0.4,-0.4) -- (0.4,0.4);
            \draw[any] (-0.4,-0.4) -- (0.4,0.4);
        \end{tikzpicture}
        =
        \begin{tikzpicture}[centerzero]
            \draw[any] (0.4,-0.4) -- (-0.4,0.4);
            \draw[wipe] (0,-0.4) to[out=45,in=down] (0.32,0) to[out=up,in=-45] (0,0.4);
            \draw[any] (0,-0.4) to[out=45,in=down] (0.32,0) to[out=up,in=-45] (0,0.4);
            \draw[wipe] (-0.4,-0.4) -- (0.4,0.4);
            \draw[any] (-0.4,-0.4) -- (0.4,0.4);
        \end{tikzpicture}
        \ ,\quad
        \begin{tikzpicture}[centerzero]
            \draw[any] (-0.2,-0.3) -- (-0.2,-0.1) arc(180:0:0.2) -- (0.2,-0.3);
            \draw[wipe] (-0.3,0.3) \braiddown (0,-0.3);
            \draw[any] (-0.3,0.3) \braiddown (0,-0.3);
        \end{tikzpicture}
        =
        \begin{tikzpicture}[centerzero]
            \draw[any] (-0.2,-0.3) -- (-0.2,-0.1) arc(180:0:0.2) -- (0.2,-0.3);
            \draw[wipe] (0.3,0.3) \braiddown (0,-0.3);
            \draw[any] (0.3,0.3) \braiddown (0,-0.3);
        \end{tikzpicture}
        \ ,\quad
        \begin{tikzpicture}[centerzero]
            \draw[any] (-0.3,0.3) \braiddown (0,-0.3);
            \draw[wipe] (-0.2,-0.3) -- (-0.2,-0.1) arc(180:0:0.2) -- (0.2,-0.3);
            \draw[any] (-0.2,-0.3) -- (-0.2,-0.1) arc(180:0:0.2) -- (0.2,-0.3);
        \end{tikzpicture}
        =
        \begin{tikzpicture}[centerzero]
            \draw[any] (0.3,0.3) \braiddown (0,-0.3);
            \draw[wipe] (-0.2,-0.3) -- (-0.2,-0.1) arc(180:0:0.2) -- (0.2,-0.3);
            \draw[any] (-0.2,-0.3) -- (-0.2,-0.1) arc(180:0:0.2) -- (0.2,-0.3);
        \end{tikzpicture}
        \ ,
        \\ \label{yank}
        \begin{tikzpicture}[centerzero]
            \draw[any] (-0.3,0.4) -- (-0.3,0) arc(180:360:0.15) arc(180:0:0.15) -- (0.3,-0.4);
        \end{tikzpicture}
        =
        \begin{tikzpicture}[centerzero]
            \draw[any] (0,-0.4) -- (0,0.4);
        \end{tikzpicture}
        = \
        \begin{tikzpicture}[centerzero]
            \draw[any] (-0.3,-0.4) -- (-0.3,0) arc(180:0:0.15) arc(180:360:0.15) -- (0.3,0.4);
        \end{tikzpicture}
        \ ,
        \\ \label{skein}
        \begin{tikzpicture}[centerzero]
            \draw[vec] (-0.2,-0.4) -- (0.2,0.4);
            \draw[wipe] (0.2,-0.4) -- (-0.2,0.4);
            \draw[vec] (0.2,-0.4) -- (-0.2,0.4);
        \end{tikzpicture}
        -
        \begin{tikzpicture}[centerzero]
            \draw[vec] (0.2,-0.4) -- (-0.2,0.4);
            \draw[wipe] (-0.2,-0.4) -- (0.2,0.4);
            \draw[vec] (-0.2,-0.4) -- (0.2,0.4);
        \end{tikzpicture}
        = (q-q^{-1})
        \left(
            \begin{tikzpicture}[centerzero]
                \draw[vec] (-0.2,-0.4) -- (-0.2,0.4);
                \draw[vec] (0.2,-0.4) -- (0.2,0.4);
            \end{tikzpicture}
            -
            \begin{tikzpicture}[centerzero]
                \draw[vec] (-0.2,-0.4) -- (-0.2,-0.3) arc(180:0:0.2) -- (0.2,-0.4);
                \draw[vec] (-0.2,0.4) -- (-0.2,0.3) arc(180:360:0.2) -- (0.2,0.4);
            \end{tikzpicture}
        \right)
        ,
        \\ \label{deloop}
        \begin{tikzpicture}[anchorbase]
            \draw[spin] (0.15,0) arc(0:180:0.15) to[out=-90,in=120] (0.15,-0.4);
            \draw[wipe] (-0.15,-0.4) to[out=60,in=-90] (0.15,0);
            \draw[spin] (-0.15,-0.4) to[out=60,in=-90] (0.15,0);
        \end{tikzpicture}
        = t \
        \capmor{spin}
        \ ,\quad
        \begin{tikzpicture}[anchorbase]
            \draw[vec] (0.15,0) arc(0:180:0.15) to[out=-90,in=120] (0.15,-0.4);
            \draw[wipe] (-0.15,-0.4) to[out=60,in=-90] (0.15,0);
            \draw[vec] (-0.15,-0.4) to[out=60,in=-90] (0.15,0);
        \end{tikzpicture}
        = \kappa^2 \
        \capmor{vec}
        \ ,
        \\ \label{typhoon}
        \begin{tikzpicture}[centerzero]
            \draw[spin] (0.4,-0.4) to[out=110,in=-20] (-0.4,0.4);
            \draw[vec] (-0.4,-0.4) -- (-0.1,-0.1);
            \draw[wipe] (-0.1,-0.1) -- (0.4,0.4);
            \draw[spin] (0,-0.4) -- (-0.1,-0.1) -- (0.4,0.4);
        \end{tikzpicture}
        =
        \begin{tikzpicture}[centerzero]
            \draw[spin] (0.4,-0.4) to[out=150,in=-60] (-0.4,0.4);
            \draw[wipe] (-0.4,-0.4) -- (0.2,0.2);
            \draw[wipe] (0,-0.4) to[out=45,in=-90] (0.2,0.2);
            \draw[vec] (-0.4,-0.4) -- (0.2,0.2);
            \draw[spin] (0,-0.4) to[out=45,in=-90] (0.2,0.2) -- (0.4,0.4);
        \end{tikzpicture}
         \ ,\qquad
        \begin{tikzpicture}[centerzero]
            \draw[vec] (0.4,-0.4) to[out=110,in=-20] (-0.4,0.4);
            \draw[vec] (-0.4,-0.4) -- (-0.1,-0.1);
            \draw[wipe] (-0.1,-0.1) -- (0.4,0.4);
            \draw[spin] (0,-0.4) -- (-0.1,-0.1) -- (0.4,0.4);
        \end{tikzpicture}
        =
        \begin{tikzpicture}[centerzero]
            \draw[vec] (0.4,-0.4) to[out=150,in=-60] (-0.4,0.4);
            \draw[wipe] (-0.4,-0.4) -- (0.2,0.2);
            \draw[wipe] (0,-0.4) to[out=45,in=-90] (0.2,0.2);
            \draw[vec] (-0.4,-0.4) -- (0.2,0.2);
            \draw[spin] (0,-0.4) to[out=45,in=-90] (0.2,0.2) -- (0.4,0.4);
        \end{tikzpicture}
         \ ,\qquad
        \begin{tikzpicture}[centerzero]
            \draw[vec] (-0.4,-0.4) -- (-0.1,-0.1);
            \draw[spin] (0,-0.4) -- (-0.1,-0.1) -- (0.4,0.4);
            \draw[wipe] (0.4,-0.4) to[out=110,in=-20] (-0.4,0.4);
            \draw[spin] (0.4,-0.4) to[out=110,in=-20] (-0.4,0.4);
        \end{tikzpicture}
        =
        \begin{tikzpicture}[centerzero]
            \draw[vec] (-0.4,-0.4) -- (0.2,0.2);
            \draw[spin] (0,-0.4) to[out=45,in=-90] (0.2,0.2) -- (0.4,0.4);
            \draw[wipe] (0.4,-0.4) to[out=150,in=-60] (-0.4,0.4);
            \draw[spin] (0.4,-0.4) to[out=150,in=-60] (-0.4,0.4);
        \end{tikzpicture}
         \ ,\qquad
        \begin{tikzpicture}[centerzero]
            \draw[vec] (-0.4,-0.4) -- (-0.1,-0.1);
            \draw[spin] (0,-0.4) -- (-0.1,-0.1) -- (0.4,0.4);
            \draw[wipe] (0.4,-0.4) to[out=110,in=-20] (-0.4,0.4);
            \draw[vec] (0.4,-0.4) to[out=110,in=-20] (-0.4,0.4);
        \end{tikzpicture}
        =
        \begin{tikzpicture}[centerzero]
            \draw[vec] (-0.4,-0.4) -- (0.2,0.2);
            \draw[spin] (0,-0.4) to[out=45,in=-90] (0.2,0.2) -- (0.4,0.4);
            \draw[wipe] (0.4,-0.4) to[out=150,in=-60] (-0.4,0.4);
            \draw[vec] (0.4,-0.4) to[out=150,in=-60] (-0.4,0.4);
        \end{tikzpicture}
        \ ,
        \\ \label{swishy}
        \begin{tikzpicture}[anchorbase]
            \draw[spin] (0,0) -- (0,0.1) arc (0:180:0.2) -- (-0.4,-0.6);
            \draw[spin] (0,0) to[out=-45, in=left] (0.2,-0.2) arc(-90:0:0.15) -- (0.35,0.4);
            \draw[vec] (0,0) to[out=-135,in=up] (-0.2,-0.3) to[out=down,in=down] (0.6,-0.3) -- (0.6,0.4);
        \end{tikzpicture}
        =
        \begin{tikzpicture}[anchorbase,xscale=-1]
            \draw[spin] (0,0) -- (0,0.1) arc (0:180:0.2) -- (-0.4,-0.6);
            \draw[vec] (0,0) to[out=-45, in=left] (0.2,-0.2) arc(-90:0:0.15) -- (0.35,0.4);
            \draw[spin] (0,0) to[out=-135,in=up] (-0.2,-0.3) to[out=down,in=down] (0.6,-0.3) -- (0.6,0.4);
        \end{tikzpicture}
        \ ,
        \\ \label{fishy}
        \begin{tikzpicture}[anchorbase]
            \draw[vec] (0.2,-0.5) to [out=135,in=down] (-0.15,-0.2) to[out=up,in=-135] (0,0);
            \draw[wipe] (-0.2,-0.5) to[out=45,in=down] (0.15,-0.2);
            \draw[spin] (-0.2,-0.5) to[out=45,in=down] (0.15,-0.2) to[out=up,in=-45] (0,0) -- (0,0.2);
        \end{tikzpicture}
        = \kappa\
        \begin{tikzpicture}[anchorbase]
            \draw[spin] (0,0) -- (0,0.1) arc (0:180:0.2) -- (-0.4,-0.4);
            \draw[spin] (0,0) to[out=-45, in=left] (0.2,-0.2) arc(-90:0:0.15) -- (0.35,0.4);
            \draw[vec] (0,0) to[out=-135,in=up] (-0.2,-0.4);
        \end{tikzpicture}
        \ ,
        \\ \label{oist}
        \begin{tikzpicture}[centerzero]
            \draw[vec] (-0.4,-0.4) -- (0,0.1);
            \draw[vec] (0,-0.4) -- (0.2,-0.15);
            \draw[spin] (0.4,-0.4) --(0,0.1) -- (0,0.4);
        \end{tikzpicture}
        + q
        \begin{tikzpicture}[centerzero]
            \draw[vec] (0,-0.4) to[out=135,in=-135] (0,0.1);
            \draw[wipe] (-0.4,-0.4) -- (0.2,-0.15);
            \draw[vec] (-0.4,-0.4) -- (0.2,-0.15);
            \draw[spin] (0.4,-0.4) -- (0,0.1) -- (0,0.4);
        \end{tikzpicture}
        = \left( q \kappa^2 + 1 \right)\
        \begin{tikzpicture}[centerzero]
            \draw[vec] (-0.4,-0.4) -- (-0.4,-0.3) arc(180:0:0.2) -- (0,-0.4);
            \draw[spin] (0.4,-0.4) \braidup (0,0.4);
        \end{tikzpicture}
        \ ,
        \\ \label{dimrel}
        \bubble{spin} = d_\Sgo 1_\one,\qquad
        \bubble{vec} = d_\Vgo 1_\one,\quad
        \text{where }
        d_\Vgo = \frac{\kappa^{-2}-\kappa^2}{q-q^{-1}} + 1 = \frac{(q\kappa^2+1)(\kappa^{-2}-q^{-1})}{q-q^{-1}}.
    \end{gather}
    This concludes the definition of $\QSB$.
\end{defin}

\begin{rem}
    The assumption that $q-q^{-1}$ be invertible is needed in \cref{dimrel}.  In \cref{sec:incarnation}, when we define an incarnation functor to a category of modules, we will specialize $\kappa$ to be a power of $q$.  Then $d_\Vgo$ is a Laurent polynomial in $q$ and the definition of $\QSB$ makes sense without the assumption that $q-q^{-1}$ is invertible.  In particular, we could then specialize $q=1$.
\end{rem}

\begin{rem}
    Note the asymmetry in \cref{QSBdef} with regards to the roles played by $d_\Sgo$ and $d_\Vgo$.  While $d_\Sgo$ is left as a parameter, $d_\Vgo$ is written in terms of the other parameters in \cref{dimrel}.  This is because adding a cap to the top of the \emph{skein relation} \cref{skein}, then using \cref{deloop} and the second relation in \cref{dimrel} shows that, in order for $\QSB$ to not collapse to the zero category, we need
    \[
        \kappa^2 - \kappa^{-2} = (q-q^{-1})(1 - d_\Vgo).
    \]
    Since we have no skein relation for the object $\Sgo$, we do not get any analogous restriction on $d_\Sgo$.
\end{rem}

\begin{rem}
    One can justify the presence of the factor of $q\kappa^2+1$ in \cref{oist} in two ways.  First note that, using \cref{deloop}, one can rewrite \cref{oist} as
    \[
        \left(
            \begin{tikzpicture}[centerzero]
                \draw[vec] (-0.4,-0.4) -- (0,0.1);
                \draw[vec] (0,-0.4) -- (0.2,-0.15);
                \draw[spin] (0.4,-0.4) --(0,0.1) -- (0,0.4);
            \end{tikzpicture}
            -
            \begin{tikzpicture}[centerzero]
                \draw[vec] (-0.4,-0.4) -- (-0.4,-0.3) arc(180:0:0.2) -- (0,-0.4);
                \draw[spin] (0.4,-0.4) \braidup (0,0.4);
            \end{tikzpicture}
        \right)
        \circ
        \left(
            \jail{vec}{vec} + q \poscross{vec}{vec}
        \right)
        = 0,
    \]
    and
    \(
        \jail{vec}{vec} + q \poscross{vec}{vec}
    \)
    is the quantum symmetrizer (up to a scalar multiple).  The second justification is that the factor of $q\kappa^2+1$ is needed to have a natural bar involution on the quantum spin Brauer category; see \cref{barinv}.
\end{rem}

\begin{rem}
    At least one of the relations in \cref{braid} is redundant.  Using \cref{skein} and the fourth equality in \cref{braid} for blue strands, we have
    \[
        \begin{tikzpicture}[centerzero]
            \draw[vec] (-0.3,0.3) \braiddown (0,-0.3);
            \draw[wipe] (-0.2,-0.3) -- (-0.2,-0.1) arc(180:0:0.2) -- (0.2,-0.3);
            \draw[vec] (-0.2,-0.3) -- (-0.2,-0.1) arc(180:0:0.2) -- (0.2,-0.3);
        \end{tikzpicture}
        \overset{\cref{skein}}{\underset{\cref{yank}}{=}}
        \begin{tikzpicture}[centerzero]
            \draw[vec] (-0.2,-0.3) -- (-0.2,-0.1) arc(180:0:0.2) -- (0.2,-0.3);
            \draw[wipe] (-0.3,0.3) \braiddown (0,-0.3);
            \draw[vec] (-0.3,0.3) \braiddown (0,-0.3);
        \end{tikzpicture}
        + (q-q^{-1})
        \left(\,
            \begin{tikzpicture}[centerzero]
                \draw[vec] (-0.2,-0.3) -- (-0.2,-0.1) arc(180:0:0.2) -- (0.2,-0.3);
                \draw[vec] (-0.4,0.3) -- (-0.4,-0.3);
            \end{tikzpicture}
            -
            \begin{tikzpicture}[centerzero]
                \draw[vec] (-0.2,-0.3) -- (-0.2,-0.1) arc(180:0:0.2) -- (0.2,-0.3);
                \draw[vec] (0.4,0.3) -- (0.4,-0.3);
            \end{tikzpicture}
        \, \right)
        \overset{\cref{braid}}{=}
        \begin{tikzpicture}[centerzero]
            \draw[vec] (-0.2,-0.3) -- (-0.2,-0.1) arc(180:0:0.2) -- (0.2,-0.3);
            \draw[wipe] (0.3,0.3) \braiddown (0,-0.3);
            \draw[vec] (0.3,0.3) \braiddown (0,-0.3);
        \end{tikzpicture}
        + (q-q^{-1})
        \left(\,
            \begin{tikzpicture}[centerzero]
                \draw[vec] (-0.2,-0.3) -- (-0.2,-0.1) arc(180:0:0.2) -- (0.2,-0.3);
                \draw[vec] (-0.4,0.3) -- (-0.4,-0.3);
            \end{tikzpicture}
            -
            \begin{tikzpicture}[centerzero]
                \draw[vec] (-0.2,-0.3) -- (-0.2,-0.1) arc(180:0:0.2) -- (0.2,-0.3);
                \draw[vec] (0.4,0.3) -- (0.4,-0.3);
            \end{tikzpicture}
        \, \right)
        \overset{\cref{skein}}{\underset{\cref{yank}}{=}}
        \begin{tikzpicture}[centerzero]
            \draw[vec] (0.3,0.3) \braiddown (0,-0.3);
            \draw[wipe] (-0.2,-0.3) -- (-0.2,-0.1) arc(180:0:0.2) -- (0.2,-0.3);
            \draw[vec] (-0.2,-0.3) -- (-0.2,-0.1) arc(180:0:0.2) -- (0.2,-0.3);
        \end{tikzpicture}
        \ .\qedhere
    \]
    implying the last relation in \cref{braid} for blue strands.  Note that a similar argument does not work for black strands, since we do not have an analogue of the skein relation \cref{skein} in that case.  In \cref{QSBdef}, we include the fourth and fifth relations in \cref{braid} for all colours of the strands since, in practice, these relations are easy to verify---they follow immediately from the fact that we work in a braided monoidal category.  See the proof of \cref{incarnation}.
\end{rem}

The relations \cref{yank} imply that $\QSB$ is a rigid monoidal category, with the objects $\Sgo$ and $\Vgo$ being self-dual.  The relations \cref{braid,typhoon} imply that $\QSB$ is braided monoidal, with braiding given by the crossings.  Then \cref{swishy} implies that $\QSB$ is strict pivotal, with duality given by rotating diagrams through $180\degree$.  This means that diagrams are isotopy invariant, and so rotated versions of all the defining relations hold.  For example, we have
\[
    % [inline block 0: 45 envs, 20507 chars -> data_tex | \begin{tikzpicture}[centerzero]         \draw[any] (-0.2,0.3) -- (-0.2,0.1) arc(180:360:0.2) -- (0.2,0.3);...]

        \overset{\cref{fishy}}{=} t \kappa^{-1}\
        \mergemor{spin}{spin}{vec}
        \ .
    \]
    The second relation in \cref{lobster2} then follows after composing on the bottom with $\negcross{spin}{spin}$ and using the second relation in \cref{braid}.
\end{proof}

Next we explore some symmetries of the quantum spin Brauer category.

\begin{prop} \label{horizon}
    There is an isomorphism of $\kk$-linear monoidal categories
    \[
        \Omega_{\updownarrow} \colon \QSB(q,t,\kappa,d_\Sgo) \to \QSB(q,t,\kappa,d_\Sgo)^\op
    \]
    given on objects by $\Vgo \mapsto \Vgo$, $\Sgo \mapsto \Sgo$, and on the generating morphisms by
    \begin{gather*}
        \capmor{spin} \mapsto \cupmor{spin}\, ,\quad
        \cupmor{spin} \mapsto \capmor{spin}\, ,\quad
        \capmor{vec} \mapsto \cupmor{vec}\, ,\quad
        \cupmor{vec} \mapsto \capmor{vec}\, ,\quad
        \mergemor{vec}{spin}{spin} \mapsto \splitmor{spin}{vec}{spin},
        \\
        \poscross{spin}{spin} \mapsto \poscross{spin}{spin},\quad
        \negcross{spin}{spin} \mapsto \negcross{spin}{spin},\quad
        \poscross{vec}{vec} \mapsto \poscross{vec}{vec},\quad
        \negcross{vec}{vec} \mapsto \negcross{vec}{vec},\quad
        \\
        \poscross{vec}{spin} \mapsto \poscross{spin}{vec},\quad
        \negcross{vec}{spin} \mapsto \negcross{spin}{vec},\quad
        \poscross{spin}{vec} \mapsto \poscross{vec}{spin},\quad
        \negcross{spin}{vec} \mapsto \negcross{vec}{spin},\quad
    \end{gather*}
\end{prop}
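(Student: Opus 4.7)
The plan is to view $\Omega_{\updownarrow}$ as the geometric operation that rotates each string diagram by $180^\circ$ about a horizontal axis, that is, the map of embedded tangles induced by the 3D reflection $(x,y,z)\mapsto(x,-y,-z)$. Under this operation top and bottom swap (so caps exchange with cups and merges with splits) while each crossing is preserved, because the slope reversal from flipping $y$ cancels the over/under swap from flipping $z$. Inspection confirms this matches the assignment on generators prescribed in the statement. Since $\QSB$ is presented by generators and relations, showing that $\Omega_{\updownarrow}$ extends to a well-defined $\kk$-linear monoidal functor $\QSB\to\QSB^\op$ reduces to verifying that the image of each defining relation holds in $\QSB$.

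The relations \cref{yank}, \cref{braid}, \cref{skein}, and \cref{dimrel} are manifestly symmetric under the flip (after using the pivotal and braided structure established in the paragraphs after \cref{QSBdef}), so their images hold. For \cref{deloop}, the image asserts $\poscross{spin}{spin}\circ\cupmor{spin}=t\cupmor{spin}$ and its blue-strand analogue; these are the $180^\circ$ planar rotations of \cref{deloop}, hence automatic from the strict pivotal structure, and are equivalent to \cref{reloop}. The relations \cref{typhoon} are ``crossing-slides-past-vertex'' relations that are topologically symmetric, so their images---asserting the analogous slide past the split trivalent morphisms---follow from the pivotal rotations of \cref{typhoon} together with the definitions \cref{vortex} and \cref{vortex2}.

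The core verifications lie in \cref{swishy}, \cref{fishy}, and especially \cref{oist}, which couple the merge with crossings and nontrivial scalars. For these, the strategy is to apply $\Omega_{\updownarrow}$ to both sides and then use \cref{lobster1} and \cref{lobster2} to rewrite the resulting diagrams (in which merges have become splits with crossings attached) as scalar multiples of standard trivalent morphisms, then compare against the flipped relation. The main obstacle is \cref{oist}, whose image involves splits and the factor $(q\kappa^2+1)$; one must carefully track how the scalars $\kappa,\kappa^{-1}$ produced by \cref{lobster1} and \cref{lobster2} combine with the $q$-coefficient to preserve this factor. Once all relations are verified, $\Omega_{\updownarrow}$ is a well-defined functor; since the geometric flip is an involution, $\Omega_{\updownarrow}\circ\Omega_{\updownarrow}$ acts as the identity on every generator and hence equals $\id_{\QSB}$, so $\Omega_{\updownarrow}$ is its own inverse and therefore an isomorphism.
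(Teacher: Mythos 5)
Your proof is correct and takes essentially the same approach as the paper: reduce to verifying that the image of each defining relation of $\QSB$ holds, with the relations coupling the merge to crossings (\cref{swishy}, \cref{fishy}, \cref{oist}) requiring the rewriting lemmas \cref{lobster1,lobster2} (the paper works out \cref{fishy} as the representative example and declares the rest analogous). The geometric interpretation of $\Omega_{\updownarrow}$ as the rotation $(x,y,z)\mapsto(x,-y,-z)$, which explains both why positive crossings map to positive crossings and why $\Omega_{\updownarrow}$ is an involution, is a nice intuition not made explicit in the paper's proof.
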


\begin{proof}
    It is a straightforward computation to show that $\Omega_\updownarrow$ preserves the defining relations of $\QSB(q,t,\kappa,d_\Sgo)$.  For example,
    \[
        \Omega_\updownarrow
        \left(
            \begin{tikzpicture}[anchorbase]
                \draw[vec] (0.2,-0.5) to [out=135,in=down] (-0.15,-0.2) to[out=up,in=-135] (0,0);
                \draw[wipe] (-0.2,-0.5) to[out=45,in=down] (0.15,-0.2);
                \draw[spin] (-0.2,-0.5) to[out=45,in=down] (0.15,-0.2) to[out=up,in=-45] (0,0) -- (0,0.2);
            \end{tikzpicture}
        \right)
        =
        \begin{tikzpicture}[anchorbase,yscale=-1]
            \draw[spin] (-0.2,-0.5) to[out=45,in=down] (0.15,-0.2) to[out=up,in=-45] (0,0);
            \draw[wipe] (0.2,-0.5) to [out=135,in=down] (-0.15,-0.2) to[out=up,in=-135] (0,0);
            \draw[vec] (0.2,-0.5) to [out=135,in=down] (-0.15,-0.2) to[out=up,in=-135] (0,0);
            \draw[spin] (0,0) -- (0,0.2);
        \end{tikzpicture}
        \overset{\cref{lobster1}}{=}
        \kappa \splitmor{spin}{spin}{vec}
        = \Omega_\updownarrow \left( \mergemor{spin}{vec}{spin} \right),
    \]
    showing that $\Omega_\updownarrow$ respects \cref{fishy}.  The verifications of the remaining relations in \cref{QSBdef} are analogous.
\end{proof}

\begin{prop} \label{barinv}
    There is an isomorphism of $\kk$-linear monoidal categories
    \[
        \QSB(q,t,\kappa,d_\Sgo) \xrightarrow{\cong} \QSB(q^{-1},t^{-1},\kappa^{-1},d_\Sgo),
    \]
     which we call the \emph{bar involution}, given by flipping all crossings in a diagram.  More precisely, the bar involution is given on objects by $\Sgo \mapsto \Sgo$, $\Vgo \mapsto \Vgo$, and on morphisms by
    \begin{gather*}
        \capmor{spin} \mapsto \capmor{spin}\, ,\quad
        \cupmor{spin} \mapsto \cupmor{spin}\, ,\quad
        \capmor{vec} \mapsto \capmor{vec}\, ,\quad
        \capmor{vec} \mapsto \capmor{vec}\, ,\quad
        \mergemor{vec}{spin}{spin} \mapsto \mergemor{vec}{spin}{spin}\, ,
        \\
        \poscross{spin}{spin} \mapsto \negcross{spin}{spin},\quad
        \negcross{spin}{spin} \mapsto \poscross{spin}{spin},\quad
        \poscross{vec}{vec} \mapsto \negcross{vec}{vec},\quad
        \poscross{vec}{vec} \mapsto \negcross{vec}{vec},
        \\
        \poscross{spin}{vec} \mapsto \negcross{spin}{vec},\quad
        \negcross{spin}{vec} \mapsto \poscross{spin}{vec},\quad
        \poscross{vec}{spin} \mapsto \negcross{vec}{spin},\quad
        \poscross{vec}{spin} \mapsto \negcross{vec}{spin}.
    \end{gather*}
\end{prop}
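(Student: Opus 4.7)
The plan is to follow the template of the proof of \cref{horizon}: we verify that the proposed assignment respects each defining relation of $\QSB(q,t,\kappa,d_\Sgo)$, interpreting the image of each relation as an equation in $\QSB(q^{-1},t^{-1},\kappa^{-1},d_\Sgo)$. Since the assignment is manifestly self-inverse---applying it twice flips each crossing back and inverts each parameter back to its original value---once well-definedness is established, the map is automatically an isomorphism of $\kk$-linear monoidal categories.

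The majority of relations admit routine checks. The relations \cref{yank} and \cref{swishy} and the $d_\Sgo$ component of \cref{dimrel} involve neither crossings nor the parameters $q, t, \kappa$, so they are preserved trivially. In \cref{braid}, crossing-flipping exchanges equivalent forms of each relation, and the four relations of \cref{typhoon} form a set closed under crossing-flipping. The image of \cref{deloop} is \cref{reloop}, which in the target (with parameters $t^{-1}$ and $\kappa^{-1}$) reads with scalars $t$ and $\kappa^2$ on the right, matching the image. The formula $d_\Vgo = (\kappa^{-2}-\kappa^2)/(q-q^{-1}) + 1$ is manifestly invariant under $(q,\kappa) \mapsto (q^{-1}, \kappa^{-1})$. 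For \cref{skein}, crossing-flipping negates the left-hand side while the coefficient $q-q^{-1}$ becomes $q^{-1}-q = -(q-q^{-1})$ in the target, so the equation is preserved. For \cref{fishy}, the image (after using \cref{vortex2}) corresponds to one of the relations derived in \cref{lobster1} applied in the target, with the appropriate inverse of $\kappa$ on the right.

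The decisive consistency check is \cref{oist}, and this is the precise reason the coefficient $q\kappa^2+1$ appears there (cf.\ the remark following \cref{QSBdef}). Let $\alpha, \beta^+, \gamma$ denote the three diagrams appearing in \cref{oist}, with $\beta^+$ containing the positive $\Vgo$--$\Vgo$ crossing, and let $\delta$ denote the cap--cup replacement of this crossing. The image of source \cref{oist} under the bar involution is $\alpha + q\beta^- = (q\kappa^2+1)\gamma$ in the target. Using the target's \cref{skein} to write $\beta^- = \beta^+ + (q-q^{-1})(\alpha - \delta)$ and substituting via the target's \cref{oist}, one computes that $\alpha + q\beta^- = (q\kappa^{-2}+q^2)\gamma - (q^2-1)\delta$ in the target. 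Comparison with the desired right-hand side, together with the identity $(q^2-1)d_\Vgo = q(\kappa^{-2}-\kappa^2)+(q^2-1)$, reduces the image equation to $(q^2-1)(\delta - d_\Vgo\gamma) = 0$; since $q^2 - 1 = q(q-q^{-1})$ is a unit of $\kk$, this further reduces to the single identity $\delta = d_\Vgo\gamma$.

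This final identity, equivalently $\phi = d_\Vgo \cdot 1_\Sgo$ where $\phi \in \End_{\QSB}(\Sgo)$ is the ``$\Vgo$-bubble'' endomorphism obtained by composing $\cup_{\Vgo\Vgo} \otimes 1_\Sgo$ with two instances of $\mergemor{vec}{spin}{spin}$, is the main technical content of the proof. Its verification proceeds by pre-composing source \cref{oist} with $\cup_{\Vgo\Vgo} \otimes 1_\Sgo$ and simplifying using the rotated form $\poscross{vec}{vec} \circ \cup_{\Vgo\Vgo} = \kappa^2 \cup_{\Vgo\Vgo}$ of \cref{deloop} together with the $d_\Vgo$ component of \cref{dimrel}, yielding the scaled identity $(1+q\kappa^2)(\phi - d_\Vgo \cdot 1_\Sgo) = 0$. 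Promoting this to the unscaled identity $\phi = d_\Vgo \cdot 1_\Sgo$ is the delicate step: it uses the braiding naturality encoded in \cref{typhoon} together with the pivotal structure of $\QSB$ to topologically detach the $\Vgo$-bubble from the $\Sgo$-strand, reducing it to a closed $\Vgo$-loop that evaluates to $d_\Vgo \cdot 1_\one$ by \cref{dimrel}. The coefficient $q\kappa^2+1$ in \cref{oist} is precisely what makes this final reduction work, confirming that it is indeed the correct choice for the bar involution to exist.
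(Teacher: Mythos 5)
Your overall strategy — checking each defining relation, noting the assignment is self-inverse, and focusing on \cref{oist} as the delicate check — matches the paper, and your algebraic reduction of the image of \cref{oist} via the target's skein relation to the single identity $\delta = d_\Vgo\gamma$ (i.e.\ essentially \cref{bump}) is correct. But the last step is a genuine gap. You claim the identity $\phi = d_\Vgo \cdot 1_\Sgo$ can be obtained by using \cref{typhoon} and the pivotal structure to ``topologically detach'' the $\Vgo$-bubble from the $\Sgo$-strand. This is not possible: the bubble is attached to the $\Sgo$-strand by two instances of the \emph{generating} merge morphism $\mergemor{vec}{spin}{spin}$, and no topological move in a braided pivotal category removes a generating trivalent vertex. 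What your manipulation of \cref{deloop}, \cref{oist}, and \cref{dimrel} actually yields is $(1+q\kappa^2)(\phi - d_\Vgo 1_\Sgo) = 0$; dropping the scalar factor is exactly the content of \cref{bump}, and the lemma stating \cref{bump} explicitly imposes the hypothesis that $q\kappa^2+1$ be invertible. That hypothesis is \emph{not} part of Proposition \cref{barinv}, so as written your argument only establishes the proposition under an extra assumption.

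The paper's proof avoids \cref{bump} altogether. Compose \cref{oist} on the bottom with $\negcross{vec}{vec}\ \idstrand{spin}$: by \cref{braid} the positive crossing in the middle term cancels the new negative crossing, turning $\beta^+$ into $\alpha$, while $\alpha$ becomes $\beta^-$; and by \cref{reloop} the cap in $\gamma$ absorbs the negative crossing with a factor of $\kappa^{-2}$. Multiplying the resulting equation by $q^{-1}$ gives $\alpha + q^{-1}\beta^- = (q^{-1}\kappa^{-2}+1)\gamma$, which is \cref{oister} — precisely what is needed — with no invertibility assumption beyond those already in \cref{QSBdef}. You should adopt this direct route rather than detouring through \cref{bump}.
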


\begin{proof}
    We need to verify that the defining relations of \cref{QSBdef} hold with all crossings flipped and $q$, $t$, and $\kappa$ replaced by $q^{-1}$, $t^{-1}$, and $\kappa^{-1}$, respectively.  For most of the relations, these follow immediately from what we have already seen.  For example, for \cref{deloop} and \cref{fishy}, it follows from \cref{reloop} and \cref{lobster1}, respectively.  The most involved relation to check is \cref{oist}, where we need to verify that
    \begin{equation} \label{oister}
        \begin{tikzpicture}[centerzero]
            \draw[vec] (-0.4,-0.4) -- (0,0.1);
            \draw[vec] (0,-0.4) -- (0.2,-0.15);
            \draw[spin] (0.4,-0.4) --(0,0.1) -- (0,0.4);
        \end{tikzpicture}
        + q^{-1}
        \begin{tikzpicture}[centerzero]
            \draw[vec] (-0.4,-0.4) -- (0.2,-0.15);
            \draw[wipe] (0,-0.4) to[out=135,in=-135] (0,0.1);
            \draw[vec] (0,-0.4) to[out=135,in=-135] (0,0.1);
            \draw[spin] (0.4,-0.4) -- (0,0.1) -- (0,0.4);
        \end{tikzpicture}
        = \left( q^{-1} \kappa^{-2} + 1 \right)\
        \begin{tikzpicture}[centerzero]
            \draw[vec] (-0.4,-0.4) -- (-0.4,-0.3) arc(180:0:0.2) -- (0,-0.4);
            \draw[spin] (0.4,-0.4) \braidup (0,0.4);
        \end{tikzpicture}
        \ .
    \end{equation}
    To see this, we compose \cref{oist} on the bottom with $\negcross{vec}{vec}\ \idstrand{spin}\,$, multiply both sides by $q^{-1}$, and then use \cref{braid,reloop}.  We leave the verification of the remaining relations to the reader.
\end{proof}

It will sometimes be convenient to draw horizontal strands.  Since $\QSB$ is strict pivotal, the meaning of diagrams containing such strands in unambiguous.  For example,
\begin{equation} \label{barbell}
    \begin{tikzpicture}[centerzero]
        \draw[spin] (-0.2,-0.3) -- (-0.2,0.3);
        \draw[spin] (0.2,-0.3) -- (0.2,0.3);
        \draw[vec] (-0.2,0) -- (0.2,0);
    \end{tikzpicture}
    \ =\
    \begin{tikzpicture}[centerzero]
        \draw[spin] (-0.2,-0.3) -- (-0.2,0.3);
        \draw[spin] (0.2,-0.3) -- (0.2,0.3);
        \draw[vec] (-0.2,0) to[out=-45,in=-135,looseness=1.8] (0.2,0);
    \end{tikzpicture}
    \ =\
    \begin{tikzpicture}[centerzero]
        \draw[spin] (-0.2,-0.3) -- (-0.2,0.3);
        \draw[spin] (0.2,-0.3) -- (0.2,0.3);
        \draw[vec] (-0.2,0) to[out=45,in=135,looseness=1.8] (0.2,0);
    \end{tikzpicture}
    \ =\
    \begin{tikzpicture}[centerzero]
        \draw[spin] (-0.2,-0.3) -- (-0.2,0.3);
        \draw[spin] (0.2,-0.3) -- (0.2,0.3);
        \draw[vec] (-0.2,0.15) -- (0.2,-0.15);
    \end{tikzpicture}
    \ =\
    \begin{tikzpicture}[centerzero]
        \draw[spin] (-0.2,-0.3) -- (-0.2,0.3);
        \draw[spin] (0.2,-0.3) -- (0.2,0.3);
        \draw[vec] (-0.2,-0.15) -- (0.2,0.15);
    \end{tikzpicture}
    \ .
\end{equation}

\begin{lem}
    If $q \kappa^2 + 1$ is invertible, then
    \begin{equation} \label{bump}
        \begin{tikzpicture}[centerzero]
            \draw[spin] (0,-0.4) -- (0,0.4);
            \draw[vec] (0,-0.2) -- (-0.1,-0.2) arc(270:90:0.2) -- (0,0.2);
        \end{tikzpicture}
        = d_\Vgo \
        \begin{tikzpicture}[centerzero]
            \draw[spin] (0,-0.4) -- (0,0.4);
        \end{tikzpicture}
        \ .
    \end{equation}
\end{lem}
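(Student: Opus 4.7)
The plan is to derive \cref{bump} directly from the relation \cref{oist}, pre-composed on the bottom with $\cupmor{vec} \otimes 1_\Sgo \colon \Sgo \to \Vgo \otimes \Vgo \otimes \Sgo$. The key observation is that the LHS of \cref{bump} factors as $\mergemor{vec}{spin}{spin} \circ \splitmor{spin}{vec}{spin}$, and by the definition of $\splitmor{spin}{vec}{spin}$ given in \cref{vortex} this equals
\[
    B := \mergemor{vec}{spin}{spin} \circ (1_\Vgo \otimes \mergemor{vec}{spin}{spin}) \circ (\cupmor{vec} \otimes 1_\Sgo),
\]
which is exactly the first term on the LHS of \cref{oist}, pre-composed with $\cupmor{vec} \otimes 1_\Sgo$.

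Now pre-compose the full relation \cref{oist} with $\cupmor{vec} \otimes 1_\Sgo$. The first LHS term yields $B$ by the identification above. The second LHS term picks up the factor $(\poscross{vec}{vec} \otimes 1_\Sgo) \circ (\cupmor{vec} \otimes 1_\Sgo) = (\poscross{vec}{vec} \circ \cupmor{vec}) \otimes 1_\Sgo$, and by the rotated form of \cref{deloop} listed explicitly in the display after \cref{vortex2}, we have $\poscross{vec}{vec} \circ \cupmor{vec} = \kappa^2\, \cupmor{vec}$. Hence the second LHS term collapses to $\kappa^2 B$. The RHS simplifies via $\capmor{vec} \circ \cupmor{vec} = d_\Vgo \cdot 1_\one$ from \cref{dimrel} to $(q\kappa^2 + 1)\, d_\Vgo \cdot 1_\Sgo$. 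Assembling:
\[
    (1 + q\kappa^2)\, B = (1 + q\kappa^2)\, d_\Vgo \cdot 1_\Sgo,
\]
and dividing by the invertible scalar $1 + q\kappa^2$ (the standing hypothesis) gives $B = d_\Vgo \cdot 1_\Sgo$, which is \cref{bump}.

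The only real subtlety is fixing the correct twist scalar $\kappa^2$ (versus $\kappa^{-2}$) in $\poscross{vec}{vec} \circ \cupmor{vec} = \kappa^2\, \cupmor{vec}$; that sign convention has already been settled in the paper via the rotated version of \cref{deloop} recorded immediately after \cref{vortex2}, so I would simply cite it rather than re-derive it. Everything else is a one-line identification of the bump with the first summand of \cref{oist} under the cup pre-composition, followed by the arithmetic above.
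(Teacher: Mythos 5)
Your proof is correct and takes essentially the same approach as the paper's: pre-compose \cref{oist} with $\cupmor{vec} \otimes 1_\Sgo$, use the rotated form of \cref{deloop} to pull out the $\kappa^2$ from the crossing-over-cup, and use \cref{dimrel} to evaluate the resulting bubble. (The paper runs the manipulation in the opposite direction, starting from $(q\kappa^2+1)$ times the bump and applying \cref{deloop} and \cref{oist}, but this is the same computation; also note the rotated $\poscross{vec}{vec}\circ\cupmor{vec}=\kappa^2\cupmor{vec}$ identity appears in the display just before \cref{vortex}, not after \cref{vortex2}.)
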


\begin{proof}
    We compute
    \[
        (q \kappa^2 +1)\
        \begin{tikzpicture}[centerzero]
            \draw[spin] (0,-0.4) -- (0,0.4);
            \draw[vec] (0,-0.2) -- (-0.1,-0.2) arc(270:90:0.2) -- (0,0.2);
        \end{tikzpicture}
        \overset{\cref{deloop}}{=}
        \begin{tikzpicture}[centerzero]
            \draw[spin] (0,-0.4) -- (0,0.4);
            \draw[vec] (0,-0.2) -- (-0.1,-0.2) arc(270:90:0.2) -- (0,0.2);
        \end{tikzpicture}
        + q\
        \begin{tikzpicture}[centerzero]
            \draw[vec] (-0.4,-0.2) to[out=right,in=left] (0,0.2);
            \draw[wipe] (0,-0.2) to[out=left,in=right] (-0.4,0.2);
            \draw[vec] (0,-0.2) to[out=left,in=right] (-0.4,0.2) arc(90:270:0.2);
            \draw[spin] (0,-0.4) -- (0,0.4);
        \end{tikzpicture}
        \overset{\cref{oist}}{=} \left( q \kappa^2 + 1 \right)\
        \begin{tikzpicture}[centerzero]
            \bub{vec}{-0.4,0};
            \draw[spin] (0,-0.4) -- (0,0.4);
        \end{tikzpicture}
        \overset{\cref{dimrel}}{=}
        \left( q \kappa^2 + 1 \right) d_\Vgo \
        \begin{tikzpicture}[centerzero]
            \draw[spin] (0,-0.4) -- (0,0.4);
        \end{tikzpicture}
        \ . \qedhere
    \]
\end{proof}

\begin{lem}
    If $q \kappa^2 + 1$ is invertible, then
    \begin{equation} \label{zombie}
        \begin{tikzpicture}[centerzero]
            \draw[vec] (-0.35,0.35) -- (0.35,-0.35);
            \draw[wipe] (-0.35,-0.35) -- (0.35,0.35);
            \draw[spin] (-0.35,-0.35) -- (0.35,0.35);
        \end{tikzpicture}
        = \frac{\kappa}{q \kappa^2 + 1}
        \left(\,
            \begin{tikzpicture}[centerzero]
                \draw[spin] (0.35,0.35) -- (0,0.15) -- (0,-0.15) -- (-0.35,-0.35);
                \draw[vec] (0,0.15) -- (-0.35,0.35);
                \draw[vec] (0,-0.15) -- (0.35,-0.35);
            \end{tikzpicture}
            + q\,
            \begin{tikzpicture}[centerzero]
                \draw[spin] (0.35,0.35) -- (0.15,0) -- (-0.15,0) -- (-0.35,-0.35);
                \draw[vec] (0.35,-0.35) -- (0.15,0);
                \draw[vec] (-0.35,0.35) -- (-0.15,0);
            \end{tikzpicture}
        \, \right)
        ,\qquad
        \begin{tikzpicture}[centerzero]
            \draw[spin] (-0.35,0.35) -- (0.35,-0.35);
            \draw[wipe] (-0.35,-0.35) -- (0.35,0.35);
            \draw[vec] (-0.35,-0.35) -- (0.35,0.35);
        \end{tikzpicture}
        = \frac{\kappa}{q \kappa^2 + 1}
        \left(\,
            \begin{tikzpicture}[centerzero]
                \draw[spin] (-0.35,0.35) -- (0,0.15) -- (0,-0.15) -- (0.35,-0.35);
                \draw[vec] (0,0.15) -- (0.35,0.35);
                \draw[vec] (0,-0.15) -- (-0.35,-0.35);
            \end{tikzpicture}
            + q\,
            \begin{tikzpicture}[centerzero]
                \draw[spin] (-0.35,0.35) -- (-0.15,0) -- (0.15,0) -- (0.35,-0.35);
                \draw[vec] (-0.35,-0.35) -- (-0.15,0);
                \draw[vec] (0.35,0.35) -- (0.15,0);
            \end{tikzpicture}
        \, \right)
        .
    \end{equation}
\end{lem}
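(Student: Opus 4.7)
The plan is to deduce the first identity of \cref{zombie} from the quadratic relation \cref{oist} by pivotal bending followed by precomposition with $\poscross{spin}{vec}$, and to obtain the second identity by an analogous argument.

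Recall from the remark following \cref{oist} that that relation may be rewritten as
\[
    X_1 \circ \bigl(\id + q \cdot \poscross{vec}{vec} \otimes \id_\Sgo\bigr) = (q\kappa^2 + 1)\bigl(\capmor{vec} \otimes \id_\Sgo\bigr),
\]
where $X_1 := \mergemor{vec}{spin}{spin} \circ (\id_\Vgo \otimes \mergemor{vec}{spin}{spin}) \colon \Vgo \otimes \Vgo \otimes \Sgo \to \Sgo$. I would first bend the leftmost $\Vgo$ input of both sides to become an output on the top-left via the pivotal structure. By the snake relation \cref{yank}, the bent right-hand side collapses to $(q\kappa^2+1) \cdot \id_{\Vgo \otimes \Sgo}$; by the definition of $\splitmor{spin}{vec}{spin}$ in \cref{vortex} as the pivotal rotation of $\mergemor{vec}{spin}{spin}$, the bent first term $X_1$ becomes $\tilde X_1 := \splitmor{spin}{vec}{spin} \circ \mergemor{vec}{spin}{spin}$. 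Writing $\tilde X_2$ for the bent version of $X_1 \circ (\poscross{vec}{vec} \otimes \id_\Sgo)$, we obtain an equation of morphisms $\Vgo \otimes \Sgo \to \Vgo \otimes \Sgo$:
\[
    \tilde X_1 + q \tilde X_2 = (q\kappa^2+1) \cdot \id_{\Vgo \otimes \Sgo}. \qquad (\star)
\]

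Next, precompose $(\star)$ on the right with $\poscross{spin}{vec} \colon \Sgo \otimes \Vgo \to \Vgo \otimes \Sgo$. For the first term, the first relation of \cref{lobster1} gives directly
\[
    \tilde X_1 \circ \poscross{spin}{vec}
    = \splitmor{spin}{vec}{spin} \circ \bigl(\mergemor{vec}{spin}{spin} \circ \poscross{spin}{vec}\bigr)
    = \kappa \cdot \splitmor{spin}{vec}{spin} \circ \mergemor{spin}{vec}{spin}
    = \kappa A,
\]
where $A$ denotes the first diagram on the right-hand side of \cref{zombie}. The analogous identification for the second term, namely
\[
    \tilde X_2 \circ \poscross{spin}{vec} = \kappa B,
\]
is the main step: the bent morphism $\tilde X_2$ is the composition of a cup at the top-left, a crossing of the cup-right strand over the remaining $\Vgo$ input, and the two trivalent vertices of $X_1$. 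The canal identity \cref{canal} of \cref{shoes} would be invoked to isotope the cup arc past the residual $\poscross{vec}{vec}$-crossing and across the trivalent vertices; after this isotopy, the resulting diagram is the horizontal-spin configuration of $B$ with a leftover spin-vec crossing, which upon post-composition with $\poscross{spin}{vec}$ is cancelled by the braid relation \cref{braid}, with the $\kappa$ factor reappearing from a second application of \cref{lobster1}. Substituting back into the precomposed $(\star)$ yields
\[
    \kappa A + q\kappa B = (q\kappa^2+1) \cdot \poscross{spin}{vec},
\]
and dividing by the invertible element $q\kappa^2+1$ gives the first identity of \cref{zombie}. The second identity is obtained by the same argument starting from the bar-involution image of \cref{oist} via \cref{barinv} (exchanging $\poscross$ with $\negcross$ and inverting $q, \kappa$) together with \cref{horizon}, or equivalently by repeating the bending-and-precomposing argument with over- and under-strands reversed and using the fourth relation of \cref{lobster1} in place of the first.

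The main obstacle is the second identification $\tilde X_2 \circ \poscross{spin}{vec} = \kappa B$: tracking the cup arc introduced by the bending as it interacts with the $\poscross{vec}{vec}$-crossing and the two trivalent vertices, and showing via \cref{canal} that the resulting diagram is exactly $\kappa B$ composed with $\poscross{spin}{vec}^{-1}$, so that post-composing with $\poscross{spin}{vec}$ yields $\kappa B$ on the nose rather than a variant.
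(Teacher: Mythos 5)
Your overall plan matches the paper's approach: rotate \cref{oist} to obtain a morphism identity $\Vgo \otimes \Sgo \to \Vgo \otimes \Sgo$ (what the paper calls \cref{ledge} and you call $(\star)$), then precompose with $\poscross{spin}{vec}$, and use \cref{lobster1} to absorb the crossing into the bottom trivalent vertex. Your treatment of the first term, $\tilde X_1 \circ \poscross{spin}{vec} = \kappa \cdot \splitmor{spin}{vec}{spin} \circ \mergemor{spin}{vec}{spin} = \kappa A$, is correct and requires only the first relation of \cref{lobster1}.

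However, there is a genuine gap in the second term, which you yourself flag as "the main obstacle." You propose using \cref{canal} to "isotope the cup arc past the residual $\poscross{vec}{vec}$-crossing and across the trivalent vertices," but this misreads \cref{canal}: that lemma is the ribbon-category identity converting $h$ precomposed with a double braiding $\sigma_{Y,X}\sigma_{X,Y}$ into $h$ conjugated by ribbon twists — it is not an isotopy device for moving cup arcs, and the diagram $\tilde X_2 \circ \poscross{spin}{vec}$ does not contain a double braiding of a single pair of strands. (It has one spin-vec crossing from $\poscross{spin}{vec}$ and one vec-vec crossing from $\tilde X_2$, involving three distinct strands.) The tools the paper uses here are \cref{typhoon} (naturality of the braiding through trivalent vertices, to slide the $\poscross{spin}{vec}$-crossing up past the lower vertex of $\tilde X_2$), the Reidemeister moves of \cref{braid} (to cancel the crossings so produced against the vec-vec crossing of $\tilde X_2$), and then \cref{lobster1} once more to absorb the remaining spin-vec crossing into the merge and produce the factor $\kappa$. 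Since you do not carry out the computation and the lemma you invoke would not accomplish the move you describe, this step remains unjustified; you should replace \cref{canal} by \cref{typhoon} and supply the diagrammatic details. Finally, for the second identity of \cref{zombie} the paper simply composes \cref{ledge} on the \emph{top} with $\poscross{vec}{spin}$, which is more direct than the route you sketch through the bar involution of \cref{barinv} and $\Omega_\updownarrow$.
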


\begin{proof}
    By \cref{oist}, we have
    \begin{equation} \label{ledge}
        (q \kappa^2 + 1)\
        \begin{tikzpicture}[centerzero]
            \draw[spin] (0,-0.4) -- (0,0.4);
            \draw[vec] (-0.3,-0.4) -- (-0.3,0.4);
        \end{tikzpicture}
        =
        \begin{tikzpicture}[centerzero]
            \draw[spin] (0.35,0.35) -- (0,0.15) -- (0,-0.15) -- (0.35,-0.35);
            \draw[vec] (0,0.15) -- (-0.35,0.35);
            \draw[vec] (0,-0.15) -- (-0.35,-0.35);
        \end{tikzpicture}
        + q\,
        \begin{tikzpicture}[centerzero]
            \draw[vec] (-0.3,-0.4) -- (0,0.25);
            \draw[wipe] (-0.3,0.4) -- (0,-0.25);
            \draw[vec] (-0.3,0.4) -- (0,-0.25);
            \draw[spin] (0,-0.4) -- (0,0.4);
        \end{tikzpicture}
        \ .
    \end{equation}
    Then the first equation in \cref{zombie} follows after composing on the bottom with $\poscross{spin}{vec}$ and using \cref{typhoon,braid,lobster1}.  Similarly, the second equation in \cref{zombie} follows after composing \cref{ledge} on the top with $\poscross{vec}{spin}$.
\end{proof}

As a corollary of \cref{zombie}, we have the skein relation
\begin{equation} \label{ash}
    \begin{tikzpicture}[centerzero]
        \draw[vec] (-0.35,0.35) -- (0.35,-0.35);
        \draw[wipe] (-0.35,-0.35) -- (0.35,0.35);
        \draw[spin] (-0.35,-0.35) -- (0.35,0.35);
    \end{tikzpicture}
    -
    \begin{tikzpicture}[centerzero]
        \draw[spin] (-0.35,-0.35) -- (0.35,0.35);
        \draw[wipe] (-0.35,0.35) -- (0.35,-0.35);
        \draw[vec] (-0.35,0.35) -- (0.35,-0.35);
    \end{tikzpicture}
    = \frac{\kappa(q-1)}{q \kappa^2 + 1}
    \left(\,
        \begin{tikzpicture}[centerzero]
            \draw[spin] (0.35,0.35) -- (0.15,0) -- (-0.15,0) -- (-0.35,-0.35);
            \draw[vec] (0.35,-0.35) -- (0.15,0);
            \draw[vec] (-0.35,0.35) -- (-0.15,0);
        \end{tikzpicture}
        -
        \begin{tikzpicture}[centerzero]
            \draw[spin] (0.35,0.35) -- (0,0.15) -- (0,-0.15) -- (-0.35,-0.35);
            \draw[vec] (0,0.15) -- (-0.35,0.35);
            \draw[vec] (0,-0.15) -- (0.35,-0.35);
        \end{tikzpicture}
    \, \right).
\end{equation}

\begin{lem}
    We have
    \begin{equation} \label{grapes}
        \begin{tikzpicture}[centerzero]
            \draw[spin] (-0.4,-0.4) -- (-0.4,0.4);
            \draw[spin] (0,-0.4) -- (0,0.4);
            \draw[spin] (0.4,-0.4) -- (0.4,0.4);
            \draw[vec] (-0.4,0.2) -- (0,0.2);
            \draw[vec] (-0.4,0) -- (0,0);
            \draw[vec] (0,-0.2) -- (0.4,-0.2);
        \end{tikzpicture}
        \ + (q+q\inv)\
        \begin{tikzpicture}[centerzero]
            \draw[spin] (-0.4,-0.4) -- (-0.4,0.4);
            \draw[spin] (0,-0.4) -- (0,0.4);
            \draw[spin] (0.4,-0.4) -- (0.4,0.4);
            \draw[vec] (-0.4,0.2) -- (0,0.2);
            \draw[vec] (-0.4,-0.2) -- (0,-0.2);
            \draw[vec] (0,0) -- (0.4,0);
        \end{tikzpicture}
        \ +\
        \begin{tikzpicture}[centerzero]
            \draw[spin] (-0.4,-0.4) -- (-0.4,0.4);
            \draw[spin] (0,-0.4) -- (0,0.4);
            \draw[spin] (0.4,-0.4) -- (0.4,0.4);
            \draw[vec] (-0.4,-0.2) -- (0,-0.2);
            \draw[vec] (-0.4,0) -- (0,0);
            \draw[vec] (0,0.2) -- (0.4,0.2);
        \end{tikzpicture}
        \ = (q\kappa^2+1)(q\inv \kappa^{-2}+1)\
        \begin{tikzpicture}[centerzero]
            \draw[spin] (-0.4,-0.4) -- (-0.4,0.4);
            \draw[spin] (0,-0.4) -- (0,0.4);
            \draw[spin] (0.4,-0.4) -- (0.4,0.4);
            \draw[vec] (0,0) -- (0.4,0);
        \end{tikzpicture}
        \ .
    \end{equation}
\end{lem}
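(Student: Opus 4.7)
The plan is to apply \cref{oist} iteratively to pairs of adjacent trivalent vertices in each of the three LHS diagrams, using \cref{bump} to collapse the resulting vec bubbles.

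First, I would handle the first and third terms of \cref{grapes}, where the two L-bridges are adjacent on strand 2 (at heights $(0,0.2)$ and $(-0.2,0)$ respectively).  Applying \cref{oist} to the adjacent L--L pair on strand 2 converts the double bridge into a vec bubble on strand 1, which by \cref{bump} evaluates to $d_\Vgo$.  This gives $T_1 = (q\kappa^2+1)\, d_\Vgo\, R - q X_1$ and $T_3 = (q\kappa^2+1)\, d_\Vgo\, R - q X_3$, where $X_1,X_3$ denote the configurations with the L double bridge replaced by its positively crossed version.

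For the middle term $T_2$, the two L-bridges are separated by the R edge on strand 2, but their strand-1 endpoints \emph{are} adjacent on strand 1.  I would apply \cref{oist} to the adjacent L--L pair viewed from the strand-1 side.  This produces a vec bubble now on strand 2, except that the R-attachment at height $0$ sits inside the bubble --- an ``obstructed bubble''.  I would evaluate it by applying \cref{oist} (or its bar variant \cref{oister}) a second time to one of the (R, bubble-end) pairs along strand 2, generating the companion factor $(q^{-1}\kappa^{-2}+1)$ together with further crossed terms.  An equivalent viewpoint that may clarify the bookkeeping is to recognize the LHS as the composition of the two quantum symmetrizers $I + q\sigma^+_{12}$ and $I + q^{-1}\sigma^-_{23}$ (acting on the three vec strands attached to strand 2), each of which by \cref{oist} and \cref{oister} respectively contributes one of the factors, corresponding to the eigenvalues $q\kappa^2+1$ and $q^{-1}\kappa^{-2}+1$ of these operators on the ``trivial'' summand of $\Vgo\otimes\Vgo$ where the twist acts as $\kappa^2$.

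Finally, I would sum the three reduced expressions with coefficients $1,\,q+q^{-1},\,1$, and verify that the accumulated crossed terms cancel using the vec skein relation \cref{skein}, together with \cref{bump} and the identity $\kappa^2 - \kappa^{-2} = (q-q^{-1})(1-d_\Vgo)$ from the remark following \cref{QSBdef}.  The main obstacle will be handling the middle term: managing the obstructed bubble and the nested applications of \cref{oist}/\cref{oister} requires careful tracking of sign conventions, crossing orientations, and planar routing of the cap strands, so that after summation only $(q\kappa^2+1)(q^{-1}\kappa^{-2}+1)\,R$ survives.
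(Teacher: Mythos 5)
Your first step for $T_1$ and $T_3$ is correct: applying \cref{oist} to the adjacent L--L pair on strand $2$ produces a vec cap there, which together with the two merge vertices on strand $1$ forms a bubble that \cref{bump} evaluates to $d_\Vgo$.  But the argument stalls at $T_2$.  After you apply \cref{oist} from the strand-$1$ side, the resulting vec arc is attached to strand $2$ at heights $\pm 0.2$ with the R-merge sitting at height $0$ between them; this is not a \cref{bump} configuration, and your proposed second application of \cref{oist}/\cref{oister} on strand $2$ would cap an L-leg to the R-leg, producing a vec strand that now threads between strand $1$ and strand $3$, together with a second layer of crossed error terms.  None of this is carried out, and the assertion that the accumulated errors $X_1,X_3$ and those from $T_2$ cancel via \cref{skein} is not demonstrated.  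The ``equivalent viewpoint'' is also not accurate as stated: $(I + q\sigma^+_{12})(I + q^{-1}\sigma^-_{23})$ expands to four terms, whereas the left-hand side of \cref{grapes} is a three-term sum $T_1 + (q+q^{-1})T_2 + T_3$, and no precise dictionary between the two is given.

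The paper's proof takes a different and shorter route that sidesteps the obstructed bubble entirely.  Writing $L,R$ for the barbells on strands $(1,2)$ and $(2,3)$, it regroups the left-hand side as $L(LR + q^{-1}RL) + q\,(LR + q^{-1}RL)L$ and then applies the bar involution of \cref{zombie}, which recognises $LR + q^{-1}RL$ as $(q^{-1}\kappa^{-1}+\kappa)$ times a single spin--vec crossing.  A use of \cref{typhoon} to slide a crossing, then \cref{oist} to produce the factor $(q\kappa^2+1)$, and finally \cref{lobster1} to absorb the remaining crossing, gives the result directly, with no $d_\Vgo$, no bubbles, and no cancellation of nested error terms.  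The missing idea in your proposal is that \cref{zombie} converts the separated pattern $L\,R\,L$ into a crossing rather than forcing you through an obstructed bubble; once you group $T_1 + q^{-1}T_2$ and $qT_2 + T_3$, the computation collapses.
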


\begin{proof}
    We have
    \begin{multline*}
        \begin{tikzpicture}[centerzero]
            \draw[spin] (-0.4,-0.4) -- (-0.4,0.4);
            \draw[spin] (0,-0.4) -- (0,0.4);
            \draw[spin] (0.4,-0.4) -- (0.4,0.4);
            \draw[vec] (-0.4,0.2) -- (0,0.2);
            \draw[vec] (-0.4,0) -- (0,0);
            \draw[vec] (0,-0.2) -- (0.4,-0.2);
        \end{tikzpicture}
        \ + (q+q^{-1})\
        \begin{tikzpicture}[centerzero]
            \draw[spin] (-0.4,-0.4) -- (-0.4,0.4);
            \draw[spin] (0,-0.4) -- (0,0.4);
            \draw[spin] (0.4,-0.4) -- (0.4,0.4);
            \draw[vec] (-0.4,0.2) -- (0,0.2);
            \draw[vec] (-0.4,-0.2) -- (0,-0.2);
            \draw[vec] (0,0) -- (0.4,0);
        \end{tikzpicture}
        \ +\
        \begin{tikzpicture}[centerzero]
            \draw[spin] (-0.4,-0.4) -- (-0.4,0.4);
            \draw[spin] (0,-0.4) -- (0,0.4);
            \draw[spin] (0.4,-0.4) -- (0.4,0.4);
            \draw[vec] (-0.4,-0.2) -- (0,-0.2);
            \draw[vec] (-0.4,0) -- (0,0);
            \draw[vec] (0,0.2) -- (0.4,0.2);
        \end{tikzpicture}
        \overset{\cref{zombie}}{=}
        (q^{-1} \kappa^{-1} + \kappa)
        \left(
            q\
            \begin{tikzpicture}[centerzero]
                \draw[spin] (-0.4,-0.4) -- (-0.4,0.4);
                \draw[spin] (0.4,-0.4) -- (0.4,0.4);
                \draw[vec] (-0.4,-0.15) -- (0.4,-0.15);
                \draw[wipe] (0,-0.4) -- (0,0.4);
                \draw[spin] (0,-0.4) -- (0,0.4);
                \draw[vec] (-0.4,0.15) -- (0,0.15);
            \end{tikzpicture}
            \ + \
            \begin{tikzpicture}[centerzero]
                \draw[spin] (-0.4,-0.4) -- (-0.4,0.4);
                \draw[spin] (0.4,-0.4) -- (0.4,0.4);
                \draw[vec] (-0.4,0.15) -- (0.4,0.15);
                \draw[wipe] (0,-0.4) -- (0,0.4);
                \draw[spin] (0,-0.4) -- (0,0.4);
                \draw[vec] (-0.4,-0.15) -- (0,-0.15);
            \end{tikzpicture}
        \right)
        \\
        \overset{\cref{typhoon}}{=}
        (q^{-1}\kappa^{-1}+\kappa)
        \left(
            q\
            \begin{tikzpicture}[centerzero]
                \draw[spin] (0.4,-0.4) -- (0.4,0.4);
                \draw[vec] (-0.4,0) -- (0.4,0);
                \draw[wipe] (0,-0.4) -- (0,0.4);
                \draw[spin] (0,-0.4) -- (0,0.4);
                \draw[wipe] (-0.4,0.2) to[out=right,in=left] (0,-0.2);
                \draw[vec] (-0.4,0.2) to[out=right,in=left] (0,-0.2);
                \draw[spin] (-0.4,-0.4) -- (-0.4,0.4);
            \end{tikzpicture}
            \ + \
            \begin{tikzpicture}[centerzero]
                \draw[spin] (-0.4,-0.4) -- (-0.4,0.4);
                \draw[spin] (0.4,-0.4) -- (0.4,0.4);
                \draw[vec] (-0.4,0.15) -- (0.4,0.15);
                \draw[wipe] (0,-0.4) -- (0,0.4);
                \draw[spin] (0,-0.4) -- (0,0.4);
                \draw[vec] (-0.4,-0.15) -- (0,-0.15);
            \end{tikzpicture}
        \right)
        \overset{\cref{oist}}{=}
        (q \kappa^2 + 1) (q^{-1}\kappa^{-1}+\kappa)\
        \begin{tikzpicture}[centerzero]
            \draw[spin] (-0.4,-0.4) -- (-0.4,0.4);
            \draw[spin] (0,-0.4) -- (0,0.4);
            \draw[spin] (0.4,-0.4) -- (0.4,0.4);
            \draw[vec] (0,-0.15) to[out=left,in=left,looseness=2] (0,0.15) -- (0.4,0.15);
            \draw[wipe] (0,0) -- (0,0.4);
            \draw[spin] (0,-0.4) -- (0,0.4);
        \end{tikzpicture}
        \\
        \overset{\cref{lobster1}}{=} (q\kappa^2+1)(q\inv \kappa^{-2}+1)\
        \begin{tikzpicture}[centerzero]
            \draw[spin] (-0.4,-0.4) -- (-0.4,0.4);
            \draw[spin] (0,-0.4) -- (0,0.4);
            \draw[spin] (0.4,-0.4) -- (0.4,0.4);
            \draw[vec] (0,0) -- (0.4,0);
        \end{tikzpicture}
        \ ,
    \end{multline*}
    where, in the second equality, we used the bar involution applied to \cref{zombie}.
\end{proof}

\begin{rem} \label{iquantum}
    The image of
    \(
        \begin{tikzpicture}[centerzero]
            \draw[spin] (-0.2,-0.3) -- (-0.2,0.3);
            \draw[spin] (0.2,-0.3) -- (0.2,0.3);
            \draw[vec] (-0.2,0) -- (0.2,0);
        \end{tikzpicture}
    \)
    under the incarnation functor to be defined in \cref{sec:incarnation} is given in \cref{rhino}.  The image of the relation \cref{grapes} under the incarnation functor corresponds to \cite[Prop 4.2]{Wen20}.  As noted in \cite{Wen20}, this implies that one has an action on $S^{\otimes r}$ of the coideal subalgebra $U_q'(\fso_r)$ of $U_q(\mathfrak{gl}_r)$ introduced in \cite{GK91} and further studied in \cite{NS95,Let97}.
\end{rem}

%===============================================================
\section{The quantum antisymmetrizer\label{sec:antisymmetrizer}}
%===============================================================

In this section we prove some results about the quantum spin Brauer category that will be used in the sequel.  Throughout this section, we assume that $\kk$ is a field, and we fix $q,t,\kappa \in \kk^\times$ such that $q$ is not a root of unity.  We also assume that
\begin{equation} \label{lime}
    q^{2r-1} \kappa^2 + 1 \ne 0 \qquad \text{for all } r \in \N.
\end{equation}
It will be convenient to introduce a shorthand for multiple strands:
\begin{equation} \label{multistrand}
    % [inline block 1: 89 envs, 43375 chars -> data_tex | \begin{tikzpicture}[centerzero]         \draw[vec] (0,-0.3) -- (0,0.3) node[midway,anchor=east] {\strandlabel{0}};...]

        \ ,
    \end{multline*}
    as desired.
\end{proof}

\begin{rem}
    It follows from \cref{asymidem} that the quantum antisymmetrizer is idempotent.  The analogous idempotent in the Birman--Murakami--Wenzl (BMW) algebra is well known.  See for, example, \cite[\S3]{DHS13} and \cite[\S4]{HS99}.  The recursion \cref{asymdef} resembles \cite[(7.12)]{TW05}, except that the expression there seems to have an error: the numerator of $[m]_q$ there should be $q^m-q^{-m}$.  Note that our use of the term \emph{quantum antisymmetrizer} does not agree with the use of this term in \cite{DF94}.
\end{rem}

\begin{lem}
    We have
    \begin{equation} \label{asymbend}
        \begin{tikzpicture}[centerzero]
            \draw[vec] (-0.5,0.6) -- (-0.5,-0.15) to[out=down,in=down,looseness=1.5] (0,-0.15) -- (0,0.15) to[out=up,in=up,looseness=1.5] (0.5,0.15) -- (0.5,-0.6);
            \altbox{-0.3,-0.15}{0.3,0.15}{r};
        \end{tikzpicture}
        =
        \begin{tikzpicture}[centerzero]
            \draw[vec] (0,-0.6) -- (0,0.6);
            \altbox{-0.3,-0.15}{0.3,0.15}{r};
        \end{tikzpicture}
        =
        \begin{tikzpicture}[centerzero,xscale=-1]
            \draw[vec] (-0.5,0.6) -- (-0.5,-0.15) to[out=down,in=down,looseness=1.5] (0,-0.15) -- (0,0.15) to[out=up,in=up,looseness=1.5] (0.5,0.15) -- (0.5,-0.6);
            \altbox{-0.3,-0.15}{0.3,0.15}{r};
        \end{tikzpicture}
        \ ,\qquad r \ge 0.
    \end{equation}
\end{lem}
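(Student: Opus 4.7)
The plan is to prove the first equality in \cref{asymbend} by induction on $r$; the second then follows by a parallel argument with the roles of left and right interchanged throughout. The base cases $r \in \{0, 1\}$ are handled by the snake relation \cref{yank}, which straightens the zigzag path through the trivial/identity antisymmetrizer into a single vertical strand.

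For the inductive step $r \ge 2$, the strategy is to expand the antisymmetrizer inside the left-hand side using the recursion \cref{asymdef}. This presents the left-hand side as a linear combination of three diagrams: one containing $A_{r-1} \otimes \id$, one with a crossing sandwiched between two copies of $A_{r-1}$, and one with a cup--cap pair sandwiched between two copies of $A_{r-1}$. In each of these three diagrams, the outer cup and cap inherited from the left-hand side of \cref{asymbend} can be isotoped using \cref{braid} and \cref{yank} into positions adjacent to one of the $(r-1)$-antisymmetrizers. The absorption identities \cref{absorbcr} and \cref{absorbcup} then simplify crossings and cup--cap pairs that end up adjacent to an $(r-1)$-antisymmetrizer, and the inductive hypothesis converts any remaining embedded occurrence of the left-hand side on $r-1$ strands back into a plain $A_{r-1}$. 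The three simplified contributions then reassemble, via the reverse direction of \cref{asymdef}, into $A_r$.

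The main obstacle is the scalar bookkeeping. The coefficients appearing in \cref{asymdef} are rational functions of $q$ and $\kappa$ involving the quantum integers $[r]$ and $[r-1]$, and additional rational factors arise during the simplification of each of the three diagrams (for instance from the dimension factor in \cref{deloop} and \cref{reloop} when resolving closed loops produced by the bent strand). Confirming that these combine to reproduce exactly the coefficients in \cref{asymdef} will require careful manipulation of quantum-integer expressions, analogous in spirit to the identities verified in \cref{fool1} and \cref{fool2} earlier in the section.
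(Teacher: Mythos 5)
Your base cases and the overall shape of the argument — induction driven by the recursion \cref{asymdef} — are correct, and your approach is in the same family as the paper's. But there is a genuine gap at the reassembly step, and I think your diagnosis that "the main obstacle is scalar bookkeeping" misses where the real difficulty lies.

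When you expand the bent $A_r$ via \cref{asymdef}, isotope, apply \cref{absorbcr}, \cref{absorbcup}, and the inductive hypothesis, each of the three resulting terms has the extra strand on the \emph{left} of the $(r-1)$-antisymmetrizers, not on the right. That is, you obtain the \emph{mirror image} of the recursion \cref{asymdef}, in which the new strand is appended on the opposite side. The morphisms $1_\Vgo \otimes A_{r-1}$ and $A_{r-1} \otimes 1_\Vgo$ are simply different morphisms, so you cannot "reassemble via the reverse direction of \cref{asymdef}" — you need a separate fact, namely that the antisymmetrizer also satisfies the \emph{mirrored} recursion. That fact is the actual content of the inductive step, and it is not an application of \cref{asymdef} in reverse. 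The paper handles this by reducing the inductive step precisely to the statement that $A_{r+1}$ satisfies the rotated recursion, and proves it by conjugating both sides of \cref{asymdef} on top and bottom by the cyclic braids that carry one strand across the remaining $r$, then invoking \cref{braid}, \cref{deloop}, \cref{reloop}, and \cref{absorbcr}. Here the scalar factors from absorbing the $r$ positive crossings into $A_{r+1}$ exactly cancel those from the $r$ negative crossings, so the bookkeeping is clean; the substantive issue is recognizing that the rotated recursion is the missing lemma. Your writeup never acknowledges it is needed, and as stated the reassembly step would not go through.
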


\begin{proof}
    We prove the result by induction on $r$.  The result clearly holds for $r \in \{0,1\}$.  For the induction step, it suffices to show that the quantum antisymmetrizer satisfies the rotated version of the recursion \cref{asymdef}:
    \[
        \begin{tikzpicture}[centerzero]
            \draw[vec] (0,-0.6) -- (0,0.6);
            \altbox{-0.4,-0.15}{0.4,0.15}{r+1};
        \end{tikzpicture}
        = \frac{1}{[r+1]}
        \left(
            q^r\
            \begin{tikzpicture}[centerzero]
                \draw[vec] (0,-0.6) -- (0,0.6);
                \altbox{-0.3,-0.15}{0.3,0.15}{r};
                \draw[vec] (-0.5,-0.6) -- (-0.5,0.6);
            \end{tikzpicture}
            - [r]\
            \begin{tikzpicture}[centerzero]
                \draw[vec] (0.15,-0.2) -- (0.15,0.2) node[midway,anchor=west] {\strandlabel{r-1}};
                \draw[vec] (-0.15,-0.2) \braidup (-0.5,0.2) -- (-0.5,0.7);
                \draw[wipe] (-0.15,0.2) \braiddown (-0.5,-0.2);
                \draw[vec] (-0.15,0.2) \braiddown (-0.5,-0.2) -- (-0.5,-0.7);
                \draw[vec] (0,0.5) -- (0,0.7);
                \draw[vec] (0,-0.5) -- (0,-0.7);
                \altbox{0.3,-0.5}{-0.3,-0.2}{r};
                \altbox{0.3,0.2}{-0.3,0.5}{r};
            \end{tikzpicture}
            - \frac{q^r - q^{-r}}{1+q^{1-2r}\kappa^{-2}}\
            \begin{tikzpicture}[centerzero]
                \draw[vec] (0.15,-0.2) -- (0.15,0.2) node[midway,anchor=west] {\strandlabel{r-1}};
                \draw[vec] (-0.15,0.2) to[out=down,in=down,looseness=1.5] (-0.5,0.2) -- (-0.5,0.7);
                \draw[vec] (-0.15,-0.2) to[out=up,in=up,looseness=1.5] (-0.5,-0.2) -- (-0.5,-0.7);
                \draw[vec] (0,0.5) -- (0,0.7);
                \draw[vec] (0,-0.5) -- (0,-0.7);
                \altbox{0.3,-0.5}{-0.3,-0.2}{r};
                \altbox{0.3,0.2}{-0.3,0.5}{r};
            \end{tikzpicture}\
        \right),\quad r \ge 1.
    \]
    This follows from composing both sides of \cref{asymdef} on top and bottom by
    \[
        \begin{tikzpicture}[centerzero]
            \draw[vec] (-0.2,-0.4) -- (0.2,0.4);
            \draw[wipe] (0.2,-0.4) -- (-0.2,0.4);
            \draw[vec] (0.2,-0.4) -- (-0.2,0.4);
            \draw[vec] (0.1,0.2) node[anchor=west] {\strandlabel{r}};
        \end{tikzpicture}
        \qquad \text{and} \qquad
        \begin{tikzpicture}[centerzero]
            \draw[vec] (0.2,-0.4) -- (-0.2,0.4);
            \draw[wipe] (-0.2,-0.4) -- (0.2,0.4);
            \draw[vec] (-0.2,-0.4) -- (0.2,0.4);
            \draw[vec] (-0.1,0.2) node[anchor=east] {\strandlabel{r}};
        \end{tikzpicture}
        \ ,
    \]
    respectively, then using \cref{braid,deloop,reloop,absorbcr}.
\end{proof}

\begin{lem}
    We have
    \begin{equation} \label{monkey}
        \begin{tikzpicture}[anchorbase]
            \draw[vec] (0,-0.15) arc(180:360:0.2) -- (0.4,0.15) arc(0:180:0.2);
            \altbox{-0.2,-0.15}{0.2,0.15}{r};
        \end{tikzpicture}
        =
        \frac{\kappa^2 q^r  + q^{1-r}}{\kappa^2 + q}
        \prod_{s=1}^r \frac{\kappa^{-2} q^{2-s} - \kappa^2 q^{s-2}}{q^s - q^{-s}} 1_\one
        ,\qquad r \in \N,
    \end{equation}
    where we interpret the right-hand side as $1_\one$ when $r=0$.
\end{lem}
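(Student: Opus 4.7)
The plan is to prove \cref{monkey} by induction on $r$, using \cref{fool1} as the key recursive step. Let $L_r \in \kk$ denote the scalar on the left-hand side of \cref{monkey}. The base case $r=0$ is immediate: the left-hand side is $1_\one$ by \cref{asymbase}, and the right-hand side is $\frac{\kappa^2 + q}{\kappa^2 + q} \cdot 1_\one = 1_\one$.

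For the inductive step, I will close off the strands one at a time, starting from the rightmost. Closing the rightmost strand of an antisymmetrizer on $r$ strands is precisely the configuration computed in \cref{fool1} (with $m$ replaced by $r$), which yields
\[
    \begin{tikzpicture}[anchorbase]
        \draw[vec] (0.15,-0.15) arc(180:360:0.2) -- (0.55,0.15) arc(0:180:0.2);
        \draw[vec] (-0.1,0.15) -- (-0.1,0.6) node[midway,anchor=east] {\strandlabel{r-1}};
        \draw[vec] (-0.1,-0.15) -- (-0.1,-0.6);
        \altbox{-0.3,-0.15}{0.3,0.15}{r};
    \end{tikzpicture}
    = c_r \cdot
    \begin{tikzpicture}[centerzero]
        \draw[vec] (0,-0.6) -- (0,0.6);
        \altbox{-0.34,-0.15}{0.34,0.15}{r-1};
    \end{tikzpicture}
    \quad\text{where}\quad
    c_r = \frac{(\kappa^{-2} q^{3-r} - \kappa^2 q^{r-1})(1 + \kappa^{-2} q^{1-2r})}{(q^r-q^{-r})(1+\kappa^{-2}q^{3-2r})}.
\]
Composing with the closure of the remaining $r-1$ strands (which is still just a $(r-1)$-strand trace of an antisymmetrizer) gives $L_r = c_r \cdot L_{r-1}$.

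It then remains to check that the closed-form expression on the right-hand side of \cref{monkey} satisfies the same recursion. Denoting that expression by $R_r$, one computes
\[
    \frac{R_r}{R_{r-1}}
    = \frac{\kappa^2 q^r + q^{1-r}}{\kappa^2 q^{r-1} + q^{2-r}} \cdot \frac{\kappa^{-2} q^{2-r} - \kappa^2 q^{r-2}}{q^r - q^{-r}}.
\]
Using the factorizations $\kappa^2 q^r + q^{1-r} = q^{1-r}(\kappa^2 q^{2r-1} + 1)$ and $\kappa^2 q^{r-1} + q^{2-r} = q^{2-r}(\kappa^2 q^{2r-3} + 1)$, together with $\kappa^{-2} q^{3-r} - \kappa^2 q^{r-1} = q(\kappa^{-2} q^{2-r} - \kappa^2 q^{r-2})$, one verifies $R_r / R_{r-1} = c_r$ by elementary algebra. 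Since $L_0 = R_0 = 1_\one$, the induction closes.

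The principal ingredient, and the only non-trivial step, is the formula \cref{fool1}, which has already been established in the preceding proof. The hypothesis \cref{lime} guarantees that all scalars in $c_r$ are well-defined (in particular, the denominator $1 + \kappa^{-2} q^{3-2r}$ is invertible), and no further cancellations are needed. Thus the remaining work is purely a ratio-tracking calculation with no expected obstacle.
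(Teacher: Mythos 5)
Your proof is correct and takes essentially the same approach as the paper, which simply states that the lemma follows by "a straightforward induction using \cref{fool1}."  The only nitpick worth flagging is that \cref{fool1} was established in the paper for $m \ge 2$, so strictly speaking the $r=1$ step should be checked directly (the left-hand side is just a bubble $= d_\Vgo$ by \cref{dimrel}, and one verifies $c_1 = d_\Vgo$), but this amounts to the same ratio computation you already perform.
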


\begin{proof}
    This is a straightforward induction using \cref{fool1}.
\end{proof}

\begin{rem}
    When $\kappa^2 = q^{1-N}$, the right-hand side of \cref{monkey} becomes
    \begin{equation} \label{monkey2}
        \frac{q^{r-N}  + q^{-r}}{1 + q^{-N}} \frac{[N]!}{[r]![N-r]!} 1_\one
        = \left( \qbinom{N-1}{r} + \qbinom{N-1}{r-1} \right) 1_\one,
    \end{equation}
    where $[a]!$ and $\qbinom{a}{b}$ are quantum factorials and quantum binomial coefficients, respectively.  (See \cref{subsec:Uq}.)  This specializes, at $q=1$, to $\binom{N}{r} 1_\one$.
\end{rem}

\begin{prop} \label{Delprop}
    We have
    \begin{equation} \label{Deligne}
        \begin{tikzpicture}[anchorbase]
            \draw[spin] (-0.1,0) -- (0.1,0) arc(-90:90:0.15) -- (-0.1,0.3) arc(90:270:0.15);
            \draw[vec] (0,-1) -- (0,0);
            \altbox{-0.2,-0.65}{0.2,-0.35}{r};
        \end{tikzpicture}
        = 0
        \qquad \text{for all } r > 0.
    \end{equation}
\end{prop}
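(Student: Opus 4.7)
The proof proceeds by induction on $r$ using a cyclic symmetry argument. Denote by $B_r \colon \Vgo^{\otimes r} \to \one$ the spin bubble with $r$ vec attachments (i.e.\ the diagram of \cref{Deligne} with the antisymmetrizer box omitted), and by $a_r$ the $r$-th quantum antisymmetrizer from \cref{grove}. The key step is to establish a self-referential identity
\[
    B_r \circ a_r = \nu \cdot B_r \circ a_r
\]
for some explicit scalar $\nu \ne 1$, forcing $B_r \circ a_r = 0$.

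The identity is obtained as follows. Topologically, sliding any one of the $r$ vec attachments once around the spin bubble via planar isotopy should yield the same diagram. Algebraically, this slide is decomposed into a sequence of elementary moves: the sliding attachment must commute past each of the other $r-1$ attachments, invoking \cref{typhoon} together with the trivalent-vertex rotations \cref{lobster1,lobster2}, and then return to its starting position by closing the spin strand into a loop via \cref{deloop,bump}. Each such elementary move contributes a $\kappa^{\pm 1}$ or $t^{\pm 1}$ scalar factor, and the net effect is to replace the diagram by itself precomposed with the braid $\sigma_1 \sigma_2 \cdots \sigma_{r-1}$, where $\sigma_i := \poscross{vec}{vec}$ on strands $i$ and $i+1$, times a scalar $\mu \in \kk^\times$ collecting the accumulated factors. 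By \cref{absorbcr}, each $\sigma_i$ acts on $a_r$ as multiplication by $-q^{-1}$, so the braid acts on $a_r$ as $(-q^{-1})^{r-1}$. Thus $\nu = \mu \cdot (-q^{-1})^{r-1}$, and $(1 - \nu)\, B_r \circ a_r = 0$.

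The main obstacle is the explicit determination of $\mu$ and the verification that $\nu \ne 1$. The scalar $\mu$ involves products of $q$, $t$, and $\kappa^{\pm 2}$ whose precise form is sensitive to orientation conventions and requires careful tracking of the rotations of the trivalent vertex via \cref{vortex,vortex2,lobster1,lobster2}; a preliminary computation in the case $r = 2$, using \cref{oist} closed into a bubble together with the identity $B_2 \circ \cupmor{vec} = d_\Vgo d_\Sgo$ coming from \cref{bump,dimrel}, already suggests $\mu$ takes the form $\kappa^{-2}$ up to an $r$-dependent correction. The hypothesis \cref{lime}, together with the fact that $q$ is not a root of unity, is precisely what is needed to guarantee $\nu \ne 1$ for all $r \ge 1$. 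In particular, in the base case $r = 1$ the braid part $(-q^{-1})^{r-1}$ is trivial, but the loop factor $\mu$ still differs from $1$, yielding $B_1 = 0$; and for $r \ge 2$, the same conclusion follows from the combined $(-q^{-1})^{r-1}\mu \ne 1$.
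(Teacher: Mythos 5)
The spirit of your argument — that $B_r \circ a_r$ must satisfy a self-referential scalar identity, and vanishes because the scalar is nontrivial — is exactly right, and is indeed how the paper proceeds. But there is a genuine gap in your execution: you assert that sliding one vec attachment once around the spin loop amounts to precomposing with the braid $\sigma_1 \cdots \sigma_{r-1}$ times a scalar $\mu$, invoking \cref{typhoon} and \cref{lobster1,lobster2}. Those relations let you slide a trivalent vertex past a \emph{crossing}, not past another trivalent vertex on the \emph{same} spin strand. The move you actually need — exchanging two adjacent merges along the spin loop — is governed by \cref{oist}, or equivalently \cref{zombie}, and each such exchange does \emph{not} return a scalar multiple of the original diagram: it produces a sum of two terms, one of which is a ``cap''/closed-loop diagram. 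Your proposed monodromy $\nu = \mu (-q^{-1})^{r-1}$ silently drops all of these extra terms.

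The reason the argument nonetheless closes is that the extra terms themselves collapse, via \cref{asymidem}, \cref{bump}, and \cref{lobster1}, back into scalar multiples of $B_r \circ a_r$ — so the self-referential identity survives, but with a more complicated scalar. The paper carries this out concretely: starting from $d_\Vgo\,(B_r \circ a_r)$, it applies \cref{bump} to create the sliding strand, then iterates \cref{zombie} around the loop, tracking at each step both the $(-q)$ contribution (absorbed into the antisymmetrizer via \cref{absorbneg}) \emph{and} the $\frac{q\kappa^2+1}{\kappa}$-scaled remainder. After $r$ steps this yields
\[
    d_\Vgo\,(B_r \circ a_r) \;=\; \Big( (-q)^r d_\Vgo + q^{r-1}(q\kappa^2+1)[r] \Big)(B_r \circ a_r),
\]
and the resulting coefficient factors as $\frac{(q\kappa^2+1)(\kappa^{-2} - (-q)^{r-1})(1-(-q)^r)}{q-q^{-1}}$, which is nonzero by \cref{lime} and the assumption that $q$ is not a root of unity. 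To repair your proof, you need to (i) replace \cref{typhoon}/\cref{lobster1,lobster2} with \cref{zombie} or \cref{oist} as the mechanism for passing trivalent vertices, (ii) keep track of the sum of diagrams this produces rather than a single one, and (iii) show each summand reduces to a multiple of $B_r \circ a_r$ before assembling the final scalar.
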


\begin{proof}
    We have
    \begin{align*}
        d_\Vgo\
        \begin{tikzpicture}[anchorbase]
            \draw[vec] (0,-1.2) -- (0,-0.8) node[midway,anchor=east] {\strandlabel{r}};
            \draw[vec] (0,-0.5) -- (0,0) node[midway,anchor=east] {\strandlabel{r}};
            \draw[spin] (-0.4,0) -- (0.4,0) arc(-90:90:0.15) -- (-0.4,0.3) arc(90:270:0.15);
            \altbox{-0.5,-0.8}{0.5,-0.5}{r};
        \end{tikzpicture}
        \ &\overset{\mathclap{\cref{bump}}}{=}\
        \begin{tikzpicture}[anchorbase]
            \draw[vec] (0,-1.2) -- (0,-0.8) node[midway,anchor=east] {\strandlabel{r}};
            \draw[vec] (0,-0.5) -- (0,0) node[midway,anchor=east] {\strandlabel{r}};
            \draw[vec] (0.2,0) -- (0.2,0.3);
            \draw[spin] (-0.4,0) -- (0.4,0) arc(-90:90:0.15) -- (-0.4,0.3) arc(90:270:0.15);
            \altbox{-0.5,-0.8}{0.5,-0.5}{r};
        \end{tikzpicture}
        \overset{\cref{zombie}}{=} - q
        \begin{tikzpicture}[anchorbase]
            \draw[vec] (0,-1.2) -- (0,-0.8) node[midway,anchor=east] {\strandlabel{r}};
            \draw[vec] (-0.2,-0.5) -- (-0.2,0) node[midway,anchor=east] {\strandlabel{r-1}};
            \draw[vec] (0.2,-0.5) -- (0.2,0);
            \draw[spin] (-0.4,0) -- (0.4,0) arc(-90:90:0.15) -- (-0.4,0.3) arc(90:270:0.15);
            \altbox{-0.5,-0.8}{0.5,-0.5}{r};
            \draw[vec] (0,0) -- (0,0.3);
        \end{tikzpicture}
        + \frac{q \kappa^2 + 1}{\kappa}
        \begin{tikzpicture}[anchorbase]
            \draw[vec] (0,-1.2) -- (0,-0.8) node[midway,anchor=east] {\strandlabel{r}};
            \draw[vec] (-0.2,-0.5) -- (-0.2,0) node[midway,anchor=east] {\strandlabel{r-1}};
            \draw[vec] (0.2,-0.5) -- (0.2,0.3);
            \draw[wipe] (0,0) -- (0.4,0);
            \draw[spin] (-0.4,0) -- (0.4,0) arc(-90:90:0.15) -- (-0.4,0.3) arc(90:270:0.15);
            \altbox{-0.5,-0.8}{0.5,-0.5}{r};
        \end{tikzpicture}
        \\
        &\overset{\mathclap{\cref{zombie}}}{=}\ q^2
        \begin{tikzpicture}[anchorbase]
            \draw[vec] (0,-1.2) -- (0,-0.8) node[midway,anchor=east] {\strandlabel{r}};
            \draw[vec] (-0.2,-0.5) -- (-0.2,0) node[midway,anchor=east] {\strandlabel{r-2}};
            \draw[vec] (0.2,-0.5) -- (0.2,0) node[midway,anchor=west] {\strandlabel{2}};
            \draw[spin] (-0.4,0) -- (0.4,0) arc(-90:90:0.15) -- (-0.4,0.3) arc(90:270:0.15);
            \altbox{-0.5,-0.8}{0.5,-0.5}{r};
            \draw[vec] (0,0) -- (0,0.3);
        \end{tikzpicture}
        - q\frac{q \kappa^2 + 1}{\kappa}
        \begin{tikzpicture}[anchorbase]
            \draw[vec] (0,-1.2) -- (0,-0.8) node[midway,anchor=east] {\strandlabel{r}};
            \draw[vec] (-0.2,-0.5) -- (-0.2,0) node[midway,anchor=east] {\strandlabel{r-2}};
            \draw[vec] (0,-0.5) -- (0,0.3);
            \draw[vec] (0.2,-0.5) -- (0.2,0);
            \draw[wipe] (-0.1,0) -- (0.1,0);
            \draw[spin] (-0.4,0) -- (0.4,0) arc(-90:90:0.15) -- (-0.4,0.3) arc(90:270:0.15);
            \altbox{-0.5,-0.8}{0.5,-0.5}{r};
        \end{tikzpicture}
        + \frac{q \kappa^2 + 1}{\kappa}
        \begin{tikzpicture}[anchorbase]
            \draw[vec] (0,-1.2) -- (0,-0.8) node[midway,anchor=east] {\strandlabel{r}};
            \draw[vec] (-0.2,-0.5) -- (-0.2,0) node[midway,anchor=east] {\strandlabel{r-1}};
            \draw[vec] (0.2,-0.5) -- (0.2,0.3);
            \draw[wipe] (0,0) -- (0.4,0);
            \draw[spin] (-0.4,0) -- (0.4,0) arc(-90:90:0.15) -- (-0.4,0.3) arc(90:270:0.15);
            \altbox{-0.5,-0.8}{0.5,-0.5}{r};
        \end{tikzpicture}
        \\
        &= \dotsb = (-q)^r\
        \begin{tikzpicture}[anchorbase]
            \draw[vec] (0,-1.2) -- (0,-0.8) node[midway,anchor=east] {\strandlabel{r}};
            \draw[vec] (0,-0.5) -- (0,0) node[midway,anchor=west] {\strandlabel{r}};
            \draw[spin] (-0.4,0) -- (0.4,0) arc(-90:90:0.15) -- (-0.4,0.3) arc(90:270:0.15);
            \altbox{-0.5,-0.8}{0.5,-0.5}{r};
            \draw[vec] (-0.2,0) -- (-0.2,0.3);
        \end{tikzpicture}
        + \frac{q \kappa^2 + 1}{\kappa} \sum_{s=0}^{r-1} (-q)^s\
        \begin{tikzpicture}[anchorbase]
            \draw[vec] (0,-1.2) -- (0,-0.8) node[midway,anchor=east] {\strandlabel{r}};
            \draw[vec] (-0.2,-0.5) -- (-0.2,0) node[midway,anchor=east] {\strandlabel{r-s-1}};
            \draw[vec] (0,-0.5) -- (0,0.3);
            \draw[vec] (0.2,-0.5) -- (0.2,0) node[midway,anchor=west] {\strandlabel{s}};
            \draw[wipe] (-0.1,0) -- (0.1,0);
            \draw[spin] (-0.4,0) -- (0.4,0) arc(-90:90:0.15) -- (-0.4,0.3) arc(90:270:0.15);
            \altbox{-0.5,-0.8}{0.5,-0.5}{r};
        \end{tikzpicture}
        \\
        &\overset{\mathclap{\cref{absorbneg}}}{\underset{\mathclap{\cref{typhoon}}}{=}}\ (-q)^r
        \begin{tikzpicture}[anchorbase]
            \draw[vec] (0,-1.2) -- (0,-0.8) node[midway,anchor=east] {\strandlabel{r}};
            \draw[vec] (0,-0.5) -- (0,0) node[midway,anchor=west] {\strandlabel{r}};
            \draw[spin] (-0.4,0) -- (0.4,0) arc(-90:90:0.15) -- (-0.4,0.3) arc(90:270:0.15);
            \altbox{-0.5,-0.8}{0.5,-0.5}{r};
            \draw[vec] (-0.2,0) -- (-0.2,0.3);
        \end{tikzpicture}
        + \frac{q \kappa^2 + 1}{\kappa} \sum_{s=0}^{r-1} q^{2s}\
        \begin{tikzpicture}[anchorbase]
            \draw[vec] (0,-1.2) -- (0,-0.8) node[midway,anchor=east] {\strandlabel{r}};
            \draw[vec] (-0.2,-0.5) -- (-0.2,0) node[midway,anchor=east] {\strandlabel{r-1}};
            \draw[vec] (0.2,-0.5) -- (0.2,0.3);
            \draw[wipe] (0,0) -- (0.4,0);
            \draw[spin] (-0.4,0) -- (0.4,0) arc(-90:90:0.15) -- (-0.4,0.3) arc(90:270:0.15);
            \altbox{-0.5,-0.8}{0.5,-0.5}{r};
        \end{tikzpicture}
        \\
        &\overset{\mathclap{\cref{bump}}}{\underset{\mathclap{\cref{lobster1}}}{=}}\
        \Big( (-q)^r d_\Vgo + q^{r-1}(q \kappa^2+1)[r] \Big)\
        \begin{tikzpicture}[anchorbase]
            \draw[vec] (0,-1.2) -- (0,-0.8) node[midway,anchor=east] {\strandlabel{r}};
            \draw[vec] (0,-0.5) -- (0,0) node[midway,anchor=east] {\strandlabel{r}};
            \draw[spin] (-0.4,0) -- (0.4,0) arc(-90:90:0.15) -- (-0.4,0.3) arc(90:270:0.15);
            \altbox{-0.5,-0.8}{0.5,-0.5}{r};
        \end{tikzpicture}
        \ .
    \end{align*}
    Thus
    \[
        \Big( \big( (-q)^r-1 \big) d_\Vgo + (q^r \kappa^2 + q^{r-1}) [r] \Big)
        \begin{tikzpicture}[anchorbase]
            \draw[vec] (0,-1.2) -- (0,-0.8) node[midway,anchor=east] {\strandlabel{r}};
            \draw[vec] (0,-0.5) -- (0,0) node[midway,anchor=east] {\strandlabel{r}};
            \draw[spin] (-0.4,0) -- (0.4,0) arc(-90:90:0.15) -- (-0.4,0.3) arc(90:270:0.15);
            \altbox{-0.5,-0.8}{0.5,-0.5}{r};
        \end{tikzpicture}
        \ = 0.
    \]
    Note that
    \[
        \big( (-q)^r-1 \big) d_\Vgo + (q^r \kappa^2 + q^{r-1}) [r]
        = \frac{\big( q\kappa^2+1 \big) \big( \kappa^{-2} - (-q)^{r-1} \big) \big( 1 - (-q)^r \big)}{q-q^{-1}}.
    \]
    Our assumption that $q$ is not a root of unity implies that the factor $1-(-q)^r$ is nonzero, while the assumption \cref{lime} implies that the factor $q \kappa^2+1$ is nonzero.  Furthermore, the assumption that $q$ is not a root of unity, together with \cref{lime}, implies that $\kappa^2 \ne (-q)^{1-r}$.  The result follows.
\end{proof}

Next, we prove two technical lemmas needed in the proof of \cref{royal}.

\begin{lem} \label{pork}
    For $1 \le s \le r$, we have
    \begin{equation}
        \begin{tikzpicture}[anchorbase]
            \draw[spin] (0,0) -- (0.9,0) arc(-90:90:0.15) -- (0,0.3) arc(90:270:0.15);
            \draw[vec] (0.1,-0.3) -- (0.1,0);
            \draw[vec] (0.2,-1) to[out=up,in=down] (0,-0.6);
            \draw[vec] (0.1,-0.8) node[anchor=east] {\strandlabel{s-1}};
            \draw[vec] (0.2,-0.6) to[out=down,in=down,looseness=1.5] (0.5,-0.6) -- (0.5,0);
            \draw[vec] (0.6,-1) to[out=up,in=down] (0.8,0);
            \altbox{-0.1,-0.6}{0.3,-0.3}{s};
            \altbox{0,-1.3}{0.8,-1}{r};
            \draw[vec] (0.4,-1.3) to[out=down,in=down,looseness=1.5] (1.2,-1.3) -- (1.2,0.3) to[out=up,in=up,looseness=1.5] (0.45,0.3);
        \end{tikzpicture}
        \, =
        \frac{(q\kappa^2 + 1)(q^{1-s} \kappa^{-2} - q^{s-2})}{q^s - q^{-s}}\
        \begin{tikzpicture}[anchorbase]
            \draw[spin] (-0.1,0) -- (0.1,0) arc(-90:90:0.15) -- (-0.1,0.3) arc(90:270:0.15);
            \draw[vec] (0,0) -- (0,-0.65) arc(180:360:0.2) -- (0.4,0.3) arc(0:180:0.2);
            \altbox{-0.2,-0.65}{0.2,-0.35}{r};
        \end{tikzpicture}
        + \frac{[s-1]}{[s]} \frac{q^{2s-4} \kappa^2 + q}{q^{2s-3} \kappa^2 + 1}
        \begin{tikzpicture}[anchorbase]
            \draw[spin] (0,0) -- (0.9,0) arc(-90:90:0.15) -- (0,0.3) arc(90:270:0.15);
            \draw[vec] (0.1,-0.3) -- (0.1,0);
            \draw[vec] (0.2,-1) to[out=up,in=down] (0,-0.6);
            \draw[vec] (0.1,-0.8) node[anchor=east] {\strandlabel{s-2}};
            \draw[vec] (0.2,-0.6) to[out=down,in=down,looseness=1.5] (0.5,-0.6) -- (0.5,0);
            \draw[vec] (0.6,-1) to[out=up,in=down] (0.8,0);
            \altbox{-0.3,-0.6}{0.3,-0.3}{s-1};
            \altbox{0,-1.3}{0.8,-1}{r};
            \draw[vec] (0.4,-1.3) to[out=down,in=down,looseness=1.5] (1.2,-1.3) -- (1.2,0.3) to[out=up,in=up,looseness=1.5] (0.45,0.3);
        \end{tikzpicture}
        \ .
    \end{equation}
\end{lem}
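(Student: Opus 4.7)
The natural strategy is to apply the recursion \cref{asymdef} (with $r$ replaced by $s-1$) to the outermost $s$-antisymmetrizer on the LHS, separating off the single strand on its right (i.e.\ the one that curves down and around to enter the spin loop at position $(0.5,0)$) from an inner $(s-1)$-antisymmetrizer acting on the remaining $s-1$ strands. This expresses the LHS as a $\kk$-linear combination of three diagrams, with coefficients
\[
    \frac{q^{s-1}}{[s]},\qquad -\frac{[s-1]}{[s]},\qquad -\frac{q^{s-1}-q^{1-s}}{[s](1+q^{3-2s}\kappa^{-2})}
\]
coming respectively from the identity, crossing, and cup-cap terms in \cref{asymdef}.

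I would then simplify each of the three terms. The identity term reproduces almost verbatim the configuration appearing as the second summand on the RHS, once the isolated strand has been moved through the spin loop using \cref{typhoon} and the resulting adjacent antisymmetrizer structure has been absorbed via \cref{asymidem}; combined with the prefactor $q^{s-1}/[s]$ and the deloop factors from \cref{deloop,reloop}, this contributes the $[s-1]/[s]$-summand on the RHS once all factors are collected. The crossing and cup-cap terms, by contrast, produce configurations in which the isolated strand interacts with the $(s-1)$-antisymmetrizer right as it enters the spin loop; these can be rewritten using \cref{zombie,lobster1,bump} to move strands past the spin-loop trivalent vertices and using \cref{absorbcr,absorbcup,asymbend} to push crossings and caps through the antisymmetrizer. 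Crucially, after these local simplifications, the configurations left over are ones in which a full antisymmetrizer is attached to the spin loop, and so \cref{Delprop} (together with \cref{monkey} for the cup-cap case, which gives the value of an antisymmetrizer closed above by a cup and cap) collapses them to scalar multiples of the ``base'' loop diagram that appears as the first summand on the RHS.

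It remains to collect coefficients and verify the stated identity. The expected main obstacle is the algebraic bookkeeping, since the coefficients $\frac{(q\kappa^2+1)(q^{1-s}\kappa^{-2}-q^{s-2})}{q^s-q^{-s}}$ and $\frac{[s-1]}{[s]}\cdot\frac{q^{2s-4}\kappa^2+q}{q^{2s-3}\kappa^2+1}$ on the RHS combine factors of three distinct origins: the base coefficients from \cref{asymdef}, the factors $\kappa/(q\kappa^2+1)$ from the crossing relation \cref{zombie}, and the deloop factors $\kappa^{\pm 2}$ from \cref{deloop,reloop}. In particular, the denominator $q^{2s-3}\kappa^2+1$ should arise from the factor $1+q^{3-2s}\kappa^{-2}$ in the third coefficient of \cref{asymdef} after multiplying numerator and denominator by $q^{2s-3}\kappa^2$, and the denominator $q^s-q^{-s}=[s](q-q^{-1})$ should reflect the $[s]$ appearing throughout \cref{asymdef}. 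Assuming these routine cancellations go through as expected, the stated formula follows.
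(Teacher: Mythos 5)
Your opening move — applying the recursion \cref{asymdef} to the outer $s$-antisymmetrizer and getting three terms with the coefficients you list — matches the paper's proof exactly. After that, however, the proposal misidentifies which term produces which summand of the RHS, and relies on lemmas that would give the wrong (in fact vanishing) answer. In the paper, the \emph{identity} term contributes the base diagram (the \emph{first} summand, not the second): the detached strand closes into a blue loop around the spin string, which evaluates to $d_\Vgo$ by \cref{bump}, and the $(s-1)$-antisymmetrizer is absorbed into the bottom $r$-antisymmetrizer via \cref{asymidem}, leaving $\tfrac{q^{s-1}}{[s]}\,d_\Vgo$ times the base diagram. The \emph{cup-cap} term, after \cref{asymidem}, is \emph{already} the $(s-1)$-version (the second summand), with no further evaluation required. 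The \emph{crossing} term is where the actual work happens: after \cref{asymidem} one has a double crossing adjacent to the spin-loop trivalent vertex, and the paper resolves this with the bar-flipped skein-type relation \cref{oister}, splitting it into a contribution $-\tfrac{[s-1]}{[s]}(\kappa^{-2}+q)$ to the first summand and $\tfrac{q[s-1]}{[s]}$ to the second. The final coefficients then follow by combining $\tfrac{1}{[s]}\bigl(q^{s-1}d_\Vgo-[s-1](\kappa^{-2}+q)\bigr)$ and $\tfrac{1}{[s]}\bigl(q[s-1]-\tfrac{q^{s-1}-q^{1-s}}{1+q^{3-2s}\kappa^{-2}}\bigr)$ and simplifying.

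Your claim that \cref{Delprop} and \cref{monkey} ``collapse'' the crossing and cup-cap terms to scalar multiples of the base diagram cannot be made to work: \cref{Delprop} says an antisymmetrizer closed by a spin loop is \emph{zero} (for $r>0$), so if you really did produce ``a full antisymmetrizer attached to the spin loop'' those terms would vanish outright, leaving only the identity term and yielding the wrong coefficient. Neither \cref{Delprop}, \cref{monkey}, nor \cref{zombie} are used in this proof; the essential ingredient you are missing is \cref{oister}, without which the crossing term cannot be resolved. You should also not expect the recursive second summand to come from the identity term — it comes from the cup-cap term (directly) and from the crossing term (after \cref{oister}).
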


\begin{proof}
    We compute
    \begin{multline*}
        \begin{tikzpicture}[anchorbase]
            \draw[spin] (0,0) -- (0.9,0) arc(-90:90:0.15) -- (0,0.3) arc(90:270:0.15);
            \draw[vec] (0.1,-0.3) -- (0.1,0);
            \draw[vec] (0.2,-1) to[out=up,in=down] (0,-0.6);
            \draw[vec] (0.1,-0.8) node[anchor=east] {\strandlabel{s-1}};
            \draw[vec] (0.2,-0.6) to[out=down,in=down,looseness=1.5] (0.5,-0.6) -- (0.5,0);
            \draw[vec] (0.6,-1) to[out=up,in=down] (0.8,0);
            \altbox{-0.1,-0.6}{0.3,-0.3}{s};
            \altbox{0,-1.3}{0.8,-1}{r};
            \draw[vec] (0.4,-1.3) to[out=down,in=down,looseness=1.5] (1.2,-1.3) -- (1.2,0.3) to[out=up,in=up,looseness=1.5] (0.45,0.3);
        \end{tikzpicture}
        \overset{\cref{asymdef}}{=}
        \frac{1}{[s]}
        \left(
            q^{s-1}
            \begin{tikzpicture}[anchorbase]
                \draw[spin] (0,0) -- (0.9,0) arc(-90:90:0.15) -- (0,0.3) arc(90:270:0.15);
                \draw[vec] (-0.2,-0.3) to[out=up,in=down] (0,0);
                \draw[vec] (0.2,-1) to[out=up,in=down] (-0.2,-0.6);
                \draw[vec] (0.2,0) -- (0.2,-0.6) to[out=down,in=down,looseness=1.5] (0.5,-0.6) -- (0.5,0);
                \draw[vec] (0.6,-1) to[out=up,in=down] (0.8,0);
                \altbox{-0.5,-0.6}{0.1,-0.3}{s-1};
                \altbox{0,-1.3}{0.8,-1}{r};
                \draw[vec] (0.4,-1.3) to[out=down,in=down,looseness=1.5] (1.2,-1.3) -- (1.2,0.3) to[out=up,in=up,looseness=1.5] (0.45,0.3);
            \end{tikzpicture}
            \, -
            [s-1]
            \begin{tikzpicture}[anchorbase]
                \draw[spin] (0,0) -- (0.9,0) arc(-90:90:0.15) -- (0,0.3) arc(90:270:0.15);
                \draw[vec] (-0.2,-0.3) to[out=up,in=down] (0,0);
                \draw[vec] (-0.1,-0.6) to[out=down,in=down,looseness=1.5] (0.55,-0.6) -- (0.55,0);
                \draw[vec] (0.6,-1.8) \braidup (0.8,0);
                \draw[vec] (-0.2,-1.4) \braiddown (0.2,-1.8);
                \draw[vec] (-0.3,-0.6) -- (-0.3,-1.1) node[midway,anchor=east] {\strandlabel{s-2}};
                \draw[wipe] (-0.1,-1.1) \braidup (0.3,-0.6);
                \draw[vec] (-0.1,-1.1) \braidup (0.3,-0.6) -- (0.3,0);
                \altbox{-0.5,-0.6}{0.1,-0.3}{s-1};
                \altbox{-0.5,-1.1}{0.1,-1.4}{s-1};
                \altbox{0,-2.1}{0.8,-1.8}{r};
                \draw[vec] (0.4,-2.1) to[out=down,in=down,looseness=1.5] (1.2,-2.1) -- (1.2,0.3) to[out=up,in=up,looseness=1.5] (0.45,0.3);
            \end{tikzpicture}
            - \frac{q^{s-1}-q^{1-s}}{1+q^{3-2s} \kappa^{-2}}
            \begin{tikzpicture}[anchorbase]
                \draw[spin] (0,0) -- (0.9,0) arc(-90:90:0.15) -- (0,0.3) arc(90:270:0.15);
                \draw[vec] (-0.2,-0.3) to[out=up,in=down] (0,0);
                \draw[vec] (-0.1,-0.6) to[out=down,in=down,looseness=1.5] (0.3,-0.6) -- (0.3,0);
                \draw[vec] (0.6,-1.8) \braidup (0.8,0);
                \draw[vec] (-0.2,-1.4) \braiddown (0.2,-1.8);
                \draw[vec] (-0.3,-0.6) -- (-0.3,-1.1) node[midway,anchor=east] {\strandlabel{s-2}};
                \draw[vec] (-0.1,-1.1) \braidup (0.55,-0.6) -- (0.55,0);
                \altbox{-0.5,-0.6}{0.1,-0.3}{s-1};
                \altbox{-0.5,-1.1}{0.1,-1.4}{s-1};
                \altbox{0,-2.1}{0.8,-1.8}{r};
                \draw[vec] (0.4,-2.1) to[out=down,in=down,looseness=1.5] (1.2,-2.1) -- (1.2,0.3) to[out=up,in=up,looseness=1.5] (0.45,0.3);
            \end{tikzpicture}
        \right)
        \\
        \overset{\cref{asymidem}}{\underset{\cref{bump}}{=}}
        \frac{1}{[s]}
        \left(
            q^{s-1} d_\Vgo\,
            \begin{tikzpicture}[anchorbase]
                \draw[spin] (-0.1,0) -- (0.1,0) arc(-90:90:0.15) -- (-0.1,0.3) arc(90:270:0.15);
                \draw[vec] (0,0) -- (0,-0.65) arc(180:360:0.2) -- (0.4,0.3) arc(0:180:0.2);
                \altbox{-0.2,-0.65}{0.2,-0.35}{r};
            \end{tikzpicture}
            \, -
            [s-1]\,
            \begin{tikzpicture}[anchorbase]
                \draw[vec] (-0.2,-0.3) to[out=up,in=down] (0,0);
                \draw[vec] (0.2,-1.2) \braidup (-0.3,-0.6);
                \draw[vec] (-0.1,-0.95) node[anchor=east] {\strandlabel{s-2}};
                \draw[vec] (0.6,-1.2) \braidup (0.8,0);
                \draw[vec] (0.7,-0.6) node[anchor=west] {\strandlabel{r-s+1}};
                \draw[vec] (-0.1,-0.6) to[out=down,in=down,looseness=1.5] (0.2,-0.6) -- (0.2,-0.4) \braidup (0.5,0);
                \draw[wipe] (0.5,-0.4) \braidup (0.2,0);
                \draw[vec] (0.4,-1.2) \braidup (0.5,-0.4) \braidup (0.2,0);
                \draw[spin] (0,0) -- (0.9,0) arc(-90:90:0.15) -- (0,0.3) arc(90:270:0.15);
                \altbox{-0.5,-0.6}{0.1,-0.3}{s-1};
                \altbox{0,-1.5}{0.8,-1.2}{r};
                \draw[vec] (0.4,-1.5) to[out=down,in=down] (2,-1.5) -- (2,0.3) to[out=up,in=up] (0.45,0.3);
            \end{tikzpicture}
            - \frac{q^{s-1}-q^{1-s}}{1+q^{3-2s} \kappa^{-2}}
            \begin{tikzpicture}[anchorbase]
                \draw[spin] (0,0) -- (0.9,0) arc(-90:90:0.15) -- (0,0.3) arc(90:270:0.15);
                \draw[vec] (0.1,-0.3) -- (0.1,0);
                \draw[vec] (0.2,-1) to[out=up,in=down] (0,-0.6);
                \draw[vec] (0.1,-0.8) node[anchor=east] {\strandlabel{s-2}};
                \draw[vec] (0.2,-0.6) to[out=down,in=down,looseness=1.5] (0.5,-0.6) -- (0.5,0);
                \draw[vec] (0.6,-1) to[out=up,in=down] (0.8,0);
                \altbox{-0.3,-0.6}{0.3,-0.3}{s-1};
                \altbox{0,-1.3}{0.8,-1}{r};
                \draw[vec] (0.4,-1.3) to[out=down,in=down,looseness=1.5] (1.2,-1.3) -- (1.2,0.3) to[out=up,in=up,looseness=1.5] (0.45,0.3);
            \end{tikzpicture}
        \right)
        \\
        \overset{\cref{oister}}{\underset{\cref{asymidem}}{=}}
        \frac{1}{[s]}
        \left(
            \big( q^{s-1} d_\Vgo - [s-1] (\kappa^{-2} + q) \big)\,
            \begin{tikzpicture}[anchorbase]
                \draw[spin] (-0.1,0) -- (0.1,0) arc(-90:90:0.15) -- (-0.1,0.3) arc(90:270:0.15);
                \draw[vec] (0,0) -- (0,-0.65) arc(180:360:0.2) -- (0.4,0.3) arc(0:180:0.2);
                \altbox{-0.2,-0.65}{0.2,-0.35}{r};
            \end{tikzpicture}
            + \left( q[s-1] - \frac{q^{s-1}-q^{1-s}}{1+q^{3-2s} \kappa^{-2}} \right)
            \begin{tikzpicture}[anchorbase]
                \draw[spin] (0,0) -- (0.9,0) arc(-90:90:0.15) -- (0,0.3) arc(90:270:0.15);
                \draw[vec] (0.1,-0.3) -- (0.1,0);
                \draw[vec] (0.2,-1) to[out=up,in=down] (0,-0.6);
                \draw[vec] (0.1,-0.8) node[anchor=east] {\strandlabel{s-2}};
                \draw[vec] (0.2,-0.6) to[out=down,in=down,looseness=1.5] (0.5,-0.6) -- (0.5,0);
                \draw[vec] (0.6,-1) to[out=up,in=down] (0.8,0);
                \altbox{-0.3,-0.6}{0.3,-0.3}{s-1};
                \altbox{0,-1.3}{0.8,-1}{r};
                \draw[vec] (0.4,-1.3) to[out=down,in=down,looseness=1.5] (1.2,-1.3) -- (1.2,0.3) to[out=up,in=up,looseness=1.5] (0.45,0.3);
            \end{tikzpicture}
        \right).
    \end{multline*}
    Then the result follows after simplifying the coefficients.
\end{proof}

\begin{lem}
    For $1 \le s \le r$, we have
    \begin{equation} \label{burrito}
        \begin{tikzpicture}[anchorbase]
            \draw[spin] (0,0) -- (0.9,0) arc(-90:90:0.15) -- (0,0.3) arc(90:270:0.15);
            \draw[vec] (0.1,-0.3) -- (0.1,0);
            \draw[vec] (0.2,-1) to[out=up,in=down] (0,-0.6);
            \draw[vec] (0.1,-0.8) node[anchor=east] {\strandlabel{s-1}};
            \draw[vec] (0.2,-0.6) to[out=down,in=down,looseness=1.5] (0.5,-0.6) -- (0.5,0);
            \draw[vec] (0.6,-1) to[out=up,in=down] (0.8,0);
            \altbox{-0.1,-0.6}{0.3,-0.3}{s};
            \altbox{0,-1.3}{0.8,-1}{r};
            \draw[vec] (0.4,-1.3) to[out=down,in=down,looseness=1.5] (1.2,-1.3) -- (1.2,0.3) to[out=up,in=up,looseness=1.5] (0.45,0.3);
        \end{tikzpicture}
        \, =
        \frac{(q^{-1}\kappa^{-2} + 1)(q^{2-s} - \kappa^2)(q^{2-s} + \kappa^2)}{(q - q^{-1})(q^{3-2s} + \kappa^2)}
        \,
        \begin{tikzpicture}[anchorbase]
            \draw[spin] (-0.1,0) -- (0.1,0) arc(-90:90:0.15) -- (-0.1,0.3) arc(90:270:0.15);
            \draw[vec] (0,0) -- (0,-0.65) arc(180:360:0.2) -- (0.4,0.3) arc(0:180:0.2);
            \altbox{-0.2,-0.65}{0.2,-0.35}{r};
        \end{tikzpicture}
        \ .
    \end{equation}
\end{lem}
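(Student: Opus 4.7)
The plan is to induct on $s$, using the recursion provided by \cref{pork}. Write $A_s$ for the diagram on the left-hand side of \cref{burrito} and $B$ for the closed diagram on the right-hand side, so that \cref{pork} reads
\[
    A_s = \alpha(s)\, B + \beta(s)\, A_{s-1},
\]
with
\(
    \alpha(s) = \tfrac{(q\kappa^2+1)(q^{1-s}\kappa^{-2} - q^{s-2})}{q^s - q^{-s}}
\)
and
\(
    \beta(s) = \tfrac{[s-1]}{[s]}\cdot\tfrac{q^{2s-4}\kappa^2 + q}{q^{2s-3}\kappa^2 + 1}.
\)
Let $\gamma(s)$ denote the scalar on the right-hand side of \cref{burrito}, which one may rewrite as
\(
\gamma(s) = \tfrac{(q^{-1}\kappa^{-2}+1)(q^{4-2s} - \kappa^4)}{(q-q^{-1})(q^{3-2s}+\kappa^2)}
\)
after combining the two factors $(q^{2-s}\pm\kappa^2)$.

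For the base case $s=1$, the factor $[s-1]=[0]=0$ kills the second term of the recursion, so the identity to check reduces to $\alpha(1) = \gamma(1)$. A short expansion shows that both expressions equal $\tfrac{q - \kappa^2 + \kappa^{-2} - q^{-1}}{q-q^{-1}} = d_\Vgo$ (compare \cref{dimrel}).

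For the inductive step, assume $A_{s-1} = \gamma(s-1)\, B$. Substituting this into the recursion from \cref{pork} gives $A_s = \bigl(\alpha(s) + \beta(s)\,\gamma(s-1)\bigr) B$, and it remains to verify the scalar identity
\[
    \alpha(s) + \beta(s)\,\gamma(s-1) = \gamma(s).
\]
This is a rational-function identity in $q$ and $\kappa$ which can be established by clearing denominators (multiplying through by $(q-q^{-1})(q^s-q^{-s})(q^{3-2s}+\kappa^2)(q^{2s-3}\kappa^2+1)$) and expanding the resulting polynomial identity in $q^{\pm 1}$ and $\kappa^{\pm 2}$. A useful sanity check along the way is that the factor $(q^{6-2s}-\kappa^4)$ in the numerator of $\gamma(s-1)$ factors as $(q^{3-s}-\kappa^2)(q^{3-s}+\kappa^2)$, while the denominator $q^{5-2s}+\kappa^2$ shares a cross-term with $\beta(s)$, so there is meaningful cancellation before one has to expand.

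The main obstacle is purely the bookkeeping in this final algebraic verification; the diagrammatic content of the lemma has been fully absorbed into \cref{pork}, and what remains is a scalar computation with no further use of the graphical calculus.
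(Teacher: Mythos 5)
Your proof is correct and follows essentially the same route as the paper: induction on $s$ using \cref{pork} as the recursion, with the inductive step reduced to the scalar identity $\alpha(s)+\beta(s)\gamma(s-1)=\gamma(s)$. The only cosmetic difference is the base case — the paper directly evaluates the $s=1$ diagram via \cref{bump} to get $d_\Vgo B$, whereas you invoke \cref{pork} at $s=1$ and note the $\beta(1)=0$ kills the $A_0$ term; both give $A_1 = d_\Vgo B$ since $\alpha(1)=\gamma(1)=d_\Vgo$ by \cref{dimrel}.
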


\begin{proof}
    We prove the result by induction on $s$.  When $s=1$, \cref{burrito} becomes
    \[
        \begin{tikzpicture}[anchorbase]
            \draw[spin] (0,0) -- (0.9,0) arc(-90:90:0.15) -- (0,0.3) arc(90:270:0.15);
            \draw[vec] (0.1,0) -- (0.1,-0.1) to[out=down,in=down,looseness=1.5] (0.5,-0.1) -- (0.5,0);
            \draw[vec] (0.6,-0.7) to[out=up,in=down] (0.8,0);
            \altbox{0,-1}{0.8,-0.7}{r};
            \draw[vec] (0.4,-1) to[out=down,in=down,looseness=1.5] (1.2,-1) -- (1.2,0.3) to[out=up,in=up,looseness=1.5] (0.45,0.3);
        \end{tikzpicture}
        \, = d_\Vgo\,
        \begin{tikzpicture}[anchorbase]
            \draw[spin] (-0.1,0) -- (0.1,0) arc(-90:90:0.15) -- (-0.1,0.3) arc(90:270:0.15);
            \draw[vec] (0,0) -- (0,-0.65) arc(180:360:0.2) -- (0.4,0.3) arc(0:180:0.2);
            \altbox{-0.2,-0.65}{0.2,-0.35}{r};
        \end{tikzpicture}
        \ ,
    \]
    which holds by \cref{bump}.  The inductive step follows from \cref{pork} and the fact that
    \begin{multline*}
        \frac{(q\kappa^2 + 1)(q^{1-s} \kappa^{-2} - q^{s-2})}{q^s - q^{-s}}
        \\
        + \frac{[s-1]}{[s]} \frac{q^{2s-4} \kappa^2 + q}{q^{2s-3} \kappa^2 + 1} \frac{(q^{-1}\kappa^{-2} + 1)(q^{2-(s-1)} - \kappa^2)(q^{2-(s-1)} + \kappa^2)}{(q - q^{-1})(q^{3-2(s-1))} + \kappa^2)}
        \\
        = \frac{(q^{-1}\kappa^{-2} + 1)(q^{2-s} - \kappa^2)(q^{2-s} + \kappa^2)}{(q - q^{-1})(q^{3-2s} + \kappa^2)}.
        \qedhere
    \end{multline*}
\end{proof}

\begin{prop} \label{royal}
    For $r \ge 0$, we have
    \begin{equation} \label{pain}
        \begin{tikzpicture}[anchorbase]
            \draw[spin] (-0.1,0) -- (0.1,0) arc(-90:90:0.15) -- (-0.1,0.3) arc(90:270:0.15);
            \draw[vec] (0,0) -- (0,-0.65) arc(180:360:0.25) -- (0.5,0.3) arc(0:180:0.25);
            \altbox{-0.3,-0.65}{0.3,-0.35}{r+1};
        \end{tikzpicture}
        = \frac{(q\kappa^2+1)(\kappa^{-2}-q^{2r-2}\kappa^2)}{(q-q^{-1})(q^{2r-1}\kappa^2+1)}\
        \begin{tikzpicture}[anchorbase]
            \draw[spin] (-0.1,0) -- (0.1,0) arc(-90:90:0.15) -- (-0.1,0.3) arc(90:270:0.15);
            \draw[vec] (0,0) -- (0,-0.65) arc(180:360:0.2) -- (0.4,0.3) arc(0:180:0.2);
            \altbox{-0.2,-0.65}{0.2,-0.35}{r};
        \end{tikzpicture}
        = \left( d_\Vgo - \frac{q^{r-1}[r](q\kappa^2+1)}{q^{2r-1}\kappa^2+1} \right)
        \begin{tikzpicture}[anchorbase]
            \draw[spin] (-0.1,0) -- (0.1,0) arc(-90:90:0.15) -- (-0.1,0.3) arc(90:270:0.15);
            \draw[vec] (0,0) -- (0,-0.65) arc(180:360:0.2) -- (0.4,0.3) arc(0:180:0.2);
            \altbox{-0.2,-0.65}{0.2,-0.35}{r};
        \end{tikzpicture}
        \ .
    \end{equation}
\end{prop}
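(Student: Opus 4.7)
The plan is to apply the recursion \cref{asymdef} for the $(r+1)$-strand quantum antisymmetrizer to the LHS of \cref{pain}. This rewrites that antisymmetrizer as a linear combination of three pieces, each involving the $r$-strand antisymmetrizer together with either the identity, a positive crossing on the last two strands, or a cup-cap on the last two strands. Substituting into the LHS of \cref{pain} produces three summands $T_1$, $T_2$, $T_3$. Let me write $X_r$ for the diagram on the RHS of \cref{pain} (so that the claimed identity is $X_{r+1} = \lambda_r X_r$), and let $c_r = (q^r - q^{-r})/(1 + q^{1-2r}\kappa^{-2})$ be the coefficient of the cup-cap term in the recursion. The goal is to express each $T_i$ as a scalar multiple of $X_r$ and to verify that the three scalars add up to $\lambda_r$.

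For $T_1$: the $(r+1)$-st strand is no longer antisymmetrized and so forms an independent vec bump on the spin loop. By \cref{bump} this bump evaluates to $d_\Vgo$ times the remaining diagram, which is exactly $X_r$. Hence $T_1 = \tfrac{q^r d_\Vgo}{[r+1]} X_r$. For $T_3$: the key observation is that the looped diagram is invariant under vertical reflection. Indeed, the antisymmetrizer is self-dual by \cref{asymbend}, the cup-cap is self-dual, and the loop closure is vertically symmetric; only the positive crossing changes, being swapped with its inverse. Consequently the loop of $(\text{antisymmetrizer})\,\sigma_r\,(\text{antisymmetrizer})$ equals the loop of $(\text{antisymmetrizer})\,\sigma_r^{-1}\,(\text{antisymmetrizer})$. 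Applying the skein relation \cref{skein} sandwiched between two antisymmetrizers and using \cref{bump} to evaluate the resulting identity term then forces the loop of the cup-cap expression to be $d_\Vgo X_r$. So $T_3 = -\tfrac{c_r d_\Vgo}{[r+1]} X_r$.

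For $T_2$: this is the principal difficulty. I would evaluate the crossing term by appealing to \cref{burrito}. After sliding the crossing past the trivalent vec-spin vertices near the spin loop using \cref{typhoon}, and inserting additional copies of the antisymmetrizer via the idempotency \cref{asymidem}, the resulting diagram can be recognised as an instance of the LHS of \cref{burrito} with inner box of size $s=r$ (which then simplifies via \cref{asymidem} as well). Applying \cref{burrito} expresses $T_2$ as an explicit rational coefficient times $X_r$. The main obstacle is the coefficient bookkeeping: the denominator $q^{3-2r}\kappa^2 + 1$ appearing in \cref{burrito} at $s = r$ must combine with the denominator $1 + q^{1-2r}\kappa^{-2}$ in $c_r$ and the factor $[r+1]$ from the recursion to produce the single denominator $q^{2r-1}\kappa^2 + 1$ of $\lambda_r$. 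Once this is carried out, the sum $T_1 + T_2 + T_3 = \lambda_r X_r$, and one can cross-check against the equivalent form $\lambda_r = d_\Vgo - q^{r-1}[r](q\kappa^2 + 1)/(q^{2r-1}\kappa^2 + 1)$ stated as the second equality in \cref{pain}.
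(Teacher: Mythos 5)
Your overall skeleton---apply the recursion \cref{asymdef} to the $(r+1)$-strand antisymmetrizer, evaluate the identity term $T_1$ via \cref{bump}, and feed the rest into \cref{burrito}---is the same one the paper uses. But your treatment of both the crossing term $T_2$ and the cup-cap term $T_3$ breaks down.

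The central error is the claim that the closed cup-cap term equals $d_\Vgo X_r$. You derive this from the assertion that the closed crossing diagram is invariant under $\sigma_r\leftrightarrow\sigma_r^{-1}$ thanks to a ``vertical reflection'' symmetry, so that the skein relation \cref{skein} then forces $D(\cupcap)=D(\mathrm{id})=d_\Vgo X_r$. No such symmetry exists at fixed parameters. The anti-automorphism $\Omega_\updownarrow$ of \cref{horizon}, which is the categorical version of a vertical flip, sends $\poscross{vec}{vec}\mapsto\poscross{vec}{vec}$ and thus does \emph{not} swap crossing types; the functor that does swap crossings is the bar involution of \cref{barinv}, but it simultaneously inverts $q$, $t$, and $\kappa$, so it only tells you that the scalar value of the positive-crossing loop computed in $\QSB(q,t,\kappa,d_\Sgo)$ equals the scalar value of the negative-crossing loop computed in $\QSB(q^{-1},t^{-1},\kappa^{-1},d_\Sgo)$---not a comparison within a single category. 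Concretely, after closing off, $T_3$ is precisely the left-hand side of \cref{burrito} with $s=r$, which evaluates to $\tfrac{(q^{-1}\kappa^{-2}+1)(q^{2-r}-\kappa^2)(q^{2-r}+\kappa^2)}{(q-q^{-1})(q^{3-2r}+\kappa^2)}X_r$. This coincides with $d_\Vgo X_r$ only when $r=1$. Already at $r=2$ the coefficient simplifies to $d_\Vgo-1$, and specializing $\kappa^2=q$ gives $-1$ there whereas $d_\Vgo=0$, so your identity fails.

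There is a second gap in the handling of $T_2$. You propose to apply \cref{burrito} directly to the closed crossing diagram after sliding it past the trivalent vertices via \cref{typhoon} and reinserting antisymmetrizers via \cref{asymidem}. But \cref{burrito} is a statement about the cup-cap configuration, and neither \cref{typhoon} nor \cref{asymidem} converts a crossing into a cup-cap. The relation that does perform this conversion, once the crossing is sitting next to the trivalent merge on the spin loop, is \cref{oist}/\cref{oister}: it expresses that local configuration as an identity-shaped piece plus a cup-shaped piece. The paper applies \cref{oister} to $T_2$ in exactly this way, producing an extra $-[r](\kappa^{-2}+q)$ contribution to the coefficient of $X_r$ together with an additional cup-cap contribution with coefficient $q[r]$; it is this $q[r]$ that combines with the $-c_r$ from $T_3$ before \cref{burrito} at $s=r$ is finally invoked. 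The $q[r]$ term is precisely what your false symmetry argument was supposed to account for, which is why your tally cannot possibly produce the stated $\lambda_r$ for general $r$.
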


\begin{proof}
    We compute
    \begin{multline*}
        \begin{tikzpicture}[anchorbase]
            \draw[spin] (-0.1,0) -- (0.1,0) arc(-90:90:0.15) -- (-0.1,0.3) arc(90:270:0.15);
            \draw[vec] (0,0) -- (0,-0.65) arc(180:360:0.25) -- (0.5,0.3) arc(0:180:0.25);
            \altbox{-0.3,-0.65}{0.3,-0.35}{r+1};
        \end{tikzpicture}
        \overset{\cref{asymdef}}{=}
        \frac{1}{[r+1]}
        \left(
            q^r\,
            \begin{tikzpicture}[anchorbase]
                \draw[spin] (-0.1,0) -- (0.4,0) arc(-90:90:0.15) -- (-0.1,0.3) arc(90:270:0.15);
                \draw[vec] (0,0) -- (0,-0.35);
                \draw[vec] (0,-0.6) to[out=down,in=down] (0.9,-0.6) -- (0.9,0.3) to[out=up,in=up,looseness=1.5] (0,0.3);
                \draw[vec] (0.3,0) -- (0.3,-0.2) to[out=down,in=down] (0.7,-0.2) -- (0.7,0.3) to[out=up,in=up] (0.3,0.3);
                \altbox{-0.2,-0.65}{0.2,-0.35}{r};
            \end{tikzpicture}
            - [r]
            \begin{tikzpicture}[anchorbase]
                \draw[spin] (-0.5,0) -- (0.4,0) arc(-90:90:0.15) -- (-0.5,0.3) arc(90:270:0.15);
                \draw[vec] (-0.2,-0.3) -- (-0.2,0);
                \draw[vec] (-0.1,-0.6) \braiddown (0.3,-1.1) -- (0.3,-1.3) to[out=down,in=down,looseness=1.5] (0.7,-1.3) -- (0.7,0.3) to[out=up,in=up,looseness=1.5] (0.3,0.3);
                \draw[vec] (-0.2,-1.4) to[out=down,in=down,looseness=1.5] (0.9,-1.4) -- (0.9,0.3) to[out=up,in=up,looseness=1.3] (0,0.3);
                \draw[vec] (-0.3,-0.6) -- (-0.3,-1.1) node[midway,anchor=east] {\strandlabel{r-1}};
                \draw[wipe] (-0.1,-1.1) \braidup (0.3,-0.6);
                \draw[vec] (-0.1,-1.1) \braidup (0.3,-0.6) -- (0.3,0);
                \altbox{-0.5,-0.6}{0.1,-0.3}{r};
                \altbox{-0.5,-1.1}{0.1,-1.4}{r};
            \end{tikzpicture}
            - \frac{q^r-q^{-r}}{1+q^{1-2r}\kappa^{-2}}
            \begin{tikzpicture}[anchorbase]
                \draw[spin] (-0.5,0) -- (0.4,0) arc(-90:90:0.15) -- (-0.5,0.3) arc(90:270:0.15);
                \draw[vec] (-0.2,-0.3) -- (-0.2,0);
                \draw[vec] (-0.1,-0.6) to[out=down,in=down,looseness=1.5] (0.3,-0.6) -- (0.3,0);
                \draw[vec] (-0.1,-1.1) to[out=up,in=up,looseness=1.5] (0.3,-1.1) -- (0.3,-1.3) to[out=down,in=down,looseness=1.5] (0.7,-1.3) -- (0.7,0.3) to[out=up,in=up,looseness=1.5] (0.3,0.3);
                \draw[vec] (-0.2,-1.4) to[out=down,in=down,looseness=1.5] (0.9,-1.4) -- (0.9,0.3) to[out=up,in=up,looseness=1.3] (0,0.3);
                \draw[vec] (-0.3,-0.6) -- (-0.3,-1.1) node[midway,anchor=east] {\strandlabel{r-1}};
                \altbox{-0.5,-0.6}{0.1,-0.3}{r};
                \altbox{-0.5,-1.1}{0.1,-1.4}{r};
            \end{tikzpicture}
        \right)
        \\
        \overset{\cref{bump}}{=}
        \frac{1}{[r+1]}
        \left(
            q^r d_\Vgo
            \begin{tikzpicture}[anchorbase]
                \draw[spin] (-0.1,0) -- (0.1,0) arc(-90:90:0.15) -- (-0.1,0.3) arc(90:270:0.15);
                \draw[vec] (0,0) -- (0,-0.65) arc(180:360:0.2) -- (0.4,0.3) arc(0:180:0.2);
                \altbox{-0.2,-0.65}{0.2,-0.35}{r};
            \end{tikzpicture}
            \, - [r]
            \begin{tikzpicture}[anchorbase]
                \draw[vec] (-0.2,-0.3) -- (-0.2,0);
                \draw[vec] (-0.1,-0.6) to[out=down,in=down,looseness=1.5] (0.3,-0.6) \braidup (0.6,0);
                \draw[wipe] (0.6,-0.6) \braidup (0.3,0);
                \draw[vec] (-0.1,-1.2) \braidup (0.6,-0.6) \braidup (0.3,0);
                \draw[vec] (-0.2,-1.5) to[out=down,in=down,looseness=1.5] (1,-1.5) -- (1,0.3) to[out=up,in=up,looseness=1.3] (0,0.3);
                \draw[vec] (-0.3,-0.6) -- (-0.3,-1.2) node[midway,anchor=east] {\strandlabel{r-1}};
                \altbox{-0.5,-0.6}{0.1,-0.3}{r};
                \altbox{-0.5,-1.2}{0.1,-1.5}{r};
                \draw[spin] (-0.5,0) -- (0.7,0) arc(-90:90:0.15) -- (-0.5,0.3) arc(90:270:0.15);
            \end{tikzpicture}
            - \frac{q^r-q^{-r}}{1+q^{1-2r}\kappa^{-2}}
            \begin{tikzpicture}[anchorbase]
                \draw[spin] (0,0) -- (0.9,0) arc(-90:90:0.15) -- (0,0.3) arc(90:270:0.15);
                \draw[vec] (0.1,-0.3) -- (0.1,0);
                \draw[vec] (0.2,-1) to[out=up,in=down] (0,-0.6);
                \draw[vec] (0.1,-0.8) node[anchor=east] {\strandlabel{r-1}};
                \draw[vec] (0.2,-0.6) to[out=down,in=down,looseness=1.5] (0.5,-0.6) -- (0.5,0);
                \draw[vec] (0.6,-1) to[out=up,in=down] (0.8,0);
                \altbox{-0.1,-0.6}{0.3,-0.3}{r};
                \altbox{0,-1.3}{0.8,-1}{r};
                \draw[vec] (0.4,-1.3) to[out=down,in=down,looseness=1.5] (1.2,-1.3) -- (1.2,0.3) to[out=up,in=up,looseness=1.5] (0.45,0.3);
            \end{tikzpicture}
        \right)
        \\
        \overset{\cref{oister}}{\underset{\cref{asymidem}}{=}}
        \frac{1}{[r+1]}
        \left(
            \big( q^r d_\Vgo - [r] (\kappa^{-2}+q) \big)
            \begin{tikzpicture}[anchorbase]
                \draw[spin] (-0.1,0) -- (0.1,0) arc(-90:90:0.15) -- (-0.1,0.3) arc(90:270:0.15);
                \draw[vec] (0,0) -- (0,-0.65) arc(180:360:0.2) -- (0.4,0.3) arc(0:180:0.2);
                \altbox{-0.2,-0.65}{0.2,-0.35}{r};
            \end{tikzpicture}
            \, +
            \left(
                q[r] - \frac{q^r-q^{-r}}{1+q^{1-2r}\kappa^{-2}}
            \right)
            \begin{tikzpicture}[anchorbase]
                \draw[spin] (0,0) -- (0.9,0) arc(-90:90:0.15) -- (0,0.3) arc(90:270:0.15);
                \draw[vec] (0.1,-0.3) -- (0.1,0);
                \draw[vec] (0.2,-1) to[out=up,in=down] (0,-0.6);
                \draw[vec] (0.1,-0.8) node[anchor=east] {\strandlabel{r-1}};
                \draw[vec] (0.2,-0.6) to[out=down,in=down,looseness=1.5] (0.5,-0.6) -- (0.5,0);
                \draw[vec] (0.6,-1) to[out=up,in=down] (0.8,0);
                \altbox{-0.1,-0.6}{0.3,-0.3}{r};
                \altbox{0,-1.3}{0.8,-1}{r};
                \draw[vec] (0.4,-1.3) to[out=down,in=down,looseness=1.5] (1.2,-1.3) -- (1.2,0.3) to[out=up,in=up,looseness=1.5] (0.45,0.3);
            \end{tikzpicture}
        \right).
    \end{multline*}
    The result then follows after applying the $s=r$ case of \cref{burrito} and simplifying the resulting coefficient.
\end{proof}

\begin{cor}
    We have
    \begin{equation}
        \begin{tikzpicture}[anchorbase]
            \draw[spin] (-0.1,0) -- (0.1,0) arc(-90:90:0.15) -- (-0.1,0.3) arc(90:270:0.15);
            \draw[vec] (0,0) -- (0,-0.65) arc(180:360:0.2) -- (0.4,0.3) arc(0:180:0.2);
            \altbox{-0.2,-0.65}{0.2,-0.35}{r};
        \end{tikzpicture}
        = \left( d_\Sgo \prod_{i=0}^{r-1}\left( d_\Vgo - \frac{q^{r-1}[r](q\kappa^2+1)}{q^{2r-1}\kappa^2+1} \right) \right) 1_\one,
        \qquad r \in \N,
    \end{equation}
    where we interpret the right-hand side as $d_\Sgo 1_\one$ when $r=0$.
\end{cor}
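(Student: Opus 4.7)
The plan is to prove this by a straightforward induction on $r$, using \cref{royal} as the driving recursion. Let $E_r$ denote the scalar endomorphism of $\one$ given by the left-hand side (the spin loop encircling the vector loop capped by the $r$-strand antisymmetrizer). The goal is to show that $E_r$ is the stated product times $1_\one$.

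For the base case $r=0$, the antisymmetrizer is $1_\one$ by \cref{asymbase}, and the vector loop disappears (by \cref{asymidem} together with an empty bundle), leaving only the spin loop. This equals $d_\Sgo 1_\one$ by the first equality in \cref{dimrel}, matching the right-hand side (which is interpreted as $d_\Sgo 1_\one$ at $r=0$).

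For the inductive step, \cref{royal} with $r$ replaced by $r-1$ gives
\[
    E_r \;=\; \left( d_\Vgo - \frac{q^{r-2}[r-1](q\kappa^2+1)}{q^{2r-3}\kappa^2+1} \right) E_{r-1}.
\]
Applying the induction hypothesis to $E_{r-1}$ then shifting the product index by one yields the claimed formula. The only thing to verify is the index bookkeeping: the factor produced at the $i$-th application of \cref{royal} is precisely the $i$-th factor of the product, so iterating from $E_r$ down to $E_0$ accumulates exactly the product $\prod_{i=0}^{r-1}$ of the stated coefficients, multiplied by $d_\Sgo$ from the base case.

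There is no substantive obstacle here; the entire content of the corollary lies in \cref{royal}, and the corollary is simply the iterated form of that recursion combined with the evaluation of the spin bubble via \cref{dimrel}.
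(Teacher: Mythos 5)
Your proof coincides with the paper's, which is exactly the stated ``straightforward induction'' using \cref{royal} together with the first relation in \cref{dimrel}; you simply spell out the base case and the appending of one factor per step.

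One caveat is worth flagging. The corollary as printed has $r$ rather than $i$ in the three places inside the product, so taken literally it asserts a constant factor raised to the $r$-th power. That cannot be what iterating \cref{royal} produces: at $r=1$ the recursion yields $d_\Sgo d_\Vgo$ (the $r=0$ case of \cref{royal} has $[0]=0$, so the correction term vanishes), whereas the printed expression evaluates to $d_\Sgo\left(d_\Vgo - 1\right)$ since $[1]=1$ and $q^{r-1}=1$, so the fraction collapses to $1$. Your induction — and what the authors surely intend — establishes the corrected formula
\[
    d_\Sgo \prod_{i=0}^{r-1}\left( d_\Vgo - \frac{q^{i-1}[i](q\kappa^2+1)}{q^{2i-1}\kappa^2+1} \right) 1_\one,
\]
where each factor is the coefficient appearing in \cref{royal} at level $i$. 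Your step ``shifting the product index by one'' is better described as appending the new factor $c_{r-1}$ from \cref{royal} to the product obtained from the induction hypothesis for $E_{r-1}$; with the indices read as $i$, the bookkeeping closes correctly.
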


\begin{proof}
    This is a straightforward induction, using \cref{royal} and the first relation in \cref{dimrel}.
\end{proof}

We conclude this section with a lemma that will be used in \cref{sec:full}.

\begin{lem}
    We have
    \begin{gather} \label{trace1}
        \begin{tikzpicture}[centerzero]
            \draw[spin] (-0.15,-0.15) -- (-0.15,0.15) arc(180:0:0.15) -- (0.15,-0.15) arc(360:180:0.15);
            \draw[spin] (-0.5,-0.15) -- (-0.5,0.15) arc(180:0:0.5) -- (0.5,-0.15) arc(360:180:0.5);
            \draw[vec]  (-0.5,0) -- (-0.15,0);
        \end{tikzpicture}
        =
        0,
        \\ \label{trace2}
        \begin{tikzpicture}[centerzero]
            \draw[spin] (-0.15,-0.15) -- (-0.15,0.15) arc(180:0:0.15) -- (0.15,-0.15) arc(360:180:0.15);
            \draw[spin] (-0.5,-0.15) -- (-0.5,0.15) arc(180:0:0.5) -- (0.5,-0.15) arc(360:180:0.5);
            \draw[vec]  (-0.5,0.15) -- (-0.15,0.15);
            \draw[vec]  (-0.5,-0.15) -- (-0.15,-0.15);
        \end{tikzpicture}
        = d_\Vgo d_\Sgo^2.
    \end{gather}
\end{lem}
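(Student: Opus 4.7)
For the first identity \cref{trace1}, my plan is to cut the diagram along the vec strand, expressing the scalar as a composition $\epsilon_1 \circ \eta_1$, where $\eta_1 \colon \one \to \Vgo$ is the inner portion (a spin loop with a vec strand emerging).  By the pivotal structure of $\QSB$, the morphism $\eta_1$ is the pivotal dual of the $r = 1$ case of \cref{Deligne} (in which the antisymmetrizer on one strand is the identity), and hence $\eta_1 = 0$.  Composing with $\epsilon_1$ then kills the scalar.

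For the second identity \cref{trace2}, write the scalar as $B := \epsilon \circ \eta$, where $\eta \colon \one \to \Vgo \otimes \Vgo$ is the inner portion (inner spin loop with two vec strands emerging) and $\epsilon \colon \Vgo \otimes \Vgo \to \one$ is the outer portion, and set $B_\times := \epsilon \circ \poscross{vec}{vec} \circ \eta$.  A preliminary application of \cref{bump} to a spin loop with a vec bubble attached yields $\capmor{vec} \circ \eta = d_\Vgo d_\Sgo$ and, dually, $\epsilon \circ \cupmor{vec} = d_\Vgo d_\Sgo$.

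The plan is to derive two linear equations in $B$ and $B_\times$ and solve.  The first comes from applying \cref{oist}, via pivotal rotation, to the inner loop's emission of the two vec strands: the cap on the two vecs appearing in the RHS of \cref{oist} joins them on the inner side, leaving a single vec arc attached to the outer spin loop at two points (a vec bubble on the outer loop), which evaluates via \cref{bump} to $d_\Vgo d_\Sgo^2$.  This gives
\[
    B + q B_\times = (q\kappa^2 + 1)\, d_\Vgo d_\Sgo^2.
\]
The second equation comes from the $r = 2$ case of \cref{Deligne}.  Expanding the antisymmetrizer via \cref{ski}, composing with $\epsilon$, and substituting the preliminary values gives
\[
    q B - B_\times = \frac{(q - q^{-1})\, d_\Vgo^2 d_\Sgo^2}{1 + q^{-1}\kappa^{-2}}.
\]
Eliminating $B_\times$ from these two equations and simplifying using the defining formula $d_\Vgo = \frac{\kappa^{-2} - \kappa^2}{q - q^{-1}} + 1$ will then yield $B = d_\Vgo d_\Sgo^2$.

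The main technical obstacle will be correctly identifying where to apply \cref{oist} within the diagram and carefully evaluating the resulting cup configuration as a vec bubble on the outer loop via \cref{bump}; once the two linear equations are in place, the remaining algebra is routine.
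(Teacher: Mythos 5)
Your proposal is correct and essentially reproduces the paper's proof: \cref{trace1} via the $r=1$ case of \cref{Deligne}, and \cref{trace2} by combining the $r=2$ case of \cref{Deligne} (expanded through \cref{ski} and \cref{bump}) with \cref{oist} to obtain two linear relations in the uncrossed and crossed configurations, which solve to give $d_\Vgo d_\Sgo^2$. The only cosmetic difference is that you compose with the outer spin loop at the outset to work with scalars $B$, $B_\times$, whereas the paper first establishes the corresponding identity in $\Hom(\Vgo^{\otimes 2}, \one)$ and attaches the outer loop at the very end; also note that the final cancellation requires $q^2 \neq -1$.
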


\begin{proof}
    Equation \cref{trace1} follows from the $r=1$ case of \cref{Delprop}.  To prove \cref{trace2}, we first compute
    \begin{equation} \label{pink1}
        0
        \overset{\cref{Deligne}}{=} [2]
        \
        \begin{tikzpicture}[anchorbase]
            \draw[spin] (-0.1,0) -- (0.1,0) arc(-90:90:0.15) -- (-0.1,0.3) arc(90:270:0.15);
            \draw[vec] (0,-1) -- (0,0);
            \altbox{-0.2,-0.65}{0.2,-0.35}{2};
        \end{tikzpicture}
        \overset{\cref{ski}}{=}
        q\
        \begin{tikzpicture}[anchorbase]
            \draw[spin] (-0.2,0.4) -- (0.2,0.4) arc(-90:90:0.15) -- (-0.2,0.7) arc(90:270:0.15);
            \draw[vec] (-0.15,-0.4) -- (-0.15,0.4);
            \draw[vec] (0.15,-0.4) -- (0.15,0.4);
        \end{tikzpicture}
        -
        \begin{tikzpicture}[anchorbase]
            \draw[vec] (0.15,-0.4) -- (-0.15,0.4);
            \draw[wipe] (-0.15,-0.4) -- (0.15,0.4);
            \draw[vec] (-0.15,-0.4) -- (0.15,0.4);
            \draw[spin] (-0.2,0.4) -- (0.2,0.4) arc(-90:90:0.15) -- (-0.2,0.7) arc(90:270:0.15);
        \end{tikzpicture}
        - \frac{q-q^{-1}}{1+q^{-1}\kappa^{-2}}\
        \begin{tikzpicture}[anchorbase]
            \draw[spin] (-0.2,0.4) -- (0.2,0.4) arc(-90:90:0.15) -- (-0.2,0.7) arc(90:270:0.15);
            \draw[vec] (-0.15,-0.4) -- (-0.15,-0.3) arc(180:0:0.15) -- (0.15,-0.4);
            \draw[vec] (-0.15,0.4) -- (-0.15,0.3) arc(180:360:0.15) -- (0.15,0.4);
        \end{tikzpicture}
        \overset{\cref{bump}}{=}
        q\
        \begin{tikzpicture}[anchorbase]
            \draw[spin] (-0.2,0.4) -- (0.2,0.4) arc(-90:90:0.15) -- (-0.2,0.7) arc(90:270:0.15);
            \draw[vec] (-0.15,-0.4) -- (-0.15,0.4);
            \draw[vec] (0.15,-0.4) -- (0.15,0.4);
        \end{tikzpicture}
        -
        \begin{tikzpicture}[anchorbase]
            \draw[vec] (0.15,-0.4) -- (-0.15,0.4);
            \draw[wipe] (-0.15,-0.4) -- (0.15,0.4);
            \draw[vec] (-0.15,-0.4) -- (0.15,0.4);
            \draw[spin] (-0.2,0.4) -- (0.2,0.4) arc(-90:90:0.15) -- (-0.2,0.7) arc(90:270:0.15);
        \end{tikzpicture}
        - \frac{q-q^{-1}}{1+q^{-1}\kappa^{-2}} d_\Vgo\
        \begin{tikzpicture}[anchorbase]
            \draw[spin] (-0.2,0.4) -- (0.2,0.4) arc(-90:90:0.15) -- (-0.2,0.7) arc(90:270:0.15);
            \draw[vec] (-0.15,-0.2) -- (-0.15,-0.1) arc(180:0:0.15) -- (0.15,-0.2);
        \end{tikzpicture}
        .
    \end{equation}
    On the other hand, we also have
    \begin{equation} \label{pink2}
        \begin{tikzpicture}[anchorbase]
            \draw[spin] (-0.2,0.4) -- (0.2,0.4) arc(-90:90:0.15) -- (-0.2,0.7) arc(90:270:0.15);
            \draw[vec] (-0.15,-0.4) -- (-0.15,0.4);
            \draw[vec] (0.15,-0.4) -- (0.15,0.4);
        \end{tikzpicture}
        + q\
        \begin{tikzpicture}[anchorbase]
            \draw[vec] (0.15,-0.4) -- (-0.15,0.4);
            \draw[wipe] (-0.15,-0.4) -- (0.15,0.4);
            \draw[vec] (-0.15,-0.4) -- (0.15,0.4);
            \draw[spin] (-0.2,0.4) -- (0.2,0.4) arc(-90:90:0.15) -- (-0.2,0.7) arc(90:270:0.15);
        \end{tikzpicture}
        \overset{\cref{oist}}{=} (q \kappa^2 + 1)\
        \begin{tikzpicture}[anchorbase]
            \draw[spin] (-0.2,0.4) -- (0.2,0.4) arc(-90:90:0.15) -- (-0.2,0.7) arc(90:270:0.15);
            \draw[vec] (-0.15,-0.2) -- (-0.15,-0.1) arc(180:0:0.15) -- (0.15,-0.2);
        \end{tikzpicture}
        .
    \end{equation}
    Adding $q$ times \cref{pink1} to \cref{pink2} gives
    \begin{equation} \label{pink3}
        (1+q^2)\
        \begin{tikzpicture}[anchorbase]
            \draw[spin] (-0.2,0.4) -- (0.2,0.4) arc(-90:90:0.15) -- (-0.2,0.7) arc(90:270:0.15);
            \draw[vec] (-0.15,-0.2) -- (-0.15,0.4);
            \draw[vec] (0.15,-0.2) -- (0.15,0.4);
        \end{tikzpicture}
        = \left( q \kappa^2 + 1 + \frac{q^2-1}{1+q^{-1}\kappa^{-2}} d_\Vgo \right)
        \begin{tikzpicture}[anchorbase]
            \draw[spin] (-0.2,0.4) -- (0.2,0.4) arc(-90:90:0.15) -- (-0.2,0.7) arc(90:270:0.15);
            \draw[vec] (-0.15,-0.2) -- (-0.15,-0.1) arc(180:0:0.15) -- (0.15,-0.2);
        \end{tikzpicture}
        \overset{\cref{dimrel}}{=} \left( q \kappa^2 + 1 + \frac{q^2-1}{1+q^{-1}\kappa^{-2}} d_\Vgo \right) d_\Sgo\ \capmor{vec}\, .
    \end{equation}
    We also have
    \[
        q \kappa^2 + 1 + \frac{q^2-1}{1+q^{-1}\kappa^{-2}} d_\Vgo
        \overset{\cref{dimrel}}{=} q\kappa^2+1+q^2\kappa^2(\kappa^{-2}-q\inv) = 1+q^2.
    \]
    Since $q^2\neq -1$, we can cancel the common factor in \cref{pink3} to get
    \[
        \begin{tikzpicture}[anchorbase]
            \draw[spin] (-0.2,0.4) -- (0.2,0.4) arc(-90:90:0.15) -- (-0.2,0.7) arc(90:270:0.15);
            \draw[vec] (-0.15,-0.2) -- (-0.15,0.4);
            \draw[vec] (0.15,-0.2) -- (0.15,0.4);
        \end{tikzpicture} = d_\Sgo\ \capmor{vec}\, .
    \]
    Therefore
    \[
        \begin{tikzpicture}[centerzero]
            \draw[spin] (-0.15,-0.15) -- (-0.15,0.15) arc(180:0:0.15) -- (0.15,-0.15) arc(360:180:0.15);
            \draw[spin] (-0.5,-0.15) -- (-0.5,0.15) arc(180:0:0.5) -- (0.5,-0.15) arc(360:180:0.5);
            \draw[vec]  (-0.5,0.15) -- (-0.15,0.15);
            \draw[vec]  (-0.5,-0.15) -- (-0.15,-0.15);
        \end{tikzpicture}
        = d_\Sgo \
        \begin{tikzpicture}[centerzero]
            \draw[spin] (-0.15,-0.15) -- (-0.15,0.15) arc(180:0:0.15) -- (0.15,-0.15) arc(360:180:0.15);
            \draw[spin] (-0.5,-0.15) -- (-0.5,0.15) arc(180:0:0.5) -- (0.5,-0.15) arc(360:180:0.5);
            \draw[vec] (-0.5,0.15) -- (-0.4,0.15) arc (90:-90:0.15) -- (-0.5,-0.15);
        \end{tikzpicture}
        \overset{\cref{bump}}{\underset{\cref{dimrel}}{=}} d_\Vgo d_\Sgo^2.
        \qedhere
    \]
\end{proof}

%=========================================
\section{The quantized enveloping algebra}
%=========================================

In this section and the next, we collect important definitions and properties of the quantized enveloping algebras associated to the orthogonal groups $\rO(N)$.  Throughout this section we work over the field
\[
    \kk = \C \big( q^{\pm \frac{1}{4}} \big).
\]
Recall that $n = \lfloor \frac{N}{2} \rfloor$, so that $N=2n$ (type $D_n$) or $N=2n+1$ (type $B_n$).

%---------------------------------------------------------------------------
\subsection{Definition of the quantized enveloping algebra\label{subsec:Uq}}
%---------------------------------------------------------------------------

We briefly recall the definition of the quantized enveloping algebra in types $B_n$ and $D_n$.  We choose the following labelling of the nodes of the Dynkin diagrams:
\begin{equation} \label{Dynkin}
    B_n :
    \begin{tikzpicture}[centerzero]
        \draw[style=double,double distance=2pt] (1,0) -- (0,0);
        \draw[style=double,double distance=2pt,-{Classical TikZ Rightarrow[length=3mm,width=4mm]}] (1,0) -- (0.35,0);
        \filldraw (0,0) circle (2pt) node[anchor=south] {$1$};
        \filldraw (1,0) circle (2pt) node[anchor=south] {$2$};
        \filldraw (2,0) circle (2pt) node[anchor=south] {$3$};
        \filldraw (3.5,0) circle (2pt) node[anchor=south] {$n$};
        \draw (1,0) -- (2.25,0);
        \draw (3.25,0) -- (3.5,0);
        \draw[dotted] (2.25,0) -- (3.25,0);
    \end{tikzpicture}
    \qquad \qquad
    D_n :
    \begin{tikzpicture}[centerzero]
        \filldraw (0,-0.5) circle (2pt) node[anchor=north] {$1$};
        \filldraw (0,0.5) circle (2pt) node[anchor=south] {$2$};
        \filldraw (1,0) circle (2pt) node[anchor=south] {$3$};
        \filldraw (2,0) circle (2pt) node[anchor=south] {$4$};
        \filldraw (3.5,0) circle (2pt) node[anchor=south] {$n$};
        \draw (0,-0.5) -- (1,0) -- (2,0) -- (2.25,0);
        \draw (3.25,0) -- (3.5,0);
        \draw (0,0.5) -- (1,0);
        \draw[dotted] (2.25,0) -- (3.25,0);
    \end{tikzpicture}
\end{equation}
We normalise the pairing $( \cdot, \cdot )$ on the weight lattice $\bigoplus_{i=1}^n \Z \epsilon_i$ so that the long roots $\alpha$ satisfy $(\alpha, \alpha)=2$.  Thus
\begin{equation} \label{astrophage}
    (\epsilon_i, \epsilon_j) = \delta_{ij},
\end{equation}
and the simple roots are given by
\begin{equation} \label{simproots}
    \alpha_i = \epsilon_i - \epsilon_{i-1},\quad 2 \le i \le n,
    \qquad
    \alpha_1 =
    \begin{cases}
        \epsilon_1 + \epsilon_2 & \text{if $N=2n$ (type $D_n$)}, \\
        \epsilon_1 & \text{if $N=2n+1$ (type $B_n$)}.
    \end{cases}
\end{equation}
The positive roots are
\begin{equation} \label{posroots}
    \Phi^+
    =
    \begin{cases}
        \{ \epsilon_i \pm \epsilon_j : 1 \le j < i \le n\} & \text{if } N=2n \text{ (type $D_n$)}, \\
        \{ \epsilon_i \pm \epsilon_j, \epsilon_k : 1 \le j < i \le n,\ 1 \le k \le n \} & \text{if } N=2n+1 \text{ (type $B_n$)}.
    \end{cases}
\end{equation}

Let $(a_{ij})_{i,j=1}^n$, be a Cartan matrix of type $B_n$ or $D_n$.  More precisely,
\[
    a_{ij} = 2 \frac{(\alpha_i, \alpha_j)}{(\alpha_i,\alpha_i)},
\]
where the $\alpha_i$ are the simple roots, and $(a_{ij})_{i,j \in I}$ is
\[
    \begin{pmatrix}
        2 & -2 & 0 & 0 & 0 & \cdots & 0 \\
        -1 & 2 & -1 & 0 & 0 & \cdots & 0 \\
        0 & -1 & 2 & -1 & 0 & \cdots & 0 \\
        \vdots & \ddots & \ddots & \ddots & \ddots & \ddots & \vdots \\
        0 & \cdots & 0 & -1 & 2 & -1 & 0 \\
        0 & \cdots & \cdots & 0 & -1 & 2 & -1 \\
        0 & \cdots & \cdots & \cdots & 0 & -1 & 2
    \end{pmatrix}
    \quad \text{or} \quad
    \begin{pmatrix}
        2 & 0 & -1 & 0 & 0 & \cdots & 0 \\
        0 & 2 & -1 & 0 & 0 & \cdots & 0 \\
        -1 & -1 & 2 & -1 & 0 & \cdots & 0 \\
        \vdots & \ddots & \ddots & \ddots & \ddots & \ddots & \vdots \\
        0 & \cdots & 0 & -1 & 2 & -1 & 0 \\
        0 & \cdots & \cdots & 0 & -1 & 2 & -1 \\
        0 & \cdots & \cdots & 0 & 0 & -1 & 2
    \end{pmatrix}
\]
in type $B_n$ or $D_n$, respectively.

Set $d_i = \frac{(\alpha_i, \alpha_i)}{2}$.  So, in type $B_n$, we have $d_1 = \frac{1}{2}$ and $d_i = 1$ for $2 \le i \le n$.  In type $D_n$, we have $d_i = 1$ for all $1 \le i \le n$.  For $1 \le i \le n$, define the following elements of $\Z[q^{\pm \frac{1}{2}}]$:
\begin{gather*}
    q_i := q^{d_i},\qquad
    [m]_i := \frac{q_i^m - q_i^{-m}}{q_i-q_i^{-1}},\qquad
    m \in \Z,
    \\
    [m]!_i := [m]_i [m-1]_i \dotsb [1]_i,\qquad
    \qbinom{m}{r}_i = \frac{[m]!_i}{[r]!_i [m-r]!_i},\qquad
    m,r \in \N,\ r \le m.
\end{gather*}
When $d_i=1$, we will often drop the subscripts $i$ above.  In this case, the $[m]$ are referred to as \emph{quantum integers} and the $\qbinom{m}{r}$ as \emph{quantum binomial coefficients}.  A straightforward proof by induction yields the following generating function for the quantum binomial coefficients:
\begin{equation} \label{bird}
    \prod_{j=1}^m \left( 1 + q^{2j-m-1} x \right)
    = \sum_{k=0}^m \qbinom{m}{k} x^k,\qquad
    m \in \N.
\end{equation}
\details{
    The result is trivial for $m=0$.  For the induction step, we have
    \[
        \prod_{j=1}^{m+1} \left( 1 + q^{2j-m-2} x \right)
        = (1 + q^m x) \sum_{k=0}^m \qbinom{m}{k} q^{-k} x^k
        = \sum_{k=0}^{m+1} \left( \qbinom{m}{k} q^{-k} + \qbinom{m}{k-1} q^{m-k+1} \right) x^k.
    \]
    Since
    \begin{multline*}
        \qbinom{m}{k}q^{-k} + \qbinom{m}{k-1} q^{m-k+1}
        = \frac{[m]!}{[k]![m-k]!}q^{-k} + \frac{[m]!}{[k-1]![m-k+1]!} q^{m-k+1}
        \\
        = \frac{[m]!}{[k]![m-k+1]!} \left( \frac{q^{m-2k+1} - q^{-m-1}}{q-q^{-1}} + \frac{q^{m+1} - q^{m-2k+1}}{q-q^{-1}} \right)
        = \frac{[m+1]!}{[k]![m-k+1]!}
        = \qbinom{m+1}{k},
    \end{multline*}
    the result follows.
}

\begin{defin} \label{Udef}
    When $N=2n+1$, $n \ge 1$, let $U_q(N)$ be the $\kk$-algebra generated by $e_i, f_i, k_i^{\pm 1}$, $1 \le i \le n$, subject to the following relations for $1 \le i,j \le n$:
    \begin{gather} \label{Udef-k}
        k_i k_i^{-1} = 1 = k_i^{-1} k_i,\qquad
        k_i k_j = k_j k_i,
        \\ \label{Udef-kef}
        k_i e_j = q^{(\alpha_i,\alpha_j)} e_j k_i = q_i^{a_{ij}} e_j k_i, \qquad
        k_i f_j = q^{-(\alpha_i,\alpha_j)} f_j k_i = q_i^{-a_{ij}} f_j k_i,
        \\ \label{Udef-ef}
        e_i f_j - f_j e_i = \delta_{ij} \frac{k_i-k_i^{-1}}{q_i-q_i^{-1}},
        \\ \label{Udef-Serre}
        \sum_{r=0}^{1-a_{ij}} (-1)^r \qbinom{1-a_{ij}}{r}_i e_i^{1-a_{ij}-r} e_j e_i^r, \quad
        \sum_{r=0}^{1-a_{ij}} (-1)^r \qbinom{1-a_{ij}}{r}_i f_i^{1-a_{ij}-r} f_j f_i^r, \quad i \ne j.
    \end{gather}
    (The second and fourth equalities in \cref{Udef-kef} follow from the fact that $q^{(\alpha_i,\alpha_j)} = q_i^{a_{ij}}$.)

    When $N=2n$, $n \ge 2$, let $U_q(N)$ be the $\kk$-algebra defined as above, but with an additional generator $\sigma$ subject to the relations
    \begin{equation} \label{sigma}
        \sigma^2 = 1,\quad
        \sigma e_i \sigma = e_{\sigma(i)},\quad
        \sigma f_i \sigma = f_{\sigma(i)},\quad
        \sigma k_i^{\pm 1} \sigma = k_{\sigma(i)}^{\pm 1},
    \end{equation}
    where, in the subscripts, $\sigma$ acts as the simple transposition $(1\, 2)$ on the set $\{1,2,\dotsc,n\}$.

    For $N=2n+1$, $U_q(N)$ is a Hopf algebra, with comultiplication, antipode, and counit given by
    \begin{gather} \label{comult}
        \Delta(e_i) = e_i \otimes k_i + 1 \otimes e_i,\quad
        \Delta(f_i) = f_i \otimes 1 + k_i^{-1} \otimes f_i,\quad
        \Delta(k_i^{\pm 1}) = k_i^{\pm 1} \otimes k_i^{\pm 1},
        \\ \label{antipode}
        \iota(e_i) = - e_i k_i^{-1},\quad
        \iota(f_i) = - k_i f_i,\quad
        \iota(k_i^{\pm 1}) = k^{\mp 1},
        \\
        \epsilon(e_i)=0,\quad
        \epsilon(f_i)=0,\quad
        \epsilon(k_i)=1.
    \end{gather}
    We use the notation $\iota$ to denote the antipode in order to reserve the notation $S$ for the spin module, to be introduced later.  When $N=2n$, $U_q(N)$ is again a Hopf algebra, extending the above definitions by
    \begin{equation} \label{Brazil}
        \Delta(\sigma) = \sigma \otimes \sigma,\quad
        \iota(\sigma) = \sigma^{-1} = \sigma,\quad
        \epsilon(\sigma) = 1.
    \end{equation}
\end{defin}

\begin{rem}
    \begin{enumerate}[wide]
        \item We draw the attention of the reader to some choices we have made that differ from some places in the literature.  First, we have chosen the labelling of the nodes of the Dynkin diagrams in \cref{Dynkin} to be compatible with the embedding of $B_n$ and $D_n$ inside $B_{n+1}$ and $D_{n+1}$, respectively.  Second, we have normalized the pairing on the root lattice so that the \emph{long} roots $\alpha$ satisfy $(\alpha,\alpha)=2$, whereas it is usually normalized so that the \emph{short} roots have this property.  Thus, in type $B_n$, our $U_q(2n+1)$ would usually be denoted $U_{q^{1/2}}(\fso(2n+1))$.  Our choice of normalization of the pairing allows us to state various result in the paper in a more uniform way.

        \item In type $D_n$, our $U_q(2n)$ is a smash product of the usual quantized enveloping algebra $U_q(\fso(2n))$ with the group algebra $\kk[\sigma]/(\sigma^2-1)$ of the cyclic group of order two, with conjugation by $\sigma$ acting by the outer automorphism of $U_q(\fso(2n))$ that swaps the vertices $1$ and $2$ of the Dynkin diagram and fixes all other vertices.  This extension, sometimes denoted $U_q(\fo(N))$, is analogous to considering the pin group instead of the spin group; see \cite[(4.23)]{MS24}.

        \item For the Hopf algebra structure defined in \cref{Udef}, we have followed the conventions of \cite[Def.-Prop.~9.1.1]{CP95}, which also matches \cite[\S3.1]{BER24}.  Note that a different Hopf algebra structure is chosen in \cite[Def.~2.1.1]{DF94}, while yet another is chosen in \cite[\S1.1]{Hay90}.  Note also that our $k_i$ and $q$ are the $k_i^2$ and $q^2$, respectively, of \cite[\S1.1]{Hay90}.

    \end{enumerate}
\end{rem}

It will useful to extend our definition of $U_q(N)$ to allow for $N$ to be any natural number.  For small values of $N$, we define $U_q(N)$ as follows.

\begin{defin} \label{limbo}
    \begin{enumerate}
        \item We define $U_q(\fso(0)) := \kk$ and $U_q(0) := \kk[\sigma]/(\sigma^2-1)$ to be the group algebra of the group of order two, with Hopf algebra structure as in \cref{Brazil}.  We consider all modules to be of type $1$.

        \item We define $U_q(\fso(1)) = U_q(1) := \kk[\xi]/(\xi^2-1)$ to be the group algebra of the group of order $2$, with Hopf algebra structure given by
            \[
                \Delta(\xi) = \xi \otimes \xi,\quad
                \iota(\xi) = \xi^{-1} = \xi,\quad
                \epsilon(\xi) = 1.
            \]
            We consider all modules to be of type $1$.

        \item We define $U_q(\fso(2))$ to be the associative algebra generated by $k,k^{-1}$, subject to the relations $k k^{-1} = 1 = k^{-1} k$.  The Hopf algebra structure is given by
            \begin{equation} \label{Panama}
                \Delta(k) = k \otimes k,\quad
                \iota(k^{\pm 1}) = k^{\mp 1},\quad
                \epsilon(k) = 1.
            \end{equation}
            We define $U_q(2)$ to be the associative algebra generated by $k,k^{-1},\sigma$, subject to the relations
            \[
                k k^{-1} = 1 = k^{-1} k,\quad
                \sigma^2 = 1,\quad
                \sigma k = k^{-1} \sigma.
            \]
            The Hopf algebra structure is given by \cref{Panama,Brazil}.  A module is said to be type $1$ if the eigenvalues of the action of $k$ lie in $q^{\frac{1}{2} \Z}$.
    \end{enumerate}
\end{defin}

Throughout this paper, we assume $U_q(\fso(N))$-modules are type $1$.  Let $U_q(\fso(N))$-mod denote the category of finite-dimensional $U_q(\fso(N))$-modules (of type $1$) and let $\fso(N)$-mod denote the category of finite dimensional $\fso(N)$-modules.  Both these categories are semisimple, with irreducible objects given by highest-weight representations with dominant integral highest weights.  Furthermore, tensor product multiplicities are the same in the two categories, as are dimensions of homomorphism spaces between corresponding modules.  We let $U_q(N)$-mod denote the categories of finite-dimensional $U_q(N)$-modules that are of type $1$; for $N \ge 3$, this means that the modules are of type $1$ when restricted to $U_q(\fso(N))$.

\begin{prop} \label{semisimple}
    The category $U_q(N)$-mod is semisimple.
\end{prop}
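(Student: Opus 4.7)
The plan is to reduce the statement to complete reducibility of finite-dimensional type~$1$ modules over $U_q(\fso(N))$ (which is classical in the generic-$q$ setting used here) and then to promote this to $U_q(N)$ in the even-$N$ case by a Maschke-style averaging over the order-two group generated by $\sigma$.

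First I would dispose of the odd and small-$N$ cases. When $N$ is odd, $U_q(N) = U_q(\fso(N))$, and complete reducibility of finite-dimensional type~$1$ modules is a standard theorem: since $\kk = \C(q^{\pm 1/4})$ has $q$ transcendental, the quantum Casimir separates distinct type~$1$ highest-weight modules and a Harish-Chandra-style argument gives the result exactly as for the classical enveloping algebra. When $N\in\{0,1\}$, $U_q(N)$ is by definition the group algebra of $\Z/2\Z$ over a characteristic-zero field, hence semisimple by Maschke. When $N=2$, the type~$1$ hypothesis forces $k$ to act diagonalisably with eigenvalues in $q^{(1/2)\Z}$, so any finite-dimensional type~$1$ $U_q(\fso(2))$-module is a direct sum of one-dimensional weight spaces.

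Next I would handle the even case $N=2n\ge 2$, where $U_q(N) = U_q(\fso(N)) \rtimes \langle\sigma\rangle$ with $\sigma^2 = 1$. Given a short exact sequence $0 \to L \to M \to M/L \to 0$ in $U_q(N)\md$, the previous step provides a $U_q(\fso(N))$-linear retraction $\pi \colon M \to L$. Since $2 \in \kk^\times$ and $L$ is $\sigma$-stable, I set
\[
    \tilde\pi(m) \ := \ \tfrac{1}{2}\bigl(\pi(m) + \sigma\,\pi(\sigma^{-1} m)\bigr).
\]
Using $\sigma x \sigma^{-1} \in U_q(\fso(N))$ for $x \in U_q(\fso(N))$, the $U_q(\fso(N))$-linearity of $\pi$, and $\pi|_L = \id_L$, a direct check shows that $\tilde\pi$ is both $U_q(\fso(N))$- and $\sigma$-equivariant, hence $U_q(N)$-equivariant, and that $\tilde\pi|_L = \id_L$. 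Its kernel is then a $U_q(N)$-stable complement to $L$ in $M$, proving semisimplicity.

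The one substantive point is securing the $U_q(\fso(N))$-semisimplicity uniformly across all $N$ — in particular verifying the implicit diagonalisability of the Cartan generators hidden in the term ``type~$1$'' at $N=2$. Once that is in hand, the averaging step is entirely formal, and no further quantum-group machinery is needed.
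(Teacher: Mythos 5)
Your proof is correct and follows essentially the same route as the paper: reduce to the known semisimplicity of finite-dimensional type~$1$ $U_q(\fso(N))$-modules, then (for $N$ even) average a $U_q(\fso(N))$-linear splitting over $\sigma$ via $\frac{1}{2}(\pi + \sigma\pi\sigma)$ to obtain a $U_q(N)$-linear one. The paper's proof is identical in substance, merely stated more tersely and without spelling out the $N\le 2$ cases as you do.
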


\begin{proof}
    If $N=2n+1$, then $U_q(N) = U_q(\fso(N))$, and so the result is well-known.  Now suppose that $N=2n$ and that $M$ is a $U_q(N)$-submodule of $M'$.  Since $U_q(\fso(N))$-mod is semisimple, there exists a $U_q(\fso(N))$-module splitting, that is, a $U_q(\fso(N))$-module homomorphism $\pi \colon M' \to M$ such that $\pi(v) = v$ for all $v \in M$.  Then $\frac{1}{2}(\pi + \sigma \pi \sigma) \colon M' \to M$ is a splitting of $U_q(N)$-modules.
\end{proof}

We let $X$ denote the weight lattice of $U_q(\fso(N))$ and let $X^+$ denote the positive weight lattice.  For $N \le 2$, this is defined as follows:
\begin{itemize}
    \item When $N \in \{0,1\}$, we define $X = X^+ = 0$.
    \item When $N = 2$, we define $X = X^+ = \frac{1}{2}\Z$ and $(\lambda,\mu) = \lambda \mu$.
\end{itemize}
For $N \in \{0,1\}$, all vectors in all modules are considered to have weight zero.  For $N=2$, a vector $v$ has weight $\lambda$ if $k v = q^\lambda v$.

For $N \ge 2$, we let $L_q(\lambda)$, $\lambda \in X^+$, denote the simple $U_q(\fso(N))$-module of highest weight $\lambda$.  For $N=2$, this is the one-dimensional module on which $k$ acts by $q^\lambda$.

%------------------------------------------------------
\subsection{Modules in type $D$\label{subsec:modulesD}}
%------------------------------------------------------

Throughout this subsection, we assume that $N$ is even, so that $U_q(N) = U_q(\fso(N)) \rtimes \kk[\sigma]/(\sigma^2-1)$.  The category of finite-dimensional $\Spin(N)$-modules is equivalent to the category $\fso(N)$-mod, hence to the category $U_q(\fso(N))$-mod, as monoidal categories.  Thus, the category $U_q(N)$-mod is described via Clifford theory in a manner completely analogous to the category of finite-dimensional $\Pin(N)$-modules.  We refer the reader to \cite[\S4.2]{MS24}, where the latter category is described in detail, and state some of the corresponding facts for $U_q(N)$-modules that will be used in the current paper.

For a $U_q(\fso(N))$-module $W$, let $W^\sigma$ denote the $U_q(\fso(N))$-module that is equal to $W$ as a vector space, but with the twisted action
\[
    a \cdot w = (\sigma a \sigma^{-1}) w,\qquad a \in U_q(\fso(N)),\ w \in W^\sigma,
\]
where the juxtaposition $aw$ denotes the action of $a \in U_q(\fso(N))$ on $w \in W$.  We define an action of the cyclic group $\langle \sigma : \sigma^2=1 \rangle$ on the weight lattice $X$ by
\begin{equation} \label{thaw}
    \sigma \epsilon_i = (-1)^{\delta_{i1}} \epsilon_i,\qquad 1 \le i \le n,
\end{equation}
and extending by linearity.  Then we have
\[
    L_q(\lambda)^\sigma \cong L(\sigma \lambda).
\]

To pass between representations of $U_q(\fso(N))$ and  $U_q(N)$, we use the biadjoint pair of restriction and induction functors
\begin{equation} \label{cricket}
    \Res \colon U_q(N)\md \to U_q(\fso(N))\md
    \qquad \text{and} \qquad
    \Ind \colon U_q(\fso(N))\md \to U_q(N)\md.
\end{equation}
These satisfy
\begin{equation}\label{resind}
    \Res \circ \Ind(W) \cong W \oplus W^\sigma.
\end{equation}
The simple $U_q(N)$-modules are:
\begin{itemize}
    \item $\Ind(W) \cong \Ind(W^\sigma)$ for a simple $U_q(\fso(N))$-module $W$ with $W^\sigma \not\cong W$,
    \item the two simple summands of $\Ind(W)$ for a simple $U_q(\fso(N))$-module $W$ with $W^\sigma \cong W$.
\end{itemize}

We let $\triv^0$ denote the trivial $U_q(N)$-module and let $\triv^1$ be the one-dimensional module that is trivial as a $U_q(\fso(N))$-module and satisfies $\sigma v = -v$, $v \in \triv^1$.  If $W$ is a simple $U_q(N)$-module with $W^\sigma \cong W$ as $U_q(\fso(N))$-modules, and $W'$ and $W''$ are its two lifts to a $U_q(N)$-module, then these are related by
\begin{equation} \label{trivflip}
    W' \otimes \triv^1 \cong W''.
\end{equation}

%----------------------------------------------------
\subsection{The natural module\label{subsec:natural}}
%----------------------------------------------------

We now describe the quantum analogue, $V$, of the natural module for $U_q(N)$.  We first suppose that $N \ge 3$.  Then $V$ is the finite-dimensional simple module with highest weight $\epsilon_n$.  Let
\begin{equation}
    \Vset =
    \begin{cases}
        \{n,n-1,\dotsc,1,-1,-2,\dotsc,-n\} & \text{if } N = 2n, \\
        \{n,n-1,\dotsc,1,0,-1,-2,\dotsc,-n\} & \text{if } N = 2n+1.
    \end{cases}
\end{equation}
The module $V$ has a basis $\{v_i\}_{i \in \Vset}$, with weights
\begin{equation} \label{hammock}
    \wt(v_0) = 0,\quad
    \wt(v_{\pm i}) = \pm \epsilon_i,\qquad
    1 \le i \le n,
\end{equation}
in which the $U_q(N)$-action is given by
\begin{gather} \label{doze1}
    e_i = E_{i,i-1} - q^{-1} E_{1-i,-i},\quad
    f_i  = E_{i-1,i} - q E_{-i,1-i},\qquad
    2 \le i \le n,
    \\ \label{doze2}
    e_1 =
    \begin{cases}
        E_{2,-1} - q^{-1} E_{1,-2} & \text{if } N=2n, \\
        (q+1) E_{1,0} - q^{-1} E_{0,-1} &\text{if } N=2n+1,
    \end{cases}
    \\ \label{doze3}
    f_1 =
    \begin{cases}
        E_{-1,2} - q E_{-2,1} & \text{if } N=2n, \\
        q^{-\frac{1}{2}} E_{0,1} - q^{\frac{1}{2}} (q+1) E_{-1,0} & \text{if } N=2n+1,
    \end{cases}
    \\ \label{doze4}
    k_i v = q^{(\alpha_i, \wt(v))} v,\qquad
    1 \le i \le n,\ v \in V,
    \\ \label{doze5}
    \sigma v_i
    =
    \begin{cases}
        -v_i & \text{if } i \notin \{\pm 1\}, \\
        -v_{-i} & \text{if } i \in \{\pm 1\},
    \end{cases}
\end{gather}
where $E_{i,j}$ is the matrix with a $1$ in the $(i,j)$-position and $0$ in all other positions, and we adopt the convention that $E_{i,j}=0$ if $i$ or $j$ does not lie in $\Vset$.  Above, and in what follows, we also adopt the convention that expressions involving $\sigma$ only apply in type $D_n$ ($N=2n$).  The module $V$ is a simple, self-dual module with highest weight vector $v_n$.
\details{
    Throughout, $2 \le i,j \le n$.  Relations \cref{Udef-k,Udef-kef} are straightforward to verify.  We verify \cref{Udef-ef}.  For $2 \le i,j \le n$, we have
    \begin{multline*}
        (E_{i,i-1} - q^{-1} E_{1-i,-i}) (E_{j-1,j} - q E_{-j,1-j}) - (E_{j-1,j} - q E_{-j,1-j}) (E_{i,i-1} - q^{-1} E_{1-i,-i})
        \\
        = \delta_{ij} (E_{i,i} - E_{i-1,i-1} + E_{1-i,1-i} - E_{-i,-i}).
    \end{multline*}
    In type $B_n$, where $N=2n+1$, we have, for $2 \le i \le n$,
    \begin{multline*}
        (E_{i,i-1} - q^{-1} E_{1-i,-i}) \left( q^{-\frac{1}{2}} E_{0,1} - q^{\frac{1}{2}} (q+1) E_{-1,0} \right)
        \\
        - \left( q^{-\frac{1}{2}} E_{0,1} - q^{\frac{1}{2}} (q+1) E_{-1,0} \right) (E_{i,i-1} - q^{-1} E_{1-i,-i})
        = 0,
    \end{multline*}
    \[
        \big( (q+1) E_{1,0} - E_{0,-1} \big) (E_{i-1,i} - q E_{-i,1-i})
        - (E_{i-1,i} - q E_{-i,1-i}) \left( (q+1) E_{1,0} - E_{0,-1} \right)
        = 0,
    \]
    \begin{multline*}
        \big( (q+1) E_{1,0} - q^{-1} E_{0,-1} \big) \left( q^{-\frac{1}{2}} E_{0,1} - q^{\frac{1}{2}} (q+1) E_{-1,0} \right)
        \\
        - \left( q^{-\frac{1}{2}} E_{0,1} - q^{\frac{1}{2}} (q+1) E_{-1,0} \right) \big( (q+1) E_{1,0} - q^{-1} E_{0,-1} \big)
        \\
        = [2]_1 (E_{1,1} + E_{0,0} - E_{0,0} - E_{-1,-1})
        = [2]_1 (E_{1,1} - E_{-1,-1}).
    \end{multline*}
    For $i \in \Vset$, let $D_i = \sum_{j \in \Vset} q^{\delta_{ij}} E_{jj}$.  Then,
    \[
        k_i =
        \begin{cases}
            D_i D_{i-1}^{-1} D_{1-i} D_{-i}^{-1}, & 2 \le i \le n, \\
            D_1 D_{-1}^{-1} & i=1,
        \end{cases}
    \]
    and so
    \[
        \frac{k_i-k_i^{-1}}{q_i-q_i^{-1}}
        =
        \begin{cases}
            E_{i,i} - E_{i-1,i-1} + E_{1-i,1-i} - E_{-i,-i}, & 2 \le i \le n, \\
            [2]_1 (E_{1,1} - E_{-1,-1}), & i=1.
        \end{cases}
    \]
    Therefore, \cref{Udef-ef} is satisfied.

    For $N=2n$, we have, for $2 \le i \le n$,
    \begin{multline*}
        (E_{i,i-1} - q^{-1} E_{1-i,-i}) (E_{-1,2} - qE_{-2,1}) - (E_{-1,2} - qE_{-2,1}) (E_{i,i-1} - q^{-1} E_{1-i,-i})
        \\
        = \delta_{i,2} (E_{-1,1} - E_{-1,1}) = 0,
    \end{multline*}
    \begin{multline*}
        (E_{2,-1} - q^{-1} E_{1,-2}) (E_{i-1,i} - qE_{-i,1-i}) - (E_{i-1,i} - qE_{-i,1-i}) (E_{2,-1} - q^{-1} E_{1,-2})
        \\
        = \delta_{i,2} (E_{1,-1} - E_{1,-1}) = 0,
    \end{multline*}
    \begin{multline*}
        (E_{2,-1} - q^{-1} E_{1,-2}) (E_{-1,2} - q E_{-2,1}) - (E_{-1,2} - q E_{-2,1}) (E_{2,-1} - q^{-1} E_{1,-2})
        \\
        = E_{2,2} + E_{1,1} - E_{-1,-1} - E_{-2,-2}.
    \end{multline*}
    Now, we also have
    \[
        \alpha_1 = \epsilon_1 + \epsilon_2,\quad
        \alpha_i = \epsilon_i - \epsilon_{i-1},\qquad 2 \le i \le n.
    \]
    Thus,
    \[
        k_i =
        \begin{cases}
            D_i D_{i-1}^{-1} D_{1-i} D_{-i}^{-1}, & 2 \le i \le n, \\
            D_1 D_2 D_{-2}^{-1} D_{-1}^{-1}, & i=1,
        \end{cases}
    \]
    and so
    \[
        \frac{k_i-k_i^{-1}}{q_i-q_i^{-1}}
        =
        \begin{cases}
            E_{i,i} - E_{i-1,i-1} + E_{1-i,1-i}- E_{-i,-i}, & 2 \le i \le n, \\
            E_{2,2} + E_{1,1} - E_{-1,-1} - E_{-2,-2}, & i = 1.
        \end{cases}
    \]
    Therefore, \cref{Udef-ef} is satisfied.
}

When $N \le 2$, the natural module is given as follows (see \cite[Rem.~4.1]{MS24}):
\begin{itemize}
    \item When $N=0$, we have $V=0$.

    \item When $N=1$, $V$ is the one-dimensional module for $U_q(N) = \kk[\xi]/(\xi^2-1)$ given by $\xi$ acting as $1$.  We fix a nonzero vector $v_0 \in V$.

    \item When $N=2$, the module $V$ is $2$-dimensional, with basis $v_1,v_{-1}$, and action given by
        \[
            k v_{\pm 1} = q^{\pm 1} v_{\pm 1},\quad
            \sigma v_{\pm 1} = - v_{\mp 1}.
        \]
\end{itemize}

When $N \ge 3$, let $\rho$ denote half the sum of the positive roots \cref{posroots}, so that
\begin{equation} \label{rho}
    \rho =
    \begin{cases}
        \sum_{i=1}^n (i-1) \epsilon_i & \text{when $N=2n$ (type $D_n$)}, \\
        \sum_{i=1}^n \left( i - \frac{1}{2} \right) \epsilon_i & \text{when $N=2n+1$ (type $B_n$)}.
    \end{cases}
\end{equation}
Define
\begin{equation}
    \rho_0 = 0,\quad
    \rho_i = (\rho,\epsilon_i),\quad
    \rho_{-i} = -(\rho,\epsilon_i),\qquad
    1 \le i \le n.
\end{equation}
Thus,
\begin{gather} \label{rhoeven}
    \rho_i = i-1,\quad
    \rho_{-i} = 1-i,\qquad
    1 \le i \le n,\qquad
    \text{when } N=2n,
    \\ \label{rhoodd}
    \rho_0 = 0,\quad
    \rho_i = i - \tfrac{1}{2},\quad
    \rho_{-i} = \tfrac{1}{2} - i,\qquad
    1 \le i \le n,\qquad
    \text{when } N=2n+1.
\end{gather}
When $N \in \{1,2\}$, we take \cref{rhoeven,rhoodd} as definitions.
\details{
    So, we have
    \begin{gather*}
        (\rho_n,\dotsc,\rho_1,\rho_{-1},\dotsc,\rho_{-n}) = (n - 1, n - 2, \dotsc, 1, 0, 0, -1, \dotsc, -n + 1),
        \quad \text{for } N=2n,
        \\
        (\rho_n,\dotsc,\rho_1,\rho_0,\rho_{-1},\dotsc,\rho_{-n})
        = \left( n - \tfrac{1}{2}, n - \tfrac{3}{2}, \dotsc, \tfrac{1}{2}, 0, -\tfrac{1}{2}, \dotsc, -n + \tfrac{1}{2} \right),
        \quad \text{for } N=2n+1.
    \end{gather*}
}

\begin{lem} \label{mink}
    For $N \ge 1$, the bilinear map
    \begin{equation} \label{Vform}
        \Phi_V \colon V \otimes V \to \triv,\quad \Phi_V(v_i \otimes v_j) = \delta_{i,-j} q^{-2 \delta_{j>0} \rho_j} (q+1)^{\delta_{j,0}}, \qquad
        i,j \in \Vset,
    \end{equation}
    is a homomorphism of $U_q(N)$-modules.
\end{lem}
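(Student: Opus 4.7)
The plan is to verify directly that $\Phi_V \circ \Delta(x) = \epsilon(x) \Phi_V$ on $V \otimes V$ for each generator $x$ of $U_q(N)$; this is exactly the condition that $\Phi_V$ is a module homomorphism to $\triv$. I will focus on the case $N \ge 3$, since the small cases $N \in \{0,1,2\}$ reduce to direct inspection using \cref{limbo} and the explicit description of $V$ in \cref{subsec:natural}.

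First, I note that $\Phi_V(v_j \otimes v_l)$ vanishes unless $l = -j$, i.e.\ unless $\wt v_j + \wt v_l = 0$. Since $\Delta(k_i) = k_i \otimes k_i$ acts on $v_j \otimes v_l$ by the scalar $q^{(\alpha_i, \wt v_j + \wt v_l)}$, which equals $1$ on weight-zero pairs, invariance under $k_i^{\pm 1}$ is immediate. In type $D_n$, invariance under $\sigma$ is a direct check from \cref{doze5}: $\sigma$ sends $v_i \otimes v_{-i} \mapsto v_{\sigma(i)} \otimes v_{\sigma(-i)}$ up to signs that square to $1$, and the assignment of $q$-weights in \cref{Vform} depends only on $\delta_{j>0}$ and $|j|$, which are preserved.

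Next, for each $e_i$ with $2 \le i \le n$, I will expand $\Phi_V(e_i \cdot (v_j \otimes v_l)) = \Phi_V((e_i v_j) \otimes k_i v_l) + \Phi_V(v_j \otimes e_i v_l)$ using \cref{comult,doze1}. Since $e_i$ only acts nontrivially on $v_{i-1}$ and $v_{-i}$, the only pairs $(j,l)$ giving a nonzero contribution are $(i-1, -i)$ and $(-i, i-1)$. In the first case, both terms land on $\Phi_V$-values equal to $1$ and cancel as $q^{-1} - q^{-1} = 0$. In the second case, the two terms contribute $q^{-2\rho_i}$ and $-q^{-2 - 2\rho_{i-1}}$, which cancel thanks to the identity $\rho_i - \rho_{i-1} = 1$, valid in both types $B_n$ and $D_n$ by \cref{rhoeven,rhoodd}. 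The argument for $f_i$, $2 \le i \le n$, is entirely parallel, using \cref{antipode} only implicitly through the coproduct $\Delta(f_i) = f_i \otimes 1 + k_i^{-1} \otimes f_i$.

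Finally, I will handle $e_1$ and $f_1$ separately in types $B_n$ and $D_n$. In type $D_n$, where $\alpha_1 = \epsilon_1 + \epsilon_2$, $\rho_1 = 0$, and $\rho_2 = 1$, the cases $(j,l) \in \{(-1,-2),(-2,-1)\}$ yield cancellations $q^{-1} - q^{-1}$ and $-q^{-2} + q^{-2}$, respectively. In type $B_n$, the formulas \cref{doze2,doze3} introduce a factor of $q+1$, and the relevant pairs become $(j,l) \in \{(0,-1),(-1,0)\}$; the factor $(q+1)^{\delta_{j,0}}$ in \cref{Vform} conspires with this to produce cancellation. The expected obstacle is purely bookkeeping: keeping careful track of the $k_i$-twist in the coproduct, the values of $(\alpha_i, \wt v_j)$, and (in type $B$) the interaction of the $(q+1)$ factor in $e_1, f_1$ with the $(q+1)^{\delta_{j,0}}$ in $\Phi_V$. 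Each case reduces to an elementary monomial identity in $q$ that follows from \cref{rhoeven,rhoodd}.
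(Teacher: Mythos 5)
Your proposal is correct but takes a genuinely different route from the paper's proof. The paper begins by invoking a representation-theoretic fact: since $V$ is simple and self-dual, there is a unique (up to scalar) $U_q(N)$-module homomorphism $V \otimes V \to \triv$; it then normalizes this unique map at a single basis pair and \emph{derives} the remaining values of $\Phi_V$ by applying $e_i$, $f_i$ to the arguments and inducting on $i$. You instead \emph{verify} directly that the given formula satisfies the defining condition $\Phi_V \circ \Delta(x) = \epsilon(x) \Phi_V$ for each generator $x$. Your route is more elementary and self-contained (no appeal to $\dim \Hom_{U_q(N)}(V \otimes V, \triv) = 1$), while the paper's is shorter and avoids separately checking $\sigma$-invariance, which is automatic once uniqueness is known. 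Your computations for $e_i$, $f_i$, $k_i^{\pm 1}$ are sound, and the cancellations hinge on the identity $\rho_i - \rho_{i-1} = 1$ exactly as you say. One small inaccuracy: in your justification of $\sigma$-invariance you assert that $\delta_{j>0}$ is preserved by $\sigma$, but $\sigma$ swaps $\pm 1$, so $\delta_{j>0}$ flips for $|j| = 1$. The argument still goes through, but the actual reason is that $\rho_1 = 0$ in type $D_n$ (by \cref{rhoeven}), so $q^{-2\delta_{j>0}\rho_j}$ evaluates to $1$ for both $j = 1$ and $j = -1$, making the formula $\sigma$-symmetric at the exceptional pair. You should state that instead.
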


\begin{proof}
    Since $V$ is simple and self-dual, there is a unique $U_q(N)$-module homomorphism $V \otimes V \to \triv$, up to scalar multiple.  It follows from \cref{hammock} that $\Phi_V(v_i \otimes v_j)$ is zero unless $i=-j$.

    First consider the case $N=2n$.  We normalize $\Phi_V$ so that $\Phi_V(v_1 \otimes v_{-1}) = 1$.  For $2 \le i \le n$, we compute
    \begin{multline*}
        \Phi_V(v_i, v_{-i})
        \overset{\cref{doze1}}{=} \Phi_V(e_i v_{i-1}, v_{-i})
        \overset{\cref{antipode}}{=} - \Phi_V(v_{i-1}, e_i k_i^{-1} v_{-i})
        \\
        \overset{\cref{astrophage}, \cref{simproots}}{\underset{\cref{hammock}, \cref{doze4}}{=}} -q \Phi_V(v_{i-1}, e_i v_{-i})
        \overset{\cref{doze1}}{=} \Phi_V(v_{i-1}, v_{1-i}).
    \end{multline*}
    Thus, \cref{Vform} holds for $1 \le i \le n$ by induction.  Next, we compute
    \begin{multline*}
        \Phi_V(v_{-1} \otimes v_1)
        \overset{\cref{doze3}}{=} \Phi_V(f_1 v_2, v_1)
        \overset{\cref{antipode}}{=} \Phi_V(v_2, - k_1 f_1 v_1)
        \\
        \overset{\cref{doze3}}{=} q \Phi_V(v_2, k_1 v_{-2})
        \overset{\cref{astrophage}, \cref{simproots}}{\underset{\cref{hammock}, \cref{doze4}}{=}} \Phi_V(v_2, v_{-2})
        = 1.
    \end{multline*}
    Then, since, for $2 \le i \le n$,
    \begin{multline*}
        \Phi_V(v_{-i}, v_i)
        \overset{\cref{doze1}}{=} - q^{-1} \Phi_V(f_i v_{1-i}, v_i)
        \overset{\cref{antipode}}{=} q^{-1} \Phi_V(v_{1-i}, k_i f_i v_i)
        \\
        \overset{\cref{doze1}}{=} q^{-1} \Phi_V(v_{1-i}, k_i v_{i-1})
        \overset{\cref{astrophage}, \cref{simproots}}{\underset{\cref{hammock}, \cref{doze4}}{=}} q^{-2} \Phi_V(v_{1-i},v_{i-1}),
    \end{multline*}
    we see that \cref{Vform} also holds for $-n \le i \le -1$ by induction.

    Now suppose $N=2n+1$.  This time we normalize the form so that $\Phi_V(v_0 \otimes v_0) = q+1$.  Then we have
    \begin{multline*}
        \Phi_V(v_1,v_{-1})
        \overset{\cref{doze2}}{=} (q+1)^{-1} \Phi_V(e_1 v_0, v_{-1})
        \overset{\cref{antipode}}{=} - (q+1)^{-1} \Phi_V(v_0, e_1 k_1^{-1} v_{-1})
        \\
        \overset{\cref{astrophage}, \cref{simproots}}{\underset{\cref{hammock}, \cref{doze4}}{=}} - q (q+1)^{-1} \Phi_V(v_0, e_1 v_{-1})
        \overset{\cref{doze2}}{=} (q+1)^{-1} \Phi_V(v_0, v_0)
        = 1
    \end{multline*}
    and
    \begin{multline*}
        \Phi_V(v_{-1},v_1)
        \overset{\cref{doze3}}{=} - q^{-\frac{1}{2}} (q+1)^{-1} \Phi_V(f_1 v_0, v_1)
        \overset{\cref{antipode}}{=} q^{-\frac{1}{2}} (q+1)^{-1} \Phi_V(v_0, k_1 f_1 v_1)
        \\
        \overset{\cref{doze3}}{=} q^{-1} (q+1)^{-1} \Phi_V(v_0, k_1 v_0)
        \overset{\cref{astrophage}, \cref{simproots}}{\underset{\cref{hammock}, \cref{doze4}}{=}}  q^{-1} (q+1)^{-1} \Phi_V(v_0, v_0)
        = q^{-1}
        \overset{\cref{rhoodd}}{=} q^{-2\rho_1}.
    \end{multline*}
    Then the result follows by induction as in the $N=2n$ case.
\end{proof}

%------------------------
\subsection{The braiding}
%------------------------

We recall some well-known facts about the braiding on the category of finite-dimensional $U_q(\fso(N))$-modules.  First suppose that $N \ge 3$.  Let $T_{W,W'} \colon W \otimes W' \to W' \otimes W$ denote the component of the braiding on modules $W$ and $W'$.  We can choose the braiding so that there exist $\Theta_\nu \in U_q(\fso(N))_\nu \otimes_\kk U_q(\fso(N))_{-\nu}$, for $\nu$ in the positive root lattice of $\fg$, such that $\Theta_0 = 1 \otimes 1$ and, for two diagonalizable highest-weight modules $W$ and $W'$ of $U_q(\fg)$,
\begin{equation} \label{hades}
    T_{W,W'} = \flip \circ D_{W,W'} \circ \left( 1 + \sum_\nu \varrho(\Theta_\nu) \right) \colon W \otimes W' \to W' \otimes W,
\end{equation}
where
\begin{itemize}
    \item $\flip$ is the tensor flip given by $w \otimes w' \mapsto w' \otimes w$,
    \item $D_{W,W'}$ acts as multiplication by $q^{(\lambda,\mu)}$ on $W_\lambda \otimes W'_\mu$, where $M_\lambda$ denotes the $\lambda$-weight space of a module $M$, and
    \item $\varrho(\Theta_\nu) := (\varrho_W \otimes \varrho_{W'})(\Theta_\nu)$, where $\varrho_M \colon U_q(\fso(N)) \to \End_\kk(M)$ is the representation associated to the module $M$.
\end{itemize}
See, for example, \cite[(2.3.3)]{DF94}.  Note that, since $W$ and $W'$ are finite dimensional, the sum $\sum_\nu \rho(\Theta_\nu)$ is finite.

Recall, from \cref{subsec:Uq}, our definition of weights for $N \le 2$.  We define
\begin{equation} \label{styx}
    T_{W,W'} := \flip_{W,W'} D_{W,W'}
    \qquad \text{for } N \in \{0,1,2\}.
\end{equation}
In particular, this means that $T_{W,W'} = \flip_{W,W'}$ for $N \in \{0,1\}$.

While it is well known that $T_{W,W'}$ is a isomorphism of $U_q(\fso(N))$-modules, we need the following stronger statement.

\begin{prop} \label{dentist}
    For $W,W' \in U_q(N)\md$, the map $T_{W,W'}$ is an isomorphism of $U_q(N)$-modules.
\end{prop}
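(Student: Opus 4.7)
The plan is to reduce everything to checking compatibility with the extra generator $\sigma$ in type $D$, since everything else is already covered by standard results for $U_q(\fso(N))$.

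First I would dispose of the easy cases. When $N$ is odd, $U_q(N) = U_q(\fso(N))$, and it is well known that the quasi-$R$-matrix construction in \cref{hades} yields an isomorphism of $U_q(\fso(N))$-modules, so there is nothing to do. When $N \in \{0,1\}$, the algebra $U_q(N)$ is a group algebra of $\Z/2\Z$ whose generator is group-like (see \cref{limbo}), and $T_{W,W'}$ is just the tensor flip by \cref{styx}; the tensor flip trivially intertwines $\Delta(x) = x \otimes x$ with $\Delta^{\op}(x)$. When $N=2$, we have $T_{W,W'} = \flip \circ D_{W,W'}$. Again $\flip$ is $U_q(\fso(2))$-linear for the same reason, and it sends weight spaces as $W_\lambda \otimes W'_\mu \to W'_\mu \otimes W_\lambda$, so $D_{W,W'}$ is also $U_q(\fso(2))$-linear. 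For the extra generator $\sigma$, the action \cref{thaw} satisfies $\sigma W_\lambda = W_{-\lambda}$, and $(-\lambda)(-\mu) = \lambda \mu$, so $D_{W,W'}$ commutes with $\sigma \otimes \sigma$; the flip obviously does too.

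Now for the main case $N=2n$ with $n \ge 2$. Since $T_{W,W'}$ is already known to be $U_q(\fso(N))$-linear, it suffices to check that it commutes with the action of $\sigma$, that is, that $T_{W,W'} \circ (\sigma \otimes \sigma) = (\sigma \otimes \sigma) \circ T_{W,W'}$, where the actions come from $\Delta(\sigma) = \sigma \otimes \sigma$. I would verify this factor-by-factor in \cref{hades}: the tensor flip clearly commutes with $\sigma \otimes \sigma$; the diagonal operator $D_{W,W'}$ commutes because $\sigma$ sends the weight space $W_\lambda$ to $W_{\sigma\lambda}$ (with $\sigma$ acting on weights as in \cref{thaw}), and the pairing is invariant: $(\sigma\lambda, \sigma\mu) = (\lambda,\mu)$, so both compositions multiply $W_\lambda \otimes W'_\mu$ by $q^{(\lambda,\mu)}$ and land in $W'_{\sigma\mu} \otimes W_{\sigma\lambda}$.

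The main obstacle, and the step that deserves the most care, is the invariance of the quasi-$R$-matrix $\Theta = \sum_\nu \Theta_\nu$ under conjugation by $\sigma \otimes \sigma$. The Dynkin diagram automorphism swapping nodes $1$ and $2$ is realized on $U_q(\fso(N))$ by conjugation by $\sigma$ (see \cref{sigma}), and this automorphism sends the root space $U_q(\fso(N))_\nu$ to $U_q(\fso(N))_{\sigma\nu}$. Using the standard characterization of $\Theta$ (as the unique element of a suitable completion of $U_q(\fn^+) \otimes U_q(\fn^-)$ with $\Theta_0 = 1 \otimes 1$ intertwining $\Delta$ and $\Delta^{\op}$), together with the fact that the diagram automorphism permutes simple roots and preserves the defining relations, one obtains $(\sigma \otimes \sigma)\,\Theta_\nu\,(\sigma \otimes \sigma) = \Theta_{\sigma\nu}$. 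Summing over the positive root lattice, which is permuted by $\sigma$, gives $(\sigma \otimes \sigma)\,\Theta\,(\sigma \otimes \sigma) = \Theta$. Combining the three factors shows $T_{W,W'}$ is $U_q(N)$-linear, and since it is already invertible as a $U_q(\fso(N))$-module map (indeed as a $\kk$-linear map), it is an isomorphism of $U_q(N)$-modules.
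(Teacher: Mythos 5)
Your overall reduction is the same as the paper's: reduce to type $D$, reduce to checking that $T_{W,W'}$ commutes with $\sigma\otimes\sigma$, dispose of $N\le 2$ from \cref{styx}, check that the flip and the diagonal operator $D_{W,W'}$ each commute with $\sigma\otimes\sigma$ (using $(\sigma\lambda,\sigma\mu)=(\lambda,\mu)$). Where you genuinely differ is the key technical step: the $\sigma\otimes\sigma$-invariance of the quasi-$R$-matrix. You argue at the algebra level, appealing to the uniqueness characterization of $\Theta$ in the completed tensor product: the Dynkin diagram automorphism realized by conjugation by $\sigma$ is a Hopf algebra automorphism commuting with the bar involution, so $(\sigma\otimes\sigma)\Theta(\sigma\otimes\sigma)^{-1}$ satisfies the same intertwining property and normalization as $\Theta$, hence equals it. (A small imprecision to fix: $\Theta$ intertwines $\Delta$ with the bar-twisted coproduct, not with $\Delta^{\op}$; only the full braiding $\flip\circ D\circ\varrho(\Theta)$ intertwines $\Delta$ and $\Delta^{\op}$. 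This does not change your argument, which only needs that the characterization is preserved by the automorphism.) The paper instead works entirely at the level of the representation matrices: it first uses the ribbon-element scalar for the double braiding $T_{W',W}T_{W,W'}$ on simple summands, together with $(\sigma\mu,\sigma\mu+2\rho)=(\mu,\mu+2\rho)$, to show the double braiding commutes with $\sigma\otimes\sigma$, and then invokes uniqueness of $LDU$ factorization (via the unitriangularity of $\varrho(\Theta)$ and $\varrho(\Theta^\sigma)$ in a suitable weight ordering) to conclude $\varrho(\Theta)=\varrho(\Theta^\sigma)$. Your route is more conceptual and proves the stronger identity $\Theta=\Theta^\sigma$ in one step; the paper's is more self-contained, staying inside finite-dimensional linear algebra and avoiding any appeal to the completion, at the cost of the extra reduction to simple modules and the scalar computation.
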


\begin{proof}
    Since $U_q(N) = U_q(\fso(N))$ if $N=2n+1$, we suppose that $N=2n$ (i.e., we are in type $D_n$).  By \cref{Brazil}, it suffices to show that $(\sigma \otimes \sigma) T_{W,W'} (\sigma \otimes \sigma) = T_{W,W'}$.  For $N \in \{0,1,2\}$, this follows immediately from \cref{styx}.

    Now suppose $N \ge 3$.  Since the category $U_q(N)$-mod is semisimple by \cref{semisimple}, it is enough to consider the case where $W = L_q(\lambda)$ and $W' = L_q(\lambda')$ for $\lambda,\lambda' \in X^+$.  By \cite[Prop.~8.22]{KS97} (which is an algebraic version of \cref{shoes}), $T_{W',W} T_{W,W'}$ acts on any simple submodule of $W \otimes W'$ of highest weight $\mu$ as scalar multiplication by $q^{-(\lambda,\lambda+2\rho)-(\lambda',\lambda'+2\rho)+(\mu,\mu+2\rho)}$.  It follows from \cref{rho,thaw} that
    \[
        (\sigma \mu, \sigma \mu + 2\rho) = (\mu, \mu + 2 \rho)
        \qquad \text{for all } \mu \in X.
    \]
    Thus
    \begin{equation} \label{mauve1}
        T_{W',W} T_{W,W'}
        = (\sigma \otimes \sigma) T_{W',W} T_{W,W'} (\sigma \otimes \sigma).
    \end{equation}

    Let $\Theta = 1 + \sum_\nu \Theta_\nu$ and $\Theta^\sigma = (\sigma \otimes \sigma)(\Theta)$where $\sigma$ acts on $U_q(\fso(N))$ via the diagram automorphism $\sigma$.  Then we have
    \begin{equation} \label{mauve2}
        T_{W',W} T_{W,W'}
        \overset{\cref{hades}}{=} D_{W,W'} \flip (\varrho(\Theta)) D_{W,W'} \varrho(\Theta)
    \end{equation}
    and
    \begin{equation} \label{mauve3}
        (\sigma \otimes \sigma) T_{W',W} T_{W,W'} (\sigma \otimes \sigma)
        \overset{\cref{hades}}{=} D_{W,W'} \flip \big( \varrho(\Theta^\sigma) \big) D_{W,W'} \varrho(\Theta^\sigma),
    \end{equation}
    By \cref{mauve1,mauve2,mauve3}, we have
    \begin{equation} \label{mauve4}
        \flip (\varrho(\Theta)) D_{W,W'} \varrho(\Theta)
        = \flip \big( \varrho(\Theta^\sigma) \big) D_{W,W'} \varrho(\Theta^\sigma).
    \end{equation}

    It follows from \cref{mauve4} and \cite[Prop.~2.3.3]{DF94} that  $\varrho(\Theta) = \varrho(\Theta^\sigma)$.  Since no proof is given in \cite[Prop.~2.3.3]{DF94}, we include the details.  Let $<$ denote the usual ordering on $X$, given by $\mu < \nu \iff \nu - \mu \in X^+$.  Then define an ordering $\prec$ on $X \times X$ by $(\mu,\nu) \prec (\mu',\nu')$ if
    \begin{itemize}
        \item $\mu > \mu'$, or
        \item $\mu = \mu'$ and $\nu < \nu'$.
    \end{itemize}
    Then, with respect to $\prec$ and the weight decomposition of the modules $W$ and $W'$, the matrices $\varrho(\Theta)$ and $\varrho(\Theta^\sigma)$ are both upper unitriangular, while $\flip(\varrho(\Theta))$ and $\flip(\varrho(\Theta^\sigma))$ are both lower unitriangular.  It then follows from \cref{mauve4} and the uniqueness of $LDU$ factorization of matrices that $\varrho(\Theta) = \varrho(\Theta^\sigma)$.  Thus,
    \[
        (\sigma \otimes \sigma) T_{W,W'} (\sigma \otimes \sigma)
        \overset{\cref{hades}}{=} \flip \circ D_{W,W'} \circ \varrho(\Theta^\sigma)
        = \flip \circ D_{W,W'} \circ \varrho(\Theta)
        \overset{\cref{hades}}{=} T_{W,W'},
    \]
    as desired.
\end{proof}

The following lemma, which follows immediately from \cref{hades,styx}, will be useful in our computations to come.

\begin{lem}
    Suppose $W$ and $W'$ are diagonalizable highest-weight modules, that $w$ is a highest-weight vectors of $W$ of weight $\lambda$, and that $w'$ is a lowest weight vector of $W'$ of weight $\lambda'$.  Then
    \begin{equation} \label{persephone}
        T_{W,W'}(w \otimes w') = q^{(\lambda,\lambda')} w' \otimes w.
    \end{equation}
\end{lem}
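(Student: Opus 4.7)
The plan is to unwind the defining formula \cref{hades} (or \cref{styx} in the degenerate cases) and observe that the quasi-$R$-matrix part $\sum_\nu \varrho(\Theta_\nu)$ acts trivially on $w \otimes w'$.

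First I would dispose of the small-$N$ cases. For $N \in \{0,1,2\}$, by \cref{styx} we have $T_{W,W'} = \flip \circ D_{W,W'}$, so since $w \in W_\lambda$ and $w' \in W'_{\lambda'}$, the definition of $D_{W,W'}$ gives $D_{W,W'}(w \otimes w') = q^{(\lambda,\lambda')}(w \otimes w')$, and then applying $\flip$ yields \cref{persephone}.

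For $N \ge 3$, I would argue as follows. By the defining property $\Theta_\nu \in U_q(\fso(N))_\nu \otimes_\kk U_q(\fso(N))_{-\nu}$ and $\Theta_0 = 1 \otimes 1$, each $\Theta_\nu$ with $\nu$ a nonzero element of the positive root lattice can be written as a finite sum $\Theta_\nu = \sum_k a_{\nu,k} \otimes b_{\nu,k}$ with $a_{\nu,k} \in U_q(\fso(N))_\nu$. Since the positive part of $U_q(\fso(N))$ is generated by the $e_i$'s, the space $U_q(\fso(N))_\nu$ for $\nu > 0$ is spanned by monomials in the $e_i$ of positive total degree. Each such monomial annihilates the highest-weight vector $w$, hence $a_{\nu,k} w = 0$ for all $k$, and so $\varrho(\Theta_\nu)(w \otimes w') = 0$ for every $\nu \ne 0$. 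Consequently
\[
    \Big( 1 + \sum_\nu \varrho(\Theta_\nu) \Big)(w \otimes w') = w \otimes w'.
\]
Then $D_{W,W'}(w \otimes w') = q^{(\lambda,\lambda')}(w \otimes w')$ by the definition of $D_{W,W'}$, and finally $\flip(w \otimes w') = w' \otimes w$, establishing \cref{persephone}. (Alternatively, one could invoke the lowest-weight property of $w'$ via $b_{\nu,k} \in U_q(\fso(N))_{-\nu}$ annihilating $w'$; in fact the argument only needs one of the two weight hypotheses, which is why the lemma states both for flexibility.)

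There is no real obstacle here: the result is essentially immediate from the formula for $T_{W,W'}$ once one observes the annihilation property of the quasi-$R$-matrix. The only point requiring a little care is handling the degenerate $N \le 2$ case via \cref{styx} separately, since \cref{hades} is only stated for $N \ge 3$.
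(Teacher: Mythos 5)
Your proof is correct and is exactly the unwinding of \cref{hades} and \cref{styx} that the paper has in mind when it says the lemma ``follows immediately'' from those formulae. One small imprecision worth flagging: it is not true that $U_q(\fso(N))_\nu$ for $\nu>0$ is spanned by monomials in the $e_i$ alone---that holds only for the positive part $U_q^+_\nu$, whereas the full weight space also contains elements like $e_i k_j$ and mixed monomials in $e$'s, $f$'s, $k$'s of total weight $\nu$. The cleaner (and correct) reason that $a_{\nu,k}$ annihilates $w$ is purely by weight: $a_{\nu,k} w \in W_{\lambda+\nu}$, and since $\lambda$ is the highest weight and $\nu$ is a nonzero sum of positive roots, $W_{\lambda+\nu}=0$. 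With that patch the argument is airtight, and your remark that the same conclusion follows from the lowest-weight property of $w'$ applied to $b_{\nu,k} \in U_q(\fso(N))_{-\nu}$ is also correct.
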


The only \emph{explicit} braiding we will need in the current paper is $T_{V,V}$, where $V$ is the natural module defined in \cref{subsec:natural}.  This explicit expression is well known; see, for example, \cite[\S 7.3\, (20)]{CP95}, \cite[\S8.4.2\, (61)]{KS97}, or \cite[Prop.~5.1.2]{DF94}).  In our chosen basis $\{v_i\}_{i \in \Vset}$, we have
\begin{multline} \label{TVV}
    T := T_{V,V}
    = q \sum_{\substack{i \in \Vset \\ i \ne 0}} E_{ii} \otimes E_{ii}
    + \delta_{N,2n+1} E_{0,0} \otimes E_{0,0}
    + \sum_{\substack{i,j \in \Vset \\ i \ne \pm j}} E_{ji} \otimes E_{ij}
    + q^{-1} \sum_{\substack{i \in \Vset \\ i \ne 0}} E_{-i,i} \otimes E_{i,-i}
    \\
    + (q-q^{-1}) \sum_{\substack{i,j \in \Vset \\ i<j}} E_{ii} \otimes E_{jj}
    - (q-q^{-1}) (q+1)^{\delta_{i,0}-\delta_{j,0}} \sum_{\substack{i,j \in \Vset \\ i<j}} E_{-j,i} \otimes E_{j,-i},
\end{multline}
where, as usual, the terms with indices $0$ only appear in the case $N=2n+1$.

%-----------------------------------------
\subsection{Tensor product decompositions}
%-----------------------------------------

We now note some tensor product decompositions that will be important for us.  As noted at the end of \cref{subsec:Uq}, these are the same as the corresponding tensor product decompositions in the non-quantum case.  We will therefore state the results without proof, referring the reader to \cite[\S4.4]{MS24}, which handles the non-quantum case.  Note, however, that the labelling of the Dynkin diagrams is different in \cite{MS24}; node $i$ in \cite{MS24} corresponds to node $n-i+1$ in the current paper.

\begin{lem} \label{SV}
    For $N \ge 1$, we have $\dim \Hom_{U_q(N)}(S \otimes V, S) = 1$.
\end{lem}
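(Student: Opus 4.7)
The plan is to reduce to the classical case. As noted at the end of \cref{subsec:Uq}, the category $U_q(\fso(N))\md$ is semisimple with the same irreducible objects, the same tensor product multiplicities, and the same $\Hom$-space dimensions as $\fso(N)\md$. Hence the corresponding statement for $U_q(\fso(N))$ is equivalent to the classical statement, and I will invoke the analysis in \cite[\S4.4]{MS24}.

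First I will handle $N \ge 3$. When $N=2n+1$ (type $B_n$), $U_q(N) = U_q(\fso(N))$, the spin module $S$ has highest weight $\tfrac12(\epsilon_1 + \cdots + \epsilon_n)$, and the natural module $V$ has highest weight $\epsilon_n$. The classical decomposition is multiplicity-free and contains exactly one copy of $S$, giving the claim at once. When $N=2n$ (type $D_n$), I will use the Clifford theory recalled in \cref{subsec:modulesD}: the spin module is $S = \Ind(S^+)$, where $S^+$ is one of the two half-spin $U_q(\fso(N))$-modules, and $\sigma$ interchanges $S^+$ with $S^-$. Since $V$ is already a $U_q(N)$-module and $\Res(V)=V$ as a $U_q(\fso(N))$-module, the projection formula gives $\Ind(S^+) \otimes V \cong \Ind(S^+ \otimes V)$. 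Combining with Frobenius reciprocity and \cref{resind},
\[
    \Hom_{U_q(N)}(S \otimes V, S)
    \cong \Hom_{U_q(\fso(N))}\!\big(S^+ \otimes V,\ S^+ \oplus S^-\big),
\]
and the classical decomposition $S^+ \otimes V \cong S^- \oplus (\text{strictly higher-weight simples})$ for $n \ge 2$ finishes the computation.

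For the remaining small cases $N \in \{1,2\}$, I will argue directly from the explicit descriptions of $U_q(N)$ given in \cref{limbo}: the module $V$ is one- or two-dimensional, and $S \otimes V$ decomposes by inspection after identifying the action of the generators $\xi$ (for $N=1$) or $k$ and $\sigma$ (for $N=2$) on the spin module.

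The main obstacle is the verification of the projection formula and Frobenius reciprocity for the smash product $U_q(N) = U_q(\fso(N)) \rtimes \langle \sigma \rangle$ in type $D_n$, together with confirming that the two half-spin modules $S^\pm$ are in fact swapped by $\sigma$ (so that $S = \Ind(S^+)$ is simple). The $B_n$ case and the classical tensor-product decompositions are standard, and the small-$N$ cases are routine but require separate treatment because the general weight-theoretic argument does not apply.
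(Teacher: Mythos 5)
Your approach is correct and is essentially the one taken in the paper: reduce to the classical case using the agreement of Hom-space dimensions between $U_q(\fso(N))\md$ and $\fso(N)\md$, and handle type $D$ via the Clifford-theory/Frobenius-reciprocity reduction to $\Hom_{U_q(\fso(N))}(S^+\otimes V,\, S^+\oplus S^-)$ exactly as described in \cref{subsec:modulesD}. The paper simply delegates the $N\ge 2$ case to \cite[Cor.~4.11]{MS24} (whose proof is precisely this argument) and treats $N=1$ directly via $V\cong\triv$, whereas you unfold the citation and split off $N\in\{1,2\}$ for separate treatment; the substance is the same.
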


\begin{proof}
    When $N=1$, this follows from the fact that $V$ is the trivial module and $S$ is simple.  For $N \ge 2$, it follows from \cite[Cor.~4.11]{MS24}.
\end{proof}

\begin{prop}\label{sdub}
    Let $n \in \N$.  By convention, let $L_q(\epsilon_n + \epsilon_{n-1} + \dotsb + \epsilon_k)$ be the trivial $U_q(\fso(N))$-module $L_q(0)$ when $k=n+1$.
    \begin{enumerate}
        \item When $N=2n+1$ (type $B_n$), we have
        \begin{equation} \label{SdubB}
            S^{\otimes 2} \cong \bigoplus_{k=1}^{n+1} L_q(\epsilon_n + \epsilon_{n-1} + \dotsb + \epsilon_k)
            \qquad \text{as $U_q(N)$-modules}.
        \end{equation}

        \item When $N=2n$ (type $D_n$), we have
        \begin{equation} \label{SdubD}
            S^{\otimes 2} \cong \bigoplus_{k=1}^{n+1} \Ind(L_q(\epsilon_n + \epsilon_{n-1} + \dotsb + \epsilon_k))
            \qquad \text{as $U_q(N)$-modules}.
        \end{equation}
    \end{enumerate}
\end{prop}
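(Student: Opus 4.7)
The plan is to reduce to the non-quantum case proved in \cite[\S4.4]{MS24}, leveraging the fact recalled in the excerpt preceding \cref{semisimple} that $U_q(\fso(N))\md$ and $\fso(N)\md$ are equivalent as monoidal categories: they have the same simple objects (indexed by the same dominant integral highest weights) and identical tensor product multiplicities. Hence any decomposition of $S^{\otimes 2}$ as a $U_q(\fso(N))$-module is determined by its classical counterpart. One bookkeeping caveat is the Dynkin-diagram relabelling noted after \cref{Udef}, under which node $i$ of \cite{MS24} corresponds to node $n-i+1$ here; the classical highest-weight data must be translated through this symmetry before being read off.

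For part (a) in type $B_n$, where $U_q(N) = U_q(\fso(N))$, no further lifting is required: the $U_q(\fso(N))$-module decomposition of $S^{\otimes 2}$ inherited from the classical case yields \cref{SdubB} directly.

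For part (b) in type $D_n$, I would proceed in two steps: first decompose $S^{\otimes 2}$ as a $U_q(\fso(N))$-module via the same classical-quantum transfer, then lift the decomposition to $U_q(N) = U_q(\fso(N)) \rtimes \kk[\sigma]/(\sigma^2-1)$ using the Clifford-theoretic apparatus of \cref{subsec:modulesD}. Setting $\lambda_k := \epsilon_n + \dotsb + \epsilon_k$, the involution \cref{thaw} fixes $\lambda_k$ precisely when $k \ge 2$ (since then the $\epsilon_1$-coefficient of $\lambda_k$ vanishes); for such $k$, the restriction $\Res \Ind L_q(\lambda_k)$ is $L_q(\lambda_k)^{\oplus 2}$ by \cref{resind}, whereas for $k = 1$ it is $L_q(\lambda_1) \oplus L_q(\sigma \lambda_1)$. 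In either case, the $U_q(\fso(N))$-decomposition inherited from the classical case should match these patterns, assembling each pair of matched simple summands into a single $\Ind L_q(\lambda_k)$-summand over $U_q(N)$.

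The main subtlety lies in the $k \ge 2$ case: I must rule out the possibility that the two copies of $L_q(\lambda_k)$ in $\Res(S^{\otimes 2})$ glue into a direct sum $W' \oplus W''$ of two inequivalent $U_q(N)$-lifts (related by the $\triv^1$-twist of \cref{trivflip}) rather than into a single $\Ind L_q(\lambda_k)$ summand. This can be resolved by exhibiting a highest-weight vector $v$ of weight $\lambda_k$ inside $S^{\otimes 2}$ and verifying that $\sigma v$ is linearly independent from $v$ while carrying the same weight, which is the structural signature of an induced module rather than a sum of distinct $U_q(N)$-lifts.
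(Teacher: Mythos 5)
Your proposal takes essentially the same route as the paper: reduce to the non-quantum case proved in \cite[\S4.4]{MS24} via the monoidal equivalence between $U_q(\fso(N))\md$ and $\fso(N)\md$, keeping track of the relabelling of the Dynkin diagram.  Part (a) is correct and needs nothing further; the paper in fact simply cites \cite{MS24} for both parts.

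For part (b), there is a conceptual error in your framing of the subtlety.  You write that you must rule out ``a direct sum $W' \oplus W''$ of two inequivalent $U_q(N)$-lifts... rather than a single $\Ind L_q(\lambda_k)$ summand.''  But for $k \ge 2$, $\sigma$ fixes $\lambda_k$, and in that case $\Ind L_q(\lambda_k)$ \emph{is}, by definition, the direct sum $W' \oplus W''$ of the two non-isomorphic lifts (this is spelled out in \cref{subsec:modulesD}, and the paper says explicitly after the proposition statement that the $k\ne 1$ summands are ``direct sums of two irreducible $U_q(N)$-modules'').  So $W'\oplus W''$ is not the case to be excluded; it is the desired outcome.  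What actually must be ruled out is the possibility that the two copies of $L_q(\lambda_k)$ glue into $W' \oplus W'$ (or $W'' \oplus W''$), i.e.\ two copies of the \emph{same} lift.  Your proposed test---exhibiting a highest-weight vector $v$ of weight $\lambda_k$ with $\sigma v$ linearly independent from $v$---does in fact rule out $W'^{\oplus 2}$ (since on such a summand $\sigma$ would act by a single scalar $\pm 1$ on the two-dimensional $\lambda_k$-highest-weight space, forcing $\sigma v \in \kk v$), so the test is sound, but your description of what it establishes is backwards.  You should also note that the test is only sketched, not carried out; the paper avoids it entirely by observing that the Clifford-theoretic lifting data (in particular the $\sigma$-eigenvalues on highest-weight spaces) transfers from the $\Pin(N)$ case in \cite{MS24} along the same equivalence, so the whole statement, not just the $U_q(\fso(N))$-decomposition, is inherited from the classical result.
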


In \cref{SdubB}, all summands on the right-hand side are irreducible.  For \cref{SdubD}, it follows from the discussion in \cref{subsec:modulesD} that the $k=1$ summand is irreducible, while the $k \ne 1$ summands are direct sums of two irreducible $U_q(N)$-modules.

%--------------------------------------
\subsection{Quantum dimension formulae}
%--------------------------------------

We now give some evaluations of the quantum dimension formula.  While standard, we include the proofs for completeness.  Recall the definition of $\rho$ from \cref{rho}.  The \emph{quantum dimension} of a $U_q(\fso(N))$-module $M$ is
\[
    \dim_q M = \sum_{\nu \in X} q^{\langle 2\rho,\nu \rangle} \dim M_\nu.
\]

\begin{lem}
    We have
    \[
        \dim_q V = [N-1] + 1.
    \]
\end{lem}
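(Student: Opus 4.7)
The plan is to unwind the definition of $\dim_q V$ using the explicit weight basis of the natural module given in \cref{hammock}, and reduce the sum to $[N-1]+1$ by an elementary computation, handled separately in types $B$ and $D$.

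From \cref{hammock}, the weights of $V$ (with multiplicity one each) are $\pm\epsilon_i$ for $1\le i\le n$, together with $0$ when $N=2n+1$. By definition of the pairing on the weight lattice, $\langle 2\rho,\pm\epsilon_i\rangle = \pm 2\rho_i$. Therefore
\[
    \dim_q V = \delta_{N,2n+1} + \sum_{i=1}^{n}\bigl(q^{2\rho_i}+q^{-2\rho_i}\bigr).
\]

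In type $D_n$ (so $N=2n$), \cref{rhoeven} gives $\rho_i=i-1$, and so the sum becomes
\[
    \sum_{i=1}^{n}\bigl(q^{2(i-1)}+q^{-2(i-1)}\bigr)
    = 1 + \sum_{i=1}^{n-1}\bigl(q^{2i}+q^{-2i}\bigr),
\]
which is precisely the expansion of $[2n-1]+1$, since $[2n-1]=q^{2n-2}+q^{2n-4}+\dotsb+q^{-(2n-2)}$ pairs up into the above plus the middle term $1$. In type $B_n$ (so $N=2n+1$), \cref{rhoodd} gives $\rho_i=i-\tfrac12$, and the sum becomes
\[
    1+\sum_{i=1}^{n}\bigl(q^{2i-1}+q^{-(2i-1)}\bigr) = 1+[2n] = 1+[N-1],
\]
since $[2n]=\sum_{i=1}^{n}(q^{2i-1}+q^{-(2i-1)})$. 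Finally, for the degenerate cases $N\in\{0,1,2\}$ governed by the conventions in \cref{subsec:Uq} and \cref{subsec:natural}, one checks directly: $\dim_q V=0,1,2$ respectively, matching $[-1]+1=0$, $[0]+1=1$, $[1]+1=2$.

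There is no real obstacle here; the whole argument is a one-line application of the definition, with the only care being the bookkeeping of the middle term $1$ (present in type $B$ from the weight-zero vector $v_0$, absent in type $D$ but recovered from the $i=1$ contribution $q^0+q^0=2$) so that the resulting sum correctly reassembles into $[N-1]+1$.
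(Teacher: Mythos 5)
Your approach is identical to the paper's: list the weights of $V$ from \cref{hammock}, substitute the explicit values of $\rho_i$ from \cref{rhoeven,rhoodd}, and recognise the resulting Laurent polynomial as $[N-1]+1$. The one blemish is a one-off arithmetic slip in the displayed computation for type $D$: the $i=1$ term of $\sum_{i=1}^n\bigl(q^{2(i-1)}+q^{-2(i-1)}\bigr)$ contributes $q^0+q^0=2$, so the sum equals $2+\sum_{i=1}^{n-1}\bigl(q^{2i}+q^{-2i}\bigr)=[2n-1]+1$, whereas the display writes ``$1+\sum_{i=1}^{n-1}(\cdots)$'', which is $[2n-1]$ without the extra $1$. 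Your closing remark shows you understand this (you explicitly note the $i=1$ contribution is $2$), so it is a typo in the display rather than a conceptual error, but it should be corrected since as written the chain of equalities is false.
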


\begin{proof}
    In type $D_n$, the weights of $V$ are $\{\pm \epsilon_i :1 \leq i \leq n \}$, each with multiplicity one.  Thus, we have
    \[
        \dim_q V = \sum_{i=1}^n q^{2i-2} + \sum_{i=1}^n q^{2-2i}
        = 1 + q^{N-2} + q^{N-4} + \dotsb + q^{2-N}) = 1 + [N-1].
    \]
    In type $B_n$ the weights of $V$ are $\{\pm \epsilon_i : 1\leq i \leq n \}\cup \{0 \}$, each with multiplicity one.  So, we have
    \[
        \dim_q V = \sum_{i=1}^n q^{2i-1} + \sum_{i=1}^n q^{1-2i} + 1
        = q^{N-2} + q^{N-4} + \dotsb + q^{2-N} + 1
        = 1 + [N-1].
        \qedhere
    \]
\end{proof}

\begin{lem}
    If $N \ge 3$, then
    \begin{equation} \label{screen}
        \dim_q L_q(\epsilon_n + \dotsb + \epsilon_k)
        = \qbinom{N-1}{n-k+1} +\qbinom{N-1}{n-k},\qquad
        1 \le k \le n.
    \end{equation}
\end{lem}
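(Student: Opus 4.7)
The plan is to compute $\dim_q L_q(\epsilon_n + \dotsb + \epsilon_k)$ by identifying this module with the quantum exterior power $\wedge^{r} V$ of the natural module, with $r := n - k + 1$, and then evaluating its quantum dimension via a generating function.

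To establish the identification, I would verify that $v_n \wedge v_{n-1} \wedge \dotsb \wedge v_k \in \wedge^r V$ is a highest-weight vector of weight $\epsilon_n + \dotsb + \epsilon_k$, directly from the explicit action \cref{doze1,doze2,doze3}; this gives an embedding $L_q(\epsilon_n + \dotsb + \epsilon_k) \hookrightarrow \wedge^r V$. Since the nonzero weights of $V$ form a single Weyl orbit, a standard argument shows that $\wedge^r V$ is irreducible (and hence equal to this simple submodule) in all cases except $r = n$ in type $D$. That boundary case is the one delicate point: there $\wedge^n V$ decomposes as $L_q(\epsilon_n + \dotsb + \epsilon_1) \oplus L_q(\epsilon_n + \dotsb + \epsilon_2 - \epsilon_1)$, with summands swapped by the outer automorphism $\sigma$, so the formula must be interpreted as the quantum dimension of the associated simple $U_q(N)$-module $\Ind(L_q(\epsilon_n + \dotsb + \epsilon_1)) \cong \wedge^n V$.

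Granting the identification, the quantum dimension of $\wedge^r V$ is the $r$th elementary symmetric polynomial in the eigenvalues of $q^{2\rho}$ acting on $V$, so the exterior powers satisfy the generating function
\[
    \sum_{r \ge 0} \dim_q(\wedge^r V) \, t^r = \prod_{\nu \in \mathrm{wt}(V)} \bigl(1 + q^{\langle 2\rho, \nu\rangle}\, t\bigr),
\]
with one factor per weight of $V$ counted with multiplicity. Combining \cref{hammock} with \cref{rhoeven,rhoodd}, I would check that in both types the multiset $\{\langle 2\rho, \nu\rangle : \nu \in \mathrm{wt}(V)\}$ equals $\{2j - N : 1 \le j \le N-1\} \sqcup \{0\}$: in type $B$ the $N-1$ values $2j - N$ account for $\pm(2i-1)$ and the extra zero comes from the zero weight of $V$, while in type $D$ the weights $\pm\epsilon_1$ each contribute a zero pairing, one of which combines with the nonzero $\pm(2i-2)$ to give the full $N-1$ values $2j - N$ while the other supplies the remaining factor.

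The generating function therefore factors as
\[
    (1 + t) \prod_{j=1}^{N-1}\bigl(1 + q^{2j - N} t\bigr),
\]
and applying \cref{bird} with $m = N-1$ evaluates the inner product as $\sum_r \qbinom{N-1}{r} t^r$. Extracting the coefficient of $t^r$ from $(1+t) \sum_r \qbinom{N-1}{r} t^r$ then yields $\qbinom{N-1}{r} + \qbinom{N-1}{r-1}$, proving \cref{screen}. The main obstacle is the bookkeeping in the type $D$, $r = n$ case just described; everything else is a matter of reading off the pairings $\langle 2\rho, \nu\rangle$ correctly and invoking \cref{bird}.
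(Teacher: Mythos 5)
Your approach — identify $L_q(\epsilon_n + \dotsb + \epsilon_k)$ with the exterior power $\Lambda^{n-k+1}V$, read off the multiset of values $\langle 2\rho, \nu\rangle$ over $\nu \in \mathrm{wt}(V)$, and extract the elementary symmetric polynomial via \cref{bird} — is exactly the route the paper takes. The paper simply invokes the correspondence with $\Lambda^{n-k+1}(V)$ from the classical equivalence rather than exhibiting the highest-weight vector, but the computation of $\dim_q$ is identical.

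The point you raise about $r = n$ in type $D$ is a genuine one, and the paper glosses over it. Since $\rho_1 = 0$ in type $D_n$, the outer automorphism $\sigma$ fixes $\rho$, so the two halves of $\Lambda^n V$ have equal quantum dimension; the right-hand side of \cref{screen} at $k=1$ in type $D$ therefore equals $\dim_q \Lambda^n V = 2\dim_q L_q(\epsilon_n + \dotsb + \epsilon_1)$, not $\dim_q L_q(\epsilon_n + \dotsb + \epsilon_1)$ as literally asserted. Concretely, for $N=4$, $k=1$ the formula gives $\qbinom{3}{2}+\qbinom{3}{1} \to 6$ at $q=1$, while $\dim L(\epsilon_1 + \epsilon_2) = 3$ for $\fso(4)$. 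Your proposed reinterpretation — reading the left side as the quantum dimension of the $U_q(N)$-module $\Ind(L_q(\epsilon_n + \dotsb + \epsilon_1)) \cong \Lambda^n V$ — is the natural repair, but note that it silently changes the statement being proved; as written, your argument establishes a corrected lemma rather than the one in the text. Fortunately, the discrepancy is harmless downstream: in the type-$D$ trace computation following \cref{eigenB}, the $k=1$ summand carries eigenvalue $0$ and drops out, and in type $B$ every $\Lambda^r V$ with $r \le n$ is irreducible, so the paper only ever uses the formula in the regime where the identification is clean.
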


\begin{proof}
    Under the equivalence of categories between $U_q(\fso(2n))$-mod and $\fso(2n)$-mod, the module $L_q(\epsilon_n + \dotsb + \epsilon_k)$ corresponds to the exterior power $\Lambda^{n-k+1}(V)$.  (See, for example, \cite[(4.31), (4.33)]{MS24}, noting our different convention for labelling the simple roots.)  Recall from \cref{hammock} that the weights of $V$ are $\epsilon_i$, $i \in \Vset$, where we adopt the convention that $\epsilon_{-i} = -\epsilon_i$ for $1 \le i \le n$.  Thus, the weights of $\Lambda^{n-k+1} V$, counted according to multiplicity, are
    \[
        \sum_{i \in I} \epsilon_i,\qquad
        I \subseteq \Vset,\ |I| = n-k+1.
    \]
    Therefore
    \[
        \dim_q L_q(\epsilon_n + \dotsb + \epsilon_k)
        = \sum_{\substack{ I \subseteq \Vset \\ |I|=n-k+1}} \prod_{i \in I} q^{2 \rho_i}.
    \]
    By \cref{rhoeven,rhoodd}, the values $2\rho_i$, $i \in I$, are
    \[
        0, N-2, N-4, \dotsc, 2-N.
    \]
    (Note that the value $0$ occurs twice when $N$ is even.)  Thus, letting $A = \{N-2,N-4,\dotsc,2-N\}$, we have
    \[
        \dim_q L_q(\epsilon_n + \dotsb + \epsilon_k)
        = \sum_{\substack{J \subseteq A \\ |J| = n-k+1}} \prod_{j \in J} q^j + \sum_{\substack{J \subseteq A \\ |J| = n-k}} \prod_{j \in J} q^j.
    \]
    For $l \in \N$, equating the coefficients of $x^l$ in \cref{bird}, we have
    \[
        \sum_{\substack{J \subseteq A \\ |J| = l}} \prod_{j \in J} q^j
        = \qbinom{N-1}{l}
    \]
    and the result follows.
\end{proof}

%=======================================================
\section{The quantum Clifford algebra\label{sec:qCliff}}
%=======================================================

Our goal in this section is to introduce the quantum analogue of the spin module for the orthogonal groups.  This module is constructed using a quantum analogue of the Clifford algebra.  Throughout this section, we continue to assume that $\kk = \C \big( q^{\pm \frac{1}{4}} \big)$.

%------------------------------------------------------
\subsection{Definition of the quantum Clifford algebra}
%------------------------------------------------------

We now introduce the quantum Clifford algebra, following the approach of \cite{DF94}.

\begin{defin}[{\cite[Defs.~3.1.1, 3.2.1]{DF94}}]
    The \emph{quantum Clifford algebra} $\Cl_q = \Cl_q(N)$ is the quotient of the tensor algebra of $V$ by the relations
    \begin{equation} \label{chelsea}
        (\id + q T) (u \otimes v) = \Phi_V(u,v),
        \qquad u,v \in V,
    \end{equation}
    where $T$ is as in \cref{TVV}.  For $i \in \Vset$, we let $\psi_i$ denote the image of $v_i$ in $\Cl_q(N)$.  We also define
    \begin{equation} \label{hilt}
        \psi_i^\dagger = \psi_{-i},\qquad 0 \le i \le n.
    \end{equation}
\end{defin}

It follows from \cref{Vform} that, for $0 \le i,j \le n$,
\begin{equation} \label{ruby}
    \begin{gathered}
        \Phi_V(v_i, v_{-j}) = \delta_{ij} (q+1)^{\delta_{i,0}},\quad
        \Phi_V(v_{-i}, v_j) = \delta_{ij} q^{-2\rho_i} (q+1)^{\delta_{i,0}},
        \\
        \Phi_V(v_i,v_j) = 0 = \Phi_V(v_{-i}, v_{-j}).
    \end{gathered}
\end{equation}

The following corollary is closely related to \cite[Prop.~5.2.3]{DF94}.  However, since no proof is given in \cite{DF94}, and our conventions (in particular, our chosen basis of $V$) are different, we provide a proof.  Recall our convention that expressions involving the index $0$ only apply when $N=2n+1$.

\begin{prop} \label{apostille}
    The quantum Clifford algebra $\Cl_q(N)$ is isomorphic to the associative algebra with generators $\psi_i$, $\psi_i^\dagger$, $1 \le i \le n$, subject to the relations
    \begin{align} \label{apostille1}
        \psi_i \psi_j &= - q \psi_j \psi_i,& 0 \le j < i \le n, \\ \label{apostille2}
        \psi_i^\dagger \psi_j^\dagger &= -q^{-1} \psi_j^\dagger \psi_i^\dagger,& 0 \le j < i \le n, \\ \label{apostille3}
        \psi_i \psi_i &= 0 = \psi_i^\dagger \psi_i^\dagger,& 1 \le i \le n, \\ \label{apostille4}
        \psi_i \psi_j^\dagger &= - q \psi_j^\dagger \psi_i,& 0 \le i,j \le n,\ i \ne j, \\ \label{apostille5}
        \psi_i \psi_i^\dagger + \psi_i^\dagger \psi_i &= (q^2-1) \sum_{j>i} \psi_j^\dagger \psi_j + 1,& 1 \le i \le n, \\ \label{apostille6}
        \psi_0^2 &= (q^2-1) \sum_{j=1}^n \psi_j^\dagger \psi_j + 1.
    \end{align}
\end{prop}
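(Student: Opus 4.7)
My plan is to define a homomorphism $\phi$ from the presented algebra to $\Cl_q(N)$ by $\psi_i \mapsto [v_i]$ and $\psi_i^\dagger \mapsto [v_{-i}]$, verify it respects all of the stated relations, and then conclude it is an isomorphism by matching dimensions on both sides.

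The core computation is to expand the defining relation $(\id + qT)(v_a \otimes v_b) = \Phi_V(v_a, v_b)$ for each pair $(a,b) \in \Vset \times \Vset$, using the explicit matrix entries of $T$ from \cref{TVV} and the values of $\Phi_V$ from \cref{ruby}, and to sort the resulting identities according to the signs of $a$ and $b$. The diagonal case $a = b \ne 0$ involves only the first summand of \cref{TVV}, yielding $(1+q^2)\psi_a^2 = 0$ and $(1+q^2)(\psi_a^\dagger)^2 = 0$, which give \cref{apostille3}. For $a,b$ of the same sign with $a \ne b$, or opposite signs with $a \ne -b$, the only contributions come from the permutation term $\sum E_{ji} \otimes E_{ij}$ and, in one of the two orderings in $\Vset$, the diagonal weight term $(q-q^{-1})\sum_{i<j} E_{ii}\otimes E_{jj}$; combining them yields \cref{apostille1}, \cref{apostille2}, and \cref{apostille4}, the case split on whether $a$ is greater or smaller than $b$ accounting for whether the coefficient of $q$ or $q^{-1}$ appears. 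The opposite-pair case $a = -b \ne 0$ picks up the term $q^{-1}\sum E_{-i,i}\otimes E_{i,-i}$ together with the tail $-(q-q^{-1})(q+1)^{\delta_{i,0}-\delta_{j,0}}\sum_{i<j} E_{-j,i}\otimes E_{j,-i}$; after matching against $\Phi_V(v_a,v_{-a}) = 1$, these combine to give \cref{apostille5}. The case $a = b = 0$ (in type $B$) is analogous, the sixth term of $T$ contributing the coefficient $(q-q^{-1})(q+1)$ which matches $\Phi_V(v_0,v_0)/\psi_0^2$ after dividing by $q+1$ to yield \cref{apostille6}.

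Once the relations are verified, surjectivity of $\phi$ is immediate because $\Cl_q(N)$ is generated by the classes $[v_i],[v_{-i}]$. For injectivity, the strategy is to apply Bergman's diamond lemma to the rewriting system provided by \cref{apostille1}--\cref{apostille6}, oriented so as to straighten every product of generators into an ordered monomial of the form $\psi_{i_k}^\dagger\cdots\psi_{i_1}^\dagger\psi_{j_l}\cdots\psi_{j_1}$ (with an optional prefactor of $\psi_0$ in type $B$, of degree at most one by \cref{apostille6} combined with \cref{apostille1}--\cref{apostille4}). This bounds $\dim_\kk$ of the presented algebra above by $2^N$, and since \cite{DF94} establishes that $\Cl_q(N)$ is a flat deformation of the classical Clifford algebra with $\dim_\kk \Cl_q(N) = 2^N$, the two dimensions must coincide and $\phi$ is forced to be an isomorphism. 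The main obstacle I expect is the careful coefficient bookkeeping in the sixth summand of $T$, whose prefactor $(q+1)^{\delta_{i,0}-\delta_{j,0}}$ only behaves cleanly when combined with the right contributions from the other summands, and the verification of the overlap ambiguities in the diamond-lemma step, where the presence of the non-monomial right-hand sides in \cref{apostille5} and \cref{apostille6} produces several overlaps that must be resolved simultaneously.
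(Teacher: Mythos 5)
Your verification of the relations is essentially identical to the paper's: both expand $(\id + qT)(v_a \otimes v_b) = \Phi_V(v_a,v_b)$ over all basis pairs using the explicit form of $T$ in \cref{TVV}, organize by the sign pattern of $(a,b)$, and match the output against \cref{apostille1}--\cref{apostille6}, with the factor $(q+1)^{\delta_{i,0}-\delta_{j,0}}$ in the sixth summand of $T$ accounting for the extra $(q+1)$ in $\Phi_V(v_0,v_0)$ when $N$ is odd. That part is fine.

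Where you diverge is the isomorphism step, and here the paper has a shorter route than the one you sketch. Both $\Cl_q(N)$ and the presented algebra $\mathcal{A}$ are quotients of the tensor algebra $T(V)$ by two-sided ideals $I$ and $J$, respectively. The computation you carried out in the first part already shows, pair by pair, that each generator $(\id+qT)(v_a\otimes v_b) - \Phi_V(v_a,v_b)$ of $I$ is a nonzero scalar multiple of one of the listed generators of $J$ (and conversely the listed generators were read off from exactly these elements). So $I = J$ as ideals of $T(V)$ with no further work, and the two quotients are literally the same algebra. This makes the diamond lemma and the appeal to a flatness/dimension result from \cite{DF94} unnecessary machinery: you are re-deriving a dimension count that the ideal-equality argument gives for free. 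Your route is not wrong — the upper bound $\dim \mathcal{A} \le 2^N$ only needs termination of the rewriting system, not full resolution of overlap ambiguities, and the cited dimension of $\Cl_q(N)$ is available — but it defers to external results and to a rewriting-system analysis whose termination (given the non-monomial, higher-index right-hand sides of \cref{apostille5} and \cref{apostille6}) you would still have to justify carefully. The direct comparison of generating sets of relations is both shorter and entirely self-contained; I would encourage you to notice when a presentation argument reduces to an equality of ideals rather than a surjection-plus-dimension-count.

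One small caution on your verification plan: make sure you actually process \emph{all} ordered pairs $(a,b) \in \Vset\times\Vset$, not just the $a>b$ and $(a,-a)$ with $a\ge 0$ cases. The $a<b$ pairs produce relations that are $q$-scalar multiples of the $a>b$ ones (the diagonal weight term $(q-q^{-1})\sum_{i<j}E_{ii}\otimes E_{jj}$ supplies exactly the compensating factor, as you noted), and the $(-a,a)$ pairs with $a>0$ likewise reproduce \cref{apostille5}, but for the ideal-equality argument to be complete these redundancies need to be observed, not assumed.
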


\begin{proof}
    For $i,j \in \Vset$, $j < i$, $j \ne -i$, we have
    \[
        T(\psi_i \otimes \psi_j)
        \overset{\cref{TVV}}{=} \psi_j \otimes \psi_i.
    \]
    Hence, \cref{chelsea,Vform} gives
    \[
        \psi_i \psi_j + q \psi_j \psi_i = 0,
    \]
    proving \cref{apostille1}.  Using \cref{hilt}, it also proves \cref{apostille2,apostille4}.

    Next, for $i \in \Vset$, $i \ne 0$, we have
    \[
        R(\psi_i \otimes \psi_i)
        \overset{\cref{TVV}}{=} q \psi_i \otimes \psi_i,
    \]
    and so \cref{chelsea,Vform} give
    \[
        (1+q^2) \psi_i \psi_i = 0 \implies \psi_i \psi_i = 0,
    \]
    proving \cref{apostille3}.

    Finally, suppose $0 \le i \le n$.  Then we have
    \[
        R(v_i \otimes v_{-i})
        \overset{\cref{TVV}}{=} q^{-\delta_{i \ne 0}} v_{-i} \otimes v_i - (q-q^{-1}) (q+1)^{\delta_{i,0}} \sum_{j>i} v_{-j} \otimes v_j.
    \]
    Hence, \cref{chelsea,ruby} give
    \[
        \psi_i \psi_i^\dagger + q^{\delta_{i,0}} \psi_i^\dagger \psi_i
        = (q^2-1) (q+1)^{\delta_{i,0}} \sum_{j> i} \psi_j^\dagger \psi_j + (q+1)^{\delta_{i,0}},
    \]
    proving \cref{apostille5,apostille6}.
\end{proof}

Recall that the classical Clifford algebra $\Cl(2n)$ is the associative $\kk$-algebra with generators $\Psi_i, \Psi_{-i} = \Psi_i^\dagger$, $1 \le i \le n$, subject to the relations
\begin{equation} \label{classicCl}
    \Psi_i \Psi_j + \Psi_j \Psi_i = 0 = \Psi_i^\dagger \Psi_j^\dagger + \Psi_j^\dagger \Psi_i^\dagger,\quad
    \Psi_i \Psi_j^\dagger + \Psi_j^\dagger \Psi_i = \delta_{ij},\qquad
    1 \le i,j \le n.
\end{equation}
The classical Clifford algebra $\Cl(2n+1)$ is the associative $\kk$-algebra with generators $\Psi_i, \Psi_i^\dagger$, $0 \le i \le n$, subject to the relations \cref{classicCl} and
\begin{equation} \label{mesh}
    \Psi_0 = \Psi_0^\dagger,\quad
    \Psi_0 \Psi_i + \Psi_i \Psi_0 = 0 = \Psi_0 \Psi_i^\dagger + \Psi_i^\dagger \Psi_0,\quad
    \Psi_0^2 = 1,\qquad
    1 \le i \le n.
\end{equation}

Define
\begin{equation} \label{omega}
    \omega_i := \Psi_i \Psi_i^\dagger + q^{-1} \Psi_i^\dagger \Psi_i,\qquad
    1 \le i \le n.
\end{equation}
These elements are invertible and, for $1 \le i,j \le n$, we have
\begin{gather} \label{notary1}
    \omega_i \Psi_i = \Psi_i,\quad
    \Psi_i \omega_i = q^{-1} \Psi_i,\quad
    \Psi_i^\dagger \omega_i = \Psi_i^\dagger,\quad
    \omega_i \Psi_i^\dagger = q^{-1} \Psi_i^\dagger,\quad
    \omega_i \omega_j = \omega_j \omega_i,
    \\ \label{notary2}
    \omega_j \Psi_i \omega_j^{-1} = q^{\delta_{ij}} \Psi_i,\quad
    \omega_j \Psi_i^\dagger \omega_i^{-1} = q^{-\delta_{ij}} \Psi_i^\dagger,
    \\ \label{notary3}
    \omega_i^k = \Psi_i \Psi_i^\dagger + q^{-k} \Psi_i^\dagger \Psi_i
    = 1 + (q^{-k}-1) \Psi_i^\dagger \Psi_i,\qquad k \in \Z.
\end{gather}
For $0 \le i \le n$, define
\begin{equation}
    \omega_{>i} := \prod_{j>i} \omega_j,\quad
    \omega_{\ge i} := \prod_{j \ge i} \omega_j,
    \quad \text{so that} \quad
    \omega_{>i}^{-1} = \prod_{j>i} \omega_j^{-1},\quad
    \omega_{\ge i}^{-1} = \prod_{j \ge i} \omega_j^{-1}.
\end{equation}

\begin{prop} \label{Clisom}
    There is an isomorphism of $\kk$-algebras $\Cl_q(N) \xrightarrow{\cong} \Cl(N)$ given by
    \begin{equation} \label{moca} \textstyle
        \psi_i \mapsto q^{\rho_i} \omega_{>i}^{-1} \Psi_i,\qquad
        \psi_i^\dagger \mapsto q^{-\rho_i} \omega_{>i}^{-1} \Psi_i^\dagger,\qquad
        0 \le i \le n.
    \end{equation}
\end{prop}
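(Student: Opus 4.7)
The plan is to verify that the map \eqref{moca} respects the defining relations of $\Cl_q(N)$ given in \cref{apostille}, and then show it is bijective.

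First I would check the relations. The key inputs are the commutation identities \cref{notary1,notary2,notary3} between $\omega_j$ and $\Psi_i, \Psi_i^\dagger$, together with the classical relations \cref{classicCl,mesh}. A crucial preliminary observation (easy from \cref{classicCl,mesh}) is that $\Psi_0$ and $\Psi_0^\dagger = \Psi_0$ commute with every $\omega_k$ for $k\ge 1$, so $\Psi_0$ behaves well with respect to the factors $\omega_{>i}^{-1}$. For \cref{apostille1} with $0\le j<i\le n$, one computes
\[
    \psi_i\psi_j = q^{\rho_i+\rho_j}\omega_{>i}^{-1}\Psi_i\,\omega_{>j}^{-1}\Psi_j
    = q^{\rho_i+\rho_j+1}\omega_{>i}^{-1}\omega_{>j}^{-1}\Psi_i\Psi_j,
\]
using that $\Psi_i$ commutes with $\omega_k^{-1}$ for $k\ne i$ and $\Psi_i\omega_i^{-1}=q\omega_i^{-1}\Psi_i$, while $\psi_j\psi_i = q^{\rho_i+\rho_j}\omega_{>j}^{-1}\omega_{>i}^{-1}\Psi_j\Psi_i$ since $\Psi_j$ commutes with every $\omega_k^{-1}$ appearing in $\omega_{>i}^{-1}$. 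Applying the classical anticommutation relation $\Psi_i\Psi_j=-\Psi_j\Psi_i$ yields the desired relation. The check for \cref{apostille2} is symmetric with a factor of $q^{-1}$ arising from $\Psi_i^\dagger\omega_i^{-1}=q^{-1}\omega_i^{-1}\Psi_i^\dagger$, and \cref{apostille3} is immediate from $\Psi_i^2=(\Psi_i^\dagger)^2=0$. For \cref{apostille4}, the same $\omega$-commutation analysis, with cases $i<j$, $i>j$, or $i=0$ or $j=0$, gives a single overall factor of $q$ that combines with $\Psi_i\Psi_j^\dagger=-\Psi_j^\dagger\Psi_i$.

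The harder relations are \cref{apostille5,apostille6}. For \cref{apostille5}, expanding gives
\[
    \psi_i\psi_i^\dagger + \psi_i^\dagger\psi_i
    = \omega_{>i}^{-1}\Psi_i\,\omega_{>i}^{-1}\Psi_i^\dagger + \omega_{>i}^{-1}\Psi_i^\dagger\,\omega_{>i}^{-1}\Psi_i
    = \omega_{>i}^{-2}\bigl(q\Psi_i\Psi_i^\dagger + q^{-1}\Psi_i^\dagger\Psi_i\bigr),
\]
where I have moved the second $\omega_{>i}^{-1}$ past $\Psi_i$ and $\Psi_i^\dagger$ (both of which commute with every factor in $\omega_{>i}$). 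Using $\omega_i = \Psi_i\Psi_i^\dagger + q^{-1}\Psi_i^\dagger\Psi_i$ and the identity $q\Psi_i\Psi_i^\dagger + q^{-1}\Psi_i^\dagger\Psi_i = q\omega_i + (q^{-1}-q)\Psi_i^\dagger\Psi_i = q - (q-q^{-1})(1-\Psi_i^\dagger\Psi_i)\cdot(\text{stuff})$ — here is where I expect the main bookkeeping obstacle — one compares with $(q^2-1)\sum_{j>i}\psi_j^\dagger\psi_j + 1$. Substituting $\psi_j^\dagger\psi_j = q^{-2\rho_j}\omega_{>j}^{-2}\cdot q\Psi_j^\dagger\Psi_j$ and telescoping via the identity $\omega_j^{-1} = 1 + (q-1)\Psi_j^\dagger\Psi_j$ (a special case of \cref{notary3}) should yield the desired equality. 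Relation \cref{apostille6} is handled similarly: $\psi_0^2 = q^{-1}\omega_{>0}^{-2}(q+1)\Psi_0^2 \cdot (\text{correction}) = \omega_{>0}^{-2}(q+1)/q$, which one must match with $(q^2-1)\sum_{j\ge 1}\psi_j^\dagger\psi_j + 1$ using the same telescoping.

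Once the map is shown to be a well-defined homomorphism, surjectivity is immediate by a descending induction on $i$: for $i=n$, $\omega_{>n}=1$ so $\Psi_n$ and $\Psi_n^\dagger$ are in the image; then $\omega_n$ lies in the image, and multiplying $\omega_{>n-1}^{-1}\Psi_{n-1}$ by $\omega_n$ puts $\Psi_{n-1}$ in the image; continuing down, all generators of $\Cl(N)$ are hit. For injectivity, I would argue by dimension: a standard straightening/PBW argument using \cref{apostille1,apostille2,apostille3,apostille4,apostille5,apostille6} shows that monomials of the form $\psi_{i_1}^\dagger\cdots\psi_{i_a}^\dagger \psi_{j_1}\cdots\psi_{j_b}$ with $i_1<\cdots<i_a$ and $j_1<\cdots<j_b$ span $\Cl_q(N)$, giving $\dim \Cl_q(N)\le 2^N = \dim \Cl(N)$. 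Combined with surjectivity, this forces the homomorphism to be an isomorphism.
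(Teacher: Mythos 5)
Your overall strategy---verify the relations of \cref{apostille} directly using \cref{notary1,notary2,notary3,classicCl,mesh}, then argue bijectivity---is the same as the paper's proof, and the easy relations \cref{apostille1,apostille2,apostille3,apostille4} are handled correctly. However, the displayed intermediate steps for the hard relations \cref{apostille5,apostille6} contain calculational errors that would derail the argument if carried through literally. The factorization
\[
    \psi_i\psi_i^\dagger + \psi_i^\dagger\psi_i = \omega_{>i}^{-2}\bigl(q\Psi_i\Psi_i^\dagger + q^{-1}\Psi_i^\dagger\Psi_i\bigr)
\]
is incorrect: since $\omega_{>i}$ involves only $\omega_k$ with $k>i$, and both $\Psi_i$ and $\Psi_i^\dagger$ commute with every such $\omega_k$, no factors of $q$ or $q^{-1}$ appear, and one simply gets $\omega_{>i}^{-2}(\Psi_i\Psi_i^\dagger + \Psi_i^\dagger\Psi_i) = \omega_{>i}^{-2}$ by \cref{classicCl}. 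The same slip appears later: $\psi_j^\dagger\psi_j = \omega_{>j}^{-2}\Psi_j^\dagger\Psi_j$ (no extra $q$), and $\psi_0^2 = \omega_{>0}^{-2}\Psi_0^2 = \omega_{>0}^{-2}$ (no $(q+1)/q$). The telescoping you invoke is the right idea, but should use $\omega_j^{-2} = 1 + (q^2-1)\Psi_j^\dagger\Psi_j$ (the $k=-2$ case of \cref{notary3}), since the quantity to be decomposed is $\omega_{>i}^{-2}$; your cited identity $\omega_j^{-1} = 1 + (q-1)\Psi_j^\dagger\Psi_j$ is the wrong power. With these corrections the telescoping gives $\omega_{>i}^{-2} = 1 + (q^2-1)\sum_{j>i}\theta(\psi_j^\dagger)\theta(\psi_j)$ directly, which is exactly \cref{apostille5} (and \cref{apostille6} likewise).

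On the bijectivity side, your argument (surjectivity via descending induction on $i$, injectivity via a spanning-set/dimension count) is more explicit than the paper's proof, which only verifies that the map is a well-defined homomorphism and leaves bijectivity implicit; this addition is a genuine improvement in rigor even if the details of the PBW-type straightening argument would need to be spelled out.
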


\begin{proof}
    Let $\theta$ be the given map.  We must verify that $\theta$ respects the relations of \cref{apostille}.  For $0 \le j < i \le n$,
    \[
        \theta(\psi_i) \theta(\psi_j)
        = q^{\rho_i+\rho_j} \omega_{>i}^{-1} \Psi_i \omega_{>j}^{-1} \Psi_j
        \overset{\cref{classicCl}}{\underset{\cref{notary2}}{=}} -q^{\rho_i+\rho_j+1} \omega_{>j}^{-1} \Psi_j \omega_{>i}^{-1} \Psi_i
        = -q \theta(\psi_j) \theta(\psi_i)
    \]
    and
    \[
        \theta(\psi_i^\dagger) \theta(\psi_j^\dagger)
        = q^{-\rho_i-\rho_j} \omega_{>i}^{-1} \Psi_i^\dagger \omega_{>j}^{-1} \Psi_j^\dagger
        \overset{\cref{classicCl}}{\underset{\cref{notary2}}{=}} - q^{-\rho_i-\rho_j-1} \omega_{>j}^{-1} \Psi_j^\dagger \omega_{>i}^{-1} \Psi_i^\dagger
        = -q^{-1} \theta(\psi_j^\dagger) \theta(\psi_i^\dagger).
    \]
    It is straightforward to verify that $\theta(\psi_i) \theta(\psi_i) = 0 = \theta(\psi_i^\dagger) \theta(\psi_i^\dagger)$ for all $1 \le i \le n$.

    For $0 \le i,j \le n$, $i \ne j$, we have
    \[
        \theta(\psi_i) \theta(\psi_j^\dagger)
        = q^{\rho_i-\rho_j} \omega_{>i}^{-1} \Psi_i \omega_{>j}^{-1} \Psi_j^\dagger
        \overset{\cref{classicCl}}{\underset{\cref{notary2}}{=}} -q^{\rho_i-\rho_j+1} \omega_{>j}^{-1} \Psi_j^\dagger \omega_{>i}^{-1} \Psi_i
        = - q \theta(\psi_j^\dagger) \theta(\psi_i).
    \]

    Next, for $1 \le i \le n$, we have
    \begin{multline*}
        \theta(\psi_i) \theta(\psi_i^\dagger)
        = \omega_{>i}^{-1} \Psi_i \omega_{>i}^{-1} \Psi_i^\dagger
        \overset{\cref{notary2}}{=} \omega_{>i}^{-2} \Psi_i \Psi_i^\dagger
        \overset{\cref{classicCl}}{=} \omega_{>i}^{-2} (1 - \Psi_i^\dagger \Psi_i)
        \\
        \overset{\cref{notary2}}{=} - \omega_{>i}^{-1} \Psi_i^\dagger \omega_{>i}^{-1} \Psi_i + \omega_{>i}^{-2}
        = -\theta(\psi_i^\dagger) \theta(\psi_i) + \omega_{>i}^{-2}.
    \end{multline*}
    Thus,
    \begin{align*}
        \theta(\psi_i) \theta(\psi_i^\dagger) + \theta(\psi_i^\dagger) \theta(\psi_i)
        &= \omega_{>i}^{-2} \\
        &\overset{\mathclap{\cref{notary3}}}{=}\ \left( 1 + (q^2 - 1) \Psi_{i+1}^\dagger \Psi_{i+1} \right) \omega_{>i+1}^{-2} \\
        &= (q^2-1) \theta(\psi_{i+1}^\dagger) \theta(\psi_{i+1}) + \omega_{>i+1}^{-2} \\
        &= (q^2-1) \Big( \theta(\psi_{i+1}^\dagger) \theta(\psi_{i+1}) + \theta(\psi_{i+2}^\dagger) \theta(\psi_{i+2}) \Big) + \omega_{>i+2}^{-2} \\
        &\ \vdots \\
        &= (q^2-1) \sum_{j>i} \theta(\psi_j^\dagger) \theta(\psi_j) + 1.
    \end{align*}
    A similar argument shows that
    \[
        \theta(\psi_0)^2
        = \omega_{>0}^{-2}
        = (q^2-1) \sum_{j=1}^n \theta(\psi_j^\dagger) \theta(\psi_j) + 1.
        \qedhere
    \]
\end{proof}

\Cref{Clisom} is closely related to \cite[Prop.~5.3.1]{DF94}.  The difference is the factors of $q^{\pm \rho_i}$ in \cref{moca}, which do not appear in \cite{DF94}.  We include these factors since they are needed for \cref{whitman}.  From now on, we will use \cref{Clisom} to identify $\Cl_q(N)$ and $\Cl(N)$.

\begin{rem}
    Quantum Clifford algebras have appeared in different places in the literature.  We have followed the approach of \cite{DF94} in our definition of $\Cl_q(2n)$.  \Cref{apostille,Clisom}, which are motivated by \cite[\S5.3]{DF94}, provide the link to the alternate approach of \cite[\S2.1]{Hay90}.  Precisely, $\Cl_q(2n)$ is isomorphic to the quotient of the algebra $\mathcal{A}_q^+(n)$ of \cite[\S2.1]{Hay90} by any of the following sets of relations (where we denote the generators $\psi_i$ and $\psi_i^\dagger$ of \cite{Hay90} by $\Psi_i$ and $\Psi_i^\dagger$, respectively):
    \begin{itemize}
        \item $\Psi_i \Psi_i^\dagger + \Psi_i^\dagger \Psi_i = 1$ for all $1 \le i \le n$,
        \item $\omega_i \Psi_i = \Psi_i$ for all $1 \le i \le n$,
        \item $\Psi_i^\dagger \omega_i = \Psi_i^\dagger$ for all $1 \le i \le n$.
    \end{itemize}
    These extra relations also appear in \cite[Def.~B.1]{BER24}; see \cite[Rem.~B.2]{BER24}.
    \details{
        \cite[Def.~B.1]{BER24} is equivalent to \cite[(5.3.1)--(5.3.4)]{DF94} with $q$ replaced everywhere by $q^2$.  This replacement corresponds to the fact that in the homomorphism of \cite[Th.~3.2(B)]{Hay90}, the codomain is $\mathcal{A}_{q^2}^+$, which is the quantum Clifford algebra with $q$ replaced by $q^2$.  In \cite{BER24}, they only consider type $B$, so they just build this $q^2$ into their definition of the quantum Clifford algebra.
    }
\end{rem}

%------------------------------------------------------------
\subsection{The spin module for the quantum Clifford algebra}
%------------------------------------------------------------

Suppose $n \ge 1$, and let
\[
    S := \Lambda(W) = \bigoplus_{r=0}^n \Lambda^r(W),
    \qquad \text{where }
    W = \Span_\kk \{\Psi_i^\dagger : 1 \le i \le n\}.
\]
As a $\kk$-module, $S$ has basis
\begin{equation} \label{xIdef}
    \begin{gathered}
        x_I := \Psi_{i_1}^\dagger \wedge \Psi_{i_2}^\dagger \wedge \dotsb \wedge \Psi_{i_k}^\dagger,\\
        I = \{i_1,\dotsc,i_k\} \subseteq \{1,2,\dotsc,n\},\quad i_1 > i_2 > \dotsc > i_k,\
        0 \le k \le n.
    \end{gathered}
\end{equation}
In particular,
\begin{equation} \label{paneer}
    \dim_\kk(S) = 2^n.
\end{equation}

The classical Clifford algebra $\Cl(2n)$ acts naturally on $S$ via wedging and contracting operators.  (The precise action is given by taking $q=1$ and replacing $\psi$ by $\Psi$ in \cref{gouda1,gouda2} below.)  Using \cref{omega}, we see that
\begin{equation} \label{drwho}
    \omega_i x_I =
    \begin{cases}
        q^{-1} x_I & \text{if } i \in I, \\
        x_I & \text{if } i \notin I.
    \end{cases}
\end{equation}
Via the isomorphism of \cref{Clisom}, $S$ has the structure of a $\Cl_q(2n)$-module by defining, for $I \subseteq \{1,2,\dotsc,n\}$, $1 \le i \le n$,
\begin{align} \label{gouda1}
    \psi_i^\dagger x_I
    &=
    \begin{cases}
        0 & \text{if } i \in I, \\
        q^{-\rho_i} (-q)^{|\{ j \in I : j>i \}|} x_{I \cup \{i\}} & \text{if } i \notin I,
    \end{cases}
    \\ \label{gouda2}
    \psi_i x_I
    &=
    \begin{cases}
        q^{\rho_i} (-q)^{|\{ j \in I : j>i \}|} x_{I \setminus \{i\}} & \text{if } i \in I, \\
        0 & \text{if } i \notin I.
    \end{cases}
\end{align}
We can define two $\Cl_q(2n+1)$-module structures on $S$, depending on a choice of $\varepsilon \in \{\pm 1\}$.  We again use the action defined in \cref{gouda1,gouda2}, and additionally define
\begin{equation} \label{diablo}
    \Psi_0 x_I = \varepsilon (-1)^{|I|} x_I,
    \quad \text{so that} \quad
    \psi_0 x_I = \varepsilon (-q)^{|I|} x_I.
\end{equation}
In both cases, $N=2n$ or $N=2n+1$, we call $S$ the \emph{spin module} for $\Cl_q(N)$.

%----------------------------------------
\subsection{The spin module for $U_q(N)$}
%----------------------------------------

We now introduce the quantum analogue of the spin module for the quantized enveloping algebra, which is one of the fundamental ingredients in our main result.

\begin{prop} \label{ukulele}
    \begin{enumerate}
        \item If $N=2n \ge 4$, then there is a $\kk$-algebra homomorphism $U_q(2n) \to \Cl_q(2n)$ given by
            \begin{gather*}
                e_i \mapsto \Psi_i \Psi_{i-1}^\dagger,\quad
                f_i \mapsto \Psi_{i-1} \Psi_i^\dagger,\quad
                k_i \mapsto \omega_i \omega_{i-1}^{-1},\qquad
                2 \le i \le n,
                \\
                e_1 \mapsto \Psi_2 \Psi_1,\quad
                f_1 \mapsto \Psi_1^\dagger \Psi_2^\dagger,\quad
                k_1 \mapsto q \omega_2 \omega_1,\quad
                \sigma \mapsto \sqrt{-1} \left( \Psi_1 - \Psi_1^\dagger \right).
            \end{gather*}

        \item If $N=2n+1 \ge 3$, then there is a $\C(q)$-algebra homomorphism $U_q(2n+1) \to \Cl_q(2n+1)$ given by
            \begin{gather*}
                e_i \mapsto \Psi_i \Psi_{i-1}^\dagger,\quad
                f_i \mapsto \Psi_{i-1} \Psi_i^\dagger,\quad
                1 \le i \le n,
                \\
                k_1 \mapsto q^{\frac{1}{2}} \omega_1,\quad
                k_i \mapsto \omega_i \omega_{i-1}^{-1},\qquad
                2 \le i \le n.
            \end{gather*}
    \end{enumerate}
\end{prop}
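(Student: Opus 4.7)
The plan is to verify that each defining relation of $U_q(N)$ listed in \cref{Udef} is preserved by the specified assignment. Via the isomorphism \cref{Clisom}, the codomain is identified with the classical Clifford algebra $\Cl(N)$, so the entire verification reduces to calculations using the anticommutation relations \cref{classicCl,mesh} together with the consequences \cref{notary1,notary2,notary3} for the auxiliary elements $\omega_i$. The $k_i$ are invertible (by \cref{notary3}) and pairwise commute (by \cref{notary1}), settling \cref{Udef-k}. The relations \cref{Udef-kef} follow from the $q$-commutation rule \cref{notary2}, since the contribution from each $\omega_j$ factor adds to give exactly $a_{ij}$ (or $a_{ij}/2$ at the short root $i=1$ in type $B_n$) after enumerating overlaps of indices.

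For the $[e_i,f_j]$ relations \cref{Udef-ef}, the case $i \ne j$ reduces by \cref{classicCl,mesh} to showing that two products of Clifford generators with near-disjoint index sets $q$-commute, and the anticommutations arrange so that the two summands of the commutator cancel. The case $i = j$ is the key computation: after expanding $\omega_j^{\pm 1} = \Psi_j \Psi_j^\dagger + q^{\mp 1} \Psi_j^\dagger \Psi_j$ in the orthogonal idempotents of $\Cl(N)$ via \cref{notary3}, and using $\Psi_i \Psi_i^\dagger + \Psi_i^\dagger \Psi_i = 1$, one verifies, for instance, $e_1 f_1 - f_1 e_1 = \Psi_2\Psi_2^\dagger - \Psi_1^\dagger \Psi_1 = (q\omega_2\omega_1 - q^{-1}\omega_2^{-1}\omega_1^{-1})/(q-q^{-1})$ in type $D_n$. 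The analogous check at $i=j=1$ in type $B_n$ uses the extra factor $q^{1/2}$ in $k_1$ together with $\Psi_0^2 = 1$ to produce the required $[2]_1^{-1}$.

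The quantum Serre relations \cref{Udef-Serre} are handled uniformly by the observation that $e_i^2 = 0 = f_i^2$ for every simple root $i$: each $e_i$ is a product of two anticommuting odd generators of $\Cl(N)$, whose square therefore vanishes. In particular, the long Serre relation at the short root in type $B_n$ is automatic, since every summand contains a power of $e_1^2$. When $a_{ij}=-1$ the relation reduces to $e_i e_j e_i = 0$, which is a short anticommutation check; the only subtle case is $(i,j) = (1,3)$ in type $D_n$, where a single application of $\Psi_2\Psi_2^\dagger = 1 - \Psi_2^\dagger\Psi_2$ produces the vanishing. When $a_{ij} = 0$ one verifies that $e_i$ and $e_j$ literally commute, the only mildly subtle case being $(i,j) = (1,2)$ in type $D_n$, where $e_1 e_2 = \Psi_2\Psi_1\Psi_2\Psi_1^\dagger = -\Psi_2^2 \Psi_1\Psi_1^\dagger = 0 = e_2 e_1$. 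The statements for the $f_i$'s are entirely parallel under $\Psi \leftrightarrow \Psi^\dagger$.

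Finally, in type $D_n$ the relations \cref{sigma} are verified by direct computation with $\sigma = \sqrt{-1}(\Psi_1 - \Psi_1^\dagger)$: one has $\sigma^2 = -(\Psi_1 - \Psi_1^\dagger)^2 = \Psi_1\Psi_1^\dagger + \Psi_1^\dagger\Psi_1 = 1$, and since $\Psi_1, \Psi_1^\dagger$ anticommute with all $\Psi_j, \Psi_j^\dagger$ for $j \ge 2$, conjugation by $\sigma$ fixes $e_i$, $f_i$, $k_i$ for $i \ge 3$. For $i \in \{1,2\}$ the swap $e_1 \leftrightarrow e_2$ is a short calculation, e.g.\ $\sigma e_1 \sigma = -(\Psi_1 - \Psi_1^\dagger)\Psi_2\Psi_1(\Psi_1 - \Psi_1^\dagger) = \Psi_2\Psi_1^\dagger = e_2$, and analogous manipulations handle $f_1, f_2, k_1, k_2$. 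The main obstacle is the length and care required for the $[e_i,f_i]$ computation, especially at the branching node of $D_n$ and at the short root of $B_n$, where bookkeeping of the $q$-signs from \cref{notary2} and the factor of $q$ (respectively $q^{1/2}$) in the image of $k_1$ must be tracked precisely; no conceptual difficulty arises once $\omega_j^{\pm 1}$ has been rewritten in the idempotent basis of $\Cl(N)$.
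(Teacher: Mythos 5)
Your proof is correct in substance, but it takes a genuinely different route from the paper. The paper does not redo the verification from scratch: it simply cites Hayashi's Theorem 3.2 in \cite{Hay90}, which already establishes the homomorphism $U_q(\fso(N)) \to \Cl_q(N)$, and then verifies only the two modifications required by the present conventions — (i) the extra factors of $\Psi_0 = \Psi_0^\dagger$ inserted into the images of $e_1$ and $f_1$ in type $B$, which are absorbed using $\Psi_0^2 = 1$, and (ii) the relations \cref{sigma} for the new generator $\sigma$ in type $D$, which are checked directly. Your approach instead runs the full verification of all relations in $\cref{Udef}$ and $\cref{sigma}$ directly in $\Cl(N)$ via $\cref{Clisom}$. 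The tradeoff is standard: the paper's citation is much shorter but pushes most of the work onto an external reference, while yours is longer but self-contained and in particular handles the $\Psi_0$ insertion and the rest of the Hayashi check in a uniform framework rather than as a patch on an existing theorem. Your illustrative computations (the expansion $e_1 f_1 - f_1 e_1 = \Psi_2\Psi_2^\dagger - \Psi_1^\dagger\Psi_1$ in type $D$, $e_i^2 = 0$ for all simple roots, $\sigma^2 = 1$, $\sigma e_1\sigma = e_2$) all check out, and the organizing observations — that $e_i^2 = 0$ trivializes the long Serre relation at the short root, and that the remaining Serre relations reduce to a single product vanishing — are correct and match what happens in $\Cl(N)$.

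One small inaccuracy of phrasing: you say the $i=j=1$ check in type $B_n$ "produces the required $[2]_1^{-1}$." In fact no $[2]_1 = q^{1/2}+q^{-1/2}$ appears; the computation is $e_1 f_1 - f_1 e_1 = \Psi_1\Psi_1^\dagger - \Psi_1^\dagger\Psi_1$ (using $\Psi_0^2 = 1$) while $k_1 - k_1^{-1} = q^{1/2}\omega_1 - q^{-1/2}\omega_1^{-1} = (q^{1/2}-q^{-1/2})(\Psi_1\Psi_1^\dagger - \Psi_1^\dagger\Psi_1)$, so dividing by $q_1 - q_1^{-1} = q^{1/2}-q^{-1/2}$ directly gives the match. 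The $q^{1/2}$ in $k_1$ and the identity $\Psi_0^2=1$ are indeed the essential ingredients, but nothing of the form $[2]_1$ arises; this is a cosmetic slip rather than a gap.
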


\begin{proof}
    This is proved in \cite[Th.~3.2]{Hay90}, except that the labelling of the Dynkin diagram is different there, the element $\sigma$ is not included in the definition of the quantized enveloping algebra there, and \cite[Th.~3.2]{Hay90} does not have the factors of $\Psi_0 = \Psi_0^\dagger$ that appear in the images of $e_1$ and $f_1$ above when $N=2n+1$.
    \details{
        Adding the factors of $\Psi_0$ does not affect the result.  For example, if $N=2n+1$, we have
        \[
            e_1 f_1 - f_1 e_1
            \mapsto \Psi_1 \Psi_1^\dagger - \Psi_1^\dagger \Psi_1
        \]
        and
        \[
            k_1 - k_1^{-1}
            \mapsto q^{\frac{1}{2}} \omega_1 - q^{-\frac{1}{2}} \omega_1^{-1}
            \overset{\cref{notary3}}{=} \left( q^{\frac{1}{2}} - q^{-\frac{1}{2}}\right) (\Psi_1 \Psi_1^\dagger - \Psi_1^\dagger \Psi_1).
        \]
        Since $q_1=q^{\frac{1}{2}}$, we see that \cref{Udef-ef} is satisfied for $i=j=1$.
    }
    The relations \cref{sigma} are straightforward to verify.  For example,
    \[
        -(\Psi_1 - \Psi_1^\dagger)(\Psi_2 \Psi_1) (\Psi_1 - \Psi_1^\dagger)
        = - \Psi_1^\dagger \Psi_2 \Psi_1 \Psi_1^\dagger
        = \Psi_2 \Psi_1^\dagger (1 - \Psi_1^\dagger \Psi_1)
        = \Psi_2 \Psi_1^\dagger,
    \]
    showing that the second relation in \cref{sigma} with $i=1$ is satisfied.  We also have
    \begin{multline*}
        -(\Psi_1 - \Psi_1^\dagger)(q \omega_2 \omega_1)(\Psi_1 - \Psi_1^\dagger)
        = q (\Psi_1^\dagger - \Psi_1)(\Psi_2 \Psi_2^\dagger + q^{-1} \Psi_2^\dagger \Psi_2)(\Psi_1 \Psi_1^\dagger + q^{-1} \Psi_1^\dagger \Psi_1) (\Psi_1 - \Psi_1^\dagger)
        \\
        = q (\Psi_1^\dagger - \Psi_1)(\Psi_2 \Psi_2^\dagger + q^{-1} \Psi_2^\dagger \Psi_2)(\Psi_1 - q^{-1} \Psi_1^\dagger)
        = q(\Psi_2 \Psi_2^\dagger + q^{-1} \Psi_2^\dagger \Psi_2)(\Psi_1^\dagger - \Psi_1)(\Psi_1 - q^{-1} \Psi_1^\dagger)
        \\
        = (\Psi_2 \Psi_2^\dagger + q^{-1} \Psi_2^\dagger \Psi_2)(\Psi_1 \Psi_1^\dagger + q \Psi_1^\dagger \Psi_1)
        = \omega_2 \omega_1^{-1},
    \end{multline*}
    showing that the last relation in \cref{sigma} for the positive exponent is satisfied.  Verification of the remaining relations in \cref{sigma} is similar.
\end{proof}

\Cref{ukulele} allows us to view the spin module $S$ as a module for $U_q(N)$.  We continue to refer to this $U_q(N)$-module as the \emph{spin module}.  It is straightforward to see that $S$ is a self-dual simple module.  For $S$ to be simple, the inclusion of the element $\sigma$ in the definition of $U_q(2n+1)$ is crucial.  When $N=2n+1$, $S$ has highest-weight vector $x_\varnothing$ of weight $\frac{1}{2}(\epsilon_1 + \epsilon_2 + \dotsb + \epsilon_n)$.  When $N=2n$, we have $S \cong \Ind(L_q(\pm \epsilon_1 + \epsilon_2 + \dotsb + \epsilon_n))$.

When $N \le 2$, the spin module is given as follows (see \cite[Rem.~4.1]{MS24}):
\begin{itemize}
    \item When $N=0$, then $S$ is the one-dimensional module for $U_q(0) = \kk[\sigma]/(\sigma^2-1)$ where $\sigma$ acts by $-1$.  We fix a nonzero element $x_\varnothing \in S$.

    \item When $N=1$, then $S$ is the one-dimensional module for $U_q(1) = \kk[\xi]/(\xi^2-1)$ where $\xi$ acts by $-1$.  We fix a nonzero element $x_\varnothing \in S$.

    \item When $N=2$, the module $S$ is $2$-dimensional, with basis $x_\varnothing,x_{\{1\}}$, and action given by
        \[
            k x_\varnothing = q^{\frac{1}{2}} x_\varnothing,\quad
            k x_{\{1\}} = q^{- \frac{1}{2}} x_{\{1\}},\quad
            \sigma x_\varnothing = - \sqrt{-1} x_{\{1\}},\quad
            \sigma x_{\{1\}} = \sqrt{-1} x_\varnothing.
        \]
\end{itemize}

%-------------------------------------------
\subsection{Quantum Clifford multiplication}
%-------------------------------------------

The inclusion $V \hookrightarrow \Cl(N)$, together with the action of $\Cl(N)$ on $S$ given by \cref{gouda1,gouda2,diablo} yields a map
\begin{equation} \label{tau}
    \tau \colon V \otimes S \to S,\quad
    v_i \otimes x \mapsto \psi_i x,\qquad i \in \Vset.
\end{equation}
We refer to $\tau$ as \emph{quantum Clifford multiplication}.

\begin{prop} \label{whitman}
    The map $\tau$ is a homomorphism of $U_q(N)$-modules.
\end{prop}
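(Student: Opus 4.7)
The plan is to verify that $\tau$ intertwines the action of each algebra generator of $U_q(N)$. Using the comultiplications \cref{comult,Brazil} and the algebra homomorphism $U_q(N) \to \Cl_q(N)$ of \cref{ukulele}, the $U_q(N)$-linearity condition applied to $v_j \otimes x$ translates into the following family of identities in $\Cl_q(N)$ (where I write $\psi_{v_i}$ simply as $\psi_i$):
\begin{gather*}
    k_i \psi_j k_i^{-1} = q^{(\alpha_i,\wt(v_j))} \psi_j, \qquad
    [e_i, \psi_j] = \psi_{e_i v_j}\, k_i, \\
    f_i \psi_j - q^{-(\alpha_i,\wt(v_j))} \psi_j f_i = \psi_{f_i v_j}, \qquad
    \sigma \psi_j \sigma^{-1} = \psi_{\sigma v_j}\ \ \text{(in type $D$)},
\end{gather*}
ranging over $1 \le i \le n$ and $j \in \Vset$.

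Each identity lives in $\Cl_q(N)$, which I would identify with the classical Clifford algebra $\Cl(N)$ via \cref{Clisom}. First I would dispatch the group-like generators: the $k_i$-relations follow from the commutation rules \cref{notary1,notary2} applied to the explicit description of $\psi_j$ in terms of $\Psi_k$, $\Psi_k^\dagger$, and $\omega_k$, and the $\sigma$-relation is a direct check combining \cref{doze5} with the formula $\sigma \mapsto \sqrt{-1}(\Psi_1 - \Psi_1^\dagger)$. Next I would handle the Chevalley generators $e_i, f_i$: since $e_i v_j$ (respectively, $f_i v_j$) is nonzero for only one or two values of $j$, only finitely many identities arise for each $i$, and in the generic range $2 \le i \le n$ the images $e_i = \Psi_i \Psi_{i-1}^\dagger$, $f_i = \Psi_{i-1} \Psi_i^\dagger$, $k_i = \omega_i \omega_{i-1}^{-1}$ allow the desired identities to be derived by straightforward manipulation using the Clifford anticommutation relations \cref{classicCl}.

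The main obstacle will be the node $i=1$, whose images in $\Cl_q(N)$ under \cref{ukulele} take a different form from the generic nodes and differ between types $B$ and $D$; in type $B$ these computations additionally involve $v_0$ and the self-dual generator $\Psi_0$, along with the short-root shift $q_1 = q^{1/2}$. The ad hoc small cases $N \in \{0,1,2\}$ likewise require separate checks from the definitions given at the end of \cref{subsec:natural} and following \cref{ukulele}. Although each individual verification is elementary, the accumulated bookkeeping across these nonstandard cases constitutes the bulk of the proof.
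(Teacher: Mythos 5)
Your proposal is essentially the paper's approach: reduce $U_q(N)$-linearity to the four families of identities in $\Cl_q(N)$ (one each for $e_i$, $f_i$, $k_i^{\pm 1}$, $\sigma$), identify $\Cl_q(N) \cong \Cl(N)$ via \cref{Clisom}, and verify case by case using \cref{notary1,notary2,classicCl} and the explicit formulas of \cref{ukulele}, treating the generic nodes $2 \le i \le n$ uniformly and the node $i=1$ (in both types $B$ and $D$) plus the small-$N$ cases separately. Your rewritten identities match the paper's \cref{froste,frostf,frostk,frosts} after substituting $k_i^{\pm 1} v_j = q^{\pm(\alpha_i,\wt v_j)} v_j$, so the plan is correct and matches the paper's proof in the appendix.
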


The proof of \cref{whitman}, which is a lengthy but straightforward direct computation, is given in \cref{sec:rope}.

%-----------------------------------
\subsection{Invariant bilinear form}
%-----------------------------------

For a subset $I$ of $\{1,2,\dotsc,n\}$, we let $I^\complement = \{1,2,\dotsc,n\} \setminus I$ denote its complement.  Define a bilinear form on $S$ by
\begin{equation} \label{Sform}
    \Phi_S(x_I, x_J)
    = \delta_{I,J^\complement} (-1)^{n(N+1)|I|} \prod_{i \in I} (-1)^i q^{-\rho_i},
\end{equation}
and extending by bilinearity.  When $N=2n+1$, this is the form from \cite[Example 4.16]{BER24}, except that their $q$ is our $q^{\frac{1}{2}}$ and we label the Dynkin diagram differently, so one needs to replace $i$ by $n+1-i$.

\begin{lem} \label{bouncy}
    For $0 \le i \le n$, $x,y \in S$, we have
    \begin{equation} \label{bounce}
        \Phi_S(\Psi_i^\dagger x, y) = (-1)^{Nn} q^{-\rho_i} \Phi_S(x, \Psi_i^\dagger y),\qquad
        \Phi_S(\Psi_i x, y) = (-1)^{Nn} q^{\rho_i} \Phi_S(x, \Psi_i y).
    \end{equation}
\end{lem}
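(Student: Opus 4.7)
The plan is to verify both identities directly on the basis $\{x_I : I \subseteq \{1,\ldots,n\}\}$ of $S$, using the explicit action of $\Psi_i^\dagger$ and $\Psi_i$ coming from the classical Clifford relations \eqref{classicCl}. Since both sides of each equation are $\kk$-bilinear, it suffices to take $x = x_I$, $y = x_J$ and check the formulas case by case. First I would record the standard creation/annihilation formulas: for $1 \le i \le n$,
\[
    \Psi_i^\dagger x_I = (1-\chi_i(I))(-1)^{s_i(I)}\, x_{I \cup \{i\}},\qquad
    \Psi_i x_I = \chi_i(I)(-1)^{s_i(I)}\, x_{I \setminus \{i\}},
\]
where $\chi_i(I) = 1$ if $i \in I$ and $0$ otherwise, and $s_i(I) := |\{j \in I : j > i\}|$. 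These follow by anticommuting $\Psi_i$ or $\Psi_i^\dagger$ past $s_i(I)$ generators and using $\Psi_i \Psi_i^\dagger + \Psi_i^\dagger \Psi_i = 1$. When $N = 2n+1$, the central generator $\Psi_0 = \Psi_0^\dagger$ acts diagonally as $\varepsilon(-1)^{|I|}$ on $x_I$.

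Next I would use the support condition $\delta_{I, J^\complement}$ in the definition of $\Phi_S$ to locate the nonzero configurations. For the first identity both sides are nonzero precisely when $i \notin I$, $i \notin J$, and $I \sqcup J \sqcup \{i\} = \{1,\ldots,n\}$; for the second identity the nonzero configuration is $i \in I \cap J$ together with $I \setminus \{i\} = J^\complement$. In either configuration $I$ and $J$ partition $\{1,\ldots,n\} \setminus \{i\}$, so the key combinatorial identity
\[
    s_i(I) + s_i(J) = n - i
\]
holds, as both sides count the integers in $\{1,\ldots,n\}$ strictly greater than $i$.

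In the nonzero configuration I would substitute the creation/annihilation formulas into $\Phi_S$. The prefactor $(-1)^{n(N+1)|I|}$ shifts by $(-1)^{n(N+1)}$ when $|I|$ changes by one, and $\prod_{j \in I}(-1)^j q^{-\rho_j}$ gains or loses the factor $(-1)^i q^{-\rho_i}$; this last factor is exactly what produces the $q^{\pm \rho_i}$ on the right-hand side of \eqref{bounce}. After cancelling the common data $\prod_{j \in I}(-1)^j q^{-\rho_j}$, both identities reduce to the parity statement
\[
    s_i(I) + s_i(J) + n + i \equiv 0 \pmod 2,
\]
which follows from $s_i(I) + s_i(J) = n - i$ and $2n \equiv 0$. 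For $N = 2n+1$ and $i = 0$ the computation collapses to $(-1)^n \equiv (-1)^{Nn} \pmod 2$, which holds since $Nn = (2n+1)n \equiv n$. The only real obstacle is careful sign bookkeeping; once the support conditions and the combinatorial identity above are in place, every sign cancels correctly and the $q$-powers match by construction.
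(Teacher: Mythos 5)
Your proof is correct and takes essentially the same approach as the paper: both reduce to basis elements $x_I, x_J$, use the creation/annihilation action of $\Psi_i^{\pm}$ on the $x_I$, identify the single nonzero configuration forced by $\delta_{I,J^\complement}$, and then cancel all signs via the combinatorial identity $|\{j\in I: j>i\}| + |\{j\in J: j>i\}| = n-i$ together with the $(-1)^{n(N+1)}$ shift from the $|I|$-dependent prefactor, finishing the $i=0$ case separately. (One tiny imprecision: in the second identity $i\in I\cap J$, so it is $I\setminus\{i\}$ and $J\setminus\{i\}$ that partition $\{1,\dots,n\}\setminus\{i\}$, but since $s_i(I)=s_i(I\setminus\{i\})$ your identity $s_i(I)+s_i(J)=n-i$ still holds.)
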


\begin{proof}
    It suffices to consider the case where $x = x_I$, $y = x_J$, $i \notin I$, $J = I^\complement \setminus \{i\}$.  In this case, for $1 \le i \le n$, we have
    \[
        \Phi_S(\Psi_i^\dagger x_I, x_J)
        = (-1)^{|\{j \in I : j>i\}|} \Phi_S(x_{I \cup \{i\}},x_J)
        = (-1)^{|\{j \in I : j>i\}| + n(N+1)(|I|+1)} \prod_{j \in I \cup \{i\}} (-1)^j q^{-\rho_j}
    \]
    and
    \[
        \Phi_S(x_I, \Psi_i^\dagger x_J)
        = (-1)^{|\{j \notin I : j>i\}|} \Phi_S(x_I, x_{I^\complement})
        = (-1)^{|\{j \notin I : j>i\}| + n(N+1)|I|}  \prod_{j \in I} (-1)^j q^{-\rho_j}.
    \]
    The first equality in \cref{bounce} now follows from the fact that
    \[
        |\{ j \in I : j > i \}| + |\{ j \notin I : j > i\}| = n-i.
    \]
    The proof of the second equality is similar.  The proof of both equalities in \cref{bounce} for $i=0$ are straightforward.
\end{proof}

\begin{rem}
    It follows from \cref{bouncy} and \cite[Lem.~4.6]{MS24}, together with the fact that $\Phi_S(x_\varnothing,x_{\varnothing^\complement})=1$, that setting $q=1$ in \cref{Sform} recovers the bilinear form \cite[(4.16)]{MS24}.
\end{rem}

\begin{cor}
    The bilinear form $\Phi_S$ induces a homomorphism $S \otimes S \to \triv$ of $U_q(N)$-modules.
\end{cor}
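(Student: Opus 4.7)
The plan is to verify directly that $\Phi_S$ satisfies the $U_q(N)$-invariance identity $\Phi_S(\Delta(a)(x \otimes y)) = \epsilon(a)\,\Phi_S(x,y)$ on each generator $a$ of $U_q(N)$, applied to weight vectors $x,y \in S$. For $N \le 2$ the claim follows by direct inspection of the explicit low-dimensional descriptions of $S$ given earlier. For $N \ge 3$, the strategy is to use \cref{ukulele} to translate each generator into the quantum Clifford algebra, and then apply \cref{bouncy} combined with the Clifford anticommutation relations \cref{classicCl,mesh}.

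The Cartan and outer-automorphism cases are comparatively quick. Since $\Phi_S(x_I, x_J) = 0$ unless $J = I^\complement$, the form is supported on weight pairs summing to zero, from which $\Phi_S(k_i x, k_i y) = \Phi_S(x, y)$ is immediate. For $\sigma$-invariance in type $D$, use $\sigma \mapsto \sqrt{-1}\,(\Psi_1 - \Psi_1^\dagger)$ together with \cref{bouncy} (in which $\rho_1 = 0$ and $(-1)^{Nn} = 1$): the diagonal terms $\Phi_S(\Psi_1 x, \Psi_1 y)$ and $\Phi_S(\Psi_1^\dagger x, \Psi_1^\dagger y)$ vanish by \cref{apostille3}, while the two cross terms assemble via \cref{apostille5} into $\Phi_S(x, (\Psi_1 \Psi_1^\dagger + \Psi_1^\dagger \Psi_1) y) = \Phi_S(x, y)$.

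The main computation is the Chevalley invariance. For $i \ge 2$ and $\phi(e_i) = \Psi_i \Psi_{i-1}^\dagger$, two applications of \cref{bouncy} together with $\Psi_i \Psi_{i-1}^\dagger = -\Psi_{i-1}^\dagger \Psi_i$ from \cref{classicCl} and $\rho_i - \rho_{i-1} = 1$ yield the compact identity $\Phi_S(e_i x, y) = -q\,\Phi_S(x, e_i y)$, with the two factors $(-1)^{Nn}$ pairing up to $1$. Combined with the commutation $e_i k_i = q_i^{-2} k_i e_i$ from \cref{Udef-kef} and the fact that $k_i$ acts on the weight vector $e_i y$ (with $\wt(y) = \mu$) as the scalar $q^{(\alpha_i,\mu) + 2}$, the required identity $\Phi_S(e_i x, k_i y) + \Phi_S(x, e_i y) = 0$ reduces to $q^{(\alpha_i,\mu) + 1} = 1$ on every weight vector $y$ with $e_i y \ne 0$. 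This last identity holds automatically: the weights of $S$ lie in $\{\pm \tfrac{1}{2}\}^n$ with respect to the $\epsilon_j$-basis, and $e_i y \ne 0$ forces the $\epsilon_i$- and $\epsilon_{i-1}$-components of $\mu$ to equal $-\tfrac12$ and $\tfrac12$ respectively, giving $(\alpha_i,\mu) = -1$. The remaining Chevalley cases ($e_1$ in both types and all $f_i$) follow by the same template, with minor adjustments reflecting $\rho_1 = \tfrac12$ and $(\alpha_1,\alpha_1) = 1$ in type $B$ (so the governing weight identity becomes $(\alpha_1,\mu) = -\tfrac12$), and with $\Delta(f_i) = f_i \otimes 1 + k_i^{-1} \otimes f_i$ pushing the $k$-scalar onto the first argument instead of the second. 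The main obstacle throughout is the sign and scalar bookkeeping: tracking the cancellation of paired $(-1)^{Nn}$ factors from \cref{bouncy}, the Clifford anticommutation signs from \cref{classicCl,mesh}, the factor $\sqrt{-1}$ in $\sigma$, and the $\rho$-shifts $q^{\pm(\rho_i - \rho_{i-1})}$. Once the template is set for $e_i$ with $i \ge 2$, each other case reduces to a short analogous verification.
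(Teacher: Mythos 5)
Your proposal is correct and follows the same essential strategy as the paper: both reduce the claim to a generator-by-generator verification via \cref{ukulele}, push the Clifford generators across $\Phi_S$ with \cref{bouncy}, and use the Clifford anticommutation relations. The only genuine difference is in the packaging of the final step. The paper works with the antipode form $\Phi_S(a x, y) = \Phi_S(x, \iota(a) y)$ and absorbs the factor $q$ into $\omega_i^{-1}\omega_{i-1}$, justified by the observation that $\Psi_i\Psi_{i-1}^\dagger x_I$ is supported on $I$ with $i \in I$, $i-1 \notin I$. You instead work with the coproduct form and reduce to the equivalent weight constraint $(\alpha_i,\mu)=-1$ (or $-\tfrac12$ for $\alpha_1$ in type $B$) on vectors $y$ with $e_i y \neq 0$ — this is precisely the same support fact, phrased weight-theoretically rather than as an $\omega$-scalar, and the two routes carry equal work. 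One small citation slip to fix: in the $\sigma$-invariance computation you are working (as you should, given \cref{ukulele} and \cref{bouncy}) with the classical generators $\Psi_i$, so the identities $\Psi_1^2 = 0 = (\Psi_1^\dagger)^2$ and $\Psi_1\Psi_1^\dagger + \Psi_1^\dagger\Psi_1 = 1$ you need come from \cref{classicCl}, not from \cref{apostille3,apostille5} (those govern the $\psi_i$ and carry extra $\omega$- and $q$-factors that would not give the clean sum to $1$).
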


\begin{proof}
    For $N \le 2$, this is a straightforward direct verification.  Thus, we suppose $N \ge 3$.  It suffices to show that
    \[
        \Phi_S(a x,y) = \Phi_S(x, \iota(a)y)
        \qquad \text{for all } x,y \in S,\ a \in \{e_i, f_i, k_i^{\pm 1}, \sigma : i \in I\}.
    \]
    For $2 \le i \le n$, we have
    \[
        \Phi_S(e_i x, y)
        = \Phi_S(\Psi_i \Psi_{i-1}^\dagger x, y)
        \overset{\cref{bounce}}{=} q^{\rho_i - \rho_{i-1}} \Phi_S(x, \Psi_{i-1}^\dagger \Psi_i y)
        \overset{\cref{rhoeven}}{\underset{\cref{rhoodd}}{=}} - q \Phi_S(x, \Psi_i \Psi_{i-1}^\dagger y).
    \]
    Now, since $\Psi_i \Psi_{i-1}^\dagger x_I = 0$ unless $i \in I$ and $i-1 \notin I$, we have
    \[
        q \Psi_i \Psi_{i-1}^\dagger y
        = \Psi_i \Psi_{i-1}^\dagger \omega_i^{-1} \omega_{i-1} y
        = e_i k_i^{-1} y,
    \]
    as desired.  The proofs that $\Phi_S(f_ix,y) = -\Phi_S(x,k_if_iy)$ and $\Phi_S(k_i^{\pm 1}x,y) = \Phi_S(x,k_i^{\mp 1}y)$ are similar.

    Now suppose that $N=2n$.  Then
    \[
        \Phi_S(e_1 x, y)
        = \Phi_S(\Psi_2 \Psi_1 x, y)
        \overset{\cref{bounce}}{=} q^{\rho_1+\rho_2} \Phi_S(x, \Psi_1 \Psi_2 y)
        \overset{\cref{rhoeven}}{=} -q \Phi_S(x, \Psi_2 \Psi_1 y).
    \]
    Since $\Psi_2 \Psi_1 x_I = 0$ unless $1,2 \in I$, we have
    \[
        q \Psi_2 \Psi_1 y
        = q^{-1} \Psi_2 \Psi_1 \omega_2^{-1} \omega_1^{-1} y
        = e_1 k_1^{-1} y,
    \]
    as desired.  The proof that $\Phi_S(f_1x,y) = -\Phi_S(x,k_1f_1y)$ is similar.  We also have
    \[
        \Phi_S(\sigma x, y)
        = \sqrt{-1} \Phi_S(\Psi_1 x,y) - \sqrt{-1} \Phi_S(\Psi_1^\dagger x,y)
        \overset{\cref{bounce}}{\underset{\cref{rhoeven}}{=}} \sqrt{-1} \Phi_S(x,\Psi_1 y) - \sqrt{-1} \Phi_S(x,\Psi_1^\dagger y)
        = \Phi_S(x, \sigma y).
    \]

    Now suppose that $N=2n+1$.  Then, for $I,J \subseteq \{1,2,\dotsc,n\}$, we have
    \[
        \Phi_S(e_1 x_I, x_J)
        = \Phi_S(\Psi_1 \Psi_0 x_I, x_J)
        \overset{\cref{bounce}}{=} q^{\rho_0 + \rho_1} \Phi_S(x_I, \Psi_0 \Psi_1 x_J)
        \overset{\cref{rhoodd}}{=} - q^{\frac{1}{2}} \Phi_S(x_I, \Psi_1 \Psi_0 x_J).
    \]
    It suffices to consider the case $1 \in I$, $J = I^\complement \cup \{1\}$, since otherwise all the above expressions are equal to zero.  In this case, we have
    \[
        q^{\frac{1}{2}} \Psi_1 \Psi_0 x_J
        = q^{-\frac{1}{2}} \Psi_1 \Psi_0 \omega_1^{-1} x_J
        = e_1 k_1^{-1} x_J,
    \]
    as desired.
\end{proof}

%=======================================================
\section{The incarnation functor\label{sec:incarnation}}
%=======================================================

In this section, we relate the quantum spin Brauer category to the representation theory of $U_q(N)$ by defining a functor from $\QSB$ to $U_q(N)$-mod.  In \cref{sec:full,sec:essential}, we will show that this functor is full and essentially surjective.

Throughout this section, we assume $\kk = \C \big( q^{\pm \frac{1}{4}} \big)$.  Let
\begin{equation} \label{squiddy}
    \sigma_N = (-1)^{\binom{n}{2}+nN}.
\end{equation}
We fix the parameters
\begin{equation} \label{crabby}
    t = q^{N(1-N)/8},\qquad
    \kappa = (-1)^{nN} q^{(1-N)/2},\qquad
    d_\Sgo = \sigma_N \prod_{i=1}^n \left( q^{\frac{N}{2}-i} + q^{i-\frac{N}{2}} \right),
\end{equation}
and set
\begin{equation}
    \QSB(N) := \QSB(q,t,\kappa,d_\Sgo).
\end{equation}
By \cref{dimrel}, we have
\begin{equation} \label{shimmy}
    d_\Vgo = \frac{\kappa^{-2} - \kappa^2}{q-q^{-1}} + 1 = [N-1] + 1.
\end{equation}

Fix a basis $\bB_S$ of $S$, and let $\bB_S^\vee = \{x^\vee : x \in \bB_S\}$ denote the left dual basis with respect to $\Phi_S$, from \cref{Sform}, defined by
\[
    \Phi_S(x^\vee, y) = \delta_{x,y},\qquad x,y \in \bB_S.
\]
We fix a basis $\bB_V$ of $V$ and define the left dual basis $\bB_V^\vee = \{v^\vee : v \in V\}$ similarly, using $\Phi_V$ from \cref{Vform}.  Then we have $\Group(V)$-module homomorphisms
\begin{align}
    \Phi_S^\vee &\colon \kk \to S \otimes S,&
    \lambda &\mapsto \lambda \sum_{x \in \bB_S} x \otimes x^\vee,\quad
    \lambda \in \kk,
    \\
    \Phi_V^\vee &\colon \kk \to V \otimes V,&
    \lambda &\mapsto \lambda \sum_{v \in \bB_V} v \otimes v^\vee,\quad
    \lambda \in \kk.
\end{align}
These are independent of the choices of bases.

It follows from \cref{Sform} that the left dual to the basis $x_I$, $I \subseteq \{1,2,\dotsc,n\}$, of $S$ is given by
\begin{equation} \label{hercules}
    x_I^\vee = (-1)^{n(N+1)|I^\complement|} \prod_{i \notin I} (-1)^i q^{\rho_i} x_{I^\complement},
\end{equation}
and hence
\begin{equation} \label{dubstepx}
    (x_I^\vee)^\vee = \sigma_N \left( \prod_{i \in I} q^{\rho_i} \right) \left( \prod_{i \notin I} q^{-\rho_i} \right) x_I.
\end{equation}
Similarly, it follows from \cref{Vform} that the left dual to the basis $v_i$, $i \in \Vset$, of $V$ is given by
\begin{equation} \label{vdual}
    v_i^\vee = q^{2 \delta_{i>0} \rho_i} (q+1)^{-\delta_{i,0}} v_{-i},
\end{equation}
and hence
\begin{equation} \label{dubstepv}
    (v_i^\vee)^\vee = q^{-2\rho_i} v_i.
\end{equation}
\details{
    We have
    \[
        \Phi_V \left( q^{-2\rho_i} v_i, v_i^\vee \right)
        = q^{-2\rho_i} q^{2\delta_{i>0} \rho_i} (q+1)^{-\delta_{i,0}} \Phi_V(v_i, v_{-i})
        \overset{\cref{Vform}}{=} q^{-2\rho_i} q^{2\delta_{i>0} \rho_i} q^{2 \delta_{i<0} \rho_i}
        = 1.
    \]
}

Recall the map $\tau \colon V \otimes S \to S$ from \cref{tau}.
\begin{theo} \label{incarnation}
    There is a unique $\kk$-linear monoidal functor
    \[
        \bF \colon \QSB(N) \to U_q(N)\md
    \]
    given on objects by $\Sgo \mapsto S$, $\Vgo \mapsto V$, and on morphisms by
    \begin{gather} \label{incarnate1}
        \capmor{spin} \mapsto \Phi_S,\qquad
        \capmor{vec} \mapsto (q^{2-N}+1)^{-1} \Phi_V,\qquad
        \mergemor{vec}{spin}{spin} \mapsto \tau,
        \\ \label{incarnate2}
        \poscross{spin}{spin} \mapsto \sigma_N T_{S,S},\qquad
        \poscross{spin}{vec} \mapsto T_{S,V},\qquad
        \poscross{vec}{spin} \mapsto T_{V,S},\qquad
        \poscross{vec}{vec} \mapsto T_{V,V}.
    \end{gather}
    Furthermore, we have
    \begin{gather} \label{incarnate3}
        \negcross{spin}{spin} \mapsto \sigma_N T_{S,S}^{-1},\qquad
        \negcross{spin}{vec} \mapsto T_{V,S}^{-1},\qquad
        \negcross{vec}{spin} \mapsto T_{S,V}^{-1},\qquad
        \negcross{vec}{vec} \mapsto T_{V,V}^{-1},
        \\ \label{incarnate4}
        \cupmor{spin} \mapsto \Phi_S^\vee,\qquad
        \cupmor{vec} \mapsto (q^{2-N}+1) \Phi_V^\vee.
    \end{gather}
\end{theo}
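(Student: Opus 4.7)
The plan is to prove the theorem in three stages: uniqueness of $\bF$ on the listed generators, the forcing of the remaining generator images, and then the verification that the specified assignment respects every defining relation of \cref{QSBdef}. Uniqueness on the generators listed in \cref{incarnate1,incarnate2} is immediate from the presentation. For the remaining generators, the braid relations \cref{braid}, combined with the invertibility of $T_{W,W'}$ (\cref{dentist}), force each negative crossing to be sent to the inverse of the corresponding braiding, giving \cref{incarnate3}. The yank relations \cref{yank}, together with the fixed images of the caps, determine the cups uniquely as the dual coevaluations; using the explicit dual bases \cref{hercules,dubstepx,vdual,dubstepv}, these are identified as $\Phi_S^\vee$ and $(q^{2-N}+1)\Phi_V^\vee$, the factor $(q^{2-N}+1)$ compensating the normalisation $(q^{2-N}+1)^{-1}$ on the vector cap. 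This yields \cref{incarnate4}.

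The bulk of the work is verifying that the stated assignments preserve each defining relation of \cref{QSBdef}, which I would organise by the tools required. First, the \emph{purely categorical} relations---braid \cref{braid}, yank \cref{yank}, typhoon \cref{typhoon}, and swishy \cref{swishy}---follow abstractly from $U_q(N)\md$ being a ribbon pivotal category with self-dual $V$ and $S$, together with the $U_q(N)$-equivariance of $\tau$ (\cref{whitman}). Second, the \emph{Kauffman-type} relations involving only $\Vgo$---the skein \cref{skein}, the vector case of the deloop \cref{deloop}, and the $\Vgo$-bubble in \cref{dimrel}---are direct calculations using the explicit $R$-matrix \cref{TVV}; the deloop yields the twist eigenvalue $\kappa^2 = q^{1-N}$ on $V$, and the bubble yields $\dim_q V = [N-1]+1 = d_\Vgo$, matching \cref{crabby,shimmy}. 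Third, the \emph{spin dimension data} in \cref{deloop,dimrel} reduce to computations with the Clifford action \cref{gouda1,gouda2,diablo} on the basis $\{x_I\}$: the twist eigenvalue on $S$ is read off from the highest weight of the spin module and matches $t$ from \cref{crabby}, while the quantum dimension equals $\sigma_N \prod_{i=1}^n \bigl( q^{N/2-i} + q^{i-N/2} \bigr) = d_\Sgo$.

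The remaining two relations, fishy \cref{fishy} and oist \cref{oist}, require the most care, as each genuinely intertwines $\tau$ with duality and with the $\Vgo$-crossing, respectively. Each takes place in a small-dimensional Hom-space---for example $\dim \Hom_{U_q(N)}(V \otimes S, S) = 1$ by \cref{SV}---so the verification can be reduced to evaluating both sides on a well-chosen test vector (say $v_1 \otimes x_\varnothing$ for fishy and $v_1 \otimes v_{-1} \otimes x_\varnothing$ for oist), using \cref{TVV} and \cref{gouda1,gouda2,diablo} to compute $T_{V,V}$ and the iterated $\tau$ explicitly. The main obstacle is the oist relation: it is the relation that forces the specific value of $\kappa$, and the scalar $q\kappa^2 + 1$ on its right-hand side will emerge from the interaction of the ``rank-change'' term $(q-q^{-1})E_{-j,i} \otimes E_{j,-i}$ in \cref{TVV} with the quantum Clifford identity \cref{apostille5}. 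Matching this scalar cleanly against $q\kappa^2 + 1 = (-1)^{nN} q \cdot q^{1-N} + 1$ coming from \cref{crabby} is the most delicate bookkeeping step in the proof.
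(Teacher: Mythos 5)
Your overall structure—uniqueness on generators, forcing the remaining generator images, then relation-by-relation verification—matches the paper's strategy, and your treatment of the braid, yank, typhoon, skein, deloop, dimrel, and fishy relations is in line with the actual argument. However, there are two places where your proposal diverges from what is actually required, and the first is a genuine gap.

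\textbf{Swishy does not follow abstractly from pivotality.} You assert that \cref{swishy} follows "abstractly from $U_q(N)\md$ being a ribbon pivotal category \ldots together with the $U_q(N)$-equivariance of $\tau$." This is too quick. Pivotality of the ambient category guarantees coherent rotation for morphisms built out of the structural data (evaluations, coevaluations, braidings), but \cref{swishy} is a statement about the \emph{extra} morphism $\tau$ interacting with the \emph{specific chosen} self-duality pairings $\Phi_S$ and $\Phi_V$: it says that the clockwise and counterclockwise rotations of $\tau$ under these particular identifications $S \cong S^*$, $V \cong V^*$ agree. That equality depends on how the forms $\Phi_S$, $\Phi_V$ were normalized and on the precise formula for $\tau$, not merely on $\tau$ being a module map. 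Indeed the paper proves \cref{swishy} by a nontrivial dual-basis computation, crucially using the explicit relations \cref{dubstepx}, \cref{dubstepv}, and the identity \cref{wipers} relating the $\rho$-shifts on either side. You need to supply this computation (or an argument for why $\Phi_S$, $\Phi_V$ intertwine the pivotal isomorphism in the right way); it is not free.

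\textbf{Oist is not the hard step.} You identify \cref{oist} as the most delicate relation, anticipating painful bookkeeping with the rank-change terms of $T_{V,V}$. In fact it is the easiest: $q\kappa^2 + 1 = q\cdot q^{1-N} + 1 = q^{2-N}+1$ exactly cancels the normalization $(q^{2-N}+1)^{-1}$ on the vector cap, and then the image of \cref{oist} under $\bF$ is literally the defining Clifford relation \cref{chelsea}, $(\id + qT)(u\otimes v) = \Phi_V(u,v)$, applied through $\tau$. No test vector is needed. Also note that $\kappa^2$ carries no sign since $(-1)^{2nN}=1$, so the sign-matching you worry about never arises; the sign $(-1)^{nN}$ in $\kappa$ does real work only in \cref{fishy}.
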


We call $\bF$ the \emph{incarnation functor}.

\begin{proof}
    We first show that \cref{incarnate1,incarnate2,incarnate3,incarnate4} indeed yield a functor $\bF$.  We must show that $\bF$ respects the relations of \cref{QSBdef}.  When $N=0$, all diagrams involving a blue strand are zero and the verification is straightforward.  Thus, we assume $N \ge 1$.

    It is well known that the $T_{S,S}$, $T_{S,V}$, $T_{V,S}$, and $T_{V,V}$ yield a braiding on the full monoidal subcategory of $U_q(\fso(N))$-mod generated by $V$ and $S$.  (See, for example, \cite[Ch.~32]{Lus10}.)  By \cref{dentist}, they also yield a braiding on the corresponding subcategory of $U_q(N)$-mod.  The relations \cref{typhoon} and the first three relations in \cref{braid} follow immediately.  The final two relations in \cref{braid} also follow from this braided structure.  Indeed, it follows from the property of a braided monoidal category that
    \[
        \bF \left(
            \begin{tikzpicture}[anchorbase]
                \draw (-0.3,-0.4) to[out=up,in=down] (0,0) arc(180:0:0.15) to[out=down,in=up] (0,-0.4);
                \draw[wipe] (0.3,-0.4) to[out=up,in=down] (-0.3,0.4);
                \draw (0.3,-0.4) to[out=up,in=down] (-0.3,0.4);
            \end{tikzpicture}
        \right)
        =
        \bF \left(
            \begin{tikzpicture}[anchorbase]
                \draw (-0.3,-0.4) -- (-0.3,-0.2) arc(180:0:0.15) -- (0,-0.4);
                \draw (0.3,-0.4) -- (0.3,0.4);
            \end{tikzpicture}
        \right).
    \]
    Composing on the bottom with $\bF \left( \idstrand{spin}\ \poscross{spin}{spin} \right)$ then shows that the fourth equality in \cref{braid} holds for black strands.  The verification for other colours of the strands, as well as the last equality in \cref{braid}, is analogous.

    The verification of \cref{yank} is standard.  The verification of \cref{skein} is also standard, dating back to the study of the surjection of the Birman--Murakami--Wenzl algebra onto the endomorphism algebra of $V^{\otimes 2}$.  See, for example, \cite[Th.~10.2.5]{CP95}.

    Next, we consider the relations \cref{deloop}.  Since $S$ is a simple self-dual module, we have
    \[
        \dim \Hom_{U_q(N)}(S^{\otimes 2}, \triv) = 1.
    \]
    Thus,
    \[
        \bF
        \left(
            \begin{tikzpicture}[anchorbase]
                \draw[spin] (0.15,0) arc(0:180:0.15) to[out=-90,in=120] (0.15,-0.4);
                \draw[wipe] (-0.15,-0.4) to[out=60,in=-90] (0.15,0);
                \draw[spin] (-0.15,-0.4) to[out=60,in=-90] (0.15,0);
            \end{tikzpicture}
        \right)
        = \sigma_N \Phi_S \circ T_{S,S}
        = c_S \Phi_S
        = \bF \left( \capmor{spin} \right)
    \]
    for some $c_S \in \kk$.  Then we compute
    \[
        \Phi_S \circ T_{S,S}(x_\varnothing, x_{\varnothing^\complement})
        \overset{\cref{persephone}}{=} q^{-\frac{n}{4}} \Phi_S(x_{\varnothing^\complement}, x_\varnothing)
        \overset{\cref{Sform}}{=} (-1)^{n(N+1)} q^{-\frac{n}{4}} \prod_{i=1}^n (-1)^i q^{-\rho_i}
        = \sigma_N q^{-\frac{n}{4} - \sum_{i=1}^n \rho_i}.
    \]
    When $N=2n$, we have
    \[
        -\frac{n}{4} - \sum_{i=1}^n \rho_i
        \overset{\cref{rhoeven}}{=} -\frac{n}{4} - \sum_{i=1}^n (i-1)
        = -\frac{n}{4} - \frac{n(n-1)}{2}
        = - \frac{n(2n-1)}{4}
        = \frac{N(1-N)}{8}.
    \]
    On the other hand, when $N=2n+1$, we have
    \[
        -\frac{n}{4} - \sum_{i=1}^n \rho_i
        \overset{\cref{rhoodd}}{=} -\frac{n}{4} - \sum_{i=1}^n \left( i - \frac{1}{2} \right)
        = - \frac{n}{4} - \frac{n^2}{2}
        = \frac{N(1-N)}{8}.
    \]
    Since $\Phi_S(x_{\varnothing}, x_{\varnothing^\complement}) = 1$, it follows that $c_S = q^{\frac{N(1-N)}{8}} = t$, verifying that $\bF$ respects the first relation in \cref{deloop}.  Similarly, since $V$ is a simple self-dual module, we have
    \[
        \bF
        \left(
            \begin{tikzpicture}[anchorbase]
                \draw[vec] (0.15,0) arc(0:180:0.15) to[out=-90,in=120] (0.15,-0.4);
                \draw[wipe] (-0.15,-0.4) to[out=60,in=-90] (0.15,0);
                \draw[vec] (-0.15,-0.4) to[out=60,in=-90] (0.15,0);
            \end{tikzpicture}
        \right)
        = \Phi_V \circ T_{V,V}
        = c \Phi_V
        = \bF \left( \capmor{vec} \right)
    \]
    for some $c_V \in \kk$.  Then we compute
    \[
        \Phi_V \circ T_{V,V} (v_n \otimes v_{-n})
        \overset{\cref{TVV}}{=} q^{-1} \Phi_V \left( v_{-n} \otimes v_n \right)
        \overset{\cref{Vform}}{=} q^{-2\rho_n-1} \Phi_V(v_n \otimes v_{-n}).
    \]
    It follows from \cref{rhoeven,rhoodd} that $c_V = -2 \rho_n - 1 = 1 - N = \kappa^2$, verifying that $\bF$ respects the second relation in \cref{deloop}.

    Now consider the relation \cref{swishy}.  The image under $\bF$ of the left-hand side is the $U_q(N)$-module homomorphism $S \to S \otimes V$ given by
    \begin{multline*}
        x_J
        \mapsto \sum_{\substack{v \in \bB_V \\ x \in \bB_S}} \Phi_S \big( x_J, \tau(v \otimes x) \big) x^\vee \otimes v^\vee
        \overset{\cref{dubstepx}}{\underset{\cref{dubstepv}}{=}} \sum_{\substack{i \in \Vset \\ I \subseteq \{1,2,\dotsc,n\}}} \Phi_S \big( x_J, \tau(v_i^\vee \otimes x_I^\vee) \big) (x_I^\vee)^\vee \otimes (v_i^\vee)^\vee
        \\
        \overset{\cref{dubstepx}}{\underset{\cref{dubstepv}}{=}} \sigma_N \sum_{\substack{i \in \Vset \\ I \subseteq \{1,2,\dotsc,n\}}} \left( \prod_{j \in I} q^{\rho_j} \right) \left( \prod_{j \notin I} q^{-\rho_j} \right) q^{-2\rho_i} \Phi_S \big( x_J, \tau(v_i^\vee \otimes x_I^\vee) \big) x_I \otimes v_i.
    \end{multline*}
    Now, if $1 \le i \le n$, then $\Psi_S \big( x_J, \tau(v_i^\vee \otimes x_I^\vee) \big) = 0$ unless $i \in I$ and $J = I \setminus \{i\}$, and in this case
    \begin{equation} \label{wipers}
        \left( \prod_{j \in I} q^{\rho_j} \right) \left( \prod_{j \notin I} q^{-\rho_j} \right) q^{-2\rho_i}
        = \left( \prod_{j \in J} q^{\rho_j} \right) \left( \prod_{j \notin J} q^{-\rho_j} \right).
    \end{equation}
    Similarly, if $-n \le i \le -1$, then $\Psi_S \big( x_J, \tau(v_i^\vee \otimes x_I^\vee) \big)=0$ unless $-i \notin I$  and $J = I \cup \{-i\}$, and in this case \cref{wipers} again holds, noting that $\rho_{-i} = -\rho_i$.  The equality \cref{wipers} also clearly holds when $i=0$.  Thus, it follows from \cref{dubstepx} that the image under $\bF$ of the left-hand side of \cref{swishy} is given by
    \[
        x_J \mapsto \sum_{\substack{i \in \Vset \\ I \subseteq \{1,2,\dotsc,n\}}} \Phi_S \left( (x_J^\vee)^\vee, \tau(v_i^\vee \otimes x_I^\vee) \right) x_I \otimes v_i
        = \sum_{\substack{i \in \Vset \\ I \subseteq \{1,2,\dotsc,n\}}} \Phi_S \left( \tau(v_i^\vee \otimes x_I^\vee), x_J \right) x_I \otimes v_i,
    \]
    which is precisely the image under $\bF$ of the right-hand side of \cref{swishy}.
    \details{
        For the equality in the centered equation above, we use the fact that
        \[
            \Phi_S \left( (x_J^\vee)^\vee, y \right) = \Phi_S (y, x_J)
            \quad \text{for all } J \subseteq \{1,2,\dotsc,n\},\ y \in S.
        \]
        This fact is clear when $y = x_I^\vee$, $I \subseteq \{1,2,\dotsc,n\}$, and then the general case follows by linearity.
    }

    Next, we prove \cref{fishy}.  Since, by \cref{SV}, $\dim \Hom_{U_q(N)}(S \otimes V, S)=1$, we have
    \begin{equation} \label{dragon}
        \bF
        \left(
            \begin{tikzpicture}[anchorbase]
                \draw[vec] (0.2,-0.5) to [out=135,in=down] (-0.15,-0.2) to[out=up,in=-135] (0,0);
                \draw[wipe] (-0.2,-0.5) to[out=45,in=down] (0.15,-0.2);
                \draw[spin] (-0.2,-0.5) to[out=45,in=down] (0.15,-0.2) to[out=up,in=-45] (0,0) -- (0,0.2);
            \end{tikzpicture}
        \right)
        = c \bF
        \left(
            \begin{tikzpicture}[anchorbase]
                \draw[spin] (0,0) -- (0,0.1) arc (0:180:0.2) -- (-0.4,-0.4);
                \draw[spin] (0,0) to[out=-45, in=left] (0.2,-0.2) arc(-90:0:0.15) -- (0.35,0.4);
                \draw[vec] (0,0) to[out=-135,in=up] (-0.2,-0.4);
            \end{tikzpicture}
        \right)
    \end{equation}
    for some $c \in \kk$.  We determine $c$ by computing the image of $x_\varnothing \otimes v_{-n}$ under both sides of \cref{dragon}.  For the left-hand side, we have, using \cref{persephone},
    \[
        x_\varnothing \otimes v_{-n}
        \xmapsto{T_{S,V}} q^{-\frac{1}{2}} v_{-n} \otimes x_\varnothing
        \xmapsto{\tau} q^{-\frac{1}{2}} \psi_n^\dagger x_\varnothing
        \overset{\cref{gouda1}}{=} q^{-\rho_n-\frac{1}{2}} x_{\{n\}}.
    \]
    For the right-hand side, we have
    \[
        x_\varnothing \otimes v_{-n}
        \xmapsto{1_S \otimes 1_V \otimes \Phi_S^\vee} \sum_I x_\varnothing \otimes v_{-n} \otimes x_I \otimes x_I^\vee
        \xmapsto{1_S \otimes \tau \otimes 1_S} \sum_I x_\varnothing \otimes \psi_n^\dagger x_I \otimes x_I^\vee
        \xmapsto{\Phi_S \otimes 1_S} \sum_I \Phi_S(x_\varnothing, \psi_n^\dagger x_I) x_I^\vee.
    \]
    Since $\Phi_S(x_\varnothing, \psi_n^\dagger, x_I) = 0$ unless $I = \{n\}^\complement$, we have
    \[
        x_\varnothing \otimes v_{-n}
        \mapsto \Phi_S \left( x_\varnothing, \psi_n^\dagger x_{\{n\}^\complement} \right) x_{\{n\}^\complement}^\vee
        \overset{\cref{gouda1}}{=} \Phi_S \left( x_\varnothing, x_{\varnothing^\complement} \right) x_{\{n\}^\complement}^\vee
        \overset{\cref{Sform}}{\underset{\cref{hercules}}{=}} (-1)^{nN} x_{\{n\}}.
    \]
    Thus, $c = (-1)^{nN} q^{-\rho_n-\frac{1}{2}} \overset{\cref{rhoeven}}{\underset{\cref{rhoodd}}{=}} (-1)^{nN} q^{(1-N)/2} = \kappa$, verifying that $\bF$ respects \cref{fishy}.

    The fact that $\bF$ respects \cref{oist} follows immediately from the definition \cref{tau} of $\tau$ and the relation \cref{chelsea} defining the quantum Clifford algebra.

    Finally, we consider the relations \cref{dimrel}.  We have
    \[
        \bF \left( \bubble{spin} \right)
        = \sum_I \Phi_S(x_I \otimes x_I^\vee)
        \overset{\cref{dubstepx}}{=} \sum_I \sigma_N \left(\prod_{i \in I} q^{-\rho_i}\right)   \left(\prod_{i \notin I} q^{\rho_i} \right)
        = \sigma_N \prod_{i=1}^n (q^{\rho_i}+q^{-\rho_i}) = d_\Sgo
    \]
    and
    \[
        \bF \left( \bubble{vec} \right)
        = \sum_{i \in \Vset} \Phi_V(v_i \otimes v_i^\vee)
        \overset{\cref{dubstepv}}{=} \sum_{i \in \Vset} q^{2\rho_i}
        \overset{\cref{rhoeven}}{\underset{\cref{rhoodd}}{=}} [N-1] + 1 = d_\Vgo.
    \]

    This completes the proof of the existence part of the theorem.  The uniqueness follows from the fact that the images of $\negcross{spin}{spin}$, $\negcross{spin}{vec}$, $\negcross{vec}{spin}$, and $\negcross{vec}{vec}$ must be the inverses of the images of $\poscross{spin}{spin}$, $\poscross{vec}{spin}$, $\poscross{spin}{vec}$, and $\poscross{vec}{vec}$, respectively, while the images of the cups must be as in \cref{incarnate4} by the same argument given at the end of the proof of \cite[Th.~6.1]{MS24}.
\end{proof}

\begin{rem}
    The images under the incarnation functor of the relations \cref{zombie} are
    \begin{gather*}
        \begin{tikzpicture}[centerzero]
            \draw[vec] (-0.35,0.35) -- (0.35,-0.35);
            \draw[wipe] (-0.35,-0.35) -- (0.35,0.35);
            \draw[spin] (-0.35,-0.35) -- (0.35,0.35);
        \end{tikzpicture}
        = \frac{(-1)^{Nn}}{q^{\frac{N}{2}-1}+q^{1-\frac{N}{2}}}
        \left( q^{-1/2}\,
            \begin{tikzpicture}[centerzero]
                \draw[spin] (0.35,0.35) -- (0,0.15) -- (0,-0.15) -- (-0.35,-0.35);
                \draw[vec] (0,0.15) -- (-0.35,0.35);
                \draw[vec] (0,-0.15) -- (0.35,-0.35);
            \end{tikzpicture}
            + q^{1/2}\,
            \begin{tikzpicture}[centerzero]
                \draw[spin] (0.35,0.35) -- (0.15,0) -- (-0.15,0) -- (-0.35,-0.35);
                \draw[vec] (0.35,-0.35) -- (0.15,0);
                \draw[vec] (-0.35,0.35) -- (-0.15,0);
            \end{tikzpicture}
        \, \right)
        ,
        \\
        \begin{tikzpicture}[centerzero]
            \draw[spin] (-0.35,0.35) -- (0.35,-0.35);
            \draw[wipe] (-0.35,-0.35) -- (0.35,0.35);
            \draw[vec] (-0.35,-0.35) -- (0.35,0.35);
        \end{tikzpicture}
        = \frac{(-1)^{Nn}}{q^{\frac{N}{2}-1}+q^{1-\frac{N}{2}}}
        \left( q^{-1/2}\,
            \begin{tikzpicture}[centerzero]
                \draw[spin] (-0.35,0.35) -- (0,0.15) -- (0,-0.15) -- (0.35,-0.35);
                \draw[vec] (0,0.15) -- (0.35,0.35);
                \draw[vec] (0,-0.15) -- (-0.35,-0.35);
            \end{tikzpicture}
            + q^{1/2}\,
            \begin{tikzpicture}[centerzero]
                \draw[spin] (-0.35,0.35) -- (-0.15,0) -- (0.15,0) -- (0.35,-0.35);
                \draw[vec] (-0.35,-0.35) -- (-0.15,0);
                \draw[vec] (0.35,0.35) -- (0.15,0);
            \end{tikzpicture}
        \, \right)
        .
    \end{gather*}
    At $q=1$, this recovers \cite[(5.14)]{MS24}.  The image of the skein relation \cref{ash} is
    \[
        \begin{tikzpicture}[centerzero]
            \draw[vec] (-0.35,0.35) -- (0.35,-0.35);
            \draw[wipe] (-0.35,-0.35) -- (0.35,0.35);
            \draw[spin] (-0.35,-0.35) -- (0.35,0.35);
        \end{tikzpicture}
        -
        \begin{tikzpicture}[centerzero]
            \draw[spin] (-0.35,-0.35) -- (0.35,0.35);
            \draw[wipe] (-0.35,0.35) -- (0.35,-0.35);
            \draw[vec] (-0.35,0.35) -- (0.35,-0.35);
        \end{tikzpicture}
        = (-1)^{Nn} \frac{q^{1/2}-q^{-1/2}}{q^{\frac{N}{2}-1}+q^{1-\frac{N}{2}}}
        \left(\,
            \begin{tikzpicture}[centerzero]
                \draw[spin] (0.35,0.35) -- (0.15,0) -- (-0.15,0) -- (-0.35,-0.35);
                \draw[vec] (0.35,-0.35) -- (0.15,0);
                \draw[vec] (-0.35,0.35) -- (-0.15,0);
            \end{tikzpicture}
            -
            \begin{tikzpicture}[centerzero]
                \draw[spin] (0.35,0.35) -- (0,0.15) -- (0,-0.15) -- (-0.35,-0.35);
                \draw[vec] (0,0.15) -- (-0.35,0.35);
                \draw[vec] (0,-0.15) -- (0.35,-0.35);
            \end{tikzpicture}
        \, \right).
    \]
\end{rem}

%============================================================
\section{Fullness of the incarnation functor\label{sec:full}}
%============================================================

In the current section, we prove that the incarnation functor of \cref{incarnation} is full.  Throughout this section, we assume that $\kk = \C(q^{\pm \frac{1}{4}})$ and that $\sigma_N$, $t$, $\kappa$, and $d_\Sgo$ are given by \cref{squiddy,crabby}.   Recall the convention that $\psi_{-i} = \psi_i^\dagger$ and $\Psi_{-i} = \Psi_i^\dagger$ for $0 \le i \le n$.

\begin{lem}
    We have
    \begin{equation} \label{tauflip}
        \bF \left( \mergemor{vec}{spin}{spin} \right) \colon x_I \otimes v_i
        \mapsto (-1)^{Nn} q^{|\{ j \notin I : j>|i| \}|} \Psi_i x_I,
        \qquad i \in \Vset,\ I \subseteq \{1,2,\dotsc,n\}.
    \end{equation}
\end{lem}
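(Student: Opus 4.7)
The statement as written refers to $\bF(\mergemor{vec}{spin}{spin})$ applied to $x_I \otimes v_i \in S \otimes V$; since the generating morphism has domain $V \otimes S$, I read this as the pivotally rotated trivalent morphism $\mergemor{spin}{vec}{spin} \colon \Sgo \otimes \Vgo \to \Sgo$ of \cref{vortex} (the notation in the statement appears to swap the last two colour labels). The first step is to identify the morphism under consideration with this rotation.

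Next, I will apply the first identity in \cref{lobster1}, namely $\mergemor{vec}{spin}{spin} \circ \poscross{spin}{vec} = \kappa \cdot \mergemor{spin}{vec}{spin}$, together with the assignments in \cref{incarnation}, to obtain the identity $\bF(\mergemor{spin}{vec}{spin}) = \kappa^{-1}\, \tau \circ T_{S,V}$. This reduces the lemma to a direct computation of $\tau\bigl(T_{S,V}(x_I \otimes v_i)\bigr)$.

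For the main computation, I would leverage \cref{SV}, which gives $\dim \Hom_{U_q(N)}(S \otimes V, S) = 1$ (for $N \ge 3$; the cases $N \le 2$ can be dispatched by hand). Accordingly, it suffices to (i) verify that the candidate map $h \colon x_I \otimes v_i \mapsto (-1)^{Nn}\, q^{|\{j \notin I : j>|i|\}|}\, \Psi_i x_I$ is $U_q(N)$-equivariant, and (ii) check that $h$ agrees with $\kappa^{-1}\tau \circ T_{S,V}$ on a single basis pair. Task (ii) is straightforward: applying \cref{persephone} to the highest weight vector $x_\varnothing$ of $S$ and the lowest weight vector $v_{-n}$ of $V$ yields $T_{S,V}(x_\varnothing \otimes v_{-n}) = q^{-1/2}\, v_{-n} \otimes x_\varnothing$, and then \cref{gouda1} gives $\tau(v_{-n} \otimes x_\varnothing) = \psi_n^\dagger x_\varnothing = q^{-\rho_n}\, x_{\{n\}}$; combining these with $\kappa = (-1)^{Nn} q^{(1-N)/2}$ and \cref{rhoeven,rhoodd} produces $\kappa^{-1}\tau \circ T_{S,V}(x_\varnothing \otimes v_{-n}) = (-1)^{Nn}\, x_{\{n\}} = h(x_\varnothing \otimes v_{-n})$.

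The main obstacle is task (i), the $U_q(N)$-equivariance of $h$. One must check $h\bigl(a \cdot (x_I \otimes v_i)\bigr) = a \cdot h(x_I \otimes v_i)$ for each generator $a \in \{e_j, f_j, k_j^{\pm 1}, \sigma\}$, using the Clifford realization \cref{ukulele} of the $U_q(N)$-action on $S$, the explicit action \cref{doze1,doze2,doze3,doze4,doze5} on $V$, and the commutation relations \cref{apostille,omega,notary1,notary2} among the $\Psi_j$ and $\omega_j$. The exponent $|\{j \notin I : j > |i|\}|$ in $h$ is precisely calibrated so that the powers of $q$ arising from the coproduct $\Delta(k_j)$ (acting asymmetrically on the two tensor factors) cancel against the powers of $q$ produced by moving $\Psi_j$-operators through the Clifford-algebra images of the Chevalley generators. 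The verification is systematic but requires separate attention to the distinguished node $j = 1$ in type $B$, where the factor $(q+1)$ in \cref{doze2,doze3,Vform} enters the bookkeeping.
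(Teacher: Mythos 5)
You correctly spot the notational issue: the map in the statement has domain $S\otimes V$, so the morphism being evaluated is the pivotal rotation $\mergemor{spin}{vec}{spin}$ of \cref{vortex}, and your reduction $\bF(\mergemor{spin}{vec}{spin}) = \kappa^{-1}\,\tau\circ T_{S,V}$ via \cref{lobster1} and the assignments in \cref{incarnation} is also correct. Your task (ii) is likewise fine. However, you leave the central step — your task (i), the $U_q(N)$-equivariance of the candidate map $h$ — unexecuted, and this is not a minor bookkeeping matter. Verifying that $h$ intertwines every Chevalley generator (plus $\sigma$ in type $D$) is a generator-by-generator check of essentially the same magnitude as the proof of \cref{whitman} itself, carried out in \cref{sec:rope} over several pages. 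Without that verification you cannot invoke the one-dimensionality of $\Hom_{U_q(N)}(S\otimes V, S)$, so the proof is genuinely incomplete.

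The paper sidesteps this entirely by never passing through the braiding. Instead of using the crossing identity from \cref{lobster1}, it uses the cup/cap rotation in \cref{vortex}, writing $\mergemor{spin}{vec}{spin}$ as $(\Phi_S\otimes\id)\circ(\id\otimes\tau\otimes\id)\circ(\id\otimes\id\otimes\Phi_S^\vee)$. Applying this to $x_I\otimes v_i$ gives $\sum_J \Phi_S(x_I,\psi_i x_J)\,x_J^\vee$, which reduces to the stated formula in two steps: first \cref{moca} and \cref{drwho} convert $\psi_i x_J$ into $q^{\rho_i+|\{j\in J:j>|i|\}|}\Psi_i x_J$, and then \cref{bounce} moves $\Psi_i$ across $\Phi_S$ at the cost of $(-1)^{Nn}q^{-\rho_i}$, leaving $(-1)^{Nn}q^{|\{j\notin I:j>|i|\}|}\sum_J\Phi_S(\Psi_i x_I,x_J)x_J^\vee = (-1)^{Nn}q^{|\{j\notin I:j>|i|\}|}\Psi_i x_I$. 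This avoids any computation of $T_{S,V}$ and any fresh equivariance check, because it relies only on equivariance of $\tau$ and $\Phi_S$, which are already established. If you want to rescue your route, the honest price is carrying out the $h$-equivariance verification; otherwise the pivotal (cup/cap) decomposition is the cheaper and intended path.
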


\begin{proof}
    We compute that
    \[
        \bF \left( \mergemor{spin}{vec}{spin} \right)
        =
        \bF
        \left(
            \begin{tikzpicture}[anchorbase]
                \draw[spin] (0,0) -- (0,0.1) arc (0:180:0.2) -- (-0.4,-0.4);
                \draw[spin] (0,0) to[out=-45, in=left] (0.2,-0.2) arc(-90:0:0.15) -- (0.35,0.4);
                \draw[vec] (0,0) to[out=-135,in=up] (-0.2,-0.4);
            \end{tikzpicture}
        \right)
    \]
    is the map
    \begin{multline*}
        x_I \otimes v_i
        \mapsto \sum_J x_I \otimes v_i \otimes x_J \otimes x_J^\vee
        \mapsto \sum_J x_I \otimes \psi_i x_J \otimes x_J^\vee
        \mapsto \sum_J \Phi_S(x_I, \psi_i x_J) x_J^\vee
        \\
        \overset{\cref{moca}}{\underset{\cref{drwho}}{=}} \sum_J q^{\rho_i + |\{j \in J : j>|i|\}|} \Phi_S(x_I,\Psi_i x_J) x_J^\vee
        \overset{\cref{bounce}}{=} (-1)^{Nn} q^{|\{j \notin I : j>|i|\}|} \sum_J \Phi_S(\Psi_i x_I,x_J) x_J^\vee
        \\
        = (-1)^{Nn} q^{|\{j \notin I : j>|i|\}|} \Psi_i x_I.
    \end{multline*}
    where, in the second equality, we used the fact that $\Phi_S(x_I,\Psi_i x_J)=0$ unless $I = J^\complement \sqcup \{i\}$.
\end{proof}

\begin{lem} \label{rhino}
    We have
    \begin{equation} \label{hippo}
        \begin{gathered}
            \bF
            \left(
                \begin{tikzpicture}[centerzero]
                    \draw[spin] (-0.2,-0.3) -- (-0.2,0.3);
                    \draw[spin] (0.2,-0.3) -- (0.2,0.3);
                    \draw[vec] (-0.2,0) -- (0.2,0);
                \end{tikzpicture}
            \right)
            \colon x_I \otimes x_J
            \mapsto (-1)^{Nn} \left( q^{\frac{N-2}{2}} + q^{\frac{2-N}{2}} \right) B_N(x_I \otimes x_J),
            \qquad \text{where}
            \\
            B_N = \sum_{i \in \Vset} (q+1)^{-\delta_{i,0}} \omega_{>|i|} \Psi_i \otimes \omega_{>|i|}^{-1} \Psi_{-i} \in \Cl_q(N) \otimes \Cl_q(N).
        \end{gathered}
    \end{equation}
\end{lem}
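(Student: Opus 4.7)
The plan is to decompose the barbell into generating morphisms of $\QSB$, apply the incarnation functor $\bF$, and then simplify on basis elements. By \cref{barbell} and the pivotal structure of $\QSB$, the barbell diagram factors as $(\idstrand{spin} \otimes \mergemor{vec}{spin}{spin}) \circ (\splitmor{spin}{spin}{vec} \otimes \idstrand{spin})$. The split $\splitmor{spin}{spin}{vec}$, by its defining picture in \cref{vortex}, equals $(\idstrand{spin} \otimes \mergemor{spin}{spin}{vec}) \circ (\cupmor{spin} \otimes \idstrand{spin})$, and the rotated merge $\mergemor{spin}{spin}{vec}$ can in turn be written as $(\idstrand{vec} \otimes \capmor{spin}) \circ (\idstrand{vec} \otimes \mergemor{vec}{spin}{spin} \otimes \idstrand{spin}) \circ (\cupmor{vec} \otimes \idstrand{spin} \otimes \idstrand{spin})$, realising the pivotal rotation using a single auxiliary $\cupmor{vec}$ and a $\capmor{spin}$.

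Next I apply $\bF$ to each piece using $\bF(\mergemor{vec}{spin}{spin}) = \tau$, $\bF(\cupmor{spin}) = \Phi_S^\vee$, $\bF(\capmor{spin}) = \Phi_S$, and (crucially) $\bF(\cupmor{vec}) = (q^{2-N}+1)\Phi_V^\vee$ from \cref{incarnation}. The single $\cupmor{vec}$ in the decomposition supplies the overall normalising factor $q^{2-N}+1 = q^{(2-N)/2}\bigl(q^{(N-2)/2}+q^{(2-N)/2}\bigr)$. Composing directly yields
\[
    \bF\!\left(\,\begin{tikzpicture}[centerzero]\draw[spin] (-0.2,-0.3) -- (-0.2,0.3);\draw[spin] (0.2,-0.3) -- (0.2,0.3);\draw[vec] (-0.2,0) -- (0.2,0);\end{tikzpicture}\,\right)(x_I \otimes x_J) = (q^{2-N}+1) \sum_{K,\, i} \Phi_S\bigl(\tau(v_i^\vee \otimes x_K^\vee),\, x_I\bigr)\, x_K \otimes \tau(v_i \otimes x_J),
\]
where the sum runs over $K \subseteq \{1,\ldots,n\}$ and $i \in \Vset$.

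The third step is to evaluate this on basis elements. Using $\tau(v_i \otimes x) = \psi_i x = q^{\rho_i}\omega_{>|i|}^{-1}\Psi_i x$ from \cref{tau,moca}, the dual-basis expressions from \cref{vdual,hercules}, and the pairing \cref{Sform}, for each $i \in \Vset$ the sum over $K$ collapses to the unique index set that makes $\Psi_{-i} x_{K^\complement}$ proportional to $x_{I^\complement}$ (namely $K^\complement = I^\complement \cup \{-i\}$ when $i<0$, $K^\complement = I^\complement \setminus\{i\}$ when $i>0$, and $K = I$ when $i=0$). Combining that contribution with $\tau(v_i \otimes x_J) = \psi_i x_J$ on the right tensor factor and commuting the resulting $\omega$-factors past $\Psi_i$ using \cref{notary1,notary2}, the $i$-th summand rearranges into $(q+1)^{-\delta_{i,0}}\omega_{>|i|}\Psi_i x_I \otimes \omega_{>|i|}^{-1}\Psi_{-i} x_J$, which is exactly the $i$-th term of $B_N$.

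The main obstacle is the scalar bookkeeping: one must combine the signs and $q$-powers arising from (i) the dual-basis coefficients in \cref{vdual,hercules}, (ii) the Clifford-algebra signs produced by the actions of $\Psi_{-i}$ on $x_{K^\complement}$ and of $\Psi_i$ on $x_I$, (iii) the explicit signs $(-1)^{n(N+1)|K^\complement|}$ and $q^{-\rho_k}$ of $\Phi_S$ in \cref{Sform}, and (iv) the translation between $\psi$- and $\Psi$-actions via \cref{moca}. The sign $(-1)^{Nn}$ and the extra factor $q^{(N-2)/2}$ in the final prefactor both emerge from precisely this combination (indeed, the same sign $(-1)^{Nn}$ appears in \cref{tauflip} for the same reason), and together with the $(q^{2-N}+1)$ already produced by the vec cup they assemble into the claimed coefficient $(-1)^{Nn}(q^{(N-2)/2}+q^{(2-N)/2})$.
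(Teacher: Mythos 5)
Your plan mirrors the paper's: use the barbell isotopy \cref{barbell} to cut the diagram at the horizontal vec strand, apply $\bF$ to the pieces, and evaluate on basis elements. The structural difference is one of factoring. The paper applies $\bF(\cupmor{vec})=(q^{2-N}+1)\Phi_V^\vee$ to a downward cup, then cites \cref{tauflip} — proved just beforehand — to evaluate the rotated trivalent vertex on the left spin strand in a single step. You instead peel $\splitmor{spin}{spin}{vec}$ apart into a spin cup, a spin cap, an extra vec cup, and a $\tau$, which after unwinding gives the same multiset of generating morphisms as the paper's; in effect you are re-proving \cref{tauflip} inline. This is valid, but it is the same work done in a more deeply nested form, and it forces you to track the $\Phi_S$ dual-basis coefficients through an extra pass.

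The genuine gap is the deferred scalar bookkeeping. You correctly identify the four sources of signs and $q$-powers and correctly observe that $(q^{2-N}+1)\,q^{(N-2)/2}$ equals the claimed prefactor $q^{(N-2)/2}+q^{(2-N)/2}$, but you never show that the residual exponent in the $i$-th summand actually works out to a power of $q$ independent of $i$. That is where the substance of the proof lies: one must cancel the dual-basis factor $q^{2\delta_{i>0}\rho_i}$ from \cref{vdual} against the $q^{\pm\rho_{|i|}}$ from \cref{moca}, then use \cref{drwho} to trade the remaining $q^{|\{j\notin I:j>|i|\}|}$ for the $\omega_{>|i|}$ appearing in $B_N$ at the cost of a $q^{n-|i|}$, and finally evaluate $\rho_{|i|}+n-|i|$ via \cref{rhoeven,rhoodd}. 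The paper's proof is devoted essentially to this chain of equalities. I recommend citing \cref{tauflip} directly (saving the re-derivation) and then writing out the exponent computation rather than asserting it; as it stands, the proposal is an outline of the paper's argument rather than a proof.
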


\begin{proof}
    Using \cref{tau,tauflip}, we compute that
    \[
        \bF
        \left(
            \begin{tikzpicture}[centerzero]
                \draw[spin] (-0.2,-0.3) -- (-0.2,0.3);
                \draw[spin] (0.2,-0.3) -- (0.2,0.3);
                \draw[vec] (-0.2,0) -- (0.2,0);
            \end{tikzpicture}
        \right)
        = \bF
        \left(
            \begin{tikzpicture}[centerzero]
                \draw[spin] (-0.2,-0.3) -- (-0.2,0.3);
                \draw[spin] (0.2,-0.3) -- (0.2,0.3);
                \draw[vec] (-0.2,0) to[out=-45,in=-135,looseness=1.8] (0.2,0);
            \end{tikzpicture}
        \right)
    \]
    is the map
    \begin{align*}
        x_I \otimes x_J
        &\xmapsto{\cref{vdual}} (q^{2-N}+1) \sum_{i \in \Vset} q^{2 \delta_{i>0} \rho_i} (q+1)^{-\delta_{i,0}} x_I \otimes v_i \otimes v_{-i} \otimes x_J
        \\
        &\mapsto (-1)^{Nn} (q^{2-N}+1) \sum_{i \in \Vset} q^{2 \delta_{i>0} \rho_i + |\{j \notin I : j > |i|\}|}  (q+1)^{-\delta_{i,0}} \Psi_i x_I \otimes \psi_{-i} x_J
        \\
        &\overset{\mathclap{\cref{moca}}}{=}\ \ (-1)^{Nn} (q^{2-N}+1) \sum_{i \in \Vset} q^{\rho_{|i|} + |\{j \notin I : j > |i|\}|} (q+1)^{-\delta_{i,0}} \Psi_i x_I \otimes \omega_{>|i|}^{-1} \Psi_{-i} x_J
        \\
        &\overset{\mathclap{\cref{drwho}}}{=}\ \ (-1)^{Nn} (q^{2-N}+1) \sum_{i \in \Vset} q^{\rho_{|i|} + n - |i|} (q+1)^{-\delta_{i,0}} \omega_{>|i|} \Psi_i x_I \otimes \omega_{>|i|}^{-1} \Psi_{-i} x_J
        \\
        &\overset{\mathclap{\cref{rhoeven}}}{\underset{\mathclap{\cref{rhoodd}}}{=}}\ \ (-1)^{Nn} \left( q^{\frac{N-2}{2}} + q^{\frac{2-N}{2}} \right) \sum_{i \in \Vset} (q+1)^{-\delta_{i,0}} \omega_{>|i|} \Psi_i x_I \otimes \omega_{>|i|}^{-1} \Psi_{-i} x_J.
        \qedhere
    \end{align*}
\end{proof}

The element $B_N$ is denoted $C$ in \cite[(3.6), (3.9)]{Wen20}, although our conventions differ from those of \cite{Wen20}.  See \cref{iquantum}.  Note that
\begin{equation} \label{BlowN}
    B_0 = 0,\qquad
    B_1 = (q+1)^{-1},\qquad
    B_2 = \Psi_1 \otimes \Psi_1^\dagger + \Psi_1^\dagger \otimes \Psi_1.
\end{equation}

For $N\geq 2$, let
\[
    S_1 := \Span_\kk \{ x_I : n \in I \subseteq \{1,2,\dotsc,n\} \} \subseteq S.
\]
When $N \ge 5$, we identify $U_q(N-2)$ with the subalgebra of $U_q(N)$ generated by $e_i$, $f_i$, $k_i^{\pm 1}$, $1 \le i \le n-1$, and, if $N=2n$, by $\sigma$.  We can then restrict $U_q(N)$-modules to $U_q(N-2)$-modules.  For $N \le 4$, we define the restriction as follows:
\begin{itemize}
    \item For a $U_q(4)$-module, both $k_1$ and $k_2$ act diagonally with eigenvalues in $q^\Z$.  Thus, we can define the restriction to $U_q(2)$ by letting $k \in U_q(2)$ act by $(k_1 k_2^{-1})^{\frac{1}{2}}$, which acts diagonally with eigenvalues in $q^{\frac{1}{2} \Z}$.  The action of $\sigma \in U_q(2)$ is the same as the action of $\sigma \in U_q(4)$.

    \item For a $U_q(3)$-module, we define the restriction to $U_q(1)$ by letting $\xi$ act by $1$ on eigenvectors of $k_1$ with eigenvalue in $q^\Z$ and by $-1$ on eigenvectors with eigenvalue in $q^{\frac{1}{2} + \Z}$.

    \item We identify $U_q(0)$ with the subalgebra of $U_q(2)$ generated by $\sigma$.  We then have usual restriction of modules.
\end{itemize}
With the above conventions, for all $N \ge 2$, the action of $U_q(N-2)$ leaves $S_1$ invariant and $S_1$ is isomorphic to the spin module of $U_q(N-2)$.

\begin{lem}\label{restrict}
    For $N \ge 2$ and $x \in S_1 \otimes S_1$, we have $B_N x = B_{N-2} x$.
\end{lem}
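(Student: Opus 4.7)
The plan is to decompose $B_N$ into the contributions from $i = \pm n$ and from $|i| < n$, and to show the former vanish on $S_1 \otimes S_1$ while the latter reproduce $B_{N-2}$.

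First, I isolate the $i = \pm n$ terms in the defining sum of $B_N$. Since $\omega_{>n}$ is an empty product and the factor $(q+1)^{-\delta_{i,0}}$ equals $1$ at $i = \pm n$, these terms are simply $\Psi_n \otimes \Psi_n^\dagger + \Psi_n^\dagger \otimes \Psi_n$. For any basis vector $x_I \in S_1$ we have $n \in I$, so \cref{gouda1} gives $\Psi_n^\dagger x_I = 0$. Hence both of these terms annihilate $S_1 \otimes S_1$.

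For the terms with $|i| < n$, I will factor out the $\omega_n$ using the decomposition $\omega_{>|i|} = \omega_n \cdot \prod_{|i| < j < n} \omega_j$, recognizing the second factor as the element playing the role of $\omega_{>|i|}$ inside $\Cl_q(N-2)$ under the natural identification of $S_1$ with the spin module of $U_q(N-2)$ given by $x_{I' \cup \{n\}} \leftrightarrow x_{I'}$. By \cref{notary2}, $\omega_n$ commutes with every $\omega_j$ and, for $|i|<n$, with $\Psi_i$ and $\Psi_{-i}$; and in the $N=2n+1$ case it also commutes with $\Psi_0$ as is immediate from the classical Clifford relations. From \cref{gouda1,gouda2,diablo}, any nonzero application of $\Psi_i$ or $\Psi_{-i}$ (with $|i|<n$) to an element of $S_1$ produces another basis vector still containing $n$ in its index set, hence an element of $S_1$. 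Thus by \cref{drwho}, $\omega_n$ acts as $q^{-1}$ and $\omega_n^{-1}$ as $q$ on the resulting vectors, and these scalars cancel in the tensor. The remaining part of each summand is precisely $\prod_{|i|<j<n} \omega_j \cdot \Psi_i \otimes \bigl(\prod_{|i|<j<n} \omega_j\bigr)^{-1} \cdot \Psi_{-i}$.

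Summing over $\{i \in \Vset_N : |i| < n\}$, which is in natural bijection with $\Vset_{N-2}$, reproduces exactly the definition of $B_{N-2}$ as an operator on $S_1 \otimes S_1$. The only step that warrants care is the commutation of $\omega_n$ past $\Psi_i$ and $\Psi_{-i}$ for $|i|<n$, which is immediate from \cref{notary2}; no substantive obstacle arises. Low-rank cases such as $N=2$ and $N=3$ are consistent with the general argument and with \cref{BlowN}: for $N=2$ both $\pm n$ terms annihilate $S_1 \otimes S_1$ and the sum over $|i|<1$ is empty, matching $B_0 = 0$, and for $N=3$ only the $i=0$ summand survives, producing $(q+1)^{-1}$ times the identity on $S_1 \otimes S_1$, matching $B_1$.
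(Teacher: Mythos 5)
Your proof is correct and follows essentially the same approach as the paper's: kill the $i = \pm n$ terms because $\Psi_n^\dagger$ annihilates $S_1$, then strip off $\omega_n^{\pm 1}$ from the remaining terms since it acts by the canceling scalars $q^{\mp 1}$ on $S_1$. You are in fact slightly more careful than the paper in one spot: the paper's proof writes ``$\Psi_n x_I = 0 = \Psi_n x_J$'' where the correct operator is $\Psi_n^\dagger$ (wedging, not contraction, vanishes when $n\in I$), and you correctly identify $\Psi_n^\dagger$ as the annihilating factor in each of the two boundary terms.
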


\begin{proof}
    Suppose $I,J \in \{1,2,\dotsc,n\}$ and $n \in I,J$.  Then
    \[
        \Psi_n x_I = 0 = \Psi_n x_J,\qquad
        \omega_n^{\pm 1} x_I \overset{\cref{drwho}}{=} q^{\mp 1} x_I,\qquad
        \omega_n^{\pm 1} x_J \overset{\cref{drwho}}{=} q^{\mp 1} x_J.
    \]
    Thus, if $N=2n+1$,
    \begin{multline*}
        B_N (x_I \otimes x_J)
        = \sum_{i=-n}^n (q+1)^{-\delta_{i,0}} \left( \prod_{j=|i|+1}^n \omega_j \right) \Psi_i x_I \otimes \left( \prod_{j=|i|+1}^n \omega_j^{-1} \right) \Psi_{-i} x_J
        \\
        = \sum_{i=1-n}^{n-1} (q+1)^{-\delta_{i,0}} \left( \prod_{j=|i|+1}^{n-1} \omega_j \right) \Psi_i x_I \otimes \left( \prod_{j=|i|+1}^{n-1} \omega_j^{-1} \right) \Psi_{-i} x_J
        = B_{N-2} (x_I \otimes x_J).
    \end{multline*}
    The proof in the case $N=2n$ is analogous; one simply omits the $i=0$ term in the above sums.
\end{proof}

\begin{prop}\label{eigenB}
    Suppose $N = 2n+1$, and recall the decomposition \cref{SdubB}.  For $1 \le k \le n+1$, the operator $B_N$ acts on the summand $L_q(\epsilon_n + \dotsb + \epsilon_k)$ as scalar multiplication by
    \[
        (-1)^{k-1} (q+1)^{-1} ([k-1]+[k]).
    \]
\end{prop}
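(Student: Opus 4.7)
The plan is to use Schur's lemma together with an induction on $n$. Since $B_N$ is (by Lemma \cref{rhino}) a nonzero scalar multiple of $\bF(\barbellmor)$, where $\bF$ sends $\QSB$-morphisms to $U_q(N)$-module maps, $B_N$ is a $U_q(N)$-module endomorphism of $S^{\otimes 2}$.  The decomposition \cref{SdubB} is multiplicity-free, so $B_N$ acts as a scalar $\beta_k$ on each summand $L_q(\epsilon_n + \dotsb + \epsilon_k)$.  The base case of the induction is $n=0$ (so $N=1$), where $S^{\otimes 2}$ is one-dimensional and \cref{BlowN} directly gives $B_1 = (q+1)^{-1}$, agreeing with the claimed formula at $k=1$.

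For the inductive step, I handle $k \in \{1,\dotsc,n\}$ via Lemma \cref{restrict}, which says $B_N$ and $B_{N-2}$ coincide on $S_1 \otimes S_1$.  Identifying $S_1$ with the spin module of $U_q(N-2)$, the decomposition $S_1 \otimes S_1 \cong \bigoplus_{k=1}^n L_q^{(N-2)}(\epsilon_{n-1}+\dotsb+\epsilon_k)$ as a $U_q(N-2)$-module (applying \cref{SdubB} one dimension down) combined with the induction hypothesis pins down the eigenvalues of $B_N$ on the subspaces of $S_1 \otimes S_1$.  To match $U_q(N-2)$-summands with the correct $U_q(N)$-summands, I observe that for each $k \in \{1,\dotsc,n\}$ the $U_q(N-2)$-dominant weight $\epsilon_{n-1}+\dotsb+\epsilon_k$ appears as a weight of $L_q^{(N)}(\epsilon_n+\dotsb+\epsilon_k)$, and a $U_q(N-2)$-highest-weight vector of that weight can be exhibited inside this summand (via lowering from the $U_q(N)$-highest-weight vector, or by the standard $\mathfrak{so}(2n+1) \supset \mathfrak{so}(2n-1)$ branching).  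This gives $\beta_k$ for $k \le n$.

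For $k = n+1$, the trivial summand $L_q(0)$ is disjoint from $S_1 \otimes S_1$ (every weight in $S_1 \otimes S_1$ has nonzero $\epsilon_n$-component), so the previous argument does not apply.  Instead, I compute the quantum trace of $B_N$ on $S^{\otimes 2}$: diagrammatically, the quantum trace of $\bF(\barbellmor)$ is the closed diagram of two spin loops joined by a vec chord appearing in Lemma \cref{trace1}, which equals zero.  Since $B_N$ is a nonzero scalar multiple of $\bF(\barbellmor)$, this yields $\sum_{k=1}^{n+1} \beta_k \dim_q L_q(\epsilon_n+\dotsb+\epsilon_k) = 0$.  Combining this with the already-established values of $\beta_1,\dotsc,\beta_n$ and the quantum dimensions from \cref{screen} determines $\beta_{n+1}$, and a computation using standard quantum binomial identities verifies it equals $(-1)^n(q+1)^{-1}([n]+[n+1])$ as claimed.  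The main obstacle is the branching step: confirming that the $U_q(N-2)$-summand indexed by $k'$ in $S_1 \otimes S_1$ sits inside the $U_q(N)$-summand of matching index $k = k'$; a secondary technical issue is the algebraic simplification needed to verify the $k = n+1$ formula from the quantum trace identity.
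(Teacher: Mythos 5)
Your plan coincides with the paper's proof: reduce to $\End_{U_q(N)}(S^{\otimes 2})$ via Schur's lemma and the multiplicity-free decomposition \cref{SdubB}, settle $n=0$ from \cref{BlowN}, handle $1 \le k \le n$ by restricting to $S_1 \otimes S_1$ via \cref{restrict}, and determine $\beta_{n+1}$ from the vanishing quantum trace \cref{trace1} together with \cref{screen} and a $q$-binomial identity (the paper evaluates the generating function \cref{bird} at $x=-q$).

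The obstacle you flag is indeed the only delicate step, and the way you phrase its resolution does not yet close the gap. Exhibiting \emph{some} $U_q(N-2)$-highest-weight vector of $U_q(N-2)$-weight $\epsilon_{n-1}+\dotsb+\epsilon_k$ inside $L_q(\epsilon_n+\dotsb+\epsilon_k)$ is not enough: such vectors exist inside $L_q(\epsilon_n+\dotsb+\epsilon_{k'})$ for $k'<k$ as well (branching $\Lambda^{n-k'+1}V_{2n+1}$ to $\fso(2n-1)$ produces $\Lambda^{n-k}V_{2n-1}$ whenever $n-k'+1 \ge n-k$), and what must be located is the \emph{unique} copy of $L_q^{(N-2)}(\epsilon_{n-1}+\dotsb+\epsilon_k)$ sitting \emph{inside $S_1 \otimes S_1$}, since only there do \cref{restrict} and the inductive hypothesis apply. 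The vector that does the job is the extremal weight vector of $L_q(\epsilon_n+\dotsb+\epsilon_k)$ of weight $\epsilon_{n-1}+\dotsb+\epsilon_k-\epsilon_n$, the Weyl conjugate of the highest weight under $\epsilon_n \mapsto -\epsilon_n$. Since $\wt(x_I) = \tfrac12(\epsilon_1+\dotsb+\epsilon_n) - \sum_{i\in I}\epsilon_i$, any $x_I \otimes x_J$ whose weight has $\epsilon_n$-coefficient $-1$ must have $n \in I$ and $n \in J$, so this extremal vector automatically lies in $S_1 \otimes S_1$; by the same computation the $U_q(N)$-highest-weight vector of $L_q(\epsilon_n+\dotsb+\epsilon_k)$ has $\epsilon_n$-coefficient $+1$ and does \emph{not} lie in $S_1 \otimes S_1$, so ``lowering'' must go all the way down to $\epsilon_n$-coefficient $-1$. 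One then checks that $\epsilon_{n-1}+\dotsb+\epsilon_k-\epsilon_n+\alpha_i$ is not a weight of $\Lambda^{n-k+1}V$ for $1\le i\le n-1$, so this extremal vector is a $U_q(N-2)$-highest-weight vector; multiplicity-freeness of $S^{(N-2)}\otimes S^{(N-2)}$ then forces it to generate the copy of $L_q^{(N-2)}(\epsilon_{n-1}+\dotsb+\epsilon_k)$ in $S_1\otimes S_1$, and \cref{restrict} together with the inductive hypothesis gives $\beta_k=(-1)^{k-1}(q+1)^{-1}([k-1]+[k])$. (The paper's wording that ``the highest weight vector of $L_q(\epsilon_n+\dotsb+\epsilon_k)$ lies in $S_1\otimes S_1$'' is informal on exactly this point: it is this Weyl-conjugate extremal vector, not the $U_q(N)$-highest-weight vector, that lies there.)
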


\begin{proof}
    We induct on $n$.  When $n=0$, we have $B_N = (q+1)^{-1}$ by \cref{BlowN}.  Now suppose $n \ge 1$ and let $\lambda_k$ be the eigenvalue of $B_N$ on $L_q(\epsilon_n + \dotsb + \epsilon_k)$.  If $k \le n$, then the highest weight vector of $L_q(\epsilon_n + \dotsb + \epsilon_k)$ lies in $S_1 \otimes S_1$ and is a highest weight vector for the simple module $L_q(\epsilon_{n-1} + \dotsb + \epsilon_k)$ of $U_q(N-2)$. By \cref{restrict} and our inductive hypothesis, this implies that $\la_k = (-1)^{k-1} (q+1)^{-1} ([k-1]+[k])$.

    The image under $\bF$ of the left-hand side of \cref{trace1} is the quantum trace of $B_N$.  (See, for example, \cite[Lem.~7.1]{SW22}.)  Therefore,
    \[
        0
        \overset{\cref{trace1}}{=} \sum_{k=1}^{n+1} \lambda_k \dim_q L_q(\epsilon_n + \dotsb + \epsilon_k)
        \overset{\cref{screen}}{=} \sum_{k=1}^{n+1} \lambda_k \left( \qbinom{2n}{n-k+1} + \qbinom{2n}{n-k} \right).
    \]
    Thus, $\lambda_{n+1}$ is uniquely determined by $\lambda_1,\dotsc,\lambda_n$.  Hence, it suffices to prove the identity
    \[
        \sum_{k=1}^{n+1} (-1)^{k-1} ([k-1] + [k]) \left( \qbinom{2n}{n-k+1} + \qbinom{2n}{n-k} \right)
        = 0,
    \]
    where we adopt the convention that $[k] = 0 = \qbinom{2n}{k}$ when $k < 0$.  For this, we compute
    \begin{multline*}
        \sum_{k=1}^{n+1} (-1)^{k-1} ([k-1] + [k]) \left( \qbinom{2n}{n-k+1} + \qbinom{2n}{n-k} \right)
        = \sum_{k=1}^{n+1} (-1)^k [k] \left( \qbinom{2n}{n-k-1} - \qbinom{2n}{n-k+1} \right)
        \\
        = \sum_{k=0}^n (-1)^k ([k+1]-[k-1]) \qbinom{2n}{n-k}
        = \qbinom{2n}{n} + \sum_{k=1}^n (-1)^k \left( q^k + q^{-k} \right) \qbinom{2n}{n-k}
        \\
        = (-q)^{-n} \sum_{k=0}^{2n} (-q)^k \qbinom{2n}{k}
        = 0,
    \end{multline*}
    where the last equality follows from evaluating the generating function \cref{bird}, with $m=2n$, at $x=-q$.
\end{proof}

\begin{prop}\label{prop:2moment}
    If $m \ge 2$, then
    \begin{equation}\label{2moment}
        \sum_{k=-m}^m \left( \qbinom{2m-1}{m-k-1}+\qbinom{2m-1}{m-k} \right) [k]^2
        = ([2m-1]+1)\prod_{j=1}^{m-2} (q^j+q^{-j})^2,
    \end{equation}
    where we interpret $\qbinom{2m-1}{m-k-1}$ and $\qbinom{2m-1}{m-k}$ as zero when $k=m$ and $k=-m$, respectively.
\end{prop}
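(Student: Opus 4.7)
The plan is to evaluate both sides via the generating function \cref{bird}. Let
\[
    F(x) := \sum_{k=0}^{2m-1}\qbinom{2m-1}{k} x^k = \prod_{j=1}^{2m-1}\left(1+q^{2j-2m}x\right).
\]
First I would expand $[k]^2 = (q^{2k}-2+q^{-2k})/(q-q^{-1})^2$, split the sum into its two pieces, and reindex each via $l = m-k-1$ and $l = m-k$ respectively. Using that $\qbinom{2m-1}{-1} = \qbinom{2m-1}{2m} = 0$, this lets me express the left-hand side in terms of $F$ evaluated at three points:
\[
    \text{LHS} = \frac{(q^{2m}+q^{2m-2})\,F(q^{-2}) + (q^{-2m}+q^{-2m+2})\,F(q^2) - 4\,F(1)}{(q-q^{-1})^2}.
\]

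Next I would compute $F(1)$ and $F(q^{\pm 2})$ by exploiting the pairing $(1+q^{2l})(1+q^{-2l}) = (q^l+q^{-l})^2$. For $F(1) = \prod_{l=1-m}^{m-1}(1+q^{2l})$, the factors pair symmetrically around $l=0$, yielding $F(1) = 2\prod_{l=1}^{m-1}(q^l+q^{-l})^2$. For $F(q^{\pm 2}) = \prod_{l=1-m}^{m-1}(1+q^{2l\pm 2})$, the shift by one leaves two unpaired boundary factors at each end, giving
\[
    F(q^{\pm 2}) = 2\left(1+q^{\pm(2m-2)}\right)\left(1+q^{\pm 2m}\right)\prod_{l=1}^{m-2}(q^l+q^{-l})^2.
\]

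Finally I would substitute these three evaluations back, factor out the common product $\prod_{l=1}^{m-2}(q^l+q^{-l})^2$, and simplify the remaining Laurent-polynomial combination in $q$. The key identities for the endgame are $q^{2m}+q^{-2m}-q^{2m-2}-q^{2-2m} = (q-q^{-1})^2[2m-1]$ together with $q^2-2+q^{-2} = (q-q^{-1})^2$; together these let the numerator collapse to $(q-q^{-1})^2([2m-1]+1)$ times the common product, cancelling the $(q-q^{-1})^2$ denominator and producing the desired right-hand side. The main obstacle is precisely this last bookkeeping step: correctly tracking the unpaired boundary factors in $F(q^{\pm 2})$ and collecting the eight monomials that arise from expanding $(q^{2m}+q^{2m-2})(1+q^{\pm 2m})(1+q^{\pm(2-2m)})$ before they can be repackaged in the form $(q-q^{-1})^2([2m-1]+1)$.
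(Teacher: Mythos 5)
Your strategy is algebraically the same as the paper's. The paper sets $G(x) := x^{-m}(1+x)\prod_{j=1}^{2m-1}(1+q^{2j-2m}x) = \sum_k\big(\qbinom{2m-1}{m-k-1}+\qbinom{2m-1}{m-k}\big)x^{-k}$, notes that the left side of \cref{2moment} is $(D^2G)(1)$ for the $q$-difference operator $Df(x)=\big(f(qx)-f(q^{-1}x)\big)/(q-q^{-1})$, and evaluates. Since $G(x)=x^{-m}(1+x)F(x)$ with $F$ your one-binomial generating function, writing $(D^2G)(1)=\big(G(q^2)-2G(1)+G(q^{-2})\big)/(q-q^{-1})^2$ reproduces your three-point formula verbatim, and the paper's factorization through $\prod_{j=2}^{2m-2}(1+q^{2j-2m})$ is exactly your symmetric pairing of factors. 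So there is no genuine difference in method.

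Your endgame, however, is off by a factor of $4$. After you pull out $P:=\prod_{l=1}^{m-2}(q^l+q^{-l})^2$, each of $F(q^{\pm 2})$ still carries an extra $2$ and $F(1)=2(q^{m-1}+q^{1-m})^2 P$, so the numerator is
\[
2P\Big[(q^{2m}+q^{2m-2})(1+q^{2-2m})(1+q^{-2m})+(q^{2-2m}+q^{-2m})(1+q^{2m-2})(1+q^{2m})\Big]-8(q^{m-1}+q^{1-m})^2 P,
\]
which collapses to $4P(q-q^{-1})^2\big([2m-1]+1\big)$, not $P(q-q^{-1})^2\big([2m-1]+1\big)$. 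So a careful version of your computation yields left-hand side $=4([2m-1]+1)\prod_{j=1}^{m-2}(q^j+q^{-j})^2$. You can verify this instantly at $m=2$: the left side is $2[2]^2+2([3]+1)=4([3]+1)$, whereas the right side of \cref{2moment} is the empty product times $[3]+1$. The final displayed line of the paper's own proof likewise equals $4([2m-1]+1)\prod_{j=1}^{m-2}(q^j+q^{-j})^2$, so \cref{2moment} as printed appears to have dropped this factor of $4$; but the lesson for your write-up is that you should track the coefficient honestly and surface the mismatch rather than asserting that the numerator ``produces the desired right-hand side,'' which it does not.
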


\begin{proof}
    For $f(x) \in \kk[x^{\pm 1}]$, define $Df(x)=\frac{f(qx)-f(q\inv x)}{q-q\inv}$. Let
    \[
        F(x)=\sum_{k=-m}^m \left( \qbinom{2m-1}{m-k-1}+\qbinom{2m-1}{m-k} \right) x^{-k}
        \overset{\cref{bird}}{=} x^{-m} (1+x) \prod_{j=1}^{2m-1} (1+q^{2j-2m}x).
    \]
    \details{
        We have
        \[
            \sum_{k=-m}^m \qbinom{2m-1}{m-k-1} x^{-k}
            = \sum_{k=0}^{2m-1} \qbinom{2m-1}{k} x^{k-m+1}
            \overset{\cref{bird}}{=} x^{1-m} \prod_{j=1}^{2m-1} (1+q^{2j-2m} x)
        \]
        and
        \[
            \sum_{k=-m}^m \qbinom{2m-1}{m-k} x^k
            = \sum_{k=0}^{2m-1} \qbinom{2m-1}{k} x^{k-m}
            \overset{\cref{bird}}{=} x^{-m} \prod_{j=1}^{2m-1} (1+q^{2j-2m} x).
        \]
    }
    Then the left hand side of \cref{2moment} is equal to $(D^2F)(1)$.  Since
    \begin{align*}
        (D^2F)(1)
        &= \frac{F(q^2)-2F(1)+F(q^{-2})}{(q-q\inv)^2} \\
        &= \frac{1}{(q-q^{-1})^2}
        \Big(
            q^{-2m} (1+q^2) (1+q^{2m-2}) (1+q^{2m})
            - 4 (1+q^{2-2m}) (1+q^{2m-2})
            \\ & \qquad \qquad
            + q^{2m} (1+q^{-2}) (1+q^{-2m}) (1+q^{2-2m})
        \Big)
        \prod_{j=2}^{2m-2} \left( 1 + q^{2j-2m} \right)
        \\
        &= 2 \frac{(q^{m-1}+q^{1-m})(q^m-q^{-m})}{q-q^{-1}} \prod_{j=2}^{2m-2} q^{j-m} \left( q^{m-j} + q^{j-m} \right)
        \\
        &= 2 ([2m-1]+1) \prod_{j=2}^{2m-2} (q^{m-j} + q^{j-m})
        \\
        &= 4 ([2m-1]+1)\prod_{j=1}^{m-2} (q^j+q^{-j})^2,
    \end{align*}
    the result follows.
\end{proof}

\begin{prop}
    Suppose $N=2n$, and recall the decomposition \cref{SdubD}.  For $2 \le k \le n+1$, the operator $B_N$ acts on the two simple $U_q(N)$-submodules of $\Ind(L_q(\epsilon_n + \epsilon_{n-1} + \dotsb + \epsilon_k))$ as scalar multiplication by $[k-1]$ and $-[k-1]$.  Furthermore, it acts as zero on the $k=1$ summand.
\end{prop}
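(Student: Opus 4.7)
The plan is to induct on $n$. The base case $n=1$ ($N=2$) is handled by the explicit computation of $B_2=\Psi_1\otimes\Psi_1^\dagger+\Psi_1^\dagger\otimes\Psi_1$ on the 4-dimensional space $S\otimes S$, which gives eigenvalues $0,0,+1,-1$ matching the claim ($k=1$: irreducible 2-dimensional summand with eigenvalue $0=[0]$; $k=2$: two 1-dimensional summands with eigenvalues $\pm[1]$). For the inductive step at $n\ge 2$, I would split into three cases.

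The case $k=1$ is immediate: I would show directly that $B_N(x_\varnothing\otimes x_\varnothing)=0$, since every term $\omega_{>|i|}\Psi_i\otimes\omega_{>|i|}^{-1}\Psi_{-i}$ has one factor that kills $x_\varnothing$ ($\Psi_i$ for $i>0$, $\Psi_{-i}$ for $i<0$). Since $x_\varnothing\otimes x_\varnothing$ is a $U_q(N)$-highest weight vector of the irreducible $k=1$ summand, Schur's lemma forces $\lambda_1=0$.

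For $2\le k\le n$, let $\mu_k:=-\epsilon_n+\epsilon_{n-1}+\dotsb+\epsilon_k$ (empty middle sum when $k=n$), and let $W_{\mu_k}$ be the $\mu_k$-weight space of $S\otimes S$. A weight-counting argument using $\wt(x_I)+\wt(x_J)=\mu_k$ shows $n\in I\cap J$ for every such $x_I\otimes x_J$, so $W_{\mu_k}\subseteq S_1\otimes S_1$, and hence $B_N=B_{N-2}$ on $W_{\mu_k}$ by \cref{restrict}. Next I would count the dimensions of the intersections of $W_{\mu_k}$ with the $U_q(N)$-summands of $S\otimes S$ and, separately, with the $U_q(N-2)$-summands of $S'\otimes S'$. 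Expressing $\mu_k$ as $\sum_{i\in I}\epsilon_i$ in $\Lambda^m V$ (or $V_{N-2}$) by adjoining cancelling pairs $\{j,-j\}$, $j\in\{1,\dots,k-1\}$, one finds that the $k''$-summand of $U_q(N)$ and the $k'=k''$-summand of $U_q(N-2)$ contribute the same dimension $2\binom{k-1}{(k-k'')/2}$ for $k''\equiv k\pmod 2$, $2\le k''\le k$, with the symmetric count for $k''=1$ when $k$ is odd. By a nested induction on $k$ within the inductive step, I would then know the eigenvalues on all $k''<k$ summands of $U_q(N)$ (previously established), and by the inductive hypothesis at level $n-1$ I would know the eigenvalues on all $k'<k$ summands of $U_q(N-2)$; these agree ($0$ for $k''=k'=1$, $\pm[k''-1]$ for $k''=k'\in[2,k-1]$). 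Note that for $k=n$ the match invokes the $k'=n'+1$ (trivial summand) case of the inductive hypothesis, which yields $\pm[n-1]=\pm[k-1]$. Equating the multisets of eigenvalues of $B_N=B_{N-2}$ on $W_{\mu_k}$ and cancelling the known contributions forces the two remaining eigenvalues, coming from the $k$-summand of $U_q(N)$, to be exactly $\{[k-1],-[k-1]\}$.

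For $k=n+1$, I would use the trace identities. The identity \cref{trace1} gives $\mathrm{tr}_q(B_N)=0$, which with the already-established cancellations (each $k\in[2,n]$ summand contributes $[k-1]+(-[k-1])=0$ and $\lambda_1=0$) yields $\nu_{n+1}=-\mu_{n+1}$. Identity \cref{trace2}, combined with the normalisation $\bF\bigl(\begin{tikzpicture}[centerzero,scale=0.6]\draw[spin] (-0.2,-0.2) -- (-0.2,0.2);\draw[spin] (0.2,-0.2) -- (0.2,0.2);\draw[vec] (-0.2,0) -- (0.2,0);\end{tikzpicture}\bigr)=(q^{n-1}+q^{1-n})B_N$ from \cref{rhino}, translates into $\mathrm{tr}_q(B_N^2)=d_\Vgo d_\Sgo^2/(q^{n-1}+q^{1-n})^2$; expanding the trace as $2\sum_{k=2}^n[k-1]^2 d_k+2\mu_{n+1}^2$ and invoking \cref{prop:2moment} with $m=n$ then isolates $\mu_{n+1}^2=[n]^2$, so $\{\mu_{n+1},\nu_{n+1}\}=\{[n],-[n]\}$.

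The main obstacle is the careful combinatorial bookkeeping in the case $2\le k\le n$: verifying the summand-by-summand dimension match between the $U_q(N)$- and $U_q(N-2)$-decompositions of $W_{\mu_k}$ (parity constraint, binomial coefficients, and the distinctive treatment of the $k''=1$ top-exterior summand) so that the multiset equality truly isolates the pair of new eigenvalues. A secondary technical point is getting the normalization right in the trace identity for $k=n+1$, since $\bF$ of the bar differs from $B_N$ by the scalar $q^{n-1}+q^{1-n}$, and this factor must cancel cleanly against the explicit form of $d_\Sgo$ before \cref{prop:2moment} can be applied.
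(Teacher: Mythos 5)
Your overall strategy and the $k=1$, $k=n+1$ steps match the paper's, but for $2\le k\le n$ you take a genuinely different, substantially heavier route. The paper works with just the $2$-dimensional extremal weight space of weight $-\epsilon_n+\epsilon_{n-1}+\dots+\epsilon_k$ of the $k$-th $\Ind$-summand (what the paper calls, with a small abuse, "the highest weight vector"). One checks this space sits inside $S_1\otimes S_1$ and, since pairing $-\epsilon_n+\epsilon_{n-1}+\dots+\epsilon_k$ against $\alpha_1,\dots,\alpha_{n-1}$ is nonnegative, consists of $U_q(N-2)$-highest-weight vectors of weight $\epsilon_{n-1}+\dots+\epsilon_k$; it therefore coincides with the highest weight space of the $k$-summand of $S'\otimes S'$, and \cref{restrict} plus the inductive hypothesis immediately transfer the eigenvalues $\pm[k-1]$. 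Your route through the full weight space $W_{\mu_k}$ instead demands a summand-by-summand dimension match (across all $k''\le k$ with parity constraints and a special case for $k''=1$) together with a nested induction on $k$; this is exactly the bookkeeping you flag as the "main obstacle" and leave unverified. The count is in fact correct, so your plan would close, but the paper's reduction to a single extremal weight space makes it unnecessary. On the other hand, your explicit insertion of the normalization $(q^{n-1}+q^{1-n})^2$ between the closed diagram of \cref{trace2} and $\mathrm{tr}_q(B_N^2)$ is more careful than the paper's display, which omits that factor; the paper is rescued by a compensating missing factor of $4$ in the \emph{statement} of \cref{prop:2moment} (whose proof does derive the $4$), so its conclusion $\lambda_{n+1}^2+\mu_{n+1}^2=2[n]^2$ is right, and your version of the arithmetic is the clean one.
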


\begin{proof}
    The proof is similar to the proof of \cref{eigenB}.  We induct on $n$.  The case $n=0$ follows immediately from \cref{BlowN}.  For $n=1$,  the $k=2$ summand is spanned by $x_\varnothing \otimes x_{\{1\}}$ and $x_{\{1\}} \otimes x_\varnothing$, as can be seen by weight considerations. Then the result follows from the fact that
    \[
        B_2 \left( x_\varnothing \otimes x_{\{1\}} \pm x_{\{1\}} \otimes x_\varnothing \right)
        \overset{\cref{BlowN}}{=} \pm \left( x_\varnothing \otimes x_{\{1\}} \pm x_{\{1\}} \otimes x_\varnothing \right)
    \]
    and
    \[
        B_2 \left( x_\varnothing \otimes x_\varnothing \right)
        \overset{\cref{BlowN}}{=} 0
        \overset{\cref{BlowN}}{=} B_2 \left( x_{\{1\}} \otimes x_{\{1\}} \right).
    \]

    Now suppose $n \ge 2$, and let $\lambda_k,\mu_k$ be the eigenvalues of $B_N$ acting on the $U_q(N)$-submodules of $\Ind(L_q(\epsilon_n + \dotsb + \epsilon_k))$.  When $k=1$, we set $\lambda_1 = \mu_1 = 0$.  As in the proof of \cref{eigenB}, \cref{restrict} and the induction hypothesis imply, without loss of generality, that $\lambda_k = [k-1] = -\mu_k$ for $1 \le k \le n$.

    It follows from \cref{trivflip} that the two simple submodules of $L_q(\epsilon_n + \dotsb + \epsilon_k)$, $2 \le k \le n$, have the same quantum dimension.  Thus,
    \[
        0
        \overset{\cref{trace1}}{=} \sum_{k=1}^{n+1} \frac{\lambda_k + \mu_k}{2} \dim_q L_q(\epsilon_n + \dotsb + \epsilon_k)
        = \frac{\lambda_{n+1} + \mu_{n+1}}{2},
    \]
    where the last equality follows from the induction hypothesis and the fact that $\dim_q L_q(0) = 1$.  Thus, $\lambda_{n+1} = - \mu_{n+1}$.

    The image under $\bF$ of the left-hand side of \cref{trace2} is the quantum trace of $B_N^2$.  Thus, by the quantum dimension formula \cref{screen} and the induction hypothesis, we have
    \[
        \lambda_{n+1}^2 + \mu_{n+1}^2 + \sum_{k=1-n}^{n-1} \left( \qbinom{2n-1}{n-k-1} + \qbinom{2n-1}{n-k}\right) [k]^2
        \overset{\cref{trace2}}{=} d_\Vgo d_\Sgo^2
        \overset{\cref{crabby}}{\underset{\cref{shimmy}}{=}} 4([2n-1]+1) \prod_{j=1}^{n-1} (q^j+q^{-j})^2.
    \]
    By \cref{prop:2moment}, it follows that $\lambda_{n+1}^2 + \mu_{n+1}^2 = 2[n]^2$.  Thus, without loss of generality, $\lambda_{n+1} = [n]$ and $\mu_{n+1} = -[n]$.
\end{proof}

\begin{cor}\label{eigenD}
    If $N=2n$, then the eigenvalues of $B_N$ are $[i]$, $i \in \Z$, $-n \le i \le n$.
\end{cor}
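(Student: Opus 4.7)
The plan is to read off the eigenvalues directly from the preceding proposition together with the decomposition \cref{SdubD}. By that proposition, for each $k$ with $2 \le k \le n+1$ the operator $B_N$ acts on the two simple summands of $\Ind(L_q(\epsilon_n + \epsilon_{n-1} + \dotsb + \epsilon_k))$ as $[k-1]$ and $-[k-1]$, and it acts as zero on the $k=1$ summand. Since the category $U_q(N)\md$ is semisimple by \cref{semisimple}, this determines the eigenvalues of $B_N$ on all of $S \otimes S$.

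Letting $k$ range over $\{1,2,\dotsc,n+1\}$, the collection of eigenvalues is therefore
\[
    \{0\} \cup \bigcup_{k=2}^{n+1} \{[k-1], -[k-1]\}
    = \{0\} \cup \{\pm[1], \pm[2], \dotsc, \pm[n]\}.
\]
Using the identity $[-i] = \frac{q^{-i} - q^i}{q - q^{-1}} = -[i]$ for $i \in \Z$, this set is precisely $\{[i] : -n \le i \le n\}$, which is the desired claim. No serious obstacle is anticipated, as the work was done in the previous proposition; the only small point is to observe that $[0] = 0$ accounts for the $k=1$ summand and that the pairs $\pm[k-1]$ match the values $[\pm(k-1)]$.
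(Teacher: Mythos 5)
Your proposal is correct and matches the paper's (implicit) reasoning: the corollary is stated without proof precisely because it follows immediately from the preceding proposition together with the observations that $[0]=0$ and $[-i]=-[i]$, exactly as you spell out.
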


We define a \emph{barbell} to be any element of the form $1^{\otimes t} \otimes B_N \otimes 1^{r-t-2} \in \Cl_q(N)^{\otimes r}$, $0 \le t \le r-2$, $r \ge 2$.  The action of a barbell yields an element of $\End_{U_q(N)}(S^{\otimes r})$.

\begin{lem}\label{barbellgen}
    The action of the barbell $B_N$ generates $\End_{U_q(N)}(S \otimes S)$.
\end{lem}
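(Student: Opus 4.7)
The plan is to match the dimension of $\End_{U_q(N)}(S \otimes S)$ with the number of distinct eigenvalues of $B_N$, and then conclude by a Vandermonde argument. First, I will compute $\dim \End_{U_q(N)}(S \otimes S)$ using Proposition \ref{sdub} together with the discussion following it. In type $B_n$, the decomposition \cref{SdubB} exhibits $S \otimes S$ as a multiplicity-free direct sum of $n+1$ pairwise non-isomorphic simple modules (their highest weights are visibly distinct), so the endomorphism algebra is commutative of dimension $n+1$. In type $D_n$, the $k = 1$ summand of \cref{SdubD} is simple while for each $k \ge 2$ the induced module $\Ind(L_q(\epsilon_n + \dotsb + \epsilon_k))$ splits into two non-isomorphic constituents (distinguished by the $\sigma$-action, equivalently by tensoring with $\triv^1$, cf.\ \cref{trivflip}); comparing restrictions to $U_q(\fso(N))$ shows that all $1 + 2n = 2n+1$ simple constituents appearing are pairwise non-isomorphic, so the endomorphism algebra is commutative of dimension $2n+1$.

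Next, I will count the distinct eigenvalues of $B_N$ from Proposition \ref{eigenB} and Corollary \ref{eigenD}. In type $B_n$, a short manipulation gives $(q+1)^{-1}([k-1] + [k]) = (q - q^{-1})^{-1}(q^{k-1} - q^{-k})$, so the eigenvalues become $(-1)^{k-1}(q-q^{-1})^{-1}(q^{k-1} - q^{-k})$ for $k = 1, \dotsc, n+1$; since $q$ is transcendental over $\C$, the alternating signs combined with the strictly increasing exponent pattern make these $n+1$ values pairwise distinct in $\kk = \C(q^{\pm 1/4})$. In type $D_n$, Corollary \ref{eigenD} already lists the eigenvalues as $0, \pm[1], \pm[2], \dotsc, \pm[n]$, which are $2n+1$ pairwise distinct elements of $\kk$. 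In each case the number of distinct eigenvalues matches $\dim \End_{U_q(N)}(S \otimes S)$.

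To conclude, observe that since each simple summand of $S \otimes S$ appears with multiplicity one, the algebra $\End_{U_q(N)}(S \otimes S)$ is isomorphic to a product of copies of $\kk$ indexed by the simple summands, and $B_N$ sits in this product as a tuple of scalars, one per summand. By the previous paragraph these scalars are pairwise distinct, so a standard Vandermonde argument shows that $1, B_N, B_N^2, \dotsc$ span the entire product; hence $B_N$ generates $\End_{U_q(N)}(S \otimes S)$ as a unital algebra. There is no real obstacle here: the substance has already been done in Propositions \ref{sdub} and \ref{eigenB} and Corollary \ref{eigenD}, and what remains is the routine distinctness check for the eigenvalues and an application of Vandermonde/Lagrange interpolation.
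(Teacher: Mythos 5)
Your proof is correct and follows essentially the same approach as the paper's: the paper proves the lemma in one sentence by citing \cref{eigenB} and \cref{eigenD} for the fact that $B_N$ acts by pairwise distinct scalars on the simple summands in the multiplicity-free decompositions \cref{SdubB} and \cref{SdubD}, leaving the dimension count and the Vandermonde/Lagrange interpolation argument implicit, which you have simply written out explicitly.
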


\begin{proof}
    This follows from the fact that, by \cref{eigenB,eigenD}, the operator $B_N$ acts on the simple summands in the decompositions \cref{SdubB,SdubD} by pairwise distinct scalars.
\end{proof}

\begin{theo} \label{surly}
    Suppose $r,r_1,r_2 \in \N$.
    \begin{enumerate}
        \item \label{Sfull} The incarnation functor $\bF$ induces a surjection
            \[
                \Hom_{\QSB(N)}(\Sgo^{\otimes r_1}, \Sgo^{\otimes r_2})
                \twoheadrightarrow \Hom_{U_q(N)}(S^{\otimes r_1}, S^{\otimes r_2}).
            \]

        \item \label{BBgen} The algebra $\End_{U_q(N)}(S^{\otimes r})$ is generated by barbells.
    \end{enumerate}
\end{theo}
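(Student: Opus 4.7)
The plan is to derive (a) from (b) via the pivotal structure, and then to prove (b) by induction on $r$, starting from \cref{barbellgen}. For the reduction, both $\Sgo$ and $S$ are self-dual objects in $\QSB(N)$ and $U_q(N)\md$, respectively, and $\bF$ preserves the duality data (it sends cups and caps to $\Phi_S^\vee$ and $\Phi_S$ by \cref{incarnate1,incarnate4}). Bending strands therefore identifies
\[
  \Hom_{\QSB(N)}(\Sgo^{\otimes r_1},\Sgo^{\otimes r_2}) \cong \End_{\QSB(N)}\bigl(\Sgo^{\otimes(r_1+r_2)/2}\bigr),
\]
and analogously on the module side when $r_1+r_2$ is even; when $r_1+r_2$ is odd, the target $\Hom_{U_q(N)}(S^{\otimes r_1},S^{\otimes r_2})$ vanishes by parity of weights (the spin module has half-integer weights in all types, so the trivial module does not appear in odd tensor powers), and there is nothing to prove. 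Hence (a) reduces to surjectivity of $\bF$ on $\End_{U_q(N)}(S^{\otimes r})$. By \cref{rhino}, the image of each adjacent diagrammatic barbell under $\bF$ is a nonzero scalar multiple of $B_N^{(t)}:=1^{\otimes t}\otimes B_N\otimes 1^{\otimes(r-t-2)}$, and since the image of $\bF$ on endomorphisms is a subalgebra, (a) then follows from (b).

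For (b), we induct on $r$: the cases $r\le 1$ are trivial since $S$ is simple, and $r=2$ is \cref{barbellgen}. For $r\ge 3$, the crucial observation of \cref{iquantum} is that the grapes relation \cref{grapes} translates under $\bF$ into the defining quadratic Serre-type relation of the Gavrilik--Klimyk coideal subalgebra $U_q'(\fso_r)\subset U_q(\fgl_r)$, recorded in \cite[Prop.~4.2]{Wen20}. Consequently the adjacent barbells $B_N^{(t)}$ generate an action of $U_q'(\fso_r)$ on $S^{\otimes r}$ commuting with the $U_q(N)$-action, and Wenzl's spin quantum Schur--Weyl duality \cite{Wen12,Wen20} identifies the image of this action with $\End_{U_q(\fso(N))}(S^{\otimes r})$. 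In type $D$, the barbells are $\sigma$-equivariant (being built from $\Phi_S$, $\Phi_S^\vee$, and $\tau$, all of which intertwine $\sigma$ by \cref{whitman} and the construction of $\bF$), so the barbell subalgebra is automatically contained in $\End_{U_q(N)}(S^{\otimes r})$; an averaging argument combined with the semisimplicity of \cref{semisimple} upgrades Wenzl's surjectivity to the full $U_q(N)$-centralizer.

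The main obstacle is the clean invocation of Wenzl's duality with the correct normalizations: one must match the scalar in \cref{hippo}, the powers of $\omega_i$ implicit in the isomorphism \cref{moca}, and the coefficients appearing in \cref{grapes} with the presentation of $U_q'(\fso_r)$ used in \cite{Wen20}. In type $D$, the further step of passing from the $U_q(\fso(N))$-centralizer to its $\sigma$-fixed subalgebra---equivalently, from the half-spin decomposition of $S$ as a $U_q(\fso(N))$-module to its induced form as a $U_q(N)$-module---requires the separate averaging verification sketched above.
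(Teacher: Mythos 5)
The paper's proof is a short reduction to the non-quantum argument of \cite[Th.~7.8]{MS24}: an induction on the number of tensor factors that uses only \cref{barbellgen} (the $r=2$ case), \cref{semisimple}, and the compatibility of barbells with restriction $U_q(N)\to U_q(N-2)$ (\cref{restrict}). Your proposal instead appeals to Wenzl's spin Schur--Weyl duality via the iquantum group $U_q'(\fso_r)$. That is a genuinely different route, and the parity/bending reduction of (a) to (b) is fine as far as it goes, but the type-$D$ portion of your argument contains a real gap.

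The problem is the identification of the centralizer. You assert that the image of the $U_q'(\fso_r)$-action (equivalently, the barbell algebra) equals $\End_{U_q(\fso(N))}(S^{\otimes r})$, and then invoke a ``$\sigma$-averaging argument'' to upgrade this to a statement about $\End_{U_q(N)}(S^{\otimes r})$. But this is backwards. Since $U_q(\fso(N))\subset U_q(N)$, we have $\End_{U_q(N)}(S^{\otimes r})\subseteq\End_{U_q(\fso(N))}(S^{\otimes r})$, and this inclusion is \emph{strict} in type $D$ for $r\ge 2$: for instance when $r=2$, the $k\ge 2$ summands of \cref{SdubD} restrict to $L_q(\mu)^{\oplus 2}$ over $U_q(\fso(N))$ (contributing a $2\times 2$ matrix block to the larger centralizer) but split as two non-isomorphic simples over $U_q(N)$ (contributing only a diagonal $\kk\oplus\kk$ to the smaller one). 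As you observe, every barbell is a $U_q(N)$-homomorphism, so the barbell algebra is a subalgebra of the \emph{smaller} centralizer $\End_{U_q(N)}(S^{\otimes r})$. It therefore cannot possibly equal the strictly larger $\End_{U_q(\fso(N))}(S^{\otimes r})$, and no ``averaging'' can fix this---averaging can only project into, not enlarge beyond, the subalgebra of $\sigma$-intertwiners. The correct statement of Wenzl's duality in type $D$ must be that the image is $\End_{U_q(N)}(S^{\otimes r})$ (the analogue of the $\Pin$-centralizer), in which case the averaging step is both incorrect as you phrased it and superfluous. If one is careful to cite the right form of Wenzl's theorem, your route probably does work, but it then relies on an external Schur--Weyl theorem whose exact hypotheses and normalizations need careful matching (as you acknowledge), whereas the paper's induction on $r$ via \cref{barbellgen}, \cref{semisimple}, and \cref{restrict} is self-contained modulo \cite{MS24}.
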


\begin{proof}
    The proof is the same as that of \cite[Th.~7.8]{MS24}.  In order to run this argument, we need the following facts:
    \begin{itemize}
        \item the barbell generates $\End_{U_q(N)}(S\otimes S)$, which is \cref{barbellgen},
        \item the category $U_q(N)$-mod is semisimple, which is \cref{semisimple},
        \item restricting from $U_q(N)$ to $U_q(N-2)$ takes the barbell to the barbell, which is \cref{restrict}.
    \end{itemize}
\end{proof}

\begin{theo} \label{full}
    The incarnation functor $\bF$ is full.
\end{theo}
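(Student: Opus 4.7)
The plan is to reduce the theorem to \cref{surly}\cref{Sfull}, which treats the case of morphisms between tensor powers of $\Sgo$.  The idea is that every $\Vgo$-factor appearing in an object of $\QSB(N)$ can effectively be replaced by $\Sgo \otimes \Sgo$, since $V$ sits inside $S \otimes S$ as a retract realized by the trivalent generators.

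First, using rigidity of both $\QSB(N)$ and $U_q(N)\md$ together with the fact that $\bF$ sends cups and caps to (rescaled) cups and caps, I would reduce the fullness of $\bF$ to the assertion that for every object $Z$ of $\QSB(N)$, the map
\[
    \bF \colon \Hom_{\QSB(N)}(\one, Z) \to \Hom_{U_q(N)}(\triv, \bF(Z))
\]
is surjective.  I then induct on the number $k$ of $\Vgo$-factors in $Z$.  The base case $k = 0$ is precisely \cref{surly}\cref{Sfull} applied with $r_1 = 0$.

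For the inductive step, I would write $Z = A \otimes \Vgo \otimes B$ and consider the rotated trivalent morphisms $i := \splitmor{vec}{spin}{spin} \colon \Vgo \to \Sgo \otimes \Sgo$ and $p := \mergemor{spin}{spin}{vec} \colon \Sgo \otimes \Sgo \to \Vgo$ defined in \cref{vortex}.  The key claim is that $\bF(p) \circ \bF(i) = c \cdot \id_V$ for some $c \in \kk^\times$.  Granting this, any $f \in \Hom_{U_q(N)}(\triv, \bF(Z))$ factors as
\[
    c f = \big( 1_{\bF(A)} \otimes \bF(p) \otimes 1_{\bF(B)} \big) \circ \big( 1_{\bF(A)} \otimes \bF(i) \otimes 1_{\bF(B)} \big) \circ f,
\]
and applying the induction hypothesis to $A \otimes \Sgo \otimes \Sgo \otimes B$ (which has $k-1$ factors of $\Vgo$) lifts the inner composition to $\QSB(N)$, producing a preimage of $f$.

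The main obstacle is verifying the claim.  By self-duality of $V$ and $S$, together with \cref{SV},
\[
    \dim \Hom_{U_q(N)}(V, S \otimes S) = \dim \Hom_{U_q(N)}(V \otimes S, S) = 1,
\]
so $V$ appears in $S \otimes S$ with multiplicity one and there exist morphisms $\iota \colon V \to S \otimes S$ and $\pi \colon S \otimes S \to V$ with $\pi \circ \iota = \id_V$.  The morphisms $\bF(i)$ and $\bF(p)$ are both nonzero, since they are pivotal rotations of $\tau = \bF(\mergemor{vec}{spin}{spin}) \ne 0$, and rotation is a bijection on Hom spaces in a pivotal rigid category.  As $\Hom_{U_q(N)}(V, S \otimes S)$ and $\Hom_{U_q(N)}(S \otimes S, V)$ are each one-dimensional, $\bF(i)$ and $\bF(p)$ must be nonzero scalar multiples of $\iota$ and $\pi$, respectively, so $\bF(p) \circ \bF(i)$ is a nonzero scalar multiple of $\id_V$, as required.
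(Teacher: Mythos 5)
Your proof is correct and follows the same overall strategy as the paper's: reduce (via rigidity) to morphisms out of $\one$, handle the case of $\Sgo$-only objects via \cref{surly}\cref{Sfull}, and handle $\Vgo$-factors by exhibiting $V$ as a retract of $S \otimes S$ realized by the trivalent generators. The paper outsources the formal structure to \cite[Th.~7.9]{MS24} and supplies only the one ingredient that changes in the quantum setting: an explicit diagrammatic computation inside $\QSB(N)$ showing
\[
\begin{tikzpicture}[centerzero]
    \draw[vec] (0,-0.5) -- (0,-0.2);
    \draw[vec] (0,0.5) -- (0,0.2);
    \draw[spin] (0,0) circle(0.2);
\end{tikzpicture}
\ = \ 2 d_\Sgo \frac{q+q^{N-1}}{(1+q^N)(q+q^{-1})}\
\begin{tikzpicture}[centerzero]
    \draw[vec] (0,-0.5) -- (0,0.5);
\end{tikzpicture}\ ,
\]
which is exactly the relation $p \circ i = D\, 1_\Vgo$ already in $\QSB(N)$ with an explicit nonzero $D$. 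Where you deviate is in how the nonvanishing of this scalar is established: you avoid the computation by arguing abstractly that $\Hom_{U_q(N)}(V, S\otimes S)$ and $\Hom_{U_q(N)}(S\otimes S, V)$ are one-dimensional (via \cref{SV} and duality), that $V$ appears as a direct summand of $S\otimes S$ by semisimplicity (\cref{semisimple}), and that $\bF(i)$, $\bF(p)$ are nonzero because they are pivotal rotations of $\tau \neq 0$. This is a valid and arguably cleaner argument, since it sidesteps the coefficient bookkeeping. The trade-off is that your relation only holds after applying $\bF$, whereas the paper's computation establishes a relation intrinsic to $\QSB(N)$ (valid for general parameters subject to the assumptions of \cref{sec:antisymmetrizer}); for the fullness theorem, your weaker statement is all that is needed.
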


\begin{proof}
    The proof is analogous to that of \cite[Th.~7.9]{MS24}.  The only difference is the diagrammatic computation appearing there.  In the quantum setting, this becomes

\begin{align*}
        0
        &\overset{\cref{Deligne}}{\underset{\substack{\cref{ski} \\ \cref{bump}}}{=}} q\,
        \begin{tikzpicture}[anchorbase]
            \draw[spin] (-0.1,0) -- (0.1,0) arc(-90:90:0.15) -- (-0.1,0.3) arc(90:270:0.15);
            \draw[vec] (-0.1,-0.4) -- (-0.1,0);
            \draw[vec] (0.1,-0.4) -- (0.1,0);
        \end{tikzpicture}
        -
        \begin{tikzpicture}[anchorbase]
            \draw[vec] (0.1,-0.4) -- (-0.1,0);
            \draw[wipe] (-0.1,-0.4) -- (0.1,0);
            \draw[vec] (-0.1,-0.4) -- (0.1,0);
            \draw[spin] (-0.1,0) -- (0.1,0) arc(-90:90:0.15) -- (-0.1,0.3) arc(90:270:0.15);
        \end{tikzpicture}
        - d_\Vgo \frac{q-q^{-1}}{1+q^N}
        \begin{tikzpicture}[anchorbase]
            \draw[spin] (-0.1,0) -- (0.1,0) arc(-90:90:0.15) -- (-0.1,0.3) arc(90:270:0.15);
            \draw[vec] (-0.15,-0.4) -- (-0.15,-0.3) arc(180:0:0.15) -- (0.15,-0.4);
        \end{tikzpicture}
        \\
        &\overset{\cref{oist}}{=} (q+q^{-1})\
        \begin{tikzpicture}[anchorbase]
            \draw[spin] (-0.1,0) -- (0.1,0) arc(-90:90:0.15) -- (-0.1,0.3) arc(90:270:0.15);
            \draw[vec] (-0.1,-0.4) -- (-0.1,0);
            \draw[vec] (0.1,-0.4) -- (0.1,0);
        \end{tikzpicture}
        - \left( \frac{q^{N-1} - q^{1-N} + q - q^{-1}}{1+q^N} + q^{1-N} + q^{-1} \right) \
        \begin{tikzpicture}[anchorbase]
            \draw[spin] (-0.1,0) -- (0.1,0) arc(-90:90:0.15) -- (-0.1,0.3) arc(90:270:0.15);
            \draw[vec] (-0.15,-0.4) -- (-0.15,-0.3) arc(180:0:0.15) -- (0.15,-0.4);
        \end{tikzpicture}
        \\
        &\overset{\cref{dimrel}}{=}
        (q+q^{-1})\
        \begin{tikzpicture}[anchorbase]
            \draw[spin] (-0.1,0) -- (0.1,0) arc(-90:90:0.15) -- (-0.1,0.3) arc(90:270:0.15);
            \draw[vec] (-0.1,-0.4) -- (-0.1,0);
            \draw[vec] (0.1,-0.4) -- (0.1,0);
        \end{tikzpicture}
        - 2 d_\Sgo \frac{q+q^{N-1}}{1+q^N}\
        \begin{tikzpicture}[anchorbase]
            \draw[vec] (-0.2,-0.4) -- (-0.2,-0.2) arc(180:0:0.2) -- (0.2,-0.4);
        \end{tikzpicture}
        \ .
    \end{align*}
    Therefore
    \[
        \begin{tikzpicture}[centerzero]
            \draw[vec] (0,-0.5) -- (0,-0.2);
            \draw[vec] (0,0.5) -- (0,0.2);
            \draw[spin] (0,0) circle(0.2);
        \end{tikzpicture}
        = 2 d_\Sgo \frac{q+q^{N-1}}{(1+q^N)(q+q^{-1})}\
        \begin{tikzpicture}[centerzero]
            \draw[vec] (0,-0.5) -- (0,0.5);
        \end{tikzpicture}
        \ .
    \]
    Then one replaces the $D$ in the last line in the proof of \cite[Th.~7.9]{MS24} with $2 d_\Sgo \frac{q+q^{N-1}}{(1+q^N)(q+q^{-1})}$.
\end{proof}

%===============================================================================
\section{Essential surjectivity of the incarnation functor\label{sec:essential}}
%===============================================================================

Let $\Kar(\QSB(N))$ be the additive Karoubi envelope (that is, the idempotent completion of the additive envelope) of $\QSB(N)$.  Since $U_q(N)\md$ is additive and idempotent complete, $\bF$ induces a monoidal functor
\begin{gather} \label{Karnation}
    \Kar(\bF) \colon \Kar \big( \QSB(N) \big) \to U_q(N)\md.
\end{gather}
Our goal in this final section is to show that $\Kar(\bF)$ is essentially surjective.  Our arguments will involve the following subcategory of the quantum spin Brauer category.

\begin{defin}
    Suppose $d_\Sgo \in \kk$ and $q,t,\kappa \in \kk^\times$, such that $q-q^{-1} \in \kk^\times$.  We define $\QSB' = \QSB'(q,t,\kappa,d_\Sgo)$ to be the $\kk$-linear monoidal subcategory of $\QSB(q,t,\kappa,d_\Sgo)$ generated by the objects $\Sgo$ and $\Vgo$ and the morphisms
    \[
        \capmor{spin}\ ,\qquad
        \cupmor{spin}\ ,\qquad
        \capmor{vec}\ ,\qquad
        \cupmor{vec}\ ,\qquad
        \poscross{vec}{vec}\ ,\qquad
        \mergemor{vec}{spin}{spin}.
    \]
\end{defin}

Note that, by \cref{zombie}, the subcategory $\QSB'$ also contains the morphisms $\poscross{spin}{vec}$ and $\poscross{vec}{spin}$ as long as $q\kappa^2+1$ is invertible (which we will assume below).  Then, using cups and caps to rotate crossings, we see that $\QSB'$ contains all blue-blue and blue-black crossings.  Thus, the essential point is that we omit the crossings $\poscross{spin}{spin}$ and $\negcross{spin}{spin}$.

For the remainder of this section we assume that $\kk$ is a field and $q,t,\kappa \in \kk^\times$ such that $q$ is not a root of unity and \cref{lime} is satisfied.  The next result shows that all closed diagrams in $\QSB'$ can be reduced to a multiple of the empty diagram $1_\one$.  Its proof is similar to that of \cite[Prop.~5.9]{MS24}.

\begin{prop} \label{blackhole}
    We have $\End_{\QSB'}(\one) = \kk 1_\one$.
\end{prop}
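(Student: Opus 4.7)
The plan is to adapt the proof of \cite[Prop.~5.9]{MS24}. Every morphism in $\End_{\QSB'}(\one)$ is a $\kk$-linear combination of closed diagrams built from caps, cups, blue-blue crossings, the trivalent vertex $\mergemor{vec}{spin}{spin}$, and their rotations; in particular, no spin-spin crossings appear. This forces the spin strands of each closed diagram to form a disjoint union of non-crossing planar closed curves, decorated at finitely many trivalent vertices where blue strands attach.

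The argument is by induction on a complexity measure such as the number of blue crossings plus the number of trivalent vertices. Blue crossings are reduced via the skein relation \cref{skein}, and free blue loops are evaluated via the second bubble identity in \cref{dimrel}. The key step is the removal of trivalent vertices from spin loops. Given a spin loop $L$ in a closed diagram, I would expand the local blue morphism attached to $L$ in terms of the quantum antisymmetrizer idempotents of \cref{grove}, using the recursion \cref{asymdef} (and \cref{skein} to handle any residual crossings). Portions in which the antisymmetrizer attaches to $L$ as a single-sided bundle are killed by \cref{Delprop}, and portions in which it attaches as a chord are evaluated via \cref{royal}. After enough iterations, no trivalent vertices remain: each spin loop then collapses to $d_\Sgo$ via \cref{dimrel}, and the residual blue-only diagram lies in the Kauffman subcategory of $\QSB'$, whose endomorphism algebra of $\one$ is well known to reduce to $\kk 1_\one$ by \cref{skein}, \cref{deloop}, and the vec bubble identity.

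The main obstacle is the bookkeeping when blue strands connect two distinct spin loops $L$ and $L'$, since the antisymmetrizer decomposition of such strands produces terms that do not all immediately match the hypotheses of \cref{Delprop} or \cref{royal}. The resolution is to slide the antisymmetrizer adjacent to one of the loops using the absorption identities \cref{asymidem} and \cref{absorbcr}, so that \cref{Delprop} kills the antisymmetrized part, while the remaining terms have strictly fewer crossings or trivalent vertices after \cref{skein} is used to resolve them. This combinatorial argument is the quantum analogue of the one carried out in \cite[Prop.~5.9]{MS24}, and the antisymmetrizer identities developed in \cref{sec:antisymmetrizer} are designed precisely to make it go through.
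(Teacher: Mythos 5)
Your overall strategy is sound: closed diagrams in $\QSB'$ have no spin-spin crossings, so spin strands form disjoint planar loops, and one should induct to eliminate trivalent vertices using \cref{Delprop} together with the antisymmetrizer machinery. This is the same route the paper takes. However, your sketch leaves the crucial step vague, and the paper's key intermediate lemma is precisely what's missing.

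The paper first establishes (by induction on $r$, using \cref{asymdef}, \cref{bump}, \cref{oist}, and \cref{asymidem}) the identity \cref{snail}: a black loop with $r$ parallel blue strands exiting equals the same loop with the antisymmetrizer $\begin{tikzpicture}[centerzero]\draw[vec] (0,-0.4) -- (0,0.4);\altbox{-0.2,-0.1}{0.2,0.1}{r};\end{tikzpicture}$ inserted, plus a correction $A$ consisting only of diagrams with fewer blue strands attached to the loop. Combined with \cref{Deligne}, which kills the antisymmetrized term, this immediately expresses any black loop with $r$ trivalent vertices as a linear combination of diagrams with fewer trivalent vertices. Your proposal gestures at "expanding in terms of antisymmetrizer idempotents" but does not state the crucial fact that the antisymmetrizer differs from the identity by a strict reduction in the relevant complexity — without that, the induction does not obviously close. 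Your invocation of \cref{royal} is also misplaced here: the paper does not need \cref{royal} at all in this proof; it uses the simpler \cref{bump} to absorb blue chords whose both ends lie on the same spin loop. Finally, your complexity measure (blue crossings plus trivalent vertices) is problematic, since applying the skein relation inside the reduction can create new cups and caps; the paper sidesteps this by inducting \emph{only} on the number of trivalent vertices, and deferring the pure-tangle cleanup (via \cref{braid}, \cref{deloop}, \cref{skein}, \cref{dimrel}) to a final, separate step. You should also add the preliminary step, which the paper makes explicit, of sliding all blue strands out of the interior of a given black loop (using \cref{lobster1} and \cref{braid}) so that the loop is in the normalized configuration to which \cref{snail} applies. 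Once these are supplied, the argument goes through; your concern about strands connecting two distinct spin loops then evaporates, since \cref{snail} applies to each spin loop independently regardless of where the other ends of the blue strands terminate.
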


\begin{proof}
    We freely use isotopy invariance of diagrams.  We first prove that, for $r \in \N$,
    \begin{equation} \label{snail}
        \begin{tikzpicture}[anchorbase]
            \draw[spin] (-0.25,0.3) -- (-0.25,0.15) arc(180:270:0.15) -- (0.1,0) arc(-90:0:0.15) -- (0.25,0.3);
            \draw[vec] (0,-1) -- (0,0);
            \altbox{-0.2,-0.65}{0.2,-0.35}{r};
        \end{tikzpicture}
        =
        \begin{tikzpicture}[anchorbase]
            \draw[spin] (-0.25,0.3) -- (-0.25,0.15) arc(180:270:0.15) -- (0.1,0) arc(-90:0:0.15) -- (0.25,0.3);
            \draw[vec] (0,-1) -- (0,0) node[midway,anchor=west] {\strandlabel{r}};
        \end{tikzpicture}
        + A,
    \end{equation}
    where $A$ is a linear combination of diagrams with fewer than $r$ blue strands attached to the black circle.  Since \cref{snail} trivially holds when $r \in \{0,1\}$, we assume $r \ge 2$.  Let $\equiv$ denote equivalence modulo linear combinations of diagrams with fewer than $r$ blue strands attached to the black circle.  Then we have
    \[
        \begin{tikzpicture}[centerzero]
            \draw[spin] (-0.25,0.9) -- (-0.25,0.75) arc(180:270:0.15) -- (0.1,0.6) arc(-90:0:0.15) -- (0.25,0.9);
            \draw[vec] (0,-0.6) -- (0,0.6);
            \altbox{-0.4,-0.15}{0.4,0.15}{r+1};
        \end{tikzpicture}
        \overset{\cref{asymdef}}{\underset{\cref{bump}}{\equiv}} \frac{1}{[r+1]}
        \left(
            q^r\
            \begin{tikzpicture}[centerzero]
                \draw[spin] (-0.25,0.9) -- (-0.25,0.75) arc(180:270:0.15) -- (0.6,0.6) arc(-90:0:0.15) -- (0.75,0.9);
                \draw[vec] (0,-0.6) -- (0,0.6);
                \altbox{-0.3,-0.15}{0.3,0.15}{r};
                \draw[vec] (0.5,-0.6) -- (0.5,0.6);
            \end{tikzpicture}
            - [r]\
            \begin{tikzpicture}[centerzero]
                \draw[spin] (-0.25,1) -- (-0.25,0.85) arc(180:270:0.15) -- (0.6,0.7) arc(-90:0:0.15) -- (0.75,1);
                \draw[vec] (-0.15,-0.2) -- (-0.15,0.2) node[midway,anchor=east] {\strandlabel{r-1}};
                \draw[vec] (0.15,0.2) \braiddown (0.5,-0.2) -- (0.5,-0.7);
                \draw[wipe] (0.15,-0.2) \braidup (0.5,0.2);
                \draw[vec] (0.15,-0.2) \braidup (0.5,0.2) -- (0.5,0.7);
                \draw[vec] (0,0.5) -- (0,0.7);
                \draw[vec] (0,-0.5) -- (0,-0.7);
                \altbox{-0.3,-0.5}{0.3,-0.2}{r};
                \altbox{-0.3,0.2}{0.3,0.5}{r};
            \end{tikzpicture}\
        \right)
        \overset{\cref{oist}}{\underset{\cref{asymidem}}{\equiv}}
        \frac{q^r + q^{-1}[r]}{[r+1]}\
        \begin{tikzpicture}[centerzero]
            \draw[spin] (-0.25,0.9) -- (-0.25,0.75) arc(180:270:0.15) -- (0.6,0.6) arc(-90:0:0.15) -- (0.75,0.9);
            \draw[vec] (0,-0.6) -- (0,0.6);
            \altbox{-0.3,-0.15}{0.3,0.15}{r};
            \draw[vec] (0.5,-0.6) -- (0.5,0.6);
        \end{tikzpicture}
        =
        \begin{tikzpicture}[centerzero]
            \draw[spin] (-0.25,0.9) -- (-0.25,0.75) arc(180:270:0.15) -- (0.6,0.6) arc(-90:0:0.15) -- (0.75,0.9);
            \draw[vec] (0,-0.6) -- (0,0.6);
            \altbox{-0.3,-0.15}{0.3,0.15}{r};
            \draw[vec] (0.5,-0.6) -- (0.5,0.6);
        \end{tikzpicture}
        \ ,
    \]
    and so \cref{snail} holds for all $r \in \N$ by induction.

    We now prove that any closed diagram is equal to a multiple of the empty diagram.  We proceed by induction on the number of trivalent vertices in the diagram.  Suppose we have a closed diagram with at least one trivalent vertex.  Consider the black curve that is part of that trivalent vertex.  Since every trivalent vertex has exactly two black strings incident to it, this curve is part of a loop.  By the definition of $\QSB'$, we may assume that this curve has no self-intersections and does not cross any other strands.  Using \cref{lobster1,braid}, we may move any blue strings in the interior of the loop to the exterior.  Thus, we may assume that the interior of the loop is empty.  Let $r$ be the number of trivalent vertices on this loop.  Since
    \[
        0
        \overset{\cref{Deligne}}{=}
        \begin{tikzpicture}[anchorbase]
            \draw[spin] (-0.1,0) -- (0.1,0) arc(-90:90:0.15) -- (-0.1,0.3) arc(90:270:0.15);
            \draw[vec] (0,-1) -- (0,0);
            \altbox{-0.2,-0.65}{0.2,-0.35}{r};
        \end{tikzpicture}
        \overset{\cref{snail}}{\equiv}
        \begin{tikzpicture}[anchorbase]
            \draw[spin] (-0.1,0) -- (0.1,0) arc(-90:90:0.15) -- (-0.1,0.3) arc(90:270:0.15);
            \draw[vec] (0,-1) -- (0,0) node[midway,anchor=west] {\strandlabel{r}};
        \end{tikzpicture}
        \ ,
    \]
    we can write our diagram as a linear combination of diagrams with fewer trivalent vertices, completing the proof of the inductive step.

    We have now reduced to the case of diagrams with no trivalent vertices.  In this case, the relations \cref{braid,deloop} suffice to rewrite our diagrams as a disjoint union of circles, which are evaluated as scalars by \cref{dimrel}.
\end{proof}

\begin{prop} \label{tulip}
    For all $r,s \in \N$, the spaces $\Hom_{\QSB'}(\Vgo^{\otimes r}, \Vgo^{\otimes s})$ and $\Hom_{\QSB'}(\Vgo^{\otimes r} \otimes \Sgo, \Vgo^{\otimes s} \otimes \Sgo)$ are finite dimensional.
\end{prop}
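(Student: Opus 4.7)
The plan is to show that every morphism in these Hom spaces can be rewritten, using the relations of $\QSB'$, as a linear combination of diagrams lying in a finite-dimensional subspace. For the first Hom space this subspace is $\Hom_{\cK(N)}(\Vgo^{\otimes r}, \Vgo^{\otimes s})$, where $\cK(N) \subset \QSB'$ is the Kauffman subcategory generated by $\capmor{vec}$, $\cupmor{vec}$, and $\poscross{vec}{vec}$; for the second it is the subspace spanned by Kauffman diagrams tensored with a bare spin through-strand. Since $\cK(N)$ is well known to have finite-dimensional Hom spaces between tensor powers of $\Vgo$, with a spanning set of Brauer-type tangle diagrams, this will yield the desired conclusion.

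The reduction would proceed by eliminating blue--spin crossings and trivalent vertices from a representing diagram in stages. First, I would use \cref{zombie} to rewrite every blue--spin crossing as a linear combination of diagrams having only trivalent vertices in their place. Next, I would eliminate trivalent vertices on closed black components by induction on their number. Given such a component $C$ with $k > 0$ trivalent vertices, pivotal isotopy arranges the $k$ attached blue strands into a locally parallel bundle just after they leave $C$. Inserting the identity on this bundle and rewriting it via \cref{asymdef} as the quantum antisymmetrizer on $k$ strands plus correction terms with a cup--cap pair or a crossing between two adjacent strands, I would apply \cref{Deligne} to kill the antisymmetrizer contribution, apply \cref{bump} to rewrite each cup--cap correction as a diagram with two fewer trivalent vertices on $C$, and use the braid and slide relations \cref{braid,typhoon,swishy} to absorb each crossing correction back into $C$, which merely permutes the cyclic arrangement of the $k$ vertices along $C$ without changing their count. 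Iterated with a secondary induction on this cyclic arrangement, the reduction eliminates every trivalent vertex on every closed component; each such component then becomes a simple black circle evaluating to $d_\Sgo$ via \cref{dimrel}. For the second Hom space, the two boundary spin endpoints must be joined by a single open spin arc, possibly bearing trivalent vertices; applying an analogous local reduction (using relations such as \cref{grapes} for open-strand trivalent configurations) reduces it to a bare through-strand, tensored with a diagram in $\cK(N)$.

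The principal technical obstacle is the treatment of the crossing corrections in the antisymmetrizer expansion: they do not reduce the trivalent vertex count on $C$, so the primary induction does not immediately drive them to zero. I would resolve this by the observation that absorbed crossings only permute the cyclic arrangement of the $k$ trivalent vertices along $C$, of which there are at most $k!$ configurations, so that the reduction can be recast as a finite linear system, solvable modulo terms having strictly fewer trivalent vertices. An alternative route is to pre-arrange the $k$ blue strands into the loop-back configuration of \cref{royal} before applying the reduction, bypassing the secondary induction by invoking that formula as a one-step evaluation.
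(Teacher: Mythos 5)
The reduction for the first Hom space is essentially sound (it follows the paper's route through \cref{blackhole}, showing that closed black components can be evaluated to scalars, leaving only blue strands). However, your treatment of the second Hom space has a genuine error. You claim it is "spanned by Kauffman diagrams tensored with a bare spin through-strand," i.e., that all trivalent vertices can be eliminated from the single open black arc. This cannot work: the morphism $\mergemor{vec}{spin}{spin} \in \Hom_{\QSB'}(\Vgo \otimes \Sgo, \Sgo)$ itself is a nonzero morphism (it maps under $\bF$ to quantum Clifford multiplication $\tau$), yet $\Hom_{\cK(N)}(\Vgo, \one) = 0$, so there is simply no Kauffman diagram $\otimes$ (bare strand) to reduce it to. More generally, blue strands ending on the open spin strand encode essential morphisms (the "barbell" $B_N$ of \cref{rhino}, which generates a nontrivial iquantum group action per \cref{iquantum}), and \cref{grapes} is a relation \emph{among} such diagrams, not a means of eliminating them. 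The key mechanism that kills trivalent vertices in \cref{blackhole} is \cref{Deligne}, which requires the black loop to be \emph{closed}; for an open strand there is no such vanishing.

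The paper's actual argument for the second space is quite different: one keeps the trivalent vertices on the open black strand and instead \emph{bounds} the diagrams. Using \cref{braid,lobster1} one slides the black strand to the far right; then one shows, using \cref{oist,oister,bump}, that no blue strand need touch the black strand twice; and finally, the standard skein argument (no blue self-intersections, no two blue strands crossing more than once) leaves only finitely many diagrams, each with every blue strand either paired with boundary points or ending once on the open black strand. This combinatorial bounding is the missing idea in your proposal. Separately, your argument for closed components mischaracterizes \cref{asymdef}: that equation recursively \emph{defines} the antisymmetrizer, it does not express the identity as antisymmetrizer plus corrections. The paper's route is the reverse: \cref{snail} shows that one may \emph{insert} an antisymmetrizer into a bundle of blue strands attached to a closed black loop at the cost of terms with fewer strands, and then \cref{Deligne} annihilates the antisymmetrizer piece.
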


\begin{proof}
    As in the proof of \cref{blackhole}, closed diagrams can be reduced to a scalar multiple of the empty diagram.  Thus, $\Hom_{\QSB'}(\Vgo^{\otimes r}, \Vgo^{\otimes s})$ is spanned by diagrams with no loops.  In particular, it is spanned by diagrams with no black strands.  Using \cref{braid,skein,deloop,reloop}, it is then a standard skein theory argument in the theory of tangles to show that this space is finite dimensional.

    Similarly, $\Hom_{\QSB'}(\Vgo^{\otimes r} \otimes \Sgo, \Vgo^{\otimes s} \otimes \Sgo)$ is spanned by diagrams with no loops.  In particular, it is spanned by diagrams that have a single black string running from the bottom of the diagram to the top that does not intersect itself.  Using \cref{braid,lobster1}, we can move all blue strings to the left of the black string.

    We now claim that we can remove all instances of a blue string being incident to the black string twice. Using \cref{braid,deloop,reloop,skein} we can reduce the case to where the picture locally looks like the following:
    \[
        \begin{tikzpicture}
            \draw[vec] (0,-0.5) to[out=left,in=left,looseness=2] (0,0.5);
            \draw[wipe] (0,0.3) to[out=left,in=right] (-0.7,0.6);
            \draw[vec] (0,0.3) to[out=left,in=right] (-0.7,0.6);
            \draw[wipe] (0,-0.3) to[out=left,in=right] (-0.7,-0.6);
            \draw[vec] (0,-0.3) to[out=left,in=right] (-0.7,-0.6);
            \node at (-0.2,0.1) {$\vdots$};
            \draw[spin] (0,-0.7) -- (0,0.7);
        \end{tikzpicture}
    \]
    We then use \cref{oist,oister} to reduce to the case where there are no strands in the interior of the blue-black loop.  Then we can use \cref{bump} to simplify the diagram.

    Now we are in a position where we can assume that no blue string is incident to the black string more than once.  Furthermore, using \cref{braid,deloop,reloop,skein}, we may assume that no two blue strings cross more than once and no blue strings intersect themselves (as in the standard skein theory argument mentioned above).  An example of such a diagram for $r=6$ and $s=5$ is the following:
    \[
        \begin{tikzpicture}
            \draw[vec] (1.5,-0.7) to[out=up,in=left] (1.8,-0.3);
            \draw[vec] (1.2,0.7) to[out=down,in=left] (1.8,0.4);
            \draw[vec] (0.6,-0.7) \braidup (0.3,0.7);
            \draw[wipe] (0.3,-0.7) to[out=up,in=up,looseness=2] (1.2,-0.7);
            \draw[vec] (0.3,-0.7) to[out=up,in=up,looseness=2] (1.2,-0.7);
            \draw[wipe] (0.9,-0.7) to[out=up,in=left] (1.8,0);
            \draw[vec] (0.9,-0.7) to[out=up,in=left] (1.8,0);
            \draw[wipe] (0.6,0.7) to[out=down,in=down,looseness=2] (1.5,0.7);
            \draw[vec] (0.6,0.7) to[out=down,in=down,looseness=2] (1.5,0.7);
            \draw[wipe] (0,-0.7) \braidup (0.9,0.7);
            \draw[vec] (0,-0.7) \braidup (0.9,0.7);
            \draw[spin] (1.8,-0.7) -- (1.8,0.7);
        \end{tikzpicture}
    \]
    Since the number of diagrams satisfying these conditions is finite, the result follows.
\end{proof}

For the remainder of this section, we suppose that $N \in \N$, $\kk = \C(q^{\pm \frac{1}{4}})$, and that $t$, $\kappa$, and $d_\Sgo$ are given by \cref{crabby}.
Let
\begin{equation} \label{incarprime}
    \bF' \colon \QSB' \to U_q(N)\md
\end{equation}
be the restriction of the functor $\bF$ to the subcategory $\QSB'$.

\begin{lem} \label{easter}
    The functor $\bF'$ is full.
\end{lem}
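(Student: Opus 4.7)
The plan is to reduce the fullness of $\bF'$ to the already-established fullness of $\bF$ (see \cref{full}) by checking that the only generators of $\QSB(N)$ absent from $\QSB'$, namely $\poscross{spin}{spin}$ and $\negcross{spin}{spin}$, have their $\bF$-images realized by morphisms that are themselves in the image of $\bF'$. Once such realizations are in hand, any morphism $f \in \Hom_{U_q(N)}(\bF X, \bF Y)$ admits, by fullness of $\bF$, a preimage $g \in \Hom_{\QSB(N)}(X, Y)$; replacing each occurrence of $\poscross{spin}{spin}$ or $\negcross{spin}{spin}$ inside $g$ by a fixed $\QSB'$-lift of its $\bF$-image produces a diagram $g' \in \QSB'$ with $\bF'(g') = f$.

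The heart of the argument is therefore to exhibit $\sigma_N T_{S,S} = \bF\bigl(\poscross{spin}{spin}\bigr)$ and its inverse inside $\bF'\bigl(\End_{\QSB'}(\Sgo \otimes \Sgo)\bigr)$. By \cref{rhino}, the barbell operator $B_N \in \End_{U_q(N)}(S \otimes S)$ coincides, up to a nonzero scalar, with the image under $\bF'$ of the morphism $\begin{tikzpicture}[centerzero]\draw[spin] (-0.2,-0.2) -- (-0.2,0.2);\draw[spin] (0.2,-0.2) -- (0.2,0.2);\draw[vec] (-0.2,0) -- (0.2,0);\end{tikzpicture}$, which manifestly belongs to $\QSB'$. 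By \cref{barbellgen}---the $r=2$ case of \cref{surly}(b)---this single operator $B_N$ generates $\End_{U_q(N)}(S \otimes S)$ as a unital algebra. Since $T_{S,S}^{\pm 1}$ are $U_q(N)$-linear by \cref{dentist}, both $T_{S,S}$ and $T_{S,S}^{-1}$ can be written as polynomials in $B_N$, and hence lie in $\bF'\bigl(\End_{\QSB'}(\Sgo \otimes \Sgo)\bigr)$.

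The step to be handled with the most care is the local replacement described in the first paragraph: one must be sure that substituting a $\QSB'$-lift of $\sigma_N T_{S,S}^{\pm 1}$ for each $\Sgo$-$\Sgo$ crossing inside a composite diagram $g$ really does preserve $\bF(g)$. This is immediate because the substitution is strictly local and, by construction, leaves the $\bF$-image unchanged at each replaced site while everything else in the diagram is already a morphism of $\QSB'$. The principal conceptual input is therefore \cref{barbellgen}, which guarantees that the single ``accessible'' operator $B_N$ generates the full endomorphism algebra $\End_{U_q(N)}(S \otimes S)$; no further diagrammatic manipulation inside $\QSB'$ is required.
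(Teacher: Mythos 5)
Your argument is correct, but it is organized quite differently from the paper's. The paper's proof is a one-liner: it observes that the whole chain of arguments in \cref{sec:full} establishing \cref{full} never actually invokes the generators $\poscross{spin}{spin}$ or $\negcross{spin}{spin}$, so the same reasoning yields fullness of $\bF'$ verbatim. You instead treat \cref{full} as a black box and show that the $\bF$-images of the two omitted generators already lie in the image of $\bF'$, via \cref{rhino} (the barbell morphism of $\QSB'$ hits a nonzero multiple of $B_N$) and \cref{barbellgen} ($B_N$ generates $\End_{U_q(N)}(S \otimes S)$), then pass from an arbitrary $\bF$-preimage $g$ to a $\QSB'$-preimage by word substitution. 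Both routes ultimately rest on the same key ingredient, \cref{barbellgen}, but yours is more modular: one needs only the statement of \cref{full}, not a re-reading of its proof, at the cost of spelling out that $\sigma_N T_{S,S}^{\pm 1}$ are polynomials in $B_N$ and that the local substitution is well defined. That last step, which you flag yourself, is best formalized by defining a monoidal functor from the free $\kk$-linear monoidal category on $\QSB$'s generators to $\QSB'$ sending $\poscross{spin}{spin},\negcross{spin}{spin}$ to fixed lifts $h_\pm$ of $\sigma_N T_{S,S}^{\pm 1}$ and every other generator to itself, and noting that its composite with $\bF'$ agrees on generators with the composite of the quotient to $\QSB$ and $\bF$; this is exactly the content of your ``the substitution is strictly local'' remark, made precise.
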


\begin{proof}
    Since none of the arguments in \cref{sec:full} used the crossings $\poscross{spin}{spin}$ or $\negcross{spin}{spin}$, this follows from \cref{full}.
\end{proof}

\begin{theo} \label{essential}
    The functors $\Kar(\bF')$ and $\Kar(\bF)$ are essentially surjective.
\end{theo}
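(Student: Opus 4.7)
The plan is to reduce the problem to showing that every simple $U_q(N)$-module is a direct summand of $\bF'(X)$ for some object $X$ of $\QSB'$, and then construct such an $X$ of the form $\Vgo^{\otimes a} \otimes \Sgo^{\otimes b}$. Since $\QSB'$ is a subcategory of $\QSB$, essential surjectivity of $\Kar(\bF')$ immediately implies essential surjectivity of $\Kar(\bF)$, so I focus on the former. By semisimplicity of $U_q(N)\md$ (\cref{semisimple}), every finite-dimensional $U_q(N)$-module decomposes as a direct sum of simples, so it suffices to realize each simple as a summand.

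To pass from a direct summand $M \subseteq \bF'(X)$ to an object of $\Kar(\QSB')$ mapping to $M$, I would invoke idempotent lifting. Fullness of $\bF'$ (\cref{easter}) and finite-dimensionality of $\End_{\QSB'}(X)$ (\cref{tulip}) give a surjection of finite-dimensional $\kk$-algebras $\End_{\QSB'}(X) \twoheadrightarrow \End_{U_q(N)}(\bF'(X))$ onto a semisimple target. This surjection factors through the semisimple quotient $\End_{\QSB'}(X)/J$, where $J$ is the Jacobson radical, and the induced surjection of semisimple algebras admits an algebra section (since kernels of surjections of semisimple algebras are direct factors). Combined with the standard lifting of idempotents modulo the nilpotent ideal $J$, any projector onto $M$ lifts to an idempotent $\tilde e \in \End_{\QSB'}(X)$, so $(X, \tilde e) \in \Kar(\QSB')$ maps to $M$ under $\Kar(\bF')$.

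The representation-theoretic input is that every simple $U_q(N)$-module appears as a direct summand of some $V^{\otimes a} \otimes S^{\otimes b}$. The exterior powers $\Lambda^r V$, which appear as summands of $V^{\otimes r}$, realize all integer-weight fundamental representations of $U_q(\fso(N))$, while $S$ covers the spin fundamental weights; hence every dominant integral (resp.\ half-integral) $U_q(\fso(N))$-weight $\lambda$ yields $L_q(\lambda)$ as a summand of $V^{\otimes a}$ (resp.\ $V^{\otimes a} \otimes S$) for some $a$. In type $B$, this finishes the argument since $U_q(N) = U_q(\fso(N))$. In type $D$, Clifford theory (\cref{subsec:modulesD}) further produces simple $U_q(N)$-modules from these $U_q(\fso(N))$-summands: for $\mu$ with $\mu^\sigma \ne \mu$, the direct sum $L_q(\mu) \oplus L_q(\sigma\mu)$ inside the tensor power is automatically a $U_q(N)$-summand isomorphic to $\Ind(L_q(\mu))$; for $\mu$ with $\mu^\sigma = \mu$, one of the two $U_q(N)$-lifts of $L_q(\mu)$ appears directly as a summand, and the other is obtained by tensoring with $\triv^1$, which is a summand of $S \otimes S$ via the $k = n+1$ term $\Ind(L_q(0)) = \triv^0 \oplus \triv^1$ of \cref{sdub}. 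The main obstacle is this type $D$ bookkeeping, ensuring both $U_q(N)$-lifts of a $\sigma$-invariant $U_q(\fso(N))$-simple are accounted for.
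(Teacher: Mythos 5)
Your proposal is correct and follows essentially the same approach as the paper: reduce to simples via semisimplicity (\cref{semisimple}), realize each simple as a summand of $V^{\otimes a} \otimes S^{\otimes b}$, and then use fullness of $\bF'$ (\cref{easter}) together with finite-dimensionality of $\End_{\QSB'}(\Vgo^{\otimes a} \otimes \Sgo^{\otimes b})$ (\cref{tulip}) to lift the projector to an idempotent in $\QSB'$. You spell out more detail than the paper on both the idempotent lifting and the type $D$ Clifford-theoretic bookkeeping (the paper simply asserts that $M$ is a summand of $V^{\otimes r} \otimes S^{\otimes s}$ with $s \in \{0,1\}$, whereas you reach the ``other'' $U_q(N)$-lift by tensoring with $\triv^1 \subseteq S^{\otimes 2}$ rather than $\triv^1 = \Lambda^N V \subseteq V^{\otimes N}$), but the logical structure is identical.
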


\begin{proof}
    Since $U_q(N)$-mod is semisimple by \cref{semisimple}, it suffices to show that every finite-dimensional simple $U_q(N)$-module is in the essential image of $\Kar(\bF')$, which also implies it is in the essential image of $\Kar(\bF)$.  Let $M$ be a finite-dimensional simple $U_q(N)$-module.  Let $s=0$ if the components of the highest weight of $M$ are integers and let $s=1$ otherwise (i.e., if the components of the highest weight are half-integers).  Then $M$ is isomorphic to a summand of $V^{\otimes r} \otimes S^{\otimes s}$ for some $r \in \N$.  By \cref{easter}, the idempotent $e_M$ in $\End_{U_q(N)}(V^{\otimes r} \otimes S^{\otimes s})$ projecting onto $M$ is in the image of
    \[
        \bF' \colon \End_{\QSB'}(\Vgo^{\otimes r} \otimes \Sgo^{\otimes s}) \twoheadrightarrow \End_{U_q(N)}(V^{\otimes r} \otimes S^{\otimes s}).
    \]
    By \cref{tulip}, $\End_{\QSB'}(\Vgo^{\otimes r} \otimes \Sgo^{\otimes s})$ is finite dimensional.  Hence, we can lift $e_M$ to an idempotent $e \in \End_{\QSB'}(\Vgo^{\otimes r} \otimes \Sgo^{\otimes s})$ such that $\bF(e) = \bF'(e) = e_M$.  Thus, $M$ is in the essential image of $\bF'$ as desired.
\end{proof}

\begin{rem}
    The non-quantum analogue of \cref{essential} is \cite[Th.~8.1]{MS24}.  There is a gap in the proof of \cite[Th.~8.1]{MS24}, where the lifting of idempotents is not justified, since the morphism spaces in the spin Brauer category may be infinite dimensional.  The strategy of the proof of \cref{essential}, which also applies to the non-quantum setting, fixes this gap.
\end{rem}

It is straightforward to verify that $\QSB'$ is a spherical pivotal category, hence so is its idempotent completion $\Kar(\QSB')$.  (We refer the reader to \cite[\S 4.4.3]{Sel11} for the definition of a spherical pivotal category.)  In any spherical pivotal category $\cC$, we have a trace map $\Tr \colon \bigoplus_{X \in \cC} \End_\cC(X) \to \End_\cC(\one)$.  In terms of string diagrams, this corresponds to closing a diagram off to the right or left:
\begin{equation} \label{natal}
    \Tr
    \left(
        \begin{tikzpicture}[centerzero]
            \draw[line width=2] (0,-0.5) -- (0,0.5);
            \filldraw[fill=white,draw=black] (-0.25,0.2) rectangle (0.25,-0.2);
            \node at (0,0) {$\scriptstyle{f}$};
        \end{tikzpicture}
    \right)
    =
    \begin{tikzpicture}[centerzero]
        \draw[line width=2] (0,0.2) arc(180:0:0.3) -- (0.6,-0.2) arc(360:180:0.3);
        \filldraw[fill=white,draw=black] (-0.25,0.2) rectangle (0.25,-0.2);
        \node at (0,0) {$\scriptstyle{f}$};
    \end{tikzpicture}
    =
    \begin{tikzpicture}[centerzero]
        \draw[line width=2] (0,0.2) arc(0:180:0.3) -- (-0.6,-0.2) arc(180:360:0.3);
        \filldraw[fill=white,draw=black] (-0.25,0.2) rectangle (0.25,-0.2);
        \node at (0,0) {$\scriptstyle{f}$};
    \end{tikzpicture}
    \ ,
\end{equation}
where the second equality follows from the axioms of a spherical category.  We say that a morphism $f \in \Hom_\cC(X,Y)$ is \emph{negligible} if $\Tr(f \circ g) = 0$ for all $g \in \Hom_\cC(Y,X)$.  The negligible morphisms form a two-sided tensor ideal $\cN$ of $\cC$, and the quotient $\cC/\cN$ is called the \emph{semisimplification} of $\cC$.

\begin{theo} \label{negligible}
    The kernel of the functor $\Kar(\bF')$ is equal to the tensor ideal of negligible morphisms of $\Kar(\QSB')$.  The functor $\Kar(\bF')$ induces an equivalence of categories from the semisimplification of $\Kar(\QSB')$ to $U_q(N)\md$.
\end{theo}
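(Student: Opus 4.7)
The plan is to follow the standard template for identifying the kernel of a full pivotal functor with the negligible ideal: verify that $\Kar(\bF')$ is full and compatible with traces, show that the negligible ideal in $U_q(N)\md$ vanishes, and then deduce the two inclusions $\ker(\Kar(\bF')) \subseteq \cN$ and $\cN \subseteq \ker(\Kar(\bF'))$.

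First, I will check that $\Kar(\bF')$ is full. This extends fullness of $\bF'$ from \cref{easter} by the routine argument that a full functor into an additive, idempotent-complete category (here $U_q(N)\md$ is idempotent complete thanks to \cref{semisimple}) induces a full functor on the additive Karoubi envelope. I also need the identity $\bF'(\Tr(h)) = \Tr(\Kar(\bF')(h))$: this holds because $\Kar(\bF')$ respects the chosen cups, caps, and duality, and because $\bF'$ gives an isomorphism $\End_{\Kar(\QSB')}(\one) = \kk 1_\one \xrightarrow{\cong} \End_{U_q(N)}(\triv) = \kk$ (the source being $\kk 1_\one$ by \cref{blackhole}, and the Karoubi envelope not altering this).

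The crucial input is that the ideal of negligible morphisms in $U_q(N)\md$ is zero. I will argue this by showing that every simple $U_q(N)$-module has nonzero quantum dimension. For $\kk = \C(q^{\pm 1/4})$ with $q$ transcendental, the Weyl quantum dimension formula produces a Laurent polynomial in $q^{1/2}$ that specializes at $q=1$ to the (positive) classical dimension, so it is nonzero in $\kk$. Each simple $U_q(N)$-module is either an induced simple $U_q(\fso(N))$-module or one of two summands of such an induced module, and in either case inherits a nonzero quantum dimension. By semisimplicity (\cref{semisimple}), the trace pairing on every $\Hom$-space is therefore non-degenerate, so negligible morphisms vanish.

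With these ingredients the two inclusions are routine. If $\Kar(\bF')(f) = 0$, then for every test morphism $g$ we have $\bF'(\Tr(fg)) = \Tr(\Kar(\bF')(f) \circ \Kar(\bF')(g)) = 0$, and the identification of scalars forces $\Tr(fg) = 0$, showing $f \in \cN$. Conversely, if $f \in \cN$, fullness of $\Kar(\bF')$ lets me lift any morphism $h$ in $U_q(N)\md$ to some $g$ with $\Kar(\bF')(g) = h$; then $\Tr(\Kar(\bF')(f) \circ h) = \bF'(\Tr(fg)) = 0$ for all such $h$, and the vanishing of negligibles in $U_q(N)\md$ forces $\Kar(\bF')(f) = 0$. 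Combining $\ker(\Kar(\bF')) = \cN$ with essential surjectivity (\cref{essential}) and the fullness of $\Kar(\bF')$ yields that the induced functor $\Kar(\QSB')/\cN \to U_q(N)\md$ is fully faithful and essentially surjective, hence an equivalence. The only mildly delicate point, which I do not expect to pose a real obstacle, is the verification of nonvanishing of quantum dimensions in the ground ring; this is immediate from the transcendence of $q$.
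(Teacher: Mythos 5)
The proposal is correct and follows the same underlying approach as the paper: the paper dispatches the theorem by invoking the black-box result \cite[Prop.~6.9]{SW22} together with \cref{easter,essential,blackhole}, and your argument simply unpacks what that citation does (fullness plus trace-compatibility give $\ker \subseteq \cN$ and $\cN \subseteq \ker$, then essential surjectivity yields the equivalence). The one extra ingredient you supply explicitly — that the negligible ideal of $U_q(N)\md$ vanishes because quantum dimensions of simples are nonzero Laurent polynomials over $\C(q^{\pm 1/4})$ — is precisely the hidden hypothesis the citation needs, and your verification of it is correct.
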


\begin{proof}
    By \cref{easter,essential} the functor $\Kar(\bF')$ is full and essentially surjective.  It follows from \cref{blackhole} and \cite[Prop.~6.9]{SW22} that its kernel is the tensor ideal of negligible morphisms.
\end{proof}

\Cref{negligible} states that $\QSB'$ is an interpolating category for the categories of $U_q(N)$-modules.  The reason we need to consider the subcategory $\QSB'$ is that the proof of \cref{negligible} relies on \cref{blackhole}.  In the larger category $\QSB$, it does not seem possible to reduce all closed diagrams to a multiple of the empty diagram due to the possible presence of nontrivial links in the black strings.  In fact, we could have worked with the smaller category $\QSB'$ throughout the paper.  We have chosen to work with the larger category $\QSB$ because it has the nice property of being braided monoidal.

%========
\appendix
%========

%------------------------------------------------
\section{Proof of \cref{whitman}\label{sec:rope}}
%------------------------------------------------

Our goal is to show that the map $\tau$, defined in \cref{tau}, is a homomorphism of $U_q(N)$-modules.  We first need a technical lemma.

\begin{lem}
    For $2 \le i \le n$, we have
    \begin{gather} \label{demon1}
        \Psi_i \Psi_{i-1}^\dagger \psi_{i-1} = \psi_i \omega_i \omega_{i-1}^{-1} + \psi_{i-1} \Psi_i \Psi_{i-1}^\dagger,
        \\ \label{demon2}
        \Psi_{i-1} \Psi_i^\dagger \psi_i = \psi_{i-1} + q^{-1} \psi_i \Psi_{i-1} \Psi_i^\dagger.
    \end{gather}
\end{lem}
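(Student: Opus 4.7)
The proof is a direct computation in $\Cl_q(N) \cong \Cl(N)$: translate each occurrence of $\psi_j$ into $\Psi$-language via \cref{moca}, then manipulate using the classical Clifford relations \cref{classicCl} together with the $\omega$-commutation rules \cref{notary1,notary2,notary3}. Once both sides are expressed in a single algebra, the identities reduce to elementary bookkeeping.

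For \cref{demon1}, I first expand the LHS as $q^{\rho_{i-1}} \Psi_i \Psi_{i-1}^\dagger \omega_{>i-1}^{-1} \Psi_{i-1}$. Since every $\omega_j$ with $j>i-1$ commutes with both $\Psi_{i-1}^\dagger$ and $\Psi_{i-1}$ by \cref{notary2}, I move $\omega_{>i-1}^{-1}$ to the far right, and then apply $\Psi_{i-1}^\dagger \Psi_{i-1} = 1 - \Psi_{i-1} \Psi_{i-1}^\dagger$ from \cref{classicCl} together with $\Psi_i \Psi_{i-1} = -\Psi_{i-1} \Psi_i$ to split the LHS into $q^{\rho_{i-1}} \Psi_i \omega_{>i-1}^{-1} + q^{\rho_{i-1}} \Psi_{i-1} \Psi_i \Psi_{i-1}^\dagger \omega_{>i-1}^{-1}$. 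On the RHS, the first term $\psi_i \omega_i \omega_{i-1}^{-1}$ collapses via $\Psi_i \omega_i = q^{-1} \Psi_i$ from \cref{notary1}; the second term $\psi_{i-1} \Psi_i \Psi_{i-1}^\dagger$ is expanded directly. To match the two sides, I use the polynomial form $\omega_{i-1}^{-1} = 1 + (q-1) \Psi_{i-1}^\dagger \Psi_{i-1}$ from \cref{notary3}, and the leftover difference cancels thanks to the arithmetic identity $\rho_i - \rho_{i-1} = 1$, which holds by \cref{rhoeven,rhoodd}.

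The proof of \cref{demon2} is entirely analogous: expand $\psi_i = q^{\rho_i} \omega_{>i}^{-1} \Psi_i$ on the LHS, commute $\omega_{>i}^{-1}$ through $\Psi_{i-1} \Psi_i^\dagger$ (all indices $j > i$ commute freely with both factors), split via $\Psi_i^\dagger \Psi_i = 1 - \Psi_i \Psi_i^\dagger$, and compare with the RHS using $\omega_{>i-1}^{-1} = \omega_i^{-1} \omega_{>i}^{-1}$ and the expansion $\omega_i^{-1} = 1 + (q-1) \Psi_i^\dagger \Psi_i$ from \cref{notary3}; the identity $\rho_i - \rho_{i-1} = 1$ again supplies the cancellation. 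The main obstacle is bookkeeping the asymmetric one-sided commutations $\omega_i \Psi_i = \Psi_i$ versus $\Psi_i \omega_i = q^{-1} \Psi_i$ and the several $q$-factors they produce; the cleanest way to handle these is to expand every $\omega_j^{-1}$ into its polynomial form from \cref{notary3} before comparing terms on the two sides.
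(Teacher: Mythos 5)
Your approach is correct and very close to the paper's: both translate $\psi_j \mapsto q^{\rho_j}\omega_{>j}^{-1}\Psi_j$ via \cref{moca}, manipulate with \cref{classicCl} and the $\omega$-rules \cref{notary1,notary2}, and invoke $\rho_i - \rho_{i-1} = 1$ from \cref{rhoeven,rhoodd}. The one substantive difference is where the $\omega$'s sit when the Clifford relation is applied. For \cref{demon1} the paper first rewrites the left-hand side as $q^{\rho_{i-1}}\omega_{>i-1}^{-1}\Psi_i\Psi_{i-1}^\dagger\Psi_{i-1}\omega_{i-1}^{-1}$, deliberately leaving an $\omega_{i-1}^{-1}$ on the far right; after $\Psi_{i-1}^\dagger\Psi_{i-1} = 1 - \Psi_{i-1}\Psi_{i-1}^\dagger$ the two resulting pieces are \emph{exactly} $\psi_i\omega_i\omega_{i-1}^{-1}$ and $\psi_{i-1}\Psi_i\Psi_{i-1}^\dagger$, with nothing left over. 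You instead push $\omega_{>i-1}^{-1}$ all the way to the right, and your split evaluates to $\psi_i + q\,\psi_{i-1}\Psi_i\Psi_{i-1}^\dagger$, which does not match the right-hand side term by term. It does equal it, but to see this you must expand $\omega_{i-1}^{-1}$ via \cref{notary3} and then use $\Psi_i\Psi_{i-1} = -\Psi_{i-1}\Psi_i$ to kill the residual operator $\Psi_i\Psi_{i-1}\Psi_{i-1}^\dagger + \Psi_{i-1}\Psi_i\Psi_{i-1}^\dagger$. Attributing this final cancellation to ``the arithmetic identity $\rho_i-\rho_{i-1}=1$'' undersells what is happening: that identity only aligns the powers of $q$, while the operator-level cancellation is genuine Clifford algebra. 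The paper's placement of the $\omega$'s is simply slicker---it avoids \cref{notary3} and the residual step entirely. The same remark applies to your treatment of \cref{demon2}.
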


\begin{proof}
    To prove \cref{demon1}, we compute
    \begin{multline*}
        \Psi_i \Psi_{i-1}^\dagger \psi_{i-1}
        \overset{\cref{moca}}{=} q^{\rho_{i-1}} \Psi_i \Psi_{i-1}^\dagger \omega_{>i-1}^{-1} \Psi_{i-1}
        \overset{\cref{notary1}}{\overset{\cref{notary2}}{=}} q^{\rho_{i-1}} \omega_{>i-1}^{-1} \Psi_i \Psi_{i-1}^\dagger \Psi_{i-1} \omega_{i-1}^{-1}
        \\
        \overset{\cref{classicCl}}{=} q^{\rho_{i-1}} \omega_{>i-1}^{-1} \Psi_i (1 - \Psi_{i-1} \Psi_{i-1}^\dagger) \omega_{i-1}^{-1}
        \overset{\cref{notary1}}{\underset{\cref{classicCl}}{=}} \psi_i \omega_i \omega_{i-1}^{-1} + \psi_{i-1} \Psi_i \Psi_{i-1}^\dagger,
    \end{multline*}
    where we used the fact that $\rho_{i-1}+1=\rho_i$, by \cref{rhoeven,rhoodd}.  To prove \cref{demon2}, we compute
    \begin{multline*}
        \Psi_{i-1} \Psi_i^\dagger \psi_i
        \overset{\cref{moca}}{=} q^{\rho_i} \Psi_{i-1} \Psi_i^\dagger \omega_{>i}^{-1} \Psi_i
        \overset{\cref{notary2}}{=} q^{\rho_i} \omega_{>i}^{-1} \Psi_{i-1} \Psi_i^\dagger \Psi_i
        \overset{\cref{notary1}}{=} \psi_{i-1} \Psi_i^\dagger \Psi_i
        \\
        \overset{\cref{classicCl}}{=} \psi_{i-1} (1 - \Psi_i \Psi_i^\dagger)
        \overset{\cref{classicCl}}{=} \psi_{i-1} + q^{\rho_{i-1}} \omega_{>i-1}^{-1} \Psi_i \Psi_{i-1} \Psi_i^\dagger
        \overset{\cref{notary2}}{=} \psi_{i-1} + q^{-1} \psi_i \Psi_{i-1} \Psi_i^\dagger.
        \qedhere
    \end{multline*}
\end{proof}

To prove \cref{whitman}, we need to show that
\[
    a \tau(v \otimes x) = \tau( \Delta(a) (v \otimes x) ),\qquad
    a \in U_q(N),\ v \in V,\ x \in S.
\]
Using \cref{comult,Brazil,tau}, we see that it suffices to show that, for $1 \le i \le n$, $j \in \Vset$,
\begin{gather} \label{froste}
    e_i \psi_j = \overline{e_i v_j} k_i + \psi_j e_i,
    \\ \label{frostf}
    f_i \psi_j = \overline{f_i v_j} + \overline{k_i^{-1} v_j} f_i,
    \\ \label{frostk}
    k_i^{\pm 1} \psi_j = \overline{k_i^{\pm 1} v_j} k_i^{\pm 1},
    \\ \label{frosts}
    \sigma \psi_j = \overline{\sigma v_j} \sigma,
\end{gather}
where $\bar{v}$ denotes the image in $\Cl_q(N)$ of $v \in V$, and both sides of \cref{froste,frostf,frostk} are to be interpreted as elements of $\Cl_q(N)$, using \cref{ukulele}.

\begin{proof}[{Proof of \cref{froste}}]
    If $2 \le i \le n$, $0 \le j \le n$, $j \ne i-1$, we have
    \[
        e_i \psi_j
        \overset{\cref{moca}}{=} q^{\rho_j} \Psi_i \Psi_{i-1}^\dagger \omega_{>j}^{-1} \Psi_j
        \overset{\cref{notary2}}{=} q^{\rho_j} \omega_{>j}^{-1} \Psi_i \Psi_{i-1}^\dagger \Psi_j
        \overset{\cref{classicCl}}{=} q^{\rho_j} \omega_{>j}^{-1} \Psi_j \Psi_i \Psi_{i-1}^\dagger
        \overset{\cref{moca}}{\underset{\cref{notary1}}{=}} \psi_j e_i.
    \]
    Since $e_i v_j = 0$ by \cref{doze1}, the equation \cref{froste} follows.

    If $2 \le i \le n$, $0 < j \le n$, $j \ne i$, we have
    \[
        e_i \psi_{-j}
        \overset{\cref{moca}}{=} q^{-\rho_j} \Psi_i \Psi_{i-1}^\dagger \omega_{>j}^{-1} \Psi_j^\dagger
        \overset{\cref{notary2}}{=} q^{-\rho_j} \omega_{>j}^{-1} \Psi_i \Psi_{i-1}^\dagger \Psi_j^\dagger
        \overset{\cref{classicCl}}{=} q^{-\rho_j} \omega_{>j}^{-1} \Psi_j^\dagger \Psi_i \Psi_{i-1}^\dagger
        \overset{\cref{moca}}{\underset{\cref{notary1}}{=}} \psi_{-j} e_i,
    \]
    where we have used the fact that $\Psi_{i-1}^\dagger \Psi_j^\dagger = 0$, by \cref{classicCl}, when $j=i-1$.  Since $e_i v_j = 0$ by \cref{doze1}, the equation \cref{froste} follows.

    The case $i=1$, $j \ne 0$ is similar to the above cases.
    \details{
        First note that, by \cref{doze1}, $e_1 v_j = 0$ unless $j \in \{-1,-2\}$ (when $N=2n$) or $j \in \{0,-1\}$ (when $N=2n+1$).   When $N=2n$, $i=1$, and $0 < j \le n$, we have
        \[
            e_1 \psi_j
            \overset{\cref{moca}}{=} q^{\rho_j} \Psi_2 \Psi_1 \omega_{>j}^{-1} \Psi_j
            \overset{\cref{notary2}}{\underset{\cref{classicCl}}{=}} q^{\rho_j} \omega_{>j}^{-1} \Psi_j \Psi_2 \Psi_1
            = \psi_j e_1.
        \]
        When $N=2n$, $i=1$, and $2 < j \le n$, we have
        \[
            e_1 \psi_{-j}
            \overset{\cref{moca}}{=} q^{-\rho_j} \Psi_2 \Psi_1 \omega_{>j}^{-1} \Psi_j^\dagger
            \overset{\cref{notary2}}{\underset{\cref{classicCl}}{=}} q^{-\rho_j} \omega_{>j}^{-1} \Psi_j^\dagger \Psi_2 \Psi_1
            = \psi_{-j} e_1.
        \]
        When $N=2n$, $i=1$ and $j=2$, we have
        \begin{multline*}
            e_1 \psi_{-2}
            \overset{\cref{moca}}{\underset{\cref{rhoeven}}{=}} q^{-1} \Psi_2 \Psi_1 \omega_{>2}^{-1} \Psi_2^\dagger
            \overset{\cref{notary2}}{\underset{\cref{classicCl}}{=}} - q^{-1} \omega_{>2}^{-1} \Psi_2 \Psi_2^\dagger \Psi_1
            \underset{\cref{classicCl}}{=} - q^{-1} \omega_{>2}^{-1} (1 - \Psi_2^\dagger \Psi_2) \Psi_1
            \\
            = - q^{-1} \omega_{>2}^{-1} \Psi_1 + q^{-1} \omega_{>2}^{-1} \Psi_2^\dagger \Psi_2 \Psi_1
            \\
            \overset{\cref{notary1}}{=} -\omega_{>1}^{-1} \Psi_1 \omega_2 \omega_1 + q^{-1} \omega_{>2}^{-1} \Psi_2^\dagger \Psi_2 \Psi_1
            \overset{\cref{moca}}{\underset{\cref{rhoeven}}{=}} -q^{-1} \psi_1 k_1 + \psi_{-2} \Psi_2 \Psi_1
            \overset{\cref{doze2}}{=} \overline{e_1 v_{-2}} k_1 + \psi_{-2} e_1
        \end{multline*}
        When $N=2n$ and $i=1$, we have
        \begin{multline*}
            e_1 \psi_{-1}
            \overset{\cref{moca}}{\underset{\cref{rhoeven}}{=}} \Psi_2 \Psi_1 \omega_{>1}^{-1} \Psi_1^\dagger
            \overset{\cref{notary1}}{\underset{\cref{notary2}}{=}} q \omega_{>1}^{-1} \Psi_2 \omega_1 \Psi_1 \Psi_1^\dagger
            \overset{\cref{classicCl}}{=} q \omega_{>1}^{-1} \Psi_2 \omega_1 (1 - \Psi_1^\dagger \Psi_1)
            \\
            \overset{\cref{classicCl}}{\underset{\cref{notary2}}{=}} q \omega_{>2}^{-1} \Psi_2 \omega_1 + q \omega_{>1}^{-1} \omega_1 \Psi_1^\dagger \Psi_2 \Psi_1
            \overset{\cref{notary1}}{=} q^2 \omega_{>2}^{-1} \Psi_2 \omega_2 \omega_1 + \omega_{>1}^{-1} \Psi_1^\dagger \Psi_2 \Psi_1
            \\
            \overset{\cref{moca}}{\underset{\cref{rhoeven}}{=}} q \psi_2 \omega_2 \omega_1 + \omega_{>1}^{-1} \Psi_1^\dagger \Psi_2 \Psi_1
            \overset{\cref{doze2}}{=} \overline{e_1 v_{-1}} k_1 + \psi_{-1} e_1.
        \end{multline*}

        When $N=2n+1$, $i=1$, and $j \ne 0$, we have
        \[
            e_1 \psi_j
            \overset{\cref{moca}}{=} q^{\rho_j} \Psi_1 \Psi_0^\dagger \omega_{>j}^{-1} \Psi_j
            \overset{\cref{notary2}}{\underset{\cref{classicCl}}{=}} q^{\rho_j} \omega_{>j}^{-1} \Psi_j \Psi_1 \Psi_0^\dagger
            = \psi_j e_1.
        \]
        When $N=2n+1$, $i=1$, and $1 < j \le n$, we have
        \[
            e_1 \psi_{-j}
            \overset{\cref{moca}}{=} q^{-\rho_j} \Psi_1 \Psi_0^\dagger \omega_{>j}^{-1} \Psi_j^\dagger
            \overset{\cref{notary2}}{\underset{\cref{classicCl}}{=}} q^{-\rho_j} \omega_{>j}^{-1} \Psi_j^\dagger \Psi_1 \Psi_0^\dagger
            = \psi_{-j} e_1.
        \]
        Finally, when $N=2n+1$ and $i=1$, we have
        \begin{multline*}
            e_1 \psi{-1}
            \overset{\cref{moca}}{\underset{\cref{rhoodd}}{=}} q^{-\frac{1}{2}} \Psi_1 \Psi_0^\dagger \omega_{>1}^{-1} \Psi_1^\dagger
            \overset{\cref{notary2}}{\underset{\cref{classicCl}}{=}} - q^{-\frac{1}{2}} \omega_{>1}^{-1} \Psi_1 \Psi_1^\dagger \Psi_0
            \overset{\cref{notary1}}{\underset{\cref{classicCl}}{=}} - q^{-\frac{1}{2}} \omega_{>0}^{-1} (1 - \Psi_1^\dagger \Psi_1) \omega_1 \Psi_0
            \\
            \overset{\cref{notary2}}{=} -q^{-\frac{1}{2}} \omega_{>0}^{-1} \Psi_0 \omega_1 + q^{-\frac{1}{2}} \omega_{>1}^{-1} \Psi_1^\dagger \Psi_1 \Psi_0
            \overset{\cref{moca}}{\underset{\cref{rhoodd}}{=}} -q^{-1} \psi_0 k_1 + \psi_{-1} e_1
            \overset{\cref{doze2}}{=} \overline{e_1 v_{-1}} k_1 + \psi_{-1} e_1.
        \end{multline*}
    }

    Now suppose $2 \le i \le n$ and $j=i-1$.  Then
    \[
        e_i \psi_{i-1}
        \overset{\cref{moca}}{=} \Psi_i \Psi_{i-1}^\dagger \psi_{i-1}
        \overset{\cref{demon1}}{=} \psi_i \omega_i \omega_{i-1}^{-1} + \psi_{i-1} \Psi_i \Psi_{i-1}^\dagger
        \overset{\cref{doze1}}{=} \overline{e_i v_{i-1}} k_i + \psi_{i-1} e_i.
    \]

    If $2 \le i \le n$, $j=-i$, we have
    \begin{multline*}
        e_i \psi_{-i}
        \overset{\cref{moca}}{=} q^{-\rho_i} \Psi_i \Psi_{i-1}^\dagger \omega_{>i}^{-1} \Psi_i^\dagger
        \overset{\cref{notary1}}{\underset{\cref{notary2}}{=}} q^{-\rho_i} \omega_{>i}^{-1} \Psi_i \Psi_{i-1}^\dagger \Psi_i^\dagger
        \overset{\cref{classicCl}}{=} -q^{-\rho_i} \omega_{>i}^{-1} (1 - \Psi_i^\dagger \Psi_i) \Psi_{i-1}^\dagger
        \\
        \overset{\cref{notary1}}{=} - q^{-\rho_i} \omega_{>i}^{-1} \Psi_{i-1}^\dagger + q^{-\rho_i} \omega_{>i}^{-1} \Psi_i^\dagger \Psi_i \Psi_{i-1}^\dagger
        \overset{\cref{moca}}{=} - q^{-\rho_i} \omega_{>i}^{-1} \Psi_{i-1}^\dagger + \psi_i^\dagger e_i
        \\
        \overset{\cref{notary1}}{\underset{\cref{notary2}}{=}} -q^{-\rho_i} \omega_{>i-1}^{-1} \Psi_{i-1}^{-1} \omega_i \omega_{i-1}^{-1} + \psi_i^\dagger e_i
        \overset{\cref{moca}}{=} - q^{-1} \psi_{i-1}^\dagger k_i + \psi_i^\dagger e_i
        \overset{\cref{doze2}}{=} \overline{e_i v_{-i}} k_i + \psi_{-i} e_i.
    \end{multline*}

    Finally, we consider the case $N=2n+1$, $i=1$, $j=0$.  We have
    \begin{gather*}
        e_1 \psi_0
        \overset{\cref{moca}}{=} \Psi_1 \Psi_0^\dagger \omega_{>0}^{-1} \Psi_0
        \overset{\cref{notary1}}{\underset{\cref{notary2}}{=}} q \omega_{>1}^{-1} \Psi_1 \Psi_0^\dagger \Psi_0
        \overset{\cref{moca}}{\underset{\cref{classicCl}}{=}} q^{\frac{1}{2}} \psi_1,
        \\
        \psi_0 e_1
        \overset{\cref{moca}}{=} \omega_{>0}^{-1} \Psi_0 \Psi_1 \Psi_0^\dagger
        \overset{\cref{classicCl}}{=} - \omega_{>0}^{-1} \psi_1
        \overset{\cref{notary1}}{=} - \omega_{>1}^{-1} \Psi_1
        \overset{\cref{moca}}{=} - q^{-\frac{1}{2}} \psi_1,
        \\
        \overline{e_1 v_0} k_1
        \overset{\cref{doze2}}{=} q^{\frac{1}{2}} (q+1) \psi_1 \omega_1
        \overset{\cref{notary1}}{=} \left( q^{\frac{1}{2}} + q^{-\frac{1}{2}} \right) \psi_1,
    \end{gather*}
    and so \cref{froste} follows.
\end{proof}

\begin{proof}[{Proof of \cref{frostf}}]
    If $2 \le i \le n$, $0 \le j \le n$, $j \ne i$, we have
    \[
        f_i \psi_j
        \overset{\cref{moca}}{=} q^{\rho_j} \Psi_{i-1} \Psi_i^\dagger \omega_{>j}^{-1} \Psi_j
        \overset{\cref{notary1}}{\underset{\cref{notary2}}{=}} q^{\rho_j + \delta_{i-1,j}} \omega_{>j}^{-1} \Psi_{i-1} \Psi_i^\dagger \Psi_j
        \overset{\cref{classicCl}}{=} q^{\rho_j} \omega_{>j}^{-1} \Psi_j \Psi_{i-1} \Psi_i^\dagger
        \overset{\cref{moca}}{=} \psi_j f_i.
    \]
    Since $f_i v_j = 0$ by \cref{doze1}, the equation \cref{frostf} follows.  If $2 \le i=j \le n$, then
    \[
        f_i \psi_i
        = \Psi_{i-1} \Psi_i^\dagger \psi_i
        \overset{\cref{demon2}}{=} \psi_{i-1} + q^{-1} \psi_i \Psi_{i-1} \Psi_i^\dagger
        \overset{\cref{doze1}}{\underset{\cref{doze4}}{=}} \overline{f_i v_i} + \overline{k_i^{-1} v_i} f_i,
    \]
    as desired.

    If $2 \le i \le n$, $0 < j \le n$, $j \ne i-1$, we have
    \[
        f_i \psi_{-j}
        \overset{\cref{moca}}{=} q^{-\rho_j} \Psi_{i-1} \Psi_i^\dagger \omega_{>j}^{-1} \Psi_j^\dagger
        \overset{\cref{notary2}}{\underset{\cref{classicCl}}{=}} q^{-\rho_j} \omega_{>j}^{-1} \Psi_j^\dagger \Psi_{i-1} \Psi_i^\dagger
        \overset{\cref{doze1}}{\overset{\cref{moca}}{=}} \overline{f_i v_{-j}} \psi_{-j} f_i,
    \]
    as desired.  We also have
    \begin{multline*}
        f_i \psi_{1-i}
        \overset{\cref{moca}}{=} q^{-\rho_{i-1}} \Psi_{i-1} \Psi_i^\dagger \omega_{>i-1}^{-1} \Psi_{i-1}^\dagger
        \overset{\cref{notary2}}{\underset{\cref{classicCl}}{=}} q^{-1-\rho_{i-1}} \omega_{>i-1}^{-1} \Psi_{i-1} \Psi_{i-1}^\dagger \Psi_i^\dagger
        \\
        \overset{\cref{classicCl}}{=} q^{-1-\rho_{i-1}} \omega_{>i-1}^{-1} (1 - \Psi_{i-1}^\dagger \Psi_{i-1}) \Psi_i^\dagger
        = q^{-\rho_i} \omega_{>i-1}^{-1} \Psi_i^\dagger + q^{-1-\rho_{i-1}} \omega_{>i-1}^{-1} \Psi_{i-1}^\dagger \Psi_{i-1} \Psi_i^\dagger
        \\
        \overset{\cref{notary1}}{=} - q^{1-\rho_i} \omega_{>i}^{-1} \Psi_i^\dagger + q^{-1-\rho_{i-1}} \omega_{>i-1}^{-1} \Psi_{i-1}^\dagger \Psi_{i-1} \Psi_i^\dagger
        \overset{\cref{moca}}{=} -q \psi_{-i} + q^{-1} \psi_{1-i} f_i
        \overset{\cref{doze1}}{=} \overline{f_i v_{1-i}} + \overline{k_i^{-1}v_{1-i}} f_i,
    \end{multline*}
    as desired.

    Now suppose that $N=2n$ and $i=1$.  When $j>2$, we have
    \[
        f_1 \psi_j
        = \Psi_1^\dagger \Psi_2^\dagger \psi_j
        \overset{\cref{moca}}{\underset{\cref{notary2},\cref{classicCl}}{=}} \psi_j \Psi_1^\dagger \Psi_2^\dagger
        = \psi_j f_1
        \overset{\cref{doze1}}{\underset{\cref{doze4}}{=}} \overline{f_1 \psi_j} + \overline{k_1^{-1} v_j} f_1.
    \]
    When $j=2$, we have
    \begin{multline*}
        f_1 \psi_2
        \overset{\cref{moca}}{=} q \Psi_1^\dagger \Psi_2^\dagger \omega_{>2}^{-1} \Psi_2
        \overset{\cref{notary2}}{\underset{\cref{notary1}}{=}} \omega_{>1}^{-1} \Psi_1^\dagger \Psi_2^\dagger \Psi_2
        \overset{\cref{classicCl}}{=} \omega_{>1}^{-1} \Psi_1^\dagger (1 - \Psi_2 \Psi_2^\dagger)
        \\
        \overset{\cref{moca}}{\underset{\cref{classicCl}}{=}} \psi_1^\dagger + \omega_{>1}^{-1} \Psi_2 \Psi_1^\dagger \Psi_2^\dagger
        \overset{\cref{notary1}}{\underset{\cref{moca}}{=}} \psi_1^\dagger + q^{-1} \psi_2 f_1
        \overset{\cref{doze3}}{=} \overline{f_1 \psi_2} + \overline{k_1^{-1} v_2} f_1.
    \end{multline*}
    When $j=1$, we have
    \begin{multline*}
        f_1 \psi_1
        \overset{\cref{moca}}{=} \Psi_1^\dagger \Psi_2^\dagger \omega_{>1}^{-1} \Psi_1
        \overset{\cref{notary2}}{=} q^{-1} \omega_{>1}^{-1} \Psi_1^\dagger \Psi_2^\dagger \Psi_1
        \overset{\cref{classicCl}}{=} - q^{-1} \omega_{>1}^{-1} \Psi_2^\dagger (1 - \Psi_1 \Psi_1^\dagger)
        \\
        \overset{\cref{classicCl}}{\underset{\cref{notary1}}{=}} - \omega_{>2}^{-1} \Psi_2^\dagger + q^{-1} \omega_{>1}^{-1} \Psi_1 \Psi_1^\dagger \Psi_2^\dagger
        \overset{\cref{moca}}{=} - q \psi_2^\dagger + q^{-1} \psi_1 f_1
        \overset{\cref{doze3}}{\underset{\cref{doze4}}{=}} \overline{f_1 v_1} + \overline{k_1^{-1} v_1} f_1.
    \end{multline*}
    When $2 < j \le n$, we have
    \[
        f_1 \psi_{-j}
        \overset{\cref{moca}}{=} q^{-\rho_j} \Psi_1^\dagger \Psi_2^\dagger \omega_{>j}^{-1} \Psi_j^\dagger
        \overset{\cref{notary2}}{\underset{\cref{classicCl}}{=}} q^{-\rho_j} \omega_{>j}^{-1} \Psi_j^\dagger \Psi_1^\dagger \Psi_2^\dagger
        \overset{\cref{moca}}{=} \psi_{-j} f_1
        \overset{\cref{doze3}}{\underset{\cref{doze4}}{=}} \overline{f_1 v_{-j}} + \overline{k_1^{-1} v_{-j}} f_1.
    \]
    When $j \in \{1,2\}$, we have
    \[
        f_1 \psi_{-j}
        \overset{\cref{moca}}{=} q^{-\rho_j} \Psi_1^\dagger \Psi_2^\dagger \omega_{>j}^{-1} \Psi_j^\dagger
        \overset{\cref{notary2}}{\underset{\cref{classicCl}}{=}} 0
        \overset{\cref{classicCl}}{=} q^{-\rho_j + 1} \omega_{>1}^{-1} \Psi_1^\dagger \Psi_1^\dagger \Psi_2^\dagger
        \overset{\cref{moca}}{=} q \psi_{-1} f_1
        \overset{\cref{doze3}}{\underset{\cref{doze4}}{=}} \overline{f_1 v_{-j}} + \overline{k_1^{-1} v_{-j}} f_1.
    \]

    Now suppose that $N=2n+1$ and $i=1$.  When $j>1$, we have
    \[
        f_1 \psi_j
        \overset{\cref{moca}}{=} q^{\rho_j} \Psi_0 \Psi_1^\dagger \omega_{>j}^{-1} \Psi_j
        \overset{\cref{notary2}}{\underset{\cref{classicCl}}{=}} q^{\rho_j} \omega_{>j}^{-1} \Psi_j \Psi_0 \Psi_1^\dagger
        \overset{\cref{moca}}{=} \psi_j f_1
        \overset{\cref{doze3}}{\underset{\cref{doze4}}{=}} \overline{f_1 v_j} + \overline{k_1^{-1} v_j} f_1.
    \]
    When $j=1$, we have
    \begin{multline*}
        f_1 \psi_1
        \overset{\cref{moca}}{\underset{\cref{rhoodd}}{=}} q^{\frac{1}{2}} \Psi_0 \Psi_1^\dagger \omega_{>1}^{-1} \Psi_1
        \overset{\cref{notary1}}{\underset{\cref{notary2}}{=}} q^{-\frac{1}{2}} \omega_{>0}^{-1} \Psi_0 \Psi_1^\dagger \Psi_1
        \overset{\cref{classicCl}}{=} q^{-\frac{1}{2}} \omega_{>0}^{-1} \Psi_0 (1 - \Psi_1 \Psi_1^\dagger)
        \\
        \overset{\cref{moca}}{\underset{\cref{classicCl}}{=}} q^{-\frac{1}{2}} \psi_0 + q^{-\frac{1}{2}} \omega_{>0}^{-1} \Psi_1 \Psi_0 \Psi_1^\dagger
        \overset{\cref{notary1}}{\underset{\cref{moca}}{=}} q^{-\frac{1}{2}} \psi_0 + q^{-1} \psi_1 f_1
        \overset{\cref{doze3}}{\underset{\cref{doze4}}{=}} \overline{f_1 v_1} + \overline{k_1^{-1} v_1} f_1.
    \end{multline*}
    When $j=0$, we have
    \begin{gather*}
        f_1 \psi_0
        \overset{\cref{moca}}{=} \Psi_0 \Psi_1^\dagger \omega_{>0}^{-1} \Psi_0
        \overset{\cref{notary2}}{=} q^{-1} \omega_{>0}^{-1} \Psi_0 \Psi_1^\dagger \Psi_0
        \overset{\cref{classicCl}}{=} - q^{-1} \omega_{>0}^{-1} \Psi_1^\dagger
        \overset{\cref{notary1}}{\underset{\cref{moca}}{=}} - q^{\frac{1}{2}} \psi_1^\dagger,
        \\
        \overline{f_1 \psi_0}
        \overset{\cref{doze3}}{=} -q^{\frac{1}{2}}(q+1) \psi_1^\dagger,
        \\
        \overline{k_1^{-1} v_0} f_1
        \overset{\cref{doze4}}{=} \psi_0 f_1
        \overset{\cref{moca}}{=} \omega_{>0}^{-1} \Psi_0 \Psi_0 \Psi_1^\dagger
        \overset{\cref{classicCl}}{=} \omega_{>0}^{-1} \Psi_1^\dagger
        \overset{\cref{notary1}}{=} q \omega_{>1}^{-1} \Psi_1^\dagger
        \overset{\cref{moca}}{=} q^{\frac{3}{2}} \psi_1^\dagger,
    \end{gather*}
    and \cref{frostf} follows.  When $2 \le j \le n$, we have
    \[
        f_1 \psi_{-j}
        \overset{\cref{moca}}{=} q^{-\rho_j} \Psi_0 \Psi_1^\dagger \omega_{>j}^{-1} \Psi_j^\dagger
        \overset{\cref{notary2}}{\underset{\cref{classicCl}}{=}} q^{-\rho_j} \omega_{>j}^{-1} \Psi_j^\dagger \Psi_0 \Psi_1^\dagger
        \overset{\cref{moca}}{=} \psi_{-j} f_1
        \overset{\cref{doze3}}{\underset{\cref{doze4}}{=}} \overline{f_1 v_{-j}} + \overline{k_1^{-1} v_{-j}} f_1.
    \]
    Finally, we have
    \[
        f_1 \psi_{-1}
        \overset{\cref{moca}}{=} q^{-\rho_1} \Psi_0 \Psi_1^\dagger \omega_{>1}^{-1} \Psi_1^\dagger
        \overset{\cref{notary2}}{\underset{\cref{classicCl}}{=}} 0
        \overset{\cref{classicCl}}{=} q^{1 - \rho_1} \omega_{>1}^{-1} \Psi_1^\dagger \Psi_0 \Psi_1^\dagger
        \overset{\cref{moca}}{=} q \psi_{-1} f_1
        \overset{\cref{doze3}}{\underset{\cref{doze4}}{=}} \overline{f_1 v_{-1}} + \overline{k_1^{-1} v_{-1}} f_1.
    \]
\end{proof}

\begin{proof}[{Proof of \cref{frostk}}]
    For $2 \le i \le n$, $0 \le j \le n$, we have
    \begin{multline*}
        k_i^{\pm 1} \psi_j
        \overset{\cref{moca}}{=} q^{\rho_j} \omega_i^{\pm 1} \omega_{i-1}^{\mp 1} \omega_{>j}^{-1} \Psi_j
        \overset{\cref{notary2}}{=} q^{\rho_j \pm \delta_{i,j} \mp \delta_{i-1,j}} \omega_{>j}^{-1} \Psi_j\omega_i^{\pm 1} \omega_{i-1}^{\mp 1}
        \\
        \overset{\cref{moca}}{=} q^{\delta_{i,j} \mp \delta_{i-1,j}} \psi_j \omega_{i-1}^{\mp 1}
        \overset{\cref{doze4}}{=} \overline{k_i^{\pm 1} \psi_j} k_i^{\pm 1}.
    \end{multline*}
    The case $2 \le i \le n$, $-n \le j < 0$ is analogous.

    For $N=2n$, $i=1$, and $0 \le j \le n$, we have
    \begin{multline*}
        k_1^{\pm 1} \psi_j
        \overset{\cref{moca}}{=} q^{\rho_j \pm 1} \omega_2^{\pm 1} \omega_1^{\pm 1} \omega_{>j}^{-1} \Psi_j
        \overset{\cref{notary2}}{=} q^{\rho_1 \pm 1 \pm \delta_{j,1} \pm \delta_{j,2}} \omega_{>j}^{-1} \Psi_j \omega_2^{\pm 1} \omega_1^{\pm 1}
        \\
        \overset{\cref{moca}}{=} q^{\pm 1 \pm \delta_{j,1} \pm \delta_{j,2}} \psi_j \omega_2^{\pm 1} \omega_1^{\pm 1}
        \overset{\cref{doze4}}{=} \overline{k_1^{\pm 1} \psi_j} k_1^{\pm 1}.
    \end{multline*}
    The case $N=2n$, $i=1$, and $-n \le j < 0$ is analogous.  The cases $N=2n+1$, $i=1$, and $j \in \Vset$ are also similar.
    \details{
        When $N=2n+1$, $i=1$, and $0 \le j \le n$, we have
        \[
            k_1^{\pm 1} \psi_j
            \overset{\cref{moca}}{=} q^{\rho_j \pm \frac{1}{2}} \omega_1^{\pm 1} \omega_{>j}^{-1} \Psi_j
            \overset{\cref{notary2}}{=} q^{\rho_j \pm \frac{1}{2} \pm \delta_{j,1}} \omega_{>j}^{-1} \Psi_j \omega_1^{\pm 1}
            \overset{\cref{moca}}{=} q^{\pm \frac{1}{2} \pm \delta_{j,1}} \psi_j \omega_1^{\pm 1}
            \overset{\cref{doze4}}{=} \overline{k_1^{\pm 1} \psi_j} k_1^{\pm 1}.
        \]
    }
\end{proof}

\begin{proof}[{Proof of \cref{frosts}}]
    For $2 \le j \le n$, we have
    \[
        \sigma \psi_j
        \overset{\cref{moca}}{=} \sqrt{-1} q^{\rho_j} (\Psi_1 - \Psi_1^\dagger) \omega_{>j}^{-1} \Psi_j
        \overset{\cref{notary2}}{\underset{\cref{classicCl}}{=}} - \sqrt{-1} q^{\rho_j} \omega_{>j}^{-1} \Psi_j (\Psi_1 - \Psi_1^\dagger)
        \overset{\cref{moca}}{=} -\psi_j \sigma
        \overset{\cref{doze5}}{=} \overline{\sigma v_j} \sigma.
    \]
    The case $-n \le j \le -2$ is analogous.  Finally, we compute
    \begin{multline*}
        \sigma \psi_1
        \overset{\cref{moca}}{=} \sqrt{-1} (\Psi_1 - \Psi_1^\dagger) \omega_{>1}^{-1} \Psi_1
        \overset{\cref{notary2}}{\underset{\cref{classicCl}}{=}} - \sqrt{-1} \omega_{>1}^{-1} \Psi_1^\dagger \Psi_1
        \\
        \overset{\cref{classicCl}}{=} - \sqrt{-1} \omega_{>1}^{-1} \Psi_1^\dagger (\Psi_1 - \Psi_1^\dagger)
        \overset{\cref{moca}}{=} - \psi_1^\dagger \sigma
        \overset{\cref{doze5}}{=} \overline{\sigma v_1} \sigma
    \end{multline*}
    and
    \begin{multline*}
        \sigma \psi_1^\dagger
        \overset{\cref{moca}}{=} \sqrt{-1} (\Psi_1 - \Psi_1^\dagger) \omega_{>1}^{-1} \Psi_1^\dagger
        \overset{\cref{notary2}}{\underset{\cref{classicCl}}{=}} \sqrt{-1} \omega_{>1}^{-1} \Psi_1 \Psi_1^\dagger
        \\
        \overset{\cref{classicCl}}{=} - \sqrt{-1} \omega_{>1}^{-1} \Psi_1 (\Psi_1 - \Psi_1^\dagger)
        \overset{\cref{moca}}{=} - \psi_1 \sigma
        \overset{\cref{doze5}}{=} \overline{\sigma v_{-1}} \sigma.
        \qedhere
    \end{multline*}
\end{proof}

This completes the proof of \cref{whitman}.

%===========
% References
%===========

\bibliographystyle{alphaurl}
\bibliography{QuantumSpinBrauer}

\newcommand\Salabel{}\newcommand\Sa[2]{Sage}
\begin{thebibliography}{CKM14}

\bibitem[Abo22]{Abo22}
W.~Aboumrad.
\newblock Skew {H}owe duality for types {$BD$} via $q$-{C}lifford algebras.
\newblock 2022.
\newblock \href {http://arxiv.org/abs/2208.09773} {\path{arXiv:2208.09773}}.

\bibitem[BER24]{BER24}
E.~Bodish, B.~Elias, and D.~E.~V. Rose.
\newblock Spin link homology.
\newblock 2024.
\newblock \href {http://arxiv.org/abs/2407.00189} {\path{arXiv:2407.00189}}.

\bibitem[BW23]{BW23}
E.~Bodish and H.~Wu.
\newblock Webs for the quantum orthogonal group.
\newblock 2023.
\newblock \href {http://arxiv.org/abs/2309.03623} {\path{arXiv:2309.03623}}.

\bibitem[CKM14]{CKM14}
Sabin Cautis, Joel Kamnitzer, and Scott Morrison.
\newblock Webs and quantum skew {H}owe duality.
\newblock {\em Math. Ann.}, 360(1-2):351--390, 2014.
\newblock \href {http://arxiv.org/abs/1210.6437} {\path{arXiv:1210.6437}},
  \href {https://doi.org/10.1007/s00208-013-0984-4}
  {\path{doi:10.1007/s00208-013-0984-4}}.

\bibitem[CP95]{CP95}
V.~Chari and A.~Pressley.
\newblock {\em A guide to quantum groups}.
\newblock Cambridge University Press, Cambridge, 1995.
\newblock Corrected reprint of the 1994 original.

\bibitem[Del07]{Del07}
P.~Deligne.
\newblock La cat\'{e}gorie des repr\'{e}sentations du groupe sym\'{e}trique
  {$S_t$}, lorsque {$t$} n'est pas un entier naturel.
\newblock In {\em Algebraic groups and homogeneous spaces}, volume~19 of {\em
  Tata Inst. Fund. Res. Stud. Math.}, pages 209--273. Tata Inst. Fund. Res.,
  Mumbai, 2007.

\bibitem[DF94]{DF94}
J.~T. Ding and Igor~B. Frenkel.
\newblock Spinor and oscillator representations of quantum groups.
\newblock In {\em Lie theory and geometry}, volume 123 of {\em Progr. Math.},
  pages 127--165. Birkh\"{a}user Boston, Boston, MA, 1994.
\newblock \href {https://doi.org/10.1007/978-1-4612-0261-5\_5}
  {\path{doi:10.1007/978-1-4612-0261-5\_5}}.

\bibitem[DHS13]{DHS13}
R.~Dipper, J.~Hu, and F.~Stoll.
\newblock Symmetrizers and antisymmetrizers for the {BMW}-algebra.
\newblock {\em J. Algebra Appl.}, 12(7):1350032, 22, 2013.
\newblock \href {http://arxiv.org/abs/1109.0342} {\path{arXiv:1109.0342}},
  \href {https://doi.org/10.1142/S0219498813500321}
  {\path{doi:10.1142/S0219498813500321}}.

\bibitem[GK91]{GK91}
A.~M. Gavrilik and A.~U. Klimyk.
\newblock {$q$}-deformed orthogonal and pseudo-orthogonal algebras and their
  representations.
\newblock {\em Lett. Math. Phys.}, 21(3):215--220, 1991.
\newblock \href {http://arxiv.org/abs/math/0203201}
  {\path{arXiv:math/0203201}}, \href {https://doi.org/10.1007/BF00420371}
  {\path{doi:10.1007/BF00420371}}.

\bibitem[GRS22]{GRS22}
M.~Gao, H.~Rui, and L.~Song.
\newblock A basis theorem for the affine {K}auffman category and its cyclotomic
  quotients.
\newblock {\em J. Algebra}, 608:774--846, 2022.
\newblock \href {http://arxiv.org/abs/2006.09626} {\path{arXiv:2006.09626}},
  \href {https://doi.org/10.1016/j.jalgebra.2022.07.005}
  {\path{doi:10.1016/j.jalgebra.2022.07.005}}.

\bibitem[Hay90]{Hay90}
T.~Hayashi.
\newblock {$q$}-analogues of {C}lifford and {W}eyl algebras---spinor and
  oscillator representations of quantum enveloping algebras.
\newblock {\em Comm. Math. Phys.}, 127(1):129--144, 1990.
\newblock URL: \url{http://projecteuclid.org/euclid.cmp/1104180043}.

\bibitem[HS99]{HS99}
I.~Heckenberger and A.~Sch\"{u}ler.
\newblock Symmetrizer and antisymmetrizer of the {B}irman-{W}enzl-{M}urakami
  algebras.
\newblock {\em Lett. Math. Phys.}, 50(1):45--51, 1999.
\newblock \href {http://arxiv.org/abs/math/0002170}
  {\path{arXiv:math/0002170}}, \href {https://doi.org/10.1023/A:1007675821808}
  {\path{doi:10.1023/A:1007675821808}}.

\bibitem[KS97]{KS97}
A.~Klimyk and K.~Schm\"udgen.
\newblock {\em Quantum groups and their representations}.
\newblock Texts and Monographs in Physics. Springer-Verlag, Berlin, 1997.
\newblock \href {https://doi.org/10.1007/978-3-642-60896-4}
  {\path{doi:10.1007/978-3-642-60896-4}}.

\bibitem[Let97]{Let97}
G.~Letzter.
\newblock Subalgebras which appear in quantum {I}wasawa decompositions.
\newblock {\em Canad. J. Math.}, 49(6):1206--1223, 1997.
\newblock \href {https://doi.org/10.4153/CJM-1997-059-4}
  {\path{doi:10.4153/CJM-1997-059-4}}.

\bibitem[Lus10]{Lus10}
G.~Lusztig.
\newblock {\em Introduction to quantum groups}.
\newblock Modern Birkh\"{a}user Classics. Birkh\"{a}user/Springer, New York,
  2010.
\newblock Reprint of the 1994 edition.
\newblock \href {https://doi.org/10.1007/978-0-8176-4717-9}
  {\path{doi:10.1007/978-0-8176-4717-9}}.

\bibitem[LZ15]{LZ15}
G.~I. Lehrer and R.~B. Zhang.
\newblock The {B}rauer category and invariant theory.
\newblock {\em J. Eur. Math. Soc. (JEMS)}, 17(9):2311--2351, 2015.
\newblock \href {http://arxiv.org/abs/1207.5889} {\path{arXiv:1207.5889}},
  \href {https://doi.org/10.4171/JEMS/558} {\path{doi:10.4171/JEMS/558}}.

\bibitem[MS21]{MS21}
Y.~Mousaaid and A.~Savage.
\newblock Affinization of monoidal categories.
\newblock {\em J. \'{E}c. polytech. Math.}, 8:791--829, 2021.
\newblock \href {http://arxiv.org/abs/2010.13598} {\path{arXiv:2010.13598}},
  \href {https://doi.org/10.5802/jep.158} {\path{doi:10.5802/jep.158}}.

\bibitem[MS24]{MS24}
P.~J. McNamara and A.~Savage.
\newblock The spin {B}rauer category.
\newblock {\em Forum Math. Sigma}, 12:Paper No. e98, 2024.
\newblock \href {http://arxiv.org/abs/2312.11766} {\path{arXiv:2312.11766}},
  \href {https://doi.org/10.1017/fms.2024.102}
  {\path{doi:10.1017/fms.2024.102}}.

\bibitem[NS95]{NS95}
M.~Noumi and T.~Sugitani.
\newblock Quantum symmetric spaces and related {$q$}-orthogonal polynomials.
\newblock In {\em Group theoretical methods in physics ({T}oyonaka, 1994)},
  pages 28--40. World Sci. Publ., River Edge, NJ, 1995.
\newblock \href {http://arxiv.org/abs/math/9503225}
  {\path{arXiv:math/9503225}}.

\bibitem[OW02]{OW02}
R.~C. Orellana and H.~G. Wenzl.
\newblock {$q$}-centralizer algebras for spin groups.
\newblock {\em J. Algebra}, 253(2):237--275, 2002.
\newblock \href {https://doi.org/10.1016/S0021-8693(02)00069-8}
  {\path{doi:10.1016/S0021-8693(02)00069-8}}.

\bibitem[\Sa25]{sagemath}
\Salabel{{The Sage Developers}}.
\newblock {\em {S}ageMath, the {S}age {M}athematics {S}oftware {S}ystem
  ({V}ersion 9.5)}, 2025.
\newblock URL: \url{https://www.sagemath.org}.

\bibitem[Sel11]{Sel11}
P.~Selinger.
\newblock A survey of graphical languages for monoidal categories.
\newblock In {\em New structures for physics}, volume 813 of {\em Lecture Notes
  in Phys.}, pages 289--355. Springer, Heidelberg, 2011.
\newblock \href {http://arxiv.org/abs/0908.3347} {\path{arXiv:0908.3347}},
  \href {https://doi.org/10.1007/978-3-642-12821-9\_4}
  {\path{doi:10.1007/978-3-642-12821-9\_4}}.

\bibitem[ST19]{ST19}
A.~Sartori and D.~Tubbenhauer.
\newblock Webs and {$q$}-{H}owe dualities in types {BCD}.
\newblock {\em Trans. Amer. Math. Soc.}, 371(10):7387--7431, 2019.
\newblock \href {http://arxiv.org/abs/1701.02932} {\path{arXiv:1701.02932}},
  \href {https://doi.org/10.1090/tran/7583} {\path{doi:10.1090/tran/7583}}.

\bibitem[SW24]{SW22}
A.~Savage and B.~W. Westbury.
\newblock Quantum diagrammatics for {$F_4$}.
\newblock {\em J. Pure Appl. Algebra}, 228(11):Paper No. 107731, 35, 2024.
\newblock \href {http://arxiv.org/abs/2204.11976} {\path{arXiv:2204.11976}},
  \href {https://doi.org/10.1016/j.jpaa.2024.107731}
  {\path{doi:10.1016/j.jpaa.2024.107731}}.

\bibitem[Tur89]{Tur89}
V.~G. Turaev.
\newblock Operator invariants of tangles, and {$R$}-matrices.
\newblock {\em Izv. Akad. Nauk SSSR Ser. Mat.}, 53(5):1073--1107, 1135, 1989.
\newblock \href {https://doi.org/10.1070/IM1990v035n02ABEH000711}
  {\path{doi:10.1070/IM1990v035n02ABEH000711}}.

\bibitem[TW05]{TW05}
I.~Tuba and H.~Wenzl.
\newblock On braided tensor categories of type {$BCD$}.
\newblock {\em J. Reine Angew. Math.}, 581:31--69, 2005.
\newblock \href {http://arxiv.org/abs/math/0301142}
  {\path{arXiv:math/0301142}}, \href
  {https://doi.org/10.1515/crll.2005.2005.581.31}
  {\path{doi:10.1515/crll.2005.2005.581.31}}.

\bibitem[Wen12]{Wen12}
H.~Wenzl.
\newblock On centralizer algebras for spin representations.
\newblock {\em Comm. Math. Phys.}, 314(1):243--263, 2012.
\newblock \href {http://arxiv.org/abs/1107.4183} {\path{arXiv:1107.4183}},
  \href {https://doi.org/10.1007/s00220-012-1494-z}
  {\path{doi:10.1007/s00220-012-1494-z}}.

\bibitem[Wen20]{Wen20}
H.~Wenzl.
\newblock Dualities for spin representations.
\newblock 2020.
\newblock \href {http://arxiv.org/abs/2005.11299} {\path{arXiv:2005.11299}}.

\bibitem[Wes08]{Wes08}
B.~W. Westbury.
\newblock Invariant tensors for the spin representation of
  {$\mathfrak{so}(7)$}.
\newblock {\em Math. Proc. Cambridge Philos. Soc.}, 144(1):217--240, 2008.
\newblock \href {http://arxiv.org/abs/math/0601209}
  {\path{arXiv:math/0601209}}, \href
  {https://doi.org/10.1017/S0305004107000722}
  {\path{doi:10.1017/S0305004107000722}}.

\end{thebibliography}

\end{document}